\documentclass[12pt]{amsart}
\usepackage{amsmath}
\usepackage{hyperref}
\usepackage{amssymb}
\usepackage{latexsym}
\usepackage{amscd}
\usepackage{graphicx} 
\usepackage{amsthm}
\usepackage{mathrsfs}
\usepackage{xypic}
\usepackage{bm}
\usepackage[all]{xy}
\usepackage{color}

\newdimen\AAdi%
\newbox\AAbo%
%
\def\AAk#1#2{\s_etbox\AAbo=\hbox{#2}\AAdi=\wd\AAbo\kern#1\AAdi{}}%
\def\AAr#1#2#3{\s_etbox\AAbo=\hbox{#2}\AAdi=\ht\AAbo\raise#1\AAdi\hbox{#3}}%
\font\tenmsb=msbm10 at 12pt \font\sevenmsb=msbm7 at 8pt
\font\fivemsb=msbm5 at 6pt
\newfam\msbfam
\textfont\msbfam=\tenmsb \scriptfont\msbfam=\sevenmsb
\scriptscriptfont\msbfam=\fivemsb
\def\Bbb#1{{\tenmsb\fam\msbfam#1}}
\textwidth 15cm \textheight 22.8cm \topmargin 0cm \oddsidemargin
0.5cm \evensidemargin 0.5cm
\parindent = 5 mm
\hfuzz     = 6 pt
\parskip   = 3 mm

\newtheorem{thm}{Theorem}[section]
\newtheorem{lem}{Lemma}[section]
\newtheorem{cor}{Corollary}[section]
\newtheorem{rem}{Remark}[section]
\newtheorem{pro}{Proposition}[section]
\newtheorem{defi}{Definition}[section]

\newcommand{\ba}{\begin{array}}
\newcommand{\ea}{\end{array}}

\newcommand{\Section}[2]{\setcounter{equation}{0}
\allowdisplaybreaks
\section[#1]{#2}}

\def\n{\nabla}
\def\bn{\overline\nabla}
\def\ir#1{\mathbb R^{#1}}

\def\f#1#2{\frac{#1}{#2}}

\def\grs#1#2{\bold G_{#1,#2}}

\def\pd#1#2{\frac {\partial #1}{\partial #2}}

\def\a{\alpha}
\def\be{\beta}

\def\p#1{\partial #1}

\def\de{\delta}
\def\De{\Delta}
\def\e{\eta}
\def\ep{\varepsilon}

\def\G{\Gamma}
\def\g{\gamma}
\def\k{\kappa}
\def\la{\lambda}
\def\La{\Lambda}
\def\om{\omega}
\def\Om{\Omega}
\def\th{\theta}
\def\Th{\Theta}
\def\si{\sigma}
\def\Si{\Sigma}
\def\r{\rho}

\def\w{\wedge}

\def\Hess{\mbox{Hess}}
\def\R{\mathbb R}

\def\H{\mathcal{H}}

\def\lan{\langle}
\def\ran{\rangle}
\def\ra{\rightarrow}

\def\bn{\bar{\nabla}}

\subjclass{53A07, 49Q15.}

\begin{document}
\title
[Minimal graphs in Euclidean space]
{Minimal graphs of arbitrary codimension in Euclidean space with bounded $2-$dilation}

\author{Qi Ding}
\address{Shanghai Center for Mathematical Sciences, Fudan University, Shanghai 200438, China}
\email{dingqi@fudan.edu.cn}
\author{J. Jost}
\address{Max Planck Institute for Mathematics in the Sciences, Inselstr. 22, 04103 Leipzig, Germany}
\email{jost@mis.mpg.de}
\author{Y.L. Xin}
\address{Institute of Mathematics, Fudan University, Shanghai 200433, China}
\email{ylxin@fudan.edu.cn}

\thanks{The  authors  are  partially
supported by NSFC and SFMEC }

\begin{abstract}
For any $\La>0$, let $\mathcal{M}_{n,\La}$ denote the space containing all locally Lipschitz minimal graphs of dimension $n$ and of arbitrary codimension $m$ in Euclidean space $\R^{n+m}$ with uniformly bounded
2-dilation $\La$ of their graphic functions. In this paper, we show that this is a natural class to extend structural results known for codimension one. In particular,  
we prove that any tangent cone $C$ of $M\in \mathcal{M}_{n,\La}$ at infinity has multiplicity one. This enables us to get a 
Neumann-Poincar$\mathrm{\acute{e}}$ inequality on stationary indecomposable components of $C$. A corollary is a Liouville theorem for $M$.
For  small $\La>1$(we can take any $\La <\sqrt{2}$), we prove that (i) for $n\le7$, $M$ is flat; (2) for $n>8$ and a non-flat $M$, any tangent cone of
$M$ at infinity is a multiplicity one quasi-cylindrical minimal cone in $\R^{n+m}$ whose singular set has dimension $\le n-7$. 
\end{abstract}
\maketitle

\maketitle \tableofcontents
\Section{Introduction}{Introduction}

It has been a central aim of the theory of minimal graphs in Euclidean space
to derive conditions under which an entire $n$-dimensional minimal graph, that
is, a graph defined on all of $\R^n$, of codimension $m$, that is, sitting in
$\R^{n+m}$, is affine linear. This is the famous Bernstein problem. Bernstein himself proved it for two-dimensional entire minimal graphs in $\R^3$. For codimension 1, but in 
higher dimensions, this holds for $n\le 7$ through  successive efforts of
Fleming \cite{f}, De Giorgi \cite{dg}, Almgren \cite{al} and culminating with Simons \cite{ss}. However, it is no longer
true for $n\ge 8$ by an example of Bombieri, De Giorgi and Giusti \cite{b-d-g}.
If we assume, however, in addition, that the graph has a
bounded gradient, then this holds for any dimension $n$ by a result of Moser \cite{m}. This is the 
so-called weak version of the Bernstein Theorem.

Research on the  Bernstein theorem was a crucial motivation for the great development of geometric measure theory.
It is well known that an entire codimension 1 minimal graph $\Si$ in $\R^{n+1}$ is area-minimizing, i.e., the current associated with $\Si$ is a minimizing current.
Fleming \cite{f} proved that any tangent cone $C_\Si$ of $\Si$ at infinity is a singular area-minimizing cone in $\R^{n+1}$, which implies that it is a stable minimal hypersurface with multiplicity one.
De Giorgi \cite{dg} further showed that $C_\Si$ is \emph{cylindrical}, i.e,
$C_\Si$  isometrically splits off a factor $\R$.

In higher codimension,
Almgren \cite{af} derived  sharp codimension 2 estimates for the singular sets of minimizing currents.
In \cite{ds1,ds2,ds3}, De Lellis-Spadaro developed a new approach to the regularity of minimizing currents and could in particular rederive Almgren's structure theorem. In general, however, minimal graphs of higher codimension are not minimizing. Nevertheless, some general structural results about minimal graphs in higher codimension are available.
Utilizing the
graph property, it is possible to study the structure and rigidity of minimal graphs of arbitrary codimension under some conditions but without a minimizing assumption.

In this paper, we approach this issue via studying tangent cones of minimal graphs at infinity.
First of all, we need some condition on minimal graphs to guarantee that they have Euclidean volume growth.
For that purpose, we now introduce the concept of bounded
$k$-dilation. As we shall see, this condition provides a natural generalization of the bounded slope condition that has been used in many other papers about higher codimension Bernstein theorems.

Let $f:\, M_1\rightarrow M_2$ be a locally Lipschitz map between
Riemannian manifolds $M_1,M_2$.
For an integer $k\ge1$, $f$ is said to have $k$-\emph{dilation} $\le\La$ for
some constant $\La\ge0$ if $f$ maps each $k$-dimensional submanifold $S\subset
M_1$ to an image $f(S)\subset M_2$ with
$\mathcal{H}^k(f(S))\le\La\mathcal{H}^k(S)$ where $\mathcal{H}^k$ denotes
$k$-dimensional Hausdorff measure. We note that this condition is the more
restrictive the smaller $k$ (up to the constant $\La$). For $k=1$, it simply means that $f$ is
$\La$-Lipschitz. This is, of course, a strong condition. In this paper, we
shall explore the case $k=2$, that is, consider maps, or more precisely locally Lipschitz
minimal graphs given by $f:\R^n\to \R^m$ with bounded 2-dilation.
But even for $n=m=2$, minimal graphs have not necessarily bounded 2-dilation for their graphic functions, as
the 2-dimensional minimal graphs given by $(\mathrm{Re}\, e^z,-\mathrm{Im}\, e^z):\,\R^2\rightarrow \R^2$.
We should note,
however, that these examples have zero $k$-dilation for $k\ge
3$. Therefore, we cannot hope for a good theory on tangent cones of minimal graphs at infinity when we only restrict some $k$-dilation for $k\ge
3$.

For any constant $\La\ge0$,
let $\mathcal{M}_{n,\La}$ denote the space containing all the locally Lipschitz
minimal graphs over $\R^n$ of arbitrary codimension $m\ge1$ with 2-dilation of
their graphic functions $\le\La$. 
Here, the codimension $m$ is bounded by a constant depending only on $n,\La$ using a result of Colding-Minicozzi \cite{c-m} (which will be explained later), and this is the reason why the notation $\mathcal{M}_{n,\La}$ does not contain $m$.
De Giorgi \cite{dg0} obtained the regularity of codimension 1 locally Lipschitz minimal graphs (see also Moser \cite{m0}), while high codimensional locally Lipschitz minimal graphs may really have singularities.
We can already provide the following geometric intuition.
\begin{enumerate}

\item $\mathcal{M}_{n,\La}$ contains every codimension one minimal graph in $\R^{n+1}$.

\item The product of Euclidean space $\R^{\ell}$ and any minimal graphical hypersurface in $\R^{n+1}$ is contained in $\mathcal{M}_{n,\La}$. 

\item $\sqrt{\La}$-Lipschitz graphs are in
$\mathcal{M}_{n,\La}$. In particular, the minimal Hopf cones in $\R^{2m}\times\R^{m+1}$ for $m=2,4,8$
constructed by Lawson-Osserman \cite{l-o} are in some
$\mathcal{M}_{n,\La}$. See \cite{d-y}\cite{x-y-z} for more examples.

\end{enumerate}
However, having a bounded Lipschitz constant is a much stronger condition than having bounded 2-dilation. For bounded domains  $\Om\subset\R^n$, we have constructed many examples of $n$-minimal graphs over $\Om$ of 2-dilation $\le1$ with arbitrary large slope, where they do not live in any $(n+1)$-dimensional Euclidean subspace \cite{d-j-x}.

Having thus sketched the basic setting, we can explain the two main objectives of this paper. The first objective is the development of the theory of minimal graphs
of arbitrary codimension in Euclidean space  with uniformly bounded
2-dilation of their graphic functions.
The principal aim is to understand the geometric structure, including
multiplicity and stability, of such minimal graphs at infinity when the
graphic functions are allowed to grow faster than linearly.
Without Euclidean volume growth, geometric measure theory cannot say much about the possible limits of minimal graphs at infinity.
Therefore, our first crucial issue will be to derive Euclidean volume growth from bounded 2-dilation.
Importantly, their tangent cones at infinity have multiplicity one (see Theorem \ref{main2} below), which plays an essential role in establishing the
Neumann-Poincar$\mathrm{\acute{e}}$ inequality on
stationary indecomposable components of the tangent cones.

This general theory prepares the ground for the second objective,
to study the rigidity of minimal graphs of arbitrary codimension in Euclidean space without the assumption of the bounded slope.
Under the condition of bounded 2-dilation for graphic functions, we prove a Liouville theorem for minimal graphs (see Theorem \ref{main1}) with Neumann-Poincar$\mathrm{\acute{e}}$ inequality, which generalizes the classical Liouville theorem obtained by Bombieri-De Giorgi-Miranda \cite{b-g-m}.
Concerning the
Bernstein theorem, we prove that there exists a constant $\La>1$ such that for $n\le7$ each $M\in\mathcal{M}_{n,\La}$ is flat, for $n\ge8$ and any non-flat $M\in\mathcal{M}_{n,\La}$, any tangent cone of $M$ at infinity is a quasi-cylindrical minimal cone (see Theorem \ref{main4}).
Here, the dimension 7 is sharp by the counter-examples by Bombieri-De Giorgi-Giusti \cite{b-d-g}, and a quasi-cylindrical minimal cone is exactly a
cylinder  in the
codimension one case (see the definition below).
Moreover, the constant $\La$ can be arbitrarily chosen $<\sqrt{2}$. The constant $\sqrt{2}$ is not essential for our whole theory but plays a role in making the volume functional subharmonic (see Corollary \ref{Lalogv}).

Let us now be more specific.
In \cite{c-l-y}, Cheng-Li-Yau estimated the codimension for each minimal cone in Euclidean space via its density.
Colding-Minicozzi \cite{c-m} proved the dimension estimates for coordinate functions on more general minimal submanifolds of Euclidean volume growth.
In \S 3, using the result of  \cite{c-m}, we prove that every
$M\in\mathcal{M}_{n,\La}$ has Euclidean volume growth with the density bounded
by a constant depending only on $n,\La$ (see Lemma \ref{VGM*} for details). In
particular, $M$ lives in a Euclidean subspace with the codimension bounded by a constant depending on $n,\La$.
Without the condition of bounded 2-dilation, of course minimal graphs need not
have Euclidean volume growth, like the  2-dimensional minimal graph by
$(\mathrm{Re}\, e^z,-\mathrm{Im}\, e^z):\,\R^2\rightarrow \R^2$ that we have already mentioned above (see Remark \ref{LEVG} for details).

Let $v$ denote the {\it slope} of a minimal graph $M$ over $\R^n$ defined by $\sqrt{\mathrm{det} g_{ij}}$, where
$g_{ij}dx_idx_j$ is the metric of $M$ induced from the ambient Euclidean space $\R^{n+m}$. In fact, $v=\tilde{v}\circ\g$, where $\tilde{v}$ is a natural function in the Grassmannian manifold $\grs{n}{m}$ and $\g$ stands for the Gauss map. The slope describes how far $M$ is from the fixed $n$-plane $\R^n$.
We will explain its geometric meaning later in detail from the perspective of  the
Grassmannian manifold.
For $0\le\La<1$ and $M\in \mathcal{M}_{n,\La}$ with codimension $m\ge1$, the corresponding $\tilde{v}$ is convex in  $\grs{n}{m}$, and then $v=\tilde{v}\circ\g$ is
subharmonic. This leads to flatness of $M$ proved by M.T. Wang \cite{wang} under the bounded slope condition.
When $\La>1$, the function $\tilde{v}$ in general
is not convex. In fact, our  class $\mathcal{M}_{n,\La}$ is much richer when $\La> 1$ is larger.

If $M\in\mathcal{M}_{n,\La}$ is minimizing, then it is not difficult to prove the multiplicity one of the tangent cone of $M$ at infinity. However, the multiplicity one holds without the minimizing condition. Moreover, we can show the stablity of the tangent cone in some small neighborhood via the slope function $v$ in \S 4.
\begin{thm}\label{main2}
Let $M$ be a locally Lipschitz minimal graph over $\R^n$ of codimension $m\ge1$ with bounded
2-dilation of its graphic function. Then any tangent cone $C$ of $M$ at infinity has multiplicity one. Moreover, if a tangent cone of $C$ contains a line perpendicular to the $n$-plane $\{(x,0^m)\in\R^n\times\R^m|\, x\in\R^n\}$, then it is a cylindrical stable minimal cone in an $(n+1)$-dimensional Euclidean subspace.
\end{thm}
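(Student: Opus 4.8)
The plan is to exploit that $M$ is a \emph{graph} over $\R^n$ through the orthogonal projection $\pi\colon\R^{n+m}\to\R^n$ onto $\{(x,0^m)\}$, together with the Euclidean volume growth of $M$ provided by Lemma~\ref{VGM*}. Fix a sequence $r_i\to\infty$ and set $M_{r_i}=r_i^{-1}M$, which is again a locally Lipschitz minimal graph — that of $f_{r_i}(x)=r_i^{-1}f(r_ix)$ — with $2$-dilation still $\le\La$. By Lemma~\ref{VGM*} the multiplicity currents/varifolds $[[M_{r_i}]]$ have locally uniformly bounded mass, so after passing to a subsequence they converge weakly, both as integral currents and as integral varifolds, to a stationary integral limit $C$; since the monotone quantity in the monotonicity formula is constant in the limit, $C$ is a cone.

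The heart of the multiplicity-one statement is the identity $\pi_\#C=[[\R^n]]$. For each $r>0$ the map $\pi|_{M_r}\colon M_r\to\R^n$ is a degree-one diffeomorphism, so $\pi_\#[[M_r]]=[[\R^n]]$; the step where the growth control on the graphic function (equivalently on the slope $v=\tilde v\circ\gamma$) obtained in \S3 is needed is precisely in passing this to the limit, which gives $\pi_\#C=[[\R^n]]$. Writing $C$ with its positive integer multiplicity $\theta\ge1$ and its orientation $\vec C$, the orientation is compatible with the projection because $C$ is a limit of graphs, so $\langle dx^1\wedge\cdots\wedge dx^n,\,\vec C\rangle\ge0$ along the carrier of $C$. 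Applying the area formula to $\pi$ restricted to the carrier and comparing with $\pi_\#C=[[\R^n]]$ forces, for a.e.\ $x\in\R^n$, the fibre over $x$ to be finite with multiplicities summing to $1$; since $\theta\ge1$, each such fibre is a single point of multiplicity one, and the locus where $\pi$ degenerates is $\mathcal{H}^n$-null. Hence $C$ has multiplicity one.

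For the second assertion, let $C'$ be a tangent cone of $C$, hence a multiplicity-one stationary integral cone, and suppose $\mathrm{spt}\,C'$ contains a line $\ell$ through $0$ with $\ell\perp\R^n\times\{0^m\}$. A splitting argument — using that $\ell$ is a full line in a stationary cone together with the multiplicity-one property — shows $C'$ is translation invariant along $\ell$, so $C'=C''\times\ell$ with $C''$ an $(n-1)$-dimensional stationary integral cone in $\ell^\perp=\R^n\times(\R^m\cap\ell^\perp)$. The crux is then to see that $C''$ lies in the base $\R^n$, i.e.\ that the effective codimension has collapsed to one: the point of $C$ producing a blow-up that contains a vertical line is a point at which the slope function degenerates only along a single direction, and it is exactly the bound on the $2$-dilation that confines this degeneration to be one-dimensional, so that $C''\subset\R^n$ and $C'=C''\times\ell$ sits in the $(n+1)$-dimensional subspace $\R^n\oplus\ell$, where it is a cylinder. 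Finally, stability of this cylindrical cone comes from the slope function: minimality of $M$ together with the subharmonicity of $v$ (resp.\ $\log v$) from Corollary~\ref{Lalogv} yields, in the blow-down limit, a positive solution of the Jacobi operator on the regular part of $C'$, which is the classical criterion for a minimal cone to be stable.

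I expect the main obstacle to be the codimension-collapse in the second part — rigorously deducing, from the appearance of a perpendicular line in an iterated tangent cone, that the whole configuration lies in a codimension-one Euclidean subspace, which is precisely the step that must use the $2$-dilation bound in an essential, quantitative way. A secondary delicate point is the passage $\pi_\#[[M_{r_i}]]\to\pi_\#C$ in the first part, that is, ruling out a ``vertical'' component of the limit current (equivalently, showing no mass escapes to infinity along the fibres under the rescaling).
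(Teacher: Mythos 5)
Your multiplicity-one argument has a gap that is not ``secondary'' but fatal. The identity $\pi_\#C=[[\R^n]]$, with $\pi\colon\R^{n+m}\to\R^n$ the base projection, cannot hold when $\mathrm{spt}\,C$ contains a vertical direction --- which is exactly the hard case. If, say, $\mathrm{spt}\,C=\{x_1=0,\ y_2=\cdots=y_m=0\}$ (the model case the paper reduces to in Lemma~\ref{multi1}), then every tangent plane of $\mathrm{spt}\,C$ contains $\mathbf{E}_{n+1}\in\ker d\pi$, so the Jacobian of $\pi$ restricted to $\mathrm{spt}\,C$ vanishes \emph{identically} and the pushforward $\pi_\#C$ is $0$ as an $n$-current (its image has Hausdorff dimension $n-1$); the ``degeneracy locus'' is all of $C$, not an $\mathcal{H}^n$-null set. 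On the current side the identity $\pi_\#[[M_{r_i}]]=[[\R^n]]$ does not localize: for a compactly supported $n$-form $\omega$ on $\R^n$, the form $\pi^*\omega$ is supported only in the infinite tube $\pi^{-1}(\mathrm{spt}\,\omega)$, and once $M_{r_i}$ goes vertical the mass of $M_{r_i}$ in such tubes blows up --- Euclidean volume growth on balls of $\R^{n+m}$ does not control mass in tubes --- so $\langle[[M_{r_i}]],\pi^*\omega\rangle$ does not converge to anything like $\langle C,\pi^*\omega\rangle$. What the paper actually does in Lemma~\ref{multi1} is project to $\R^{n+1}$, i.e.\ base plus the single vertical direction $\mathbf{E}_{n+1}$, so the projected set $M_k^*$ is still $n$-dimensional; it then controls the mass with the explicit divergence-free field $Y_k=v_k\sum_{i,j}g^{ij}_k\partial_iu^1_kE_j$ via the boundary identity \eqref{CompS} applied to the two components of $C_1(0^{n+1})\setminus M_k^*$, with the $2$-dilation bound entering quantitatively through \eqref{viD1u1i} to give $v_k\approx|D_1u^1_k|$ outside a set $W_k$ of small measure. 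This is an entirely different mechanism from a degree/area-formula argument, and it is where the real work lies.

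Your identification of the crux of the cylindrical splitting is right --- the $2$-dilation bound forces all other singular values of $du_k$ to $0$ when one blows up --- and this is essentially the tangent-plane computation of Lemma~\ref{Cy*vmc}, carried out there on the orientation $n$-vectors $\xi_k$. But the stability argument as you state it is also gapped: Corollary~\ref{Lalogv} requires $\La<\sqrt 2$, whereas Theorem~\ref{main2} makes no assumption on $\La$. The paper's Theorem~\ref{Stable} bypasses this by starting from the identity \eqref{DeMv-1} for $\De_{M_k}v_k^{-1}$ on balls where $M_k$ converges smoothly to the codimension-one cylinder: there the cross-codimension coefficients $h^k_{\a,ij}$ with $\a\ge 2$ tend to zero, so the mixed terms $\la_{i,k}\la_{j,k}h^k_{i,jl}h^k_{j,il}$ and $\la_{i,k}\la_{j,k}h^k_{i,il}h^k_{j,jl}$ are $\le\ep_k|B_{M_k}|^2+\ep_k$ with $\ep_k\to 0$, using only $\la_{i,k}\la_{j,k}\le\La$. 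This yields $\De_{M_k}v_k^{-1}\le -v_k^{-1}\left((1-\ep_k)|B_{M_k}|^2-\ep_k\right)$ and hence, in the limit, the stability inequality on the regular part of the cylinder for arbitrary $\La$.
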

\begin{rem}
We do not know whether the cylindrical stable minimal cone obtained in Theorem \ref{main2} is minimizing. Even we have no a priori dimensional estimates on its singular set of the cylindrical stable minimal cone.
\end{rem}
In fact, we prove a somewhat stronger version than Theorem \ref{main2}, where
 we do not  need to require that the graphs are entire.
The proof of multiplicity one is somewhat difficult because of the complex interplay between geometry and analysis for
submanifolds of high codimensions.
Our strategy therefore consists in using
the structure of the minimal surface system to treat
the higher codimension case as a perturbation of the codimension one case with error terms including some quantities
from the other codimensions.
The key idea is projecting the minimal graph $M$ to a hypersurface $M'$ in a suitable $(n+1)$-dimensional Euclidean subspace.
In general $M'$ is no more minimal, but from $M$ we can get effective estimates up to a set of arbitrary small measure in the scaling sense.

In 1969, Bombieri-De Giorgi-Miranda \cite{b-g-m} showed a Liouville theorem for solutions to the minimal
surface equation via interior gradient estimates (see also the exposition in chapter 16 of \cite{g-t}).
For high codimensions, M.T. Wang proved a Liouville type theorem for minimal graphs with positive
graphic functions under the area-decreasing condition \cite{w2}. This
condition also means that the graphic functions have $2$-dilation bounded by $\La<1$.

Let $\mathbf{B}_r$ denote the ball in $\R^{n+m}$ with the radius $r$ and centered at the origin.
Let $B_r$ denote the ball in $\R^{n}$ with the radius $r$ and centered at the origin.
{Inspired} by Bombieri-Giusti  \cite{b-g}, we establish the Neumann-Poincar$\mathrm{\acute{e}}$ inequality on stationary indecomposable components of tangent cones of minimal graphs at infinity, and get Harnack's inequality for positive harmonic functions on the components.
Then we can get the following Liouville theorem without the subharmonic functions in terms of the gradient functions on minimal graphs in \S 6, which generalizes Bombieri-De
Giorgi-Miranda's result in \cite{b-g-m}, and improves Wang's result in \cite{w2}.
\begin{thm}\label{main1}
Let $M=\mathrm{graph}_u$ be a locally Lipschitz minimal graph over $\R^n$ of codimension $m\ge2$ with bounded
2-dilation of $u=(u^1,\cdots,u^m)$.
If
\begin{equation}\aligned\label{ua**0}
\limsup_{r\rightarrow\infty}\left(r^{-1}\sup_{\mathbf{B}_r\cap M}u^\a\right)\le0
\endaligned
\end{equation}
for each $\a\in\{2,\cdots,m\}$, and
\begin{equation}\aligned\label{u1**0}
\liminf_{r\rightarrow\infty}\left(r^{-1}\sup_{B_r}u^1\right)<\infty,
\endaligned
\end{equation}
then $M$ is flat.
\end{thm}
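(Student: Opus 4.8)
The plan is to argue by contradiction through the tangent cone of $M$ at infinity, exploiting the Neumann--Poincar\'e inequality and the Harnack inequality it yields. Since $M$ is minimal, every ambient coordinate function restricts to a harmonic function on $M$; in particular $u^1,\dots,u^m$ are harmonic on $M$, and $M$ is flat precisely when all of them are affine. Assume $M$ is not flat and fix a tangent cone $C$ of $M$ at infinity, realised as a limit of the blow-downs $M/r_j$ ($r_j\to\infty$) in the varifold and current sense; such a cone exists because $M$ has Euclidean volume growth with bounded density by Lemma \ref{VGM*}. By Theorem \ref{main2}, $C$ has multiplicity one. By the monotonicity formula, the density of $M$ at any $x\in M$ is at most the density $\Theta(C)$ of the cone; were $C$ a multiplicity-one plane, this would force the density of $M$ to be identically $1$, hence $M$ flat, a contradiction. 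So $C$ is a non-flat, multiplicity-one minimal cone.

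Next I pass the coordinate functions to the limit. On $M/r_j$ the $(n+\a)$-th coordinate function is $r_j^{-1}u^\a(r_j\cdot)$ and is harmonic; along the blow-down it converges (smoothly on the regular part of $C$, via Allard's theorem and elliptic estimates for the minimal surface system) to the $(n+\a)$-th coordinate function $w^\a$ on $C$, which is harmonic on $C$ and homogeneous of degree one. Given $p\in C\cap\mathbf B_1$, choose $p_j\in M/r_j$ with $p_j\to p$; since $p_j$ is the rescaling of a point of $M\cap\mathbf B_{r_j}$ whose last $m$ coordinates are the values of $u^1,\dots,u^m$ there, \eqref{ua**0} gives $w^\a(p)\le0$ for $\a\in\{2,\dots,m\}$, and by $1$-homogeneity $w^\a\le0$ on all of $C$. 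Now decompose $C$ into its stationary indecomposable components. Each such component is again a cone, so the Neumann--Poincar\'e inequality established earlier is scale invariant on it, and the resulting Harnack inequality for positive harmonic functions is scale invariant too; a standard oscillation and maximum principle argument then shows that every nonnegative harmonic function on the component is constant. Applying this to $-w^\a\ge0$ gives $w^\a$ constant on the component, hence $w^\a\equiv0$ by $1$-homogeneity. Therefore $w^2=\dots=w^m\equiv0$, i.e.\ $C$ is contained in the $(n+1)$-plane $\R^n\times\R\times\{0\}^{m-1}$, and is a non-flat $n$-dimensional minimal cone there.

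It remains to derive a contradiction in this codimension-one situation, which is where \eqref{u1**0} and the graph property are essential. Since each $M/r_j$ is a graph over $\R^n$, one has $\pi_\#[M/r_j]=[\R^n]$ for the projection $\pi:\R^{n+m}\to\R^n$, and by the multiplicity-one convergence $\pi_\#[C]=[\R^n]$; thus $C$ bounds, inside $\R^{n+1}$, the subgraph of a $1$-homogeneous function $\phi$. Condition \eqref{u1**0} bounds $r_j^{-1}u^1(r_j\cdot)$ from above on $B_1$ along the sequence, forcing $\phi\le L|x|$; together with the multiplicity-one, finite-density structure of $C$, which precludes a vertical part of $C$ escaping to $u^1=-\infty$, one obtains $|\phi|\le L|x|$, so $C$ has no vertical part and is the graph of the locally bounded function $\phi$. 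By De Giorgi's interior regularity for the minimal surface equation, $\phi$ is then a smooth entire solution; being homogeneous of degree one with $\phi(0)=0$ and differentiable at the origin, it is linear, so $C$ is a hyperplane --- contradicting that $C$ is non-flat. Hence $M$ is flat, which is the assertion of Theorem \ref{main1}.

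The main obstacle is the last paragraph: ruling out the appearance of a ``vertical'' sheet in the blow-down, that is, showing that the at-most-linear growth \eqref{u1**0} of $u^1$ --- combined with the graph structure and the multiplicity-one conclusion of Theorem \ref{main2} --- prevents the phenomenon that occurs, for faster-growing graphic functions, in the Bombieri--De Giorgi--Giusti example. A secondary difficulty is to upgrade the scale-invariant Harnack inequality on the (possibly singular) stationary indecomposable components of $C$ to a genuine Liouville property for nonnegative harmonic functions, and to justify the convergence of the rescaled harmonic coordinate functions across the singular set of $C$.
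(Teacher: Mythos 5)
Your proposal follows essentially the same strategy as the paper up to the reduction to a multiplicity-one, codimension-one tangent cone $C\subset\R^{n+1}$: blow down using Lemma \ref{VGM*} for Euclidean volume growth, invoke Theorem \ref{main2} for multiplicity one, decompose into stationary indecomposable components, and use the Harnack inequality from the Neumann--Poincar\'e inequality together with \eqref{ua**0} and $1$-homogeneity to kill the coordinates $x_{n+2},\dots,x_{n+m}$. That part is sound, and the Liouville step you worried about is not actually an obstacle: one applies the Harnack inequality (Proposition \ref{Tsuperhar}) directly to the ambient coordinate functions $-x_{n+\alpha}\ge 0$, which are $C^1$ in the ambient space and weakly harmonic on each indecomposable component; since each component is a truncated cone through the origin where $-x_{n+\alpha}$ vanishes, the infimum is $0$, hence the supremum is $0$.

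The genuine gap is exactly the one you flag in your final paragraph, and it is not a technicality. You assert that ``the multiplicity-one, finite-density structure of $C$ precludes a vertical part of $C$ escaping to $u^1=-\infty$.'' That is unjustified. The pushforward identity $\pi_\#[C]=[\R^n]$ only forces the graphical part of $C$ to cover $\R^n$ up to a set of measure zero; it is perfectly consistent with $C$ containing a vertical sheet over a lower-dimensional cone in $\R^n$, and such sheets genuinely appear in blow-downs of entire non-flat codimension-one minimal graphs (Bombieri--De Giorgi--Giusti). Nothing in your argument prevents such a sheet from being one-sided, extending only to $x_{n+1}=-\infty$, which would be compatible with the upper bound $\sup x_{n+1}<\infty$ coming from \eqref{u1**0}.

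The paper closes this gap with Lemma \ref{codim1splitting}, a De Giorgi-type splitting theorem: any non-flat tangent cone at infinity of the graphs that lies in an $(n+1)$-plane must split off a \emph{full} line perpendicular to the base $n$-plane $\R^n\times\{0^m\}$. Granting this, a non-flat $C$ would have $x_{n+1}$ unbounded both above and below in $B_1\times\R^m$, contradicting $\sup x_{n+1}<\infty$; hence $C$ is an entire graph of a $1$-homogeneous Lipschitz function, De Giorgi's codimension-one regularity forces it to be linear, and Allard's regularity theorem concludes that $M$ is flat --- just as you describe. But proving Lemma \ref{codim1splitting} is the substantial missing work: its proof uses the stability of the cylindrical limit (Theorem \ref{Stable}), Wickramasekera's strong maximum principle, Schoen--Simon estimates, and a dimension-reduction argument. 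Your proposal correctly identifies this as the main obstacle but does not supply the argument.
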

In the proof of Theorem \ref{main1}, we need a De Giorgi type result, i.e., every tangent cone of $M$ at infinity is a cylinder if it lives in an $(n+1)$-dimensinoal Euclidean subspace. Using this, we can reduce the problem to the codimension 1 case with the Neumann-Poincar$\mathrm{\acute{e}}$ inequality. Then we can finish the proof by the regularity of codimension 1 Lipschitz minimal graphs from De Giorgi \cite{dg0}. 

According to this  introduction, people have an essentially complete understanding of the classical case of codimension 1 Bernstein theoerm. The question then naturally arises what we can say for higher codimension.

We can now  briefly review the Bernstein type theorems for minimal graphs of bounded slope of codimension $m\ge2$. Bounded slope condition is an adequate generalization of bounded gradient to higher codimension.
For $m\ge2$, Chern and Osserman \cite{c-o} showed that any 2-dimensional minimal graph of bounded slope in $\R^{2+m}$ is flat, which was generalized in \cite{h-s-v}(for $m=2$) and \cite{j-x-y1} without bounded slope.
Barbosa and Fischer-Colbrie proved this for 3-dimensional minimal graphs of bounded slope in \cite{b}\cite{fc}.
Recently, Assimos-Jost \cite{a-j} proved a
Bernstein type theorem for minimal graphs of bounded slope in codimension $m=2$.
For dimension $\ge4$ and codimension
$\ge3$, this no longer holds, by an example of Lawson and Osserman \cite{l-o}.
However, Bernstein type theorem holds for small slope $v$ such as Simons \cite{s}, Hildebrandt-Jost-Widman \cite{h-j-w}, Jost-Xin \cite{j-x}, Jost-Xin-Yang
\cite{j-x-y}. In particular, any minimal graph of slope $\le3$ in Euclidean space is flat \cite{j-x-y}.

But we may also ask whether there exist other natural conditions that ensure a higher codimension Bernstein theorem.
We point out that when $v=\tilde{v}\circ\g\le 3$, $\tilde{v}$ need no longer be convex
on the Grassmannian manifold, the target manifold of the Gauss map of
$M$. Note that $v=9$ in Lawson-Osserman's example mentioned above.
Hence, if every minimal graph $M\in\mathcal{M}_{n,\La}$ is flat, then $\La$ must be small.
Without the conditions \eqref{ua**0}\eqref{u1**0} of Theorem \ref{main1}, we can study the structure of tangent cones of minimal graphs at infinity for small $\La>1$ using Theorem \ref{main2} and the Neumann-Poincar$\mathrm{\acute{e}}$ inequality.

Now let us introduce a concept 'quasi-cylindrical' for studying unbounded slope case.
For an integer $1\le k\le n$ and a $k$-varifold $V$ in $\R^{n+m}$, $V$ is said to be {\it high-codimensional quasi-cylindrical} ({\it quasi-cylindrical}
for short) 
if there are a countably $(k-1)$-rectifiable set $E$ in $\R^n$, and there is a countably $1$-rectifiable normalized curve $\g_x:\,\R\rightarrow\R^m$ for almost all $x\in E$ such that the set $E_\g\triangleq\{(x,y)\in\R^n\times\R^m|\, x\in E,\, y\in\g_x\}$ satisfies
$$\mathcal{H}^n\left((\mathrm{spt}V\setminus E_\g)\cup(E_\g\setminus \mathrm{spt}V)\right)=0.$$
Note that  'quasi-cylindrical' is simply 'cylindrical' for $m=1$.
Hence, quasi-cylindrical varifolds can be seen as a generalization of cylindrical varifolds in the case of high codimensions in Euclidean space.
\begin{thm}\label{main4}
There exists a constant $\La>1$ such that if $M$ is a locally Lipschitz minimal graph over $\R^n$ of codimension $m\ge1$ with
2-dilation of its graphic function bounded by $\La$, then either $M$ is flat, or $M$ is non-flat with $n\ge8$. Furthermore, for non-flat $M$, any tangent cone of
$M$ at infinity is a multiplicity one quasi-cylindrical minimal cone in $\R^{n+m}$ whose singular set has dimension $\le n-7$. 
\end{thm}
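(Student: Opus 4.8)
The plan is to combine the general structural results of the paper (Theorems \ref{main2} and \ref{main1}) with a quantitative dimension-reduction argument, exploiting the subharmonicity of $\log v$ that is available for $\Lambda<\sqrt 2$ (Corollary \ref{Lalogv}). I would first fix $M\in\mathcal M_{n,\Lambda}$ for $\Lambda<\sqrt 2$ and, by Lemma \ref{VGM*}, record that $M$ has Euclidean volume growth with density bounded in terms of $n,\Lambda$, so that tangent cones $C$ of $M$ at infinity exist as stationary integral varifolds. By Theorem \ref{main2}, every such $C$ has multiplicity one, and moreover whenever a tangent cone of $C$ contains a line perpendicular to the base $n$-plane it is a cylindrical stable minimal cone in some $\R^{n+1}$. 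The first real step is to upgrade this: I would run Federer dimension reduction on $C$ and show that at every singular point, after blowing up, the resulting cone splits off such a perpendicular line (this is where the graph structure and the decomposition into stationary indecomposable components from \S 4--\S 5, together with the Neumann--Poincar\'e inequality, do the work), so that iterated tangent cones are, at the bottom of the stratification, cylindrical stable minimal \emph{hypersurface} cones.

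Next I would invoke the codimension-one theory: a stable minimal hypersurface cone in $\R^{k+1}$ with the density and stability bounds we have is, by the Simons-type argument combined with the subharmonicity of $\log v$ for $\Lambda<\sqrt 2$, necessarily flat once $k\le 7$; here is the precise place where the constant $\Lambda$ must be taken close enough to $1$ (and $<\sqrt2$) so that the slope function $v$ on the relevant hypersurface is close enough to $1$ that the usual Simons inequality plus the stability inequality forces the second fundamental form to vanish. Feeding this back into the dimension reduction shows the singular set of $C$ has Hausdorff dimension $\le n-7$. For $n\le 7$ this means $C$ is regular, hence a multiplicity one smooth stable minimal cone with small slope, hence a plane; then a standard monotonicity/density argument propagates flatness of the unique tangent cone at infinity back to flatness of $M$ itself. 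This gives part (i) and the dichotomy ``$M$ flat, or $M$ non-flat with $n\ge8$.''

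For the remaining assertion in the non-flat, $n\ge8$ case I would establish that $C$ is quasi-cylindrical in the sense defined just above the theorem. The idea is the one sketched in the discussion after Theorem \ref{main2}: project $C$ to a hypersurface $C'$ in a suitable $\R^{n+1}$ and use the De Giorgi-type result (invoked in the proof of Theorem \ref{main1}) that the image, away from a set of measure zero, splits off an $\R$-factor in the base direction; translating this splitting back through the fibers $\g_x:\R\to\R^m$ of the graph gives exactly the rectifiable structure $E_\g$ required, with $E\subset\R^n$ countably $(n-1)$-rectifiable and $\g_x$ a normalized curve for a.e. $x$. Combined with the singular-set bound already obtained, $C$ is a multiplicity one quasi-cylindrical minimal cone with $\dim\operatorname{sing} C\le n-7$, and for $m=1$ this is literally a cylinder, recovering De Giorgi's theorem. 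The main obstacle I expect is the first step — showing that \emph{every} singularity of $C$ produces, after blow-up, a line perpendicular to the base plane, so that Theorem \ref{main2} is actually applicable at the bottom of the stratification; this requires controlling how the higher-codimension error terms (the ``other codimensions'' in the minimal surface system) behave under iterated blow-up, and it is precisely there that the multiplicity-one conclusion and the Neumann--Poincar\'e inequality on indecomposable components are indispensable, since without multiplicity one the blow-up limits could fail to be stationary or could lose the stability needed to run Simons' argument.
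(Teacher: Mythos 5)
Your outline assembles the right toolkit (multiplicity one from Theorem \ref{main2}, subharmonicity of $\log v$ for $\La<\sqrt2$, dimension reduction, Simons' theorem, Neumann--Poincar\'e), but the way the pieces are supposed to interact is off in several concrete places. The role of the bound $\La<\sqrt2$ is \emph{not} to make the slope $v$ close to $1$ on the intermediate stable hypersurface cone: that cone $C_{y_*}$ is produced (Lemma \ref{Cy*vmc}) by translating to infinity along a vertical direction $y_*=(0^n,y^*)$, so the graph has gone fully vertical and its slope is infinite there, not near $1$. The only job $\La<\sqrt2$ does in the paper is to force $\De_M\log v\ge\de|B_M|^2+\f1n|\n\log v|^2$ (Corollary \ref{Lalogv}), hence $\De_{M_k}v_k^{-1/n}\le0$; this superharmonicity feeds into the mean-value inequality of Proposition \ref{MeanVSuph}. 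Simons' Lemma \ref{JSimons} is then applied to the stable cone $C_{y_*}$ \emph{unconditionally}, with stability supplied by Theorem \ref{Stable}, not by any small-slope hypothesis. Similarly, the quasi-cylindrical structure is not reached by projecting $C$ to a hypersurface in $\R^{n+1}$ and invoking a De Giorgi splitting: Lemma \ref{codim1splitting} is only available once you already know the cone lives in some $\R^{n+1}$. The paper's actual mechanism (Theorem \ref{mainCONE}, Lemma \ref{almostver}) is to use the mean-value inequality on stationary indecomposable components to propagate $v_T^{-1}=0$ from one point to all of $\mathrm{spt}T$, i.e.\ the tangent planes are everywhere perpendicular to the base $n$-plane, and then to integrate the resulting vertical vector field $e_1$ with $\pi_*(e_1)=0$ to obtain the fiber curves $\g_x$. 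No codimension-one splitting theorem is used here.

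There is also a structural gap: your dichotomy ``$M$ flat or $n\ge8$'' rests on dimension reduction showing that for $n\le7$ the cone is regular, but the paper's Lemma \ref{Cflat} is a genuine case analysis that you omit. If the tangent cone $C$ is still a graph over $\R^n$ (slope bounded along the blow-down), dimension reduction and Simons never get triggered; that case is disposed of by the bounded-slope Bernstein Theorem \ref{BDMGAFFINE}, which you never invoke but which is indispensable. Only in the complementary case (a vertical direction $y_*\in\mathrm{spt}C$) does one pass to the stable hypersurface cone $C_{y_*}$, get flatness from Simons for $n-1\le6$, conclude $y_*$ is a regular point of $C$, and then use the quasi-cylindrical structure of Lemma \ref{almostver} to push regularity back to the origin of $C$. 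Finally, your worry that ``every singularity produces a perpendicular line after blow-up'' is not the actual obstruction the paper faces: Lemmas \ref{codim3} and \ref{codim7} run dimension reduction down to a low-dimensional regular cross-sectional cone and rule it out via Theorem \ref{BDMGAFFINE} plus Theorem \ref{Stable} plus Simons (and, at dimension one, Lemma \ref{cross}), rather than by tracking a perpendicular line at every singular point.
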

In the codimension one case, Theorem \ref{main4} for $n\le7$ has been proved by Simons \cite{ss}, and the dimension 7 is sharp from \cite{b-d-g}. So Theorem \ref{main4} is a generalization in high codimension.
Actually, our proof is based on Simons' result that any stable minimal regular hypercone $C$ in $\R^{k}$ is flat for $k\le7$, where Simons' result holds allowing that $C$ has singularities in some sense, see \cite{wn}. For the case $n\ge8$ of Theorem \ref{main4}, the singular set of dimension $\le n-7$ is obtained through stable minimal hypercones combining Theorem \ref{main2}.
The only purpose of the upper bound  for
the constant $\La$ in Theorem \ref{main4} is to ensure that there is a constant $\de>0$ (which
may depend on $\La,n,m$) such that there holds
\begin{equation}\aligned\label{DeMdeBM**0}
\De_M\log v\ge\de|B_M|^2
\endaligned
\end{equation}
on the minimal graph $M$ (see Corollary \ref{Lalogv}), where $\De_M,B_M$ denote the Laplacian, the second fundamental form of $M$, respectively.
Noting that \eqref{DeMdeBM**0} always holds with $\de=1$ for $n=2$ no matter how large $\La$ is (see Proposition 2.2 in \cite{fc} for instance). 
Hence, Theorem \ref{main4} immediately implies the following Bernstein theorem for 2-dimensional entire minimal graphs, which generalizes the results in \cite{c-o}\cite{h-s-v}.
\begin{cor}
Let $M$ be a locally Lipschitz minimal graph over $\R^2$ of codimension $m\ge1$ with bounded
2-dilation of its graphic function, then $M$ is flat.
\end{cor}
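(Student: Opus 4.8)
The proof is immediate once one unwinds the role of the constant $\La$ in Theorem \ref{main4}. As is pointed out right after that statement, the upper bound imposed on $\La$ there serves one purpose only: to guarantee, through Corollary \ref{Lalogv}, the Bochner-type inequality \eqref{DeMdeBM**0}, that is, $\De_M\log v\ge\de|B_M|^2$ for some $\de>0$ (depending on $\La,n,m$). But when $n=2$ this inequality holds unconditionally, with $\de=1$, for every locally Lipschitz minimal graph over $\R^2$ with bounded $2$-dilation and no matter how large that $2$-dilation is; this is the classical composition formula for the Gauss map of a two-dimensional minimal graph (Proposition 2.2 in \cite{fc}). Hence for $n=2$ the hypothesis ``$\La$ small'' is vacuous and every ingredient in the proof of Theorem \ref{main4} is available.

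First I would invoke Lemma \ref{VGM*} to see that $M$ has Euclidean volume growth (with density bounded in terms of $\La$) and lies in a Euclidean subspace of codimension bounded in terms of $\La$, so that tangent cones of $M$ at infinity are well defined; by Theorem \ref{main2}, every such tangent cone has multiplicity one. Next, since \eqref{DeMdeBM**0} is in force with $\de=1$, the argument establishing Theorem \ref{main4} --- which combines the multiplicity-one and cylindrical-stability conclusions of Theorem \ref{main2}, the Neumann--Poincar\'e inequality on stationary indecomposable components of the tangent cones, and Simons' theorem on the flatness of stable minimal hypercones of ambient dimension $\le 7$ --- applies verbatim and yields the dichotomy: either $M$ is flat, or $M$ is non-flat and $n\ge 8$. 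Since $n=2<8$, the second alternative cannot occur, and therefore $M$ is flat.

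There is no genuine obstacle here; the one thing to verify is the bookkeeping claim that in the proof of Theorem \ref{main4} the constant $\La$ never intervenes except through \eqref{DeMdeBM**0} --- all the other inputs (Euclidean volume growth, multiplicity one, stability, the Neumann--Poincar\'e inequality, the regularity theory of stable minimal hypercones) being valid for arbitrary $\La\ge 0$. Granting this, the corollary follows, and it recovers, without any bounded-slope assumption, the two-dimensional results of \cite{c-o} and \cite{h-s-v}. One can also give a more hands-on argument in this low-dimensional setting, using that a two-dimensional minimal cone is a union of $2$-planes through the origin and that multiplicity one together with the quasi-cylindrical structure pins it down to a single plane, after which density monotonicity at infinity forces $M$ to be affine; but this is really just the $n=2$ specialization of the reasoning behind Theorem \ref{main4}.
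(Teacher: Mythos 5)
Your proof is correct and is exactly the argument the paper records: the corollary is deduced from Theorem \ref{main4} together with the observation, made in the remark immediately preceding it, that for $n=2$ the inequality \eqref{DeMdeBM**0} holds with $\de=1$ for any $2$-dilation bound (Proposition 2.2 of \cite{fc}), so the smallness restriction on $\La$ becomes vacuous and $n=2<8$ rules out the non-flat alternative. The supplementary ``hands-on'' route you mention (a multiplicity-one two-dimensional minimal tangent cone must be a single $2$-plane) is also sound, but the main argument you give is the one the paper intends.
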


In general, the constant $\La$ in Theorem \ref{main4} can be arbitrarily chosen $<\sqrt{2}$, and we do not know whether $\sqrt{2}$ is sharp, though it appears naturally for the subharmonicity of $\log v$.
In fact, we can find a slightly weaker condition for the
Bernstein theorem in
all dimensions in the situation of bounded slope (see Theorem \ref{BDMGAFFINE}).

\Section{Preliminaries}{Preliminaries}

Let $\R^k$ denote the Euclidean space for each integer $k\ge1$, and $0^k$ denote the origin of $\R^k$.
Let $B^k_r(x)$ be the ball in $\R^k$ with the radius $r$ and centered at $x\in\R^k$, and $B_r(x)=B^n_r(x)$ for convenience.
Let $\mathbf{B}_r(\mathbf{x})$ be the ball in $\R^{n+m}$ with the radius $r$ and centered at $\mathbf{x}\in\R^{n+m}$.
We denote $B_r=B_r(0^n)$, $\mathbf{B}_r=\mathbf{B}_r(0^{n+m})$ for convenience.
We always use $D$ to  denote the derivative on $\R^n$.
For any subset $E$ in $\R^n$, for any constant $0\le s\le n$ we define
$\mathcal{H}^s(E)$ to be the $s$-dimensional Hausdorff measure of $E$.
Let $\omega_k$ denote the $k$-dimensional Hausdorff measure of $B^k_1(0^k)$.
We use the summation convention and agree on the ranges of indices:
$$1\leq  i,j,l\leq n,\; 1\leq \a,\be\leq m$$
unless otherwise stated.

Let $M$ be an $n$-dimensional smooth Riemannian manifold, and $M\ra \R^{n+m}$ be an isometric immersion. Let $\n$ and $\bn$ be Levi-Civita connections on $M$ and $\R^{m+n}$, respectively.
Here, $\n$ is induced from $\bn$ naturally.
The second fundamental form $B_M$ on the submanifold $M$ is defined by $B_M(\xi,\e)=\bn_\xi\e-\n_\xi\e=(\bn_\xi\e)^N$ for any vector fields $\xi,\e$ along $M$, where $(\cdots)^N$
denotes the projection onto the normal bundle $NM$ (see \cite{x} for instance).
Let $e_1,\cdots,e_n$ be a local orthonormal frame field near a considered
point in $M$. Let $|B_M|^2$ denote the square norm of $B_M$, i.e.,
$|B_M|^2=\sum_{i,j=1}^n|B_M(e_i,e_j)|^2$ (This notation should not be confused
with that  of the  ball $B_r$).
Let $H$ denote the mean curvature vector of $M$ in $\R^{n+m}$ defined by the trace of $B_M$, i.e., $H=\sum_{i=1}^nB_M(e_i,e_i)$.
This is a normal vector field.
$M$ is said to be \emph{minimal} if $H\equiv0$ on $M$. More generally, $M$ has parallel mean curvature if $\n H\equiv0$.

\subsection{Grassmannian manifolds and Gauss maps}
We will study minimal submanifolds in ambient Euclidean space. The target manifolds
of the Gauss map of minimal submanifolds are Grassmannian manifolds. For convenience of later
application, let us describe the geometry of the Grassmannian manifolds.
In $\R^{n+m}$ all the
oriented $n$-subspaces constitute the Grassmann manifold
$\grs{n}{m}$, which is the Riemannian symmetric space of compact
type $SO(n+m)/SO(n)\times SO(m)$, where $SO(k)$ denotes the $k$-dimensional special orthogonal group for each integer $k$.
$\grs{n}{m}$ can be viewed as a submanifold of some  Euclidean space
via the Pl\"ucker embedding. The restriction of the
Euclidean inner product on $M$ is denoted by $w:\grs{n}{m}\times \grs{n}{m}\ra \R$
\begin{equation}\aligned\label{www}
w(P,Q)=\lan e_1\w\cdots\w e_n,f_1\w\cdots\w f_n\ran=\det W,
\endaligned
\end{equation}
where $P$ is presented by a unit $n$-vector $e_1\w\cdots\w e_n$, $Q$ is presented  by another unit $n$-vector $f_1\w\cdots
\w f_n$, and  $W=\big(\lan e_i,f_j\ran\big)$ is an $(n\times n)$-matrix. It is well-known that
$$W^T W=O^T \left(\begin{array}{ccc}
            \mu_1^2 &   &  \\
                    & \ddots &  \\
                    &        & \mu_n^2
            \end{array}\right) O$$
with $O$ an orthogonal matrix and $0\leq \mu_i^2\leq 1$ for each $i$. Putting $p\triangleq\min\{m,n\}$, then
at most $p$ elements in $\{\mu_1^2,\cdots, \mu_n^2\}$ are not
equal to $1$. Without loss of generality, we can assume
$\mu_i^2=1$ whenever $i>p$.

For a unit vector $\xi=\sum_i a_i e_i\in P$, let $\xi^*$ denote its projection into $Q$, i.e.,
$$\xi^*=\sum_{j,l} a_j\lan e_j,f_l\ran f_l.$$
Then
\begin{equation}\aligned\nonumber
\lan\xi,\xi^*\ran=\sum_{i,j,l}a_ia_j\lan e_i, f_l\ran\lan e_j, f_l\ran.
\endaligned
\end{equation}
Certainly, the matrix $(\sum_l\lan e_i,f_l\ran\lan e_j,f_l\ran)_{i,j}$ has eigenvalues $\mu_1^2,\cdots,\mu_n^2.$
Hence, we can introduce the Jordan angles $\th_1,\cdots,\th_n$ between two points $P,\; Q\in \grs{n}{m}$ defined by
$$\th_i=\arccos(\mu_i), \qquad 1\leq i\leq n,$$
which are actually critical values of the angle between a nonzero vector $\xi$ in $P$ and its
orthogonal projection $\xi^*$ in $Q$ as $\xi$ runs through $P$ (see Wong \cite{w} or Xin \cite{x} for further details).

We also note that the $\mu_i^2$ can be expressed as
\begin{equation}\label{di1a}
\mu_i^2=\cos^2\th_i=\frac{1}{1+\la_i^2},
\end{equation}
so that
\begin{equation}\label{di2}
\la_i=\tan\th_i, \end{equation}
where $\la_i$ has explicit meaning in studying graphs of high codimensions (We will explain it later).
The distance between $P$ and $Q$ is defined by
\begin{equation}\label{di}
d(P, Q)=\sqrt{\sum\th_i^2}. \end{equation}
Let $E_{i\a}$ be the matrix with $1$
in the intersection of row $i$ and column $\a$ and $0$ otherwise.
Then, $\sec\th_i\sec\th_\a E_{i\a}$ form an orthonormal basis of
$T_P\grs{n}{m}$ with respect to the canonical Riemannian metric $g$ on $\grs{n}{m}$ (compatible to \eqref{di}). Denote its dual frame
by $\tilde{\om}_{i\a}.$ Then $g$ can be written as
\begin{equation}\label{m2}
g=\sum_{i, \a}\tilde{\om}_{i\a}^2.
\end{equation}
Denote $\tilde{\om}_{\be\a}=0$ for $\be\ge n+1$.

Now we fix $P_0\in \grs{n}{m}.$
Denote
\begin{equation}\label{BbbW0PP0}
\Bbb{W}_0:=\{P\in \grs{n}{m}|\, w(P,P_0)>0\},
\end{equation}
where $w$ is defined in \eqref{www}. The Jordan angles between $P$ and $P_0$ are defined by $\{\th_i\}$.
Let $\Bbb{T}^{2,\La}$ be a $2-$bounded subset of $\Bbb{W}_0$ defined by
\begin{equation}\label{BbbT2La}
\Bbb{T}^{2,\La}=\{P\in \Bbb{W}_0|\, \tan\th_i\tan\th_j<\La\ \mathrm{for\ every}\ i\neq j\}.
\end{equation}
In \cite{j-x}, we already have the largest geodesic convex subset
$B_{JX}(P_0)$, which is defined by sum of any two Jordan angles being less than $\pi\over 2$
for any point $P\in B_{JX}(P_0)$. It is easily seen that
$$\Bbb{T}^{2,1}=B_{JX}(P_0).$$
Hence, the distance function from $P_0$ is convex on $\Bbb{T}^{2,1}$, but is no longer convex on $\Bbb{T}^{2,\La}$ when $\La>1$.

Our fundamental quantity will be
\begin{equation}
\tilde{v}(\cdot, P_0):=w^{-1}(\cdot, P_0)\qquad \text{ on }\;\Bbb{W}_0.
\end{equation}
It is easily seen that
\begin{equation}\label{v}
\tilde{v}(P,P_0)=\prod_{i=1}^n
\sec\th_i=\prod_{i=1}^n \sqrt{1+\la_i^2},
\end{equation}
where $\th_1,\cdots,\th_n$ denote the Jordan angles between $P$
and $P_0$.
In this terminology, from (3.8) in \cite{x-y1}, we get
\begin{equation}\label{dw3}
d\tilde{v}(\cdot, P_0) =\sum_{1\leq j\leq p}\la_j\,\tilde{v}(\cdot, P_0)\tilde{\om}_{jj},
\end{equation}
and
\begin{equation}\label{hess}
\tilde{v}(\cdot,P_0)^{-1}\Hess(\tilde{v}(\cdot,P_0))
=g+\sum_{\a,\be}\la_\a\la_\be(\tilde{\om}_{\a\a}\otimes \tilde{\om}_{\be\be}+
\tilde{\om}_{\a\be}\otimes \tilde{\om}_{\be\a}).
\end{equation}
Combining \eqref{dw3}\eqref{hess}, it follows that (see (3.9) in \cite{x-y1} for instance)
\begin{equation}\label{Hel}
\Hess \log \tilde{v}(\cdot,P_0)=g+\sum_{1\leq j\leq p}\la_j^2\tilde{\om}_{jj}^2
+\sum_{1\leq i,j\leq p,i\neq j}\la_i\la_j\tilde{\om}_{ij}\otimes \tilde{\om}_{ji}.
\end{equation}

Let $M$ be an $n$-dimensional smooth submanifold in $\R^{n+m}$.
Around any point $p\in M$, we choose an
orthonormal frame field $e_i,\cdots, e_{n+m}$ in $\R^{n+m},$ such
that $\{e_i\}$ are tangent to $M$ and $\{e_{n+\a}\}$ are normal to
$M.$
Let $\{\om_1,\cdots,\om_{n+m}\}$ be its dual frame field
so that the metric on $M$ is $\sum_i \om_i^2$ and the Euclidean metric in $\ir{n+m}$ is
$$\sum_{i}\om_i^2+\sum_{\a}\om_{n+\a}^2.$$
The Levi-Civita  connection forms $\om_{ab}$ of $\ir{n+m}$ are
uniquely determined by the equations
$$\aligned
&d\om_{a}=\om_{ab}\wedge\om_b,\cr &\om_{ab}+\om_{ba}=0,
\endaligned$$
where $a, b=1,\cdots, n+m$.
Moreover, we have the equations
\begin{equation}\label{str}
\om_{i\  n+\a}=h_{\a, ij}\om_j,
\end{equation}
where $h_{\a, ij}=\lan \bn_{e_i}e_j,e_{n+\a}\ran$ are the coefficients of the second fundamental form $B_M$ of $M$ in $\R^{n+m}.$
The Gauss map $\g: M\ra \grs{n}{m}$ is defined by
$$\g(p)=T_p M\in \grs{n}{m}$$
via the parallel translation in $\R^{n+m}$ for every $p\in M$. We also have
\begin{equation}\label{edg}
|d\g|^2=\sum_{\a,i,j}h_{\a, ij}^2=|B_M|^2.
\end{equation}
Up to an isotropic group $SO(n)\times SO(m)$ action, we can assume $\om_{i\  n+\a}=\g^*\tilde{\om}_{i \a}$ at $p$
(see section 8.1 in \cite{x} for instance).
Combining (\ref{str}), we obtain
\begin{equation}\label{hij}
\g^*\tilde{\om}_{i\a}=h_{\a, ij}\om_j\qquad \mathrm{at}\ p.
\end{equation}
By the Ruh-Vilms theorem \cite{r-v},  the mean curvature of $M$
is parallel if and only if its Gauss map $\g$ is a harmonic map.

Now we define a function
\begin{equation}
v\triangleq \tilde{v}(\cdot,P_0)\circ \g\qquad \mathrm{on}\ \ M,
\end{equation}
which will play a basic role in this paper.
Using the composition formula, in conjunction with (\ref{Hel}),
(\ref{edg}) and (\ref{hij}), and the fact that  $\tau(\g)=0$ (the
tension field of the Gauss map vanishes \cite{r-v}), we can deduce the following
important formula (see also Lemma 1.1 in \cite{fc} or Prop. 2.1 in \cite{wang}).
\begin{pro}
Let $M$ be an $n$-dimensional smooth submanifold in $\ir{n+m}$ with parallel mean curvature. Then at any considered point $p$
\begin{equation}\label{Delogv}
\De_M\ln v=|B_M|^2+\sum_{i,j}\la_i^2h_{i,i j}^2 +\sum_{l,i\neq
j}\la_i\la_jh_{i,j l}h_{j,i l},
\end{equation}
where $\De_M$ is the Laplacian on $M$, $h_{\a,ij}$ are defined in \eqref{str}. Let $\n_M$ denote the Levi-Civita connection of $M$. Combining \eqref{dw3} and \eqref{Delogv}, one has
\begin{equation}\aligned\label{DeMv-1}
\De_M v^{-1}=&\De_M e^{-\log v}=-v^{-1}\De_M\log v+v^{-1}|\n_M\log v|^2\\
=&-v^{-1}\left(\sum_{\a,i,j}h^2_{\a,ij}+\sum_{l,i\neq j}\la_i\la_jh_{i,jl}h_{j,il}-\sum_{l,i\neq j}\la_i\la_jh_{i,il}h_{j,jl}\right).
\endaligned
\end{equation}
\end{pro}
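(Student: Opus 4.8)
The plan is to recognize both identities as consequences of the standard composition formula for the Laplacian together with the Ruh--Vilms theorem. Recall that for a smooth map $\g:M\to N$ of Riemannian manifolds and a smooth function $\phi$ on $N$ one has
\begin{equation*}
\De_M(\phi\circ\g)=\sum_i\Hess\,\phi\big(d\g(e_i),d\g(e_i)\big)+d\phi\big(\tau(\g)\big),
\end{equation*}
where $\{e_i\}$ is a local orthonormal frame on $M$ and $\tau(\g)$ is the tension field. I would apply this with $N=\grs{n}{m}$, with $\g$ the Gauss map, and with $\phi=\log\tilde v(\cdot,P_0)$. Since $M$ has parallel mean curvature, the Ruh--Vilms theorem \cite{r-v} gives $\tau(\g)=0$, so the last term vanishes and $\De_M\ln v$ equals the trace of $\g^*\Hess(\log\tilde v)$ at the considered point $p$.

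To evaluate that trace I would work in the frame normalized (by the $SO(n)\times SO(m)$ isotropy action, as in the derivation of \eqref{hij}) so that $\g^*\tilde\om_{i\a}=h_{\a,ij}\om_j$ at $p$, and substitute the Hessian formula \eqref{Hel} term by term. The metric term $g$ contributes $|d\g|^2=|B_M|^2$ by \eqref{edg}. The term $\sum_{1\le j\le p}\la_j^2\tilde\om_{jj}^2$ pulls back to $\sum_{j}\la_j^2\sum_l h_{j,jl}^2$, i.e.\ $\sum_{i,j}\la_i^2h_{i,ij}^2$ after relabeling (with the convention $\la_i=0$ for $i>p$, so the range restriction is harmless). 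The term $\sum_{1\le i,j\le p,\,i\neq j}\la_i\la_j\,\tilde\om_{ij}\otimes\tilde\om_{ji}$ pulls back to $\sum_{l,i\neq j}\la_i\la_j h_{i,jl}h_{j,il}$, where one must be careful that in $\g^*\tilde\om_{ij}$ the second subscript plays the role of the normal index $\a$. Adding the three contributions gives \eqref{Delogv}.

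For \eqref{DeMv-1} I would not re-run the composition formula but simply use $v^{-1}=e^{-\log v}$ and the elementary identity $\De_M e^{f}=e^{f}\big(\De_M f+|\n_M f|^2\big)$ with $f=-\log v$, which gives $\De_M v^{-1}=-v^{-1}\De_M\log v+v^{-1}|\n_M\log v|^2$. From \eqref{dw3} one gets $d\log\tilde v=\sum_{1\le j\le p}\la_j\tilde\om_{jj}$, so pulling back, $\n_M\log v$ has $l$-th component $\sum_j\la_j h_{j,jl}$ and $|\n_M\log v|^2=\sum_{l,i,j}\la_i\la_j h_{i,il}h_{j,jl}$. Splitting off the diagonal $i=j$, the piece $\sum_{l,i}\la_i^2h_{i,il}^2$ exactly cancels the term $\sum_{i,j}\la_i^2h_{i,ij}^2$ produced by \eqref{Delogv}, and the remaining terms assemble into the stated right-hand side.

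The whole argument is essentially bookkeeping once the composition formula and $\tau(\g)=0$ are in hand, and I do not anticipate a genuine obstacle. The only places demanding care are the consistent pull-back of the one-forms $\tilde\om_{i\a}$ via \eqref{hij} --- keeping straight which index is tangential and which is normal, and using $\tilde\om_{\be\a}=0$ for $\be\ge n+1$ --- and the sign and cancellation bookkeeping in passing from \eqref{Delogv} to \eqref{DeMv-1}, which is where a computational slip would be easiest.
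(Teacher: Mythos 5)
Your proposal is correct and follows exactly the route the paper intends: the composition formula for $\De_M(\phi\circ\g)$, the Ruh--Vilms theorem to kill the tension-field term, pull-back of \eqref{Hel} via \eqref{hij} and \eqref{edg} for the first identity, and the elementary exponential identity plus the cancellation of the diagonal $\sum_{i,l}\la_i^2h_{i,il}^2$ term against the one from $|\n_M\log v|^2$ for the second. The index bookkeeping (including the role of the second subscript in $\tilde\om_{ij}$ as the normal index and the split of $\sum_{l,i,j}\la_i\la_j h_{i,il}h_{j,jl}$ into diagonal and off-diagonal parts) is all handled correctly.
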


\subsection{Varifolds and currents from geometric measure theory}

Let us recall Almgren's notion of {\it varifolds} from  geometric measure
theory (see \cite{l-ya}\cite{s} for more details), which is a
generalization of submanifolds.
For a set $S$ in $\R^{n+m}$, we call $S$ \emph{$n$-rectifiable} if $S\subset
S_0\cup F_1(\R^n)$, where $\mathcal{H}^n(S_0)=0$, and $F_1:\,
\R^n\rightarrow\R^{n+m}$ is a Lipschitz mapping.
More general, we call $S$ \emph{countably $n$-rectifiable} if $S\subset S_0\cup\bigcup_{k=1}^\infty F_k(\R^n)$, where $\mathcal{H}^n(S_0)=0$, and $F_k:\, \R^n\rightarrow\R^{n+m}$ are Lipschitz mappings for all integers $k\ge1$.
By Rademacher's theorem, a countably $n$-rectifiable set has tangent spaces at almost every point.
Suppose $\mathcal{H}^n(S)<\infty$. Let $\th$ be a positive locally $\mathcal{H}^n$ integrable function on $S$.
Let $|S|$ be the varifold associated with the set $S$.
The associated varifold $V=\th|S|$ is called a \emph{rectifiable $n$-varifold}. $\th$ is called the \emph{multiplicity} function of $V$. In particular, the multiplicity of $|S|$ equal to one on $S$.
If $\th$ is integer-valued, then $V$ is said to be an \emph{integral varifold}. Associated to $V$, there is a Radon measure $\mu_V$ defined by $\mu_V=\mathcal{H}^n\llcorner\th$, namely,
$$\mu_V(W)=\int_{W\cap S}\th(y)d\mathcal{H}^n(y)\qquad \mathrm{for\ each\ open}\ W\subset\R^{n+m}.$$

For an open set $U\subset\R^{n+m}$, $V$ is said to be \emph{stationary} in $U$ if
\begin{equation}\aligned
\int \mathrm{div}_S Yd\mu_V=0
\endaligned
\end{equation}
for each $Y\in C^\infty_c(U,\R^{n+m})$. Here, $\mathrm{div}_S Y$ is the divergence of $Y$ restricted on $S$.
When we say an $n$-dimensional \emph{minimal cone} $C$ in $\R^{n+m}$, we mean that $C$ is an integral stationary varifold with support being a cone.
One of the most important properties of the stationary varifold $V$ is that the function
\begin{equation}\aligned\label{ratio}
\r^{-n}\mu_V(\mathbf{B}_\r(\mathbf{x}))
\endaligned
\end{equation}
is monotone non-decreasing
for $0<\r<\r_0$ with $\r_0\le d(\mathbf{x},\p V)$ and $\mathbf{x}\in\R^{n+m}$.
By Rademacher's theorem, we can define the derivative $\n_V$ on $V$ for
Lipschitz functions almost everywhere (see Definition 12.1 in \cite{s} for
instance).

Let $\{V_j\}_{j\ge0}$ be a sequence of integral stationary $n$-varifolds with the Radon measure $\mu_{V_j}$ associated to $V_j$ satisfying
$$\sup_{j\ge1}\mu_{V_j}(W)<\infty,\qquad\mathrm{for\ each}\ W\subset\subset U.$$
By compactness theorem of varifolds (see Theorem 42.7 and its proof in \cite{s}), there are a subsequence $V_{j'}$ and an integral stationary $n$-varifold $V_\infty$ such that $V_{j'}$ converges to $V_\infty$ in the varifold (Radon measure) sense.

Let us recall Sobolev inequality on a stationary varifold $V$ in $U$.
Michael-Simon \cite{m-s} proved the following Sobolev inequality on $V$ (actually, for general submanifolds of mean curvature type).
There is a constant $c_n>0$ depending only on $n$ such that
\begin{equation}\aligned\label{SobM0}
\left(\int|f|^{\f n{n-1}}d\mu_V\right)^{\f{n-1}n}\le c_n\int|\n_V f|d\mu_V
\endaligned
\end{equation}
holds for each Lipschitz function $f$ with compact support in $U$. Recently, Brendle \cite{br} obtained the sharp Sobolev constant for minimal submanifolds of codimensions $\le2$ (see \cite{l-w-w} for the relative version).

Let $\mathcal{D}^n(U)$ denote the set including all smooth $n$-forms on the open $U\subset\R^{n+m}$ with compact supports in $U$. Denote $\mathcal{D}_n(U)$ be the set of $n$-currents in $U$, which are continuous linear functionals
on $\mathcal{D}^n(U)$. For each $T\in \mathcal{D}_n(U)$ and each open set $W$ in $U$, one defines the mass of $T$ on $W$ by
\begin{equation*}\aligned
\mathbb{M}(T\llcorner W)=\sup_{|\omega|_U\le1,\omega\in\mathcal{D}^n(U),\mathrm{spt}\omega\subset W}T(\omega)
\endaligned
\end{equation*}
with $|\omega|_U=\sup_{x\in U}\lan\omega(x),\omega(x)\ran^{1/2}$.
Let $\p T$ be the boundary of $T$ defined by $\p T(\omega')=T(d\omega')$ for any $\omega'\in\mathcal{D}^{n-1}(U)$.
For $T\in\mathcal{D}_n(U)$ , $T$ is said to be an \emph{integer multiplicity current} if it can be expressed as
$$T(\omega)=\int_S\th\lan \omega,\xi\ran,\qquad \mathrm{for\ each}\ \omega\in \mathcal{D}^n(U),$$
where $S$ is a countably $n$-rectifiable subset of $U$, $\th$ is a locally $\mathcal{H}^n$-integrable positive integer-valued function, and $\xi$ is an orientation on $S$, i.e.,  $\xi(x)$ is an $n$-vector representing the approximate tangent space $T_xS$ for $\mathcal{H}^n$-a.e. $x$.
Let $f:\ U\rightarrow \R^p$ be a $C^1$-mapping with $p\ge n$, and $f_*\xi$ denote the push-forward of $\xi$, which is an orientation of $f(S)$ in $\R^p$. We define $f(T)\in \mathcal{D}_n(\R^p)$ by letting
\begin{equation}\label{PushforwfT}
f(T)(\omega)=\int_S\th\lan \omega\circ f,f_*\xi\ran=\int_{y\in f(S)}\left\lan\omega(y),\sum_{x\in f^{-1}(y)\cap S_*}\th(x)\f{f_*\xi(x)}{|f_*\xi(x)|}\right\ran
\end{equation}
for each $\omega\in \mathcal{D}^n(\R^p)$,
where $S_*=\{x\in S|\, |f_*\xi(x)|>0\}$. It's clear that $f(T)$ is an integer multiplicity current in $\R^p$.

Let $|T|$ denote the varifold associated with $T$, i.e., $|T|=\th|S|$.
If both $T$ and $\p T$ are integer multiplicity rectifiable currents, then $T$ is called an \emph{integral current}.
Federer and Fleming \cite{ff}(see also 27.3 Theorem in \cite{s}) proved a compactness theorem (or referred to as a closure theorem):
a sequence of integral currents $T_j\in\mathcal{D}_n(U)$ with $\mathbb{M}(T_j)$ and $\mathbb{M}(\p T_j)$ uniformly
bounded admits a subsequence that converges weakly to an integral current.
For an integer $k\ge1$,
we recall that an integral current $T\in\mathcal{D}_k(\R^{n+m})$ is \emph{decomposable} in $U$ (see Bombieri-Giusti \cite{b-g}) if there exist integral currents $T_1,T_2\in\mathcal{D}_k(U)$ with $T_1\llcorner U,T_2\llcorner U\neq0$ such that
\begin{equation}\aligned\nonumber
\mathbb{M}(T\llcorner W)=\mathbb{M}(T_1\llcorner W)+\mathbb{M}(T_2\llcorner W),\qquad \mathbb{M}(\p T\llcorner W)=\mathbb{M}(\p T_1\llcorner W)+\mathbb{M}(\p T_2\llcorner W)
\endaligned
\end{equation}
for any $W\subset\subset U$. Here, $T_1,T_2$ are called \emph{components} of $T\llcorner U$. On the contrary, $T$ is said to be \emph{indecomposable} in $U$.
If $T$ is decomposable (indecomposable) in any open $W\subset\subset\R^{n+m}$, then we say $T$ \emph{decomposable(indecomposable)} for simplicity.

All $n$-dimensional minimal graphs in $\R^{n+1}$ are
area-minimizing, which is not true for the higher codimension case in general.
Bombieri-Giusti \cite{b-g} proved that every codimension one area-minimizing current in Euclidean space is indecomposable, and the following uniform Neumann-Poincar$\mathrm{\acute{e}}$ inequality holds on any area-minimizing hypersurface $\Si$ in $\R^{n+1}$.
There is a constant $c_n>0$ depending only on $n$ such that for any $x\in \Si$, $r>0$, any Lipschitz function $f$ on $B_r(x)$
\begin{equation}\aligned\label{PoincareM0}
\min_{k\in\R}\left(\int_{B_r(x)\cap \Si}|f-k|^\f{n}{n-1}\right)^{\f{n-1}n}\le c_n\int_{B_{c_nr}(x)\cap \Si}|\n_\Si f|.
\endaligned
\end{equation}
The uniform Neumann-Poincar$\mathrm{\acute{e}}$ inequality plays a significant
role in the mean value inequality for superharmonic functions.
As applications, they got several impressive results for minimal graphs of codimension 1.
Note that \eqref{PoincareM0} does not hold for all  minimal hypersurfaces; for
instance, the catenoid is a counterexample.

\subsection{Lipschitz graphs of high codimensions}

Let $f:\, M_1\rightarrow M_2$ be a locally Lipschitz map between Riemannian manifolds $M_1$ and $M_2$.
For each point $p\in M_1$ and each integer $k>0$, let $\La^kdf|_p:\,\La^kT_pM_1\rightarrow\La^kT_{f(p)}M_2$ be the $k$-Jacobian map induced by the differential $df|_p:\,T_pM_1\rightarrow T_{f(p)}M_2$ at differentiable points of $f$. Let $\k_1,\cdots,\k_n$ denote the singular values of the Jabobi matrix $df$ at any considered differentiable point of $f$.
We let $|\La^2df|$ be the 2-dilation of $f$ defined by
\begin{equation}\label{DEF2-dil}
|\La^2df|=\sup_{i\neq j}|\k_i\k_j|,
\end{equation}
while $|\La^1df|$ is the 1-dilation, i.e. the Lipschitz norm $\mathrm{Lip}\,f=|\La^1df|=\sup_{i}|\k_i|.$
$f$ is said to have 2-dilation bounded by $\La$ for some constant $\La\ge0$ if $|\La^2df|\le\La$ a.e. on $M_1$.
Let $\mathbf{Lip}\, f=\sup_{x\in M_1}\mathrm{Lip}\, f(x)$ denote the Lipschitz constant of $f$ on $M_1$.

In the case $f$ being a locally Lipschitz map from an open $\Om\subset\Bbb{R}^n$ into $\Bbb{R}^m$, its graph defines a locally Lipschitz submanifold $M$ in $\Bbb{R}^{n+m}.$
Let $\Om_*$ be the largest subset of $\Om$ such that $f$ is $C^1$ on $\Om_*$, and $\mathrm{reg} M=\{(x,f(x))|\, x\in\Om_*\}$. $\mathrm{reg} M$ is called the \emph{regular} part of $M$.
Now we have the usual Gauss map from $\mathrm{reg} M$.
Let $\{\mathbf{E}_1,\cdots, \mathbf{E}_{n+m}\}$ be the standard orthonormal basis of $\Bbb R^{m+n}$. 
At each point in $\mathrm{reg} M$ its image $n$-plane $P$ under the Gauss map is spanned by $\tilde{f}_1,\cdots,\tilde{f}_n$ with
$$\tilde{f}_i=\mathbf{E}_i+\sum_\a f^\a_i\mathbf{E}_{n+\a},$$
where $f^\a_i=\frac{\p {f^\a}}{\p {x^i}}$.
Let $\th_1,\cdots,\th_n$ be the Jordan angles between $\mathbf{E}_1\wedge\cdots\wedge \mathbf{E}_n$ and $\tilde{f}_1\w\cdots\w \tilde{f}_n$.
Let $\xi$ denote an \emph{orientation} of $M$ defined by
\begin{equation}\label{orientation}
\xi=\f1{|\tilde{f}_1\w\cdots\w \tilde{f}_n|}\tilde{f}_1\w\cdots\w \tilde{f}_n
\end{equation}
on $\mathrm{reg} M$ with $|\tilde{f}_1\w\cdots\w \tilde{f}_n|^2=\det(\de_{ij}+\sum_\a f^\a_if^\a_j)$.
In particular, $\xi$ is continuous on $\mathrm{reg} M$, and $\xi(x)$ represents the tangent space $T_xM$ as a unit $n$-vector for each $x\in\mathrm{reg} M$.
We denote $[|M|]\in\mathcal{D}_n(\Om\times\R^m)$ as the $n$-current associated with $M$ and its orientation $\xi$.
Let $\la_i=\tan\th_i$ as \eqref{di2}, then
$\la_1,\cdots,\la_n$ are the singular values of $df$ at each point $x\in \Om$. Namely,
$\la_i^2$ are eigenvalues of the matrix $\left(\sum_\a\frac{\p f^\a}{\p x^i}\frac {\p f^\a}{\p x^j}\right).$
Hence, the Gauss image of any point in $\mathrm{reg} M$ is spanned by orthonormal vectors
$$\f{1}{\sqrt{1+\la_i^2}}(\mathbf{E}_i+\la_i \mathbf{E}_{n+i}).$$

Suppose that the $n$-plane $P_0$ in \eqref{BbbW0PP0} is spanned by $\mathbf{E}_1,\cdots,\mathbf{E}_n$.
Recalling \eqref{BbbT2La}, we have a conclusion:
\begin{pro}
For a locally Lipschitz graph $M=\mathrm{graph}_f$ in $\Bbb{R}^{n+m}$, the image under the Gauss map from $\mathrm{reg} M$ lies in $2-$bounded subset
$\Bbb{T}^{2,\La}\subset \Bbb{W}_0 \subset\grs{n}{m}$ if and only if the defining map $f$ has bounded $2-$dilation by $\La$.
\end{pro}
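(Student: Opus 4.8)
The plan is to unwind both sides of the claimed equivalence through the explicit description of the Gauss image of a Lipschitz graph that was just set up, so that the statement becomes a pointwise comparison between the two invariants of the same linear algebra datum: the singular values $\la_1,\dots,\la_n$ of $df$ at a differentiable point $x\in\Om_*$. Recall from the discussion preceding the statement that at each $x\in\Om_*$ the Gauss image $P=\g(x)$ is spanned by the orthonormal vectors $\f{1}{\sqrt{1+\la_i^2}}(\mathbf{E}_i+\la_i\mathbf{E}_{n+i})$, where $\la_i=\tan\th_i\ge0$ and $\th_1,\dots,\th_n$ are precisely the Jordan angles between $P$ and $P_0=\mathrm{span}\{\mathbf{E}_1,\dots,\mathbf{E}_n\}$; this is exactly the content of \eqref{di2} applied in the graphical setting. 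First I would note that, since $\mu_i=\cos\th_i=\f{1}{\sqrt{1+\la_i^2}}>0$ for all $i$, we have $w(P,P_0)=\prod_i\mu_i>0$, so $P\in\Bbb{W}_0$ automatically at every differentiable point; thus membership of the Gauss image in $\Bbb{W}_0$ carries no extra information and the content of the statement is the $2$-boundedness condition $\tan\th_i\tan\th_j<\La$ for all $i\ne j$.

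Next I would match the $2$-dilation condition on $f$ to this. By the definition \eqref{DEF2-dil}, $f$ has $2$-dilation bounded by $\La$ means $|\La^2df|=\sup_{i\ne j}|\k_i\k_j|\le\La$ a.e.\ on $\Om$, where $\k_1,\dots,\k_n$ are the singular values of $df$. As recorded just above the statement, the $\la_i$ \emph{are} the singular values of $df$ at $x$: $\la_i^2$ are the eigenvalues of $\big(\sum_\a\pd{f^\a}{x^i}\pd{f^\a}{x^j}\big)$, and $\la_i\ge0$. Hence $|\k_i\k_j|=\la_i\la_j=\tan\th_i\tan\th_j$, and the $2$-dilation bound $\sup_{i\ne j}\la_i\la_j\le\La$ a.e.\ is literally the same pointwise inequality (a.e.) as the $2$-boundedness condition defining $\Bbb{T}^{2,\La}$ in \eqref{BbbT2La}, up to the harmless distinction between the strict inequality in \eqref{BbbT2La} and the non-strict one in \eqref{DEF2-dil}. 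To handle that distinction cleanly I would simply observe that $\Bbb{T}^{2,\La}=\bigcup_{\La'<\La}\{P:\tan\th_i\tan\th_j\le\La'\}$ and that $|\La^2df|\le\La$ a.e.; more honestly, the two formulations agree because a.e.\ statements of $\le\La$ and the pointwise condition $<\La$ differ only on boundary configurations of measure zero, and in any case throughout the paper $\Bbb{T}^{2,\La}$ is used with the convention that closes up this boundary — so I would phrase the equivalence as: the Gauss image of $\mathrm{reg}\,M$ lies in $\Bbb{T}^{2,\La}$ iff $\la_i\la_j\le\La$ for all $i\ne j$ at every point of $\Om_*$ iff $|\La^2df|\le\La$ on $\Om_*$ iff $f$ has $2$-dilation bounded by $\La$.

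Assembling this: for the forward direction, if $\g(\mathrm{reg}\,M)\subset\Bbb{T}^{2,\La}$, then at each $x\in\Om_*$ the Jordan angles satisfy $\tan\th_i\tan\th_j<\La$, hence $\la_i\la_j<\La$ for $i\ne j$, hence $|\La^2df|_x=\sup_{i\ne j}\la_i\la_j\le\La$; since $\Om_*$ has full measure in $\Om$ (Rademacher), $f$ has $2$-dilation $\le\La$. Conversely, if $f$ has $2$-dilation $\le\La$, then at each differentiable point $\sup_{i\ne j}\la_i\la_j\le\La$, i.e.\ $\tan\th_i\tan\th_j\le\La$ for all $i\ne j$, together with $w(P_0,\cdot)>0$ shown above, which places $P=\g(x)\in\Bbb{W}_0$ with the required angle bounds, i.e.\ $P\in\Bbb{T}^{2,\La}$. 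I do not expect a genuine obstacle here — the statement is essentially a dictionary entry translating the differential-geometric side (Jordan angles, Gauss map, $\Bbb{T}^{2,\La}$) into the analytic side ($2$-dilation of $f$) — but the one point that needs care, and that I would state explicitly rather than gloss, is the identification $\{$singular values of $df\}=\{\la_i=\tan\th_i\}$ together with the handling of the strict-versus-non-strict inequality and the a.e.\ qualifier on $\Om_*$; everything else is a direct substitution into \eqref{di2}, \eqref{BbbT2La}, and \eqref{DEF2-dil}.
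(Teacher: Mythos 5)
Your proof is correct and coincides with the paper's (unstated) argument: the paper presents this proposition as an immediate consequence of the identification, carried out just beforehand, of the singular values $\la_i$ of $df$ at a point of $\Om_*$ with $\tan\th_i$ where $\th_i$ are the Jordan angles of the Gauss image relative to $P_0$, after which the defining inequality of $\Bbb{T}^{2,\La}$ in \eqref{BbbT2La} is read off as the 2-dilation bound \eqref{DEF2-dil}. You are also right to flag the strict-versus-non-strict mismatch between \eqref{BbbT2La} and \eqref{DEF2-dil}; that is an imprecision in the statement itself rather than a gap in your reasoning, and is resolved in practice by continuity of the singular values on $\Om_*$, which upgrades the a.e.\ bound $\le\La$ to a pointwise one there.
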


Let $u=(u^1,\cdots,u^m)$ be a locally Lipschitz (vector-valued) function on an open $\Om\subset\R^n$.
Let $g_{ij}=\de_{ij}+\sum_{\a=1}^m\p_{x_i}u^\a\p_{x_j}u^\a$, and $(g^{ij})$ be the inverse matrix of $(g_{ij})$ for almost every point in $\Om$.
Let $M$ be the graph of the function $u$, which is countably $n$-rectifiable.
We can define the {\it slope} function of $M$ by
\begin{equation}\aligned\label{v}
v=\sqrt{\det g_{ij}}=\sqrt{\det\left(\de_{ij}+\sum_{\a=1}^m \f{\p u^\a}{\p x_i}\f{\p u^\a}{\p x_j}\right)}
\endaligned
\end{equation}
$\mathcal{H}^n$-a.e. on $\Om$.
Note that we also see $u,v$ as the functions on $M$ by letting $u(x,u(x))=u(x)$ and $v(x,u(x))=v(x)$ for every $x\in\Om$.
If the varifold associated with $M$ is stationary, then all the coordinate functions are weakly harmonic on $M$ (see \cite{c-m1} for instance), i.e., all the $x_1,\cdots,x_n$ and $u^1,\cdots, u^\a$ are weakly harmonic on $M$.
Namely, for any Lipschitz function $\phi$ on $\Om$ with compact support on $\Om$, there holds
\begin{equation}\aligned\label{Weaku}
\sum_{j=1}^n\int_{\Om}\ vg^{ij}\f{\p \phi}{\p x_j}=0\qquad \mathrm{for\ each}\ i=1,\cdots,n,
\endaligned
\end{equation}
and
\begin{equation}\aligned\label{Weaku}
\sum_{i,j=1}^n\int_{\Om}\ vg^{ij}\f{\p u^\a}{\p x_i}\f{\p \phi}{\p x_j}=0\qquad \mathrm{for\ each}\ \a=1,\cdots,m.
\endaligned
\end{equation}
We call $M$ a \emph{minimal graph} if and only if the varifold associated with $M$ is
stationary. Unlike minimal graphs of codimension one, minimal graphs of higher
codimensions may be only Lipschitz as in the  examples by Lawson-Osserman \cite{l-o}.
For simplicity, we say that the graph $M$ has 2-dilation bounded by $\La$, if its graphic function $u:\Om\to\R^m$ has 2-dilation bounded almost everywhere by $\La$.
Namely, $|\La^2du|\le\La$ $\mathcal{H}^n$-a.e. on $\Om$.

From the interior regularity theorem of Morrey, $u$ is smooth at the differentiable points.
Then from \eqref{Weaku}, $u$ satisfies the following minimal surface system
\begin{equation}\aligned\label{Nms*}
\De_M u^\a=\f1{v}\sum_{i,j=1}^n\f{\p}{\p x_i}\left(v g^{ij}\f{\p u^\a}{\p x_j}\right)=0
\endaligned
\end{equation}
at differentiable points of $u$.
By \cite{o} (or \cite{l-o}), \eqref{Nms*} is equivalent to
\begin{equation}\aligned\label{Nms}
\sum_{i,j=1}^ng^{ij}\f{\p^2 u^\a}{\p x_i\p x_j}=0
\endaligned
\end{equation}
at differentiable points of $u$.

\Section{Volume estimates for minimal graphs}{Volume estimates for
minimal graphs}

\medskip

Let $M$ be a locally Lipschitz minimal graph of the graphic function $u=(u^1,\cdots,u^m)$ over $B_R$ of codimension $m\ge1$ in $\R^{n+m}$. Then the induced metric of the graph
is $g_{ij}=\de_{ij}+ \pd{u^\a}{x^i}\pd{u^\a}{x^j}$, and we denote
$v=\left(\det(g_{ij})\right)^{\f{1}{2}}.$
Let $D$ denote the derivative on $\R^n$, and $\n$ denote the Levi-Civita connection of the regular part of $M$.
\begin{lem}\label{VGM}
Suppose that $u$ has  2-dilation bounded by $\La$.
Then we have a volume estimate:
\begin{equation}\label{VGM1}
\H^n(M\cap \mathbf{B}_r(\mathbf{x}))=c_{n,\La}\sqrt{m}\om_nr^n
\end{equation}
for any ball $\mathbf{B}_{2r}(\mathbf{x})$ in $B_R\times\R^{m}$, where $c_{n,\La}$ is a constant $\ge1$ depending only on $n,\La$.
\end{lem}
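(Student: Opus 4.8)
I would prove the estimate in three steps, isolating where minimality must enter in an essential, nonlocal way.

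\emph{Step 1: a linear-algebraic reduction.} At a differentiable point of $u$, order the singular values $\la_1\ge\cdots\ge\la_n\ge0$ of $Du$. The $2$-dilation bound forces $\la_i^2\le\la_1\la_i\le\La$ for every $i\ge2$, so at most one Jordan angle can be large, and since $\la_1^2\le\sum_i\la_i^2=\sum_\a|Du^\a|^2$,
\[
v=\prod_{i=1}^n\sqrt{1+\la_i^2}\ \le\ (1+\La)^{(n-1)/2}\sqrt{1+\la_1^2}\ \le\ (1+\La)^{(n-1)/2}\Big(1+\big(\textstyle\sum_\a|Du^\a|^2\big)^{1/2}\Big).
\]
Writing $A_r$ for the (injective) image in $\R^n$ of $M\cap\mathbf B_r(\mathbf x)$, which is contained in $B_r(x_0)$, Cauchy--Schwarz gives
\[
\mathcal{H}^n\big(M\cap\mathbf B_r(\mathbf x)\big)=\int_{A_r}v\ \le\ (1+\La)^{(n-1)/2}\Big(\om_nr^n+(\om_nr^n)^{1/2}\big(\textstyle\sum_\a\int_{A_r}|Du^\a|^2\big)^{1/2}\Big).
\]
Hence it suffices to prove the Dirichlet-energy bound $\int_{A_r}|Du^\a|^2\,dx\le C(n,\La)r^n$ for each $\a$, with a constant \emph{independent of $m$}; the factor $\sqrt m$ in the statement is produced precisely by this Cauchy--Schwarz step after summation over the $m$ coordinate functions.

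\emph{Step 2: why local arguments are not enough.} Each $u^\a$ is weakly harmonic on $M$ with $|\n_Mu^\a|=|P_{T_xM}\mathbf E_{n+\a}|\le1$, and $|u^\a-y_0^\a|\le2r$ on $M\cap\mathbf B_{2r}(\mathbf x)$ (where $\mathbf x=(x_0,y_0)$). A Caccioppoli inequality on $M$, or \eqref{SobM0} applied to a cut-off, then only yields circular estimates of the form $\mathcal{H}^n(M\cap\mathbf B_r(\mathbf x))\le C\,\mathcal{H}^n(M\cap\mathbf B_{2r}(\mathbf x))$ --- in fact these are weaker than the monotonicity \eqref{ratio} of $\rho^{-n}\mu_{[|M|]}(\mathbf B_\rho(\mathbf x))$, which is available because $\mathrm{div}_MX\equiv n$ for the position vector field $X$. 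Monotonicity propagates an a priori bound only \emph{downward} in scale; the real task is therefore to produce a ``seed'' bound $\mathcal{H}^n(M\cap\mathbf B_{\rho_0}(\mathbf x))\le C\rho_0^{\,n}$ at one fixed scale, and here the graph structure together with minimality must be used.

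\emph{Step 3: the seed bound via a codimension-one projection.} For $e\in S^{m-1}$ push $[|M|]$ forward under $\pi_e(x,y)=(x,\langle y,e\rangle)$ to the Lipschitz graph $M_e=\mathrm{graph}(\langle u,e\rangle)$ in $\R^{n+1}$. The function $\langle u,e\rangle$ still satisfies $\sum_{ij}g^{ij}D_{ij}\langle u,e\rangle=0$, but $M_e$ is not minimal; nonetheless, on the region where the direction $\nu_1(x)\in S^{m-1}$ of largest stretching of $Du(x)$ lies within a small angle $\de$ of $e$, the tangent planes of $M$ tilt away from $\R^n\times\R e$ only through the small singular values $\la_2,\dots,\la_n\le\sqrt\La$ (plus a $\de$-error), so there $M_e$ differs from a minimal hypersurface by errors controlled by $\sqrt\La$ --- quantitatively, its first variation, after integrating by parts on $M$ and using stationarity, is governed by $C(n,\La)$ times the second fundamental form of $M$ on that region. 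Covering $S^{m-1}$ by finitely many $\de$-caps, decomposing $A_r$ accordingly, and estimating the residual ``bad'' set --- small in the rescaled picture --- crudely via $v\le(1+\La)^{(n-1)/2}(1+|Du|)$, one reduces each piece to a generalized hypersurface of controlled mean curvature, to which the monotonicity formula with a mean-curvature term and \eqref{SobM0} apply to give $\mathcal{H}^n\le C(n,\La)r^n$; letting $\de\to0$ furnishes the energy bound needed in Step 1.

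\emph{The main obstacle} is exactly that in higher codimension a minimal graph need not be area-minimizing, so the classical codimension-one volume bound (comparison with a flat disc, or De Giorgi's interior estimates for \eqref{Nms}) is unavailable, and every purely local manipulation is circular. What saves the argument is that bounded $2$-dilation confines the Gauss image to the $2$-bounded set $\mathbb T^{2,\La}$ of \eqref{BbbT2La}, in which at most one Jordan angle can be large; consequently, up to a set of arbitrarily small measure in the rescaled picture, the minimal graph is a small perturbation of a codimension-one minimal graph, with every error term governed by $\sqrt\La$. Carrying this reduction out cleanly, so as to extract the seed volume bound, is the heart of the proof.
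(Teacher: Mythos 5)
Your diagnosis in Step~2 — that any Caccioppoli-type argument is ``circular'' and that a separate seed bound must be manufactured — is where the proposal goes wrong, and Step~3 (which is there to supply that seed) inherits a genuine gap. The paper's proof \emph{is} a Caccioppoli-type computation, carried out in the base $\R^n$ rather than intrinsically on $M$, and it is not circular because bounded $2$-dilation yields a \emph{pointwise} bound that you never invoke: after diagonalizing $Du$ at a point, the Euclidean vector field $Y^\a=\sum_{i,j}vg^{ij}\p_ju^\a\,E_i$ (the flux density of the coordinate function $u^\a$) satisfies $|Y^\a|\le c_{n,\La}$, uniformly, even though $v$ is unbounded. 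Indeed
\begin{equation*}
Y^\a_i = a_{\a i}\,\frac{\la_i}{\sqrt{1+\la_i^2}}\prod_{k\neq i}\sqrt{1+\la_k^2},
\end{equation*}
and for $i=1$ the product over $k\ge2$ is $\le(1+\La)^{(n-1)/2}$, while for $i\ge2$ the dangerous factor $\la_i\sqrt{1+\la_1^2}\le\la_i+\la_i\la_1\le\sqrt{\La}+\La$ stays bounded; the large singular value $\la_1$ never appears uncontrolled. With this in hand one tests the weak harmonicity $\int vg^{ij}\p_iu^\a\p_j\varphi=0$ against $\varphi=\e\,\mathfrak{u}^\a_r$, where $\e$ is a cutoff in $x$ on a thin annulus $B_{(1+\de)r}\setminus B_r$ and $\mathfrak{u}^\a_r$ is $u^\a$ truncated to $[-r,r]$ (both features essential), and obtains
\begin{equation*}
\int_{\{|u^\a|<r\}}v|\n u^\a|^2\,\e \;=\; -\int_{B_{(1+\de)r}\setminus B_r}\lan\n u^\a,\n\e\ran\,\mathfrak{u}^\a_r\,v,
\end{equation*}
whose right-hand side is bounded pointwise by $c_{n,\La}\,r\,|D\e|\le c_{n,\La}/\de$. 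Summing over $\a$ with Cauchy--Schwarz (this is where $\sqrt m$ appears, as you correctly anticipated), integrating over the annulus, letting $\de\to0$, and using $v\le1+\sum_\a v|\n u^\a|^2$ gives the bound at every scale $r$ directly. No seed bound and no reduction to codimension one is required.

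Concretely, the gap in Step~3 is that you control the first variation of the projected graph $M_e$ by ``$C(n,\La)$ times the second fundamental form of $M$ on that region'' and then invoke a monotonicity formula with a mean-curvature term. But an $L^2$ bound on $|B_M|$ over a ball is itself a \emph{consequence} of the Euclidean volume bound you are trying to prove (via subharmonicity of $\log v$ and a cutoff), and without some such integrability the mean-curvature term in the generalized monotonicity formula is not under control. So the proposed route is circular exactly at the step it was designed to resolve. The observations you make along the way — that $2$-dilation confines the Gauss image so that at most one Jordan angle is large, that $v\le(1+\La)^{(n-1)/2}(1+|Du|)$, and that $\sqrt m$ is a Cauchy--Schwarz artifact — are correct and are used in the actual proof; what is missing is the pointwise flux bound, which makes the single Caccioppoli step self-contained.
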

\begin{proof}
For proving \eqref{VGM1},
we only need to consider the ball $\mathbf{B}_r$ with $\mathbf{B}_{2r}\subset B_R\times\R^{m}$. Without loss of generality, we can assume $u^\a(0)=0$ for each $\a$.
Let $\mathfrak{u}^\a_r$ be a function on $B_{2r}$ defined by
\begin{eqnarray*}
   \mathfrak{u}^\a_r= \left\{\begin{array}{ccc}
           r  & \quad\quad  {\rm{if}} \ \ \     u^\a\geq r \\ [3mm]
            u^\a     & \quad\ \ \ \ {\rm{if}} \ \ \  |u^\a|< r \\ [3mm]
           -r  & \quad\quad\ \ {\rm{if}} \ \ \   u^\a\leq-r
     \end{array}\right..
\end{eqnarray*}
For any $\de\in(0,r]$, we define a non-negative Lipschitz function $\e$ on
$\ir{n}$ given by
$$\e=\begin{cases} 1,\hskip0.8in \text{on}\quad B_r\\
\f{(1+\de)r-|x|}{\de r},\quad \text{on}\quad
B_{(1+\de)r}\backslash B_r\\
0,\hskip0.8in \text{on}\quad \ir{n}\backslash B_{(1+\de)r}
\end{cases}.$$
From \eqref{Weaku}, for each $\a$ we have
\begin{equation}\aligned\label{IBBvua2}
0=\int_{\R^n} vg^{ij}\f{\p u^\a}{\p x_i}\f{\p(\e \mathfrak{u}^\a_r)}{\p x_j}=\int_{B_{(1+\de)r}\backslash B_r}\lan \n u^\a,\n\e\ran \mathfrak{u}^\a_rv+\int_{\{|u^\a|<r\}}v|\n u^\a|^2\e.
\endaligned
\end{equation}
We can check
\begin{equation}\label{vvua2}
v\le \sum_\a|\n u^\a|^2v+1,
\end{equation}
which is obvious when $m=1$.
Combining \eqref{IBBvua2} and \eqref{vvua2}, one has
\begin{equation}\aligned\label{EVGM0}
&\mathcal{H}^n(M\cap \mathbf{B}_r)\le\int_{B_r\cap\{|u|<r\}}v\le\int_{B_r\cap\{|u|<r\}}\left(1+v\sum_\a|\n u^\a|^2\right)\\
\le&\omega_nr^n+\sum_\a\int_{\{|u^\a|<r\}}v|\n u^\a|^2\e=\omega_nr^n-\sum_\a\int_{B_{(1+\de)r}\backslash B_r}\lan \n u^\a,\n\e\ran \mathfrak{u}^\a_rv.
\endaligned
\end{equation}
For each considered point in $B_{(1+\de)r}\backslash B_r$, we can assume
$g_{ij}=(\sec^2\th_i)\de_{ij},$
where $\la_i^2=\tan^2\th_i$ are eigenvalues of $\left(\sum_\a\f{\p{f^\a}}{\p{x^i}}\f{\p{f^\a}}{\p{x^j}}\right)$. Then $v=\left(\det(g_{ij})\right)^{\f{1}{2}}=\prod\sec\th_i.$
Moreover, there is an orthonormal matrix $(a_{\a\be})$ so that $\pd{u^\a}{x^j}=a_{\a j}\la_j=a_{\a j}\tan\th_j$ (We let $a_{\a j}=0$ for $j\ge m+1$).
Hence, for each $\a$
\begin{equation}\aligned\label{nuanue}
|\left<\n u^\a,\n \e\right>\mathfrak{u}^\a_r|v\le& r\left|g^{ij}\pd{\e}{x^i}\pd{u^\a}{x^j}\right|v=r\sum_i\left|\cos^2\th_i\pd{\e}{x^i} a_{\a i}\tan\th_i\right|v\\
=&r\sum_i\left|a_{\a i}\pd{\e}{x^i}\sin\th_i\cos\th_i \right|\prod_j\sec\th_j.
\endaligned
\end{equation}

Under the condition of the  $2-$dilation bounded by $\La$, namely, $\tan\th_i\tan\th_j\le \La$ for $i\ne j$
\begin{equation*}\aligned
&\sum_i\sin^2\th_i\cos^2\th_i\prod_j\sec^2\th_j=\sum_i\sin^2\th_i\prod_{j\neq i}(1+\tan^2\th_j)\\
\le&\sum_i\cos^2\th_i\tan^2\th_i\sum_{j\neq i}(c'+\tan^2\th_j)\le c^2,
\endaligned
\end{equation*}
where $c$ and $c'$ depend on $\La$ and $n$. Using Cauchy inequality, from \eqref{nuanue} we get
\begin{equation}\aligned\label{nuanue*}
&\sum_\a|\left<\n u^\a,\n \e\right>\mathfrak{u}^\a_r|v\le r\sqrt{\sum_{\a,i}\left|a_{\a i}\pd{\e}{x^i}\right|^2}\sqrt{\sum_{\a,i}\sin^2\th_i\cos^2\th_i\prod_j\sec^2\th_j}\\
\le& c\sqrt{m}r\sqrt{\sum_{\a,i}a_{\a i}^2\left|\pd{\e}{x^i}\right|^2}= c\sqrt{m}r\sqrt{\sum_{i}\left|\pd{\e}{x^i}\right|^2}\le \f{c\sqrt{m}}{\de}.
\endaligned
\end{equation}
Substituting \eqref{nuanue*} into \eqref{EVGM0} gives
\begin{equation*}
\mathcal{H}^n(M\cap \mathbf{B}_r)\le\omega_nr^n+\f{c\sqrt{m}}{\de}\int_{B_{(1+\de)r}\backslash B_r}dx\le\om_nr^n+\f{c\sqrt{m}}{\de}((1+\de)^n-1)\om_nr^n.
\end{equation*}
Thus,
\begin{equation*}\H^n(M\cap \mathbf{B}_r)\le\om_nr^n+\lim_{\de\to 0}\f{c\sqrt{m}}{\de}((1+\de)^n-1)\om_nr^n=(cn\sqrt{m}+1)\om_nr^n.\end{equation*}
This completes the proof.
\end{proof}
If $M$ is an entire minimal graph, the estimation in \eqref{VGM1} of Lemma \ref{VGM} can be independent of the codimension $m$.
\begin{lem}\label{VGM*}
Let $M=\mathrm{graph}_u$ be a locally Lipschitz minimal graph over $\R^n$ of codimension $m\ge2$ with $\sup_{\R^n}|\La^2du|\le\La$.
Then there is a constant $C_{n,\La}\ge1$ depending only on $n,\La$ such that $M$ is contained in some affine subspace of dimension $\le C_{n,\La}$
and for any ball $\mathbf{B}_r(\mathbf{x})$ in $\R^{n+m}$
\begin{equation}\aligned\label{VGMentireG}
\mathcal{H}^n(M\cap \mathbf{B}_r(\mathbf{x}))\le C_{n,\La}\omega_nr^n.
\endaligned
\end{equation}
\end{lem}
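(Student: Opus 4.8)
The plan is to upgrade Lemma \ref{VGM} to a volume bound that does \emph{not} depend on the codimension $m$, and then to feed the resulting Euclidean volume growth into the dimension estimates of Colding--Minicozzi \cite{c-m}, which bound the codimension.

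\emph{Step 1: a volume estimate independent of $m$.} Since $M$ is entire, for every ball $\mathbf{B}_r(\mathbf{x})$ one may run the argument of Lemma \ref{VGM} on $B_R$ with $R$ as large as needed; but the factor $\sqrt m$ there comes only from truncating each $u^\a$ separately. I would instead truncate $u$ \emph{radially}. After a translation, reduce to $\mathbf{x}$ at the origin and put $\psi=\min(|u|,r)\,u/|u|\in\R^m$, set equal to $0$ where $u=0$ (so $\psi$ is Lipschitz with compact support on $B_{(1+\de)r}$: near $u=0$ one has $\min(|u|,r)=|u|$, so $\psi^\a=u^\a$ and the unit-vector factor never appears alone). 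Testing the weak equations \eqref{Weaku} for each $u^\a$ against $\e\psi^\a$, with $\e$ the radial cutoff of Lemma \ref{VGM}, and summing over $\a$ gives
\begin{equation*}
0=\int v\sum_\a\psi^\a\lan\n u^\a,\n\e\ran+\int v\,\e\sum_\a\lan\n u^\a,\n\psi^\a\ran .
\end{equation*}
On $\{|u|<r\}$ the last sum equals $\sum_\a|\n u^\a|^2$, and on $\{|u|>r\}$ it equals $\tfrac{r}{|u|}\big(\sum_\a|\n u^\a|^2-\big|\n|u|\big|^2\big)\ge0$; moreover $|\psi|\le r$ and, since the columns of $(a_{\a i})$ (where $\pd{u^\a}{x^i}=a_{\a i}\tan\th_i$) are orthonormal, $\sum_\a\lan\n u^\a,\n\e\ran^2=\sum_i\sin^2\th_i\cos^2\th_i(\partial_i\e)^2$, which carries \emph{no} factor of $m$. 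Running this through the estimate of Lemma \ref{VGM} (and letting $\de\to0$) yields $\mathcal{H}^n(M\cap\mathbf{B}_r(\mathbf{x}))\le C_{n,\La}\,\om_nr^n$ with $C_{n,\La}$ independent of $m$, so $M$ has Euclidean volume growth with density bounded only in terms of $n,\La$.

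\emph{Step 2: bounding the codimension.} Because $M$ is a minimal graph, the ambient coordinates $x_1,\dots,x_{n+m}$ restrict to weakly harmonic functions on $M$; and since the intrinsic distance on $M$ dominates the Euclidean distance in $\R^{n+m}$, each restriction grows at most linearly from a fixed base point, so $1,x_1|_M,\dots,x_{n+m}|_M$ all lie in the space $\mathcal{H}_1(M)$ of harmonic functions of linear growth on $M$. By the Colding--Minicozzi estimate \cite{c-m}, $\dim\mathcal{H}_1(M)$ is bounded by a constant depending only on $n$ and the volume density bound from Step 1, hence only on $n,\La$. On the other hand $x_1|_M,\dots,x_n|_M$ are already linearly independent (as $M$ is a graph) and a linear relation among $1,x_1|_M,\dots,x_{n+m}|_M$ is precisely a hyperplane containing $M$, so these functions span a space of dimension $1+\dim(\text{affine hull of }M)$. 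Therefore the affine hull of $M$ has dimension at most some $C_{n,\La}$; relabeling, $M$ lies in an affine subspace of dimension $\le C_{n,\La}$, and the volume bound follows from Step 1.

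\emph{Main obstacle.} The crux is Step 1: one must check carefully that $\psi=\min(|u|,r)u/|u|$ is an admissible (Lipschitz, compactly supported) test field despite the apparent singularity at $u=0$, and that the terms produced on $\{|u|>r\}$ have the favorable sign, so that the radial truncation --- unlike the componentwise one of Lemma \ref{VGM} --- gives a constant free of $m$; without this improvement the $\sqrt m$ survives and Colding--Minicozzi only bounds $\dim(\text{affine hull of }M)$ in terms of $n,\La$ and $m$. A secondary point is to invoke the precise form of the Colding--Minicozzi dimension estimate and to verify the linear growth of the coordinate functions via the monotonicity \eqref{ratio} together with $d_M\ge d_{\R^{n+m}}$. (One could alternatively try to keep Lemma \ref{VGM} verbatim and pass to the affine hull of $M$, but then one would need $M$ to remain a minimal graph over an $n$-plane with controlled $2$-dilation inside that hull, which does not seem to follow formally, because the affine hull may be tilted with respect to the base $n$-plane.)
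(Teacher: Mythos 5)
Your proof is correct but takes a genuinely different route, which is worth comparing. The paper does \emph{not} improve Lemma \ref{VGM}: it keeps the $\sqrt m$ and bootstraps it away. By Corollary 1.4 of \cite{c-m}, $M$ lies in an affine subspace $\mathbb{V}$ of dimension $\mathfrak{p}\le c_nV_M$; instead of passing to $\mathbb{V}$ (which raises exactly the ``tilting'' worry at the end of your proposal), the paper passes to $\widetilde{\mathbb{V}}=\mathrm{span}\big(\mathbb{V},\ \R^n\times\{0^m\}\big)$, of dimension $\tilde{\mathfrak{p}}\le\mathfrak{p}+n$. Since $\widetilde{\mathbb{V}}$ contains the base $n$-plane by construction, $M$ remains a graph over the \emph{same} $\R^n$ inside $\R^{\tilde{\mathfrak{p}}}$ with the same $2$-dilation bound after rotating the $\R^m$-factor only (which leaves the singular values of $Du$ unchanged); Lemma \ref{VGM} in that ambient then gives $V_M\le c_{n,\La}\sqrt{\tilde{\mathfrak{p}}-n}\le c_{n,\La}\sqrt{\mathfrak{p}}$, whence $\mathfrak{p}\le c_nc_{n,\La}\sqrt{\mathfrak{p}}$, i.e.\ $\mathfrak{p}\le(c_nc_{n,\La})^2$, and the volume bound follows. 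Your Step 1 removes the $\sqrt m$ at the source, and I checked it does work: $\psi=\min(|u|,r)u/|u|$ is Lipschitz (the radial retraction of $\R^m$ onto the closed $r$-ball is $2$-Lipschitz and equals the identity near $0$), so $\e\psi^\a$ is admissible in \eqref{Weaku}; on $\{|u|>r\}$ one has $\n\psi^\a=\tfrac r{|u|}(\n u^\a-\tfrac{u^\a}{|u|}\n|u|)$ and $\n|u|=\sum_\be\tfrac{u^\be}{|u|}\n u^\be$, so $\sum_\a\lan\n u^\a,\n\psi^\a\ran=\tfrac r{|u|}(\sum_\a|\n u^\a|^2-|\n|u||^2)\ge0$ by Cauchy--Schwarz; and with $|\psi|\le r$ and the orthonormality $\sum_\a a_{\a i}a_{\a j}=\de_{ij}$ of the nonzero columns of $(a_{\a i})$, the Cauchy--Schwarz over $\a$ replaces \eqref{nuanue*} by
\begin{equation*}
v\Big|\sum_\a\psi^\a\lan\n u^\a,\n\e\ran\Big|\le r\sqrt{\sum_i\sin^2\th_i\cos^2\th_i\,(\p_i\e)^2\prod_j\sec^2\th_j}\le\f{c_{n,\La}}{\de},
\end{equation*}
with no $\sqrt m$. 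So $V_M\le C_{n,\La}$ outright, and a single appeal to Corollary 1.4 of \cite{c-m} (your Step 2 essentially re-derives it via $\mathcal H_1(M)$) finishes. Net: your version avoids the bootstrap and sharpens \eqref{VGM1} itself to be $m$-independent; the paper's version avoids re-proving Lemma \ref{VGM} and obtains the $m$-independence only a posteriori.
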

\begin{proof}
From Colding-Minicozzi (Corollary 1.4 in \cite{c-m}), $M$ must be contained in
some affine subspace $\mathbb{V}$ of the dimension
\begin{equation}\aligned\label{pcnVM}
\mathfrak{p}\le c_nV_M,
\endaligned
\end{equation}
where
$c_n=\f{3n}{n-1}2^{n+3}e^8$ and 
\begin{equation}\aligned
V_M=\lim_{r\rightarrow\infty}\f1{\omega_nr^n}\mathcal{H}^n(M\cap \mathbf{B}_r).
\endaligned
\end{equation}
In particular, for $\mathfrak{p}=n$, $M$ is flat.
Let $\widetilde{\mathbb{V}}$ be the linear space spanned by vectors in $\mathbb{V}$ and vectors in the $n$-plane $\{(x,0^m)\in\R^n\times\R^m|\, x\in\R^n\}$.
Then $\widetilde{\mathbb{V}}$ has dimension $\tilde{\mathfrak{p}}\le\mathfrak{p}+n$.
Hence up to an isometric transformation of $\R^m$, $M$ can be written as a graph over $\R^n$ in $\R^{\tilde{\mathfrak{p}}}$ with the graphic function $w$ satisfying $\sup_{\R^n}|\La^2dw|\le\La$.
From Lemma \ref{VGM} and \eqref{pcnVM}, we have
\begin{equation}\aligned
\mathfrak{p}\le c_nc_{n,\La}\sqrt{\tilde{\mathfrak{p}}-n}\le c_nc_{n,\La}\sqrt{\mathfrak{p}},
\endaligned
\end{equation}
which implies $\mathfrak{p}\le c_n^2c_{n,\La}^2$.
Therefore, from Lemma \ref{VGM} again, we get
\begin{equation}\aligned
V_M=\lim_{r\rightarrow\infty}\f1{\omega_nr^n}\mathcal{H}^n(M\cap \mathbf{B}_r)\le c_{n,\La}\min\{\sqrt{m},\sqrt{\mathfrak{p}}\}\le c_{n,\La}\min\{\sqrt{m},c_nc_{n,\La}\}.
\endaligned
\end{equation}
We complete the proof by the monotonicity of $r^{-n}\mathcal{H}^n(M\cap \mathbf{B}_r(\mathbf{x}))$ on $r$.
\end{proof}

The constant $c_{n,\La}$ in \eqref{VGM1} or $C_{n,\La}$ in \eqref{VGMentireG} depends on $\La$  in general. Even, without the bounded 2-dilation, minimal graphs have much faster volume growth in view of the following remark.
\begin{rem}\label{LEVG}
For a smooth harmonic function $\phi$ on $\R^2$, the graph
$M=\{(x,y,D\phi)\in\R^4|\ (x,y)\in\R^2\}$ is a special Lagrangian submanifold
in $\R^4$, and in particular,  minimal.
Let $\phi=\mathrm{Re}\, e^z=e^x\cos y$.
Then $D\phi=(\mathrm{Re}\, e^z,-\mathrm{Im}\, e^z)$.
By a straightforward computation,
\begin{equation}\label{D2phi2}
\aligned
D^2\phi D^2\phi=
\left( \begin{array}{cc}
e^x\cos y & -e^x\sin y  \\
-e^x\sin y & -e^x\cos y  \\
\end{array} \right)^2=
\left( \begin{array}{cc}
e^{2x} & 0  \\
0 & e^{2x}  \\
\end{array} \right).
\endaligned\end{equation}
For any point $(x,y,D\phi)\in \mathbf{B}_{\sqrt{3}r}^4\subset\R^4$ with $r\ge e$, we have
\begin{equation}\aligned
3r^2>x^2+y^2+|D\phi|^2=x^2+y^2+e^{2x},
\endaligned
\end{equation}
which implies
\begin{equation}\aligned
\{(x,y,D\phi)\in\R^4|\, 0<x<\log r,\, |y|<r\}\subset M\cap\mathbf{B}^{4}_{\sqrt{3}r}.
\endaligned
\end{equation}
Then with \eqref{D2phi2}
\begin{equation}\aligned
\mathcal{H}^2(M\cap\mathbf{B}^{4}_{\sqrt{3}r})\ge&\int_{0}^{\log r}\left(\int_{-r}^r\sqrt{\det(I+D^2\phi D^2\phi)}dy\right)dx\\
\ge&2r\int_0^{\log r}e^{2x}dx=r(r^2-1).
\endaligned
\end{equation}
In other words, $M$ has  volume growth strictly larger than Euclidean.

\end{rem}

Let $\Om$ be an open set in $B_R\subset\R^n$. For each $\a=1,\cdots,m$, let $M^\a$ be the graph in $\Om\times\R$ defined by
\begin{equation}\aligned
M^\a=\{(x,u^\a(x))\in\R^n\times\R|\ x\in \Om\}.
\endaligned
\end{equation}
By a diagonal argument, it is clear that
\begin{equation}\aligned\label{digDnau}
|Du^\a|^2\le |\n u^\a|^2v^2
\endaligned
\end{equation}
for each $\a$.
At any $C^1$-point of $u$, the unit normal vector of $M^\a$ can be written as
$$\nu_{M^\a}=\f1{\sqrt{1+|Du^\a|^2}}\left(-\sum_{j=1}^n\f{\p u^\a}{\p x_j}E_j+E_{n+1}\right),$$
where $E_j$ is a unit constant vector in $\R^{n+1}$ with respect to the axis $x_j$.
For each fixed $\a$, let $W$ denote a bounded open set with $n$-rectifiable $\p W$ in $\R^{n+1}$ such that
there is a constant $\g>0$ satisfying
\begin{equation}\aligned\label{Wg}
\mathcal{H}^n(\p W\cap B_r(x))\ge\g r^n\qquad \mathrm{for\ any}\ x\in \p W,\ \mathrm{any} \ r\in(0,1).
\endaligned
\end{equation}

Let $Y$ be a measurable vector field in $\Om$ defined by
\begin{equation}\aligned\label{Y}
Y=v\sum_{i,j=1}^n g^{ij}\f{\p u^\a}{\p x_i}E_j.
\endaligned
\end{equation}
By parallel transport, we obtain a Lipschitz vector field in $\Om\times\R$, still denoted by $Y$.
Let $\bn$ denote the Levi-Civita connection of $\R^{n+1}$. Then from
\eqref{Weaku} and the co-area formula,
\begin{equation}\aligned\label{YbarDphi}
0=\int_\R\left(\int_{W\cap\{x_{n+1}=t\}} \lan Y,D \phi\ran \right)dt=\int_\R\left(\int_{W\cap\{x_{n+1}=t\}} \lan Y,\bn \phi\ran \right)dt=\int_W \lan Y,\bn \phi\ran
\endaligned
\end{equation}
for any smooth function $\phi$ with compact support in $W$. For any $\ep>0$, let $W_\ep=\{x\in W|\, d(x,\p W)>\ep\}$, $\phi_\ep=1-\f1\ep d(\cdot,W_\ep)$ on $W$ and $\phi_\ep=0$ on $\R^{n+1}\setminus W$.
Let $\sigma\in C^\infty_c(\R^{n+1})$ satisfy $\int_{\R^{n+1}}\si(z)dz=1$, and $\si_\de(z)=\de^{-n}\si(z/\de)$ for each $z\in\R^{n+1}$ and each $\de\in(0,1]$.
Let $\phi_{\ep,\de}$ be a convolution of $\phi_{\ep}$ and $\si_\de$ defined by
\begin{equation}\aligned\label{wep}
\phi_{\ep,\de}(z)=(\phi_{\ep}*\si_\de)(z)=\int_{\R^{n+1}} \phi_{\ep}(y)\si_\de(z-y)dy=\int_{\R^{n+1}} \phi_{\ep}(z-y)\si_\de(y)dy.
\endaligned
\end{equation}
Then $\phi_{\ep,\de}\in C^\infty_c(\R^{n+1})$, and $\bn\phi_{\ep,\de}\to\bn\phi_{\ep}$ at differentiable points of $\phi_{\ep}$ as $\de\to0$.
Substituting $\phi_{\ep,\de}$ into \eqref{YbarDphi} implies
\begin{equation}\aligned\label{TDphiep**}
0=\lim_{\de\rightarrow0}\int_W \lan Y,\bn \phi_{\ep,\de}\ran=\int_W \lan Y,\bn \phi_\ep\ran.
\endaligned
\end{equation}
Let $\nu_{\p W}$ be the outward unit normal vector to the regular part of $\p W$.
Since $\lan Y,E_{n+1}\ran=0$ a.e. on $\Om\times\R$, we may denote $\lan Y,E_{n+1}\ran=0$ on $\p W$.
By the definition of $Y$, $\lan Y,\nu_{\p W}\ran$ is well-defined $\mathcal{H}^n$-a.e. on $\p W$.
From \eqref{Wg} and Ambrosio-Fusco-Pallara (in \cite{a-f-p}, p. 110), letting $\ep\to0$ in \eqref{TDphiep**} implies
\begin{equation}\aligned
0=\lim_{\ep\rightarrow0}\int_W \lan Y,\bn \phi_\ep\ran=\int_{\p W}\lan Y,\nu_{\p W}\ran.
\endaligned
\end{equation}
Recalling \eqref{digDnau}, we have
\begin{equation}\aligned\label{CompS}
&\int_{\p W\cap M^\a}\f{|D u^\a|^2}{\sqrt{1+|Du^\a|^2}v}\le\int_{\p W\cap M^\a}\f{|\n u^\a|^2v}{\sqrt{1+|Du^\a|^2}}\\
=&-\int_{\p W\cap M^\a}\lan Y,\nu_{M^\a}\ran\le\int_{\p W\setminus M^\a}\left|\lan Y,\nu_{\p W}\ran\right|.
\endaligned
\end{equation}

\Section{Cylindrical minimal cones from minimal graphs}{Cylindrical minimal cones from minimal graphs}

Let $M_k$ be a sequence of $n$-dimensional locally Lipschitz minimal graphs over $B_{R_k}$ in $\R^{n+m}$ of codimension $m\ge1$
with $R_k\rightarrow\infty$,
and the graphic functions $u_k=(u^1_k,\cdots,u^m_k)$ of $M_k$ satisfies $u_k(0^n)=0^m$ and $|\La^2du_k|\le\La$ a.e. for some constant $\La>0$.
We may suppose that $|M_k|$ converges in the varifold sense to a minimal cone $C$ in $\R^{n+m}$ with $0^{n+m}\in C$. From Lemma \ref{VGM}, the multiplicity of $C$ has a upper bound depending only on $n,m,\La$.

Let $\{\mathbf{E}_i\}_{i=1}^{n+m}$ be a standard basis of $\R^{n+m}$ such that each $\mathbf{x}\in M_k$ can be represented as $\sum_{i}x_i\mathbf{E}_i+\sum_\a u^\a_k(x)\mathbf{E}_{n+\a}$ with $x=(x_1,\cdots,x_n)\in B_{R_k}$.
If $\mathrm{spt}C\cap\{(0^n,y)\in\R^n\times\R^m|\, y\in\R^m\}=\{0^{n+m}\}$, i.e., $u_k$ has uniformly linear growth. Then the minimal cone $C$ has multiplicity one from Lemma \ref{Multi1}, which completes the proof of Theorem \ref{main2}.
Now we assume that there is a point $y_*=(0^n,y^*)\in \mathrm{spt}C$ with $0^m\neq y^*\in\R^{m}$.
Without loss of generality, we assume $y^*=(1,0,\cdots,0)\in\R^m$, then $y_*=\mathbf{E}_{n+1}$. Let
$$C_t=C-ty_*=C-t\mathbf{E}_{n+1}$$
for any $t\in\R$.
In other words, if we denote $C=\th_C|C|$, then $C_t=\th_C(\cdot+t\mathbf{E}_{n+1})|C_t|$ with spt$C_t=\{\mathbf{x}-t\mathbf{E}_{n+1}|\, \mathbf{x}\in\mathrm{spt}C\}$.
Since $C$ is a cone, then $C_t$ converges as $t\rightarrow\infty$ in the varifold sense to a minimal cone $C_{y_*}$ in $\R^{n+m}$,
where spt$C_{y_*}$ splits off a line $\{ty_*|\, t\in\R\}$ isometrically.
\begin{lem}\label{Cy*vmc}
$C_{y_*}$ is a cylindrical minimal cone living in an $(n+1)$-dimensional subspace $\{(x_1,\cdots,x_{n+1},0,\cdots0)\in\R^{n+m}|\,(x_1,\cdots,x_{n+1})\in\R^{n+1}\}$ of $\R^{n+m}$.
\end{lem}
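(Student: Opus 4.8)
The plan is to reduce the whole statement to the single assertion that $\mathrm{spt}\,C_{y_*}$ is contained in the $(n+1)$-plane $\R^{n+1}:=\{(x_1,\dots,x_{n+1},0^{m-1})\}$, i.e. that the coordinates $\mathbf{E}_{n+2},\dots,\mathbf{E}_{n+m}$ vanish on it. Granting this, $C_{y_*}$ is automatically a cylindrical minimal cone in $\R^{n+1}$: it is an integral stationary varifold (a varifold limit of stationary varifolds, by the compactness theorem recalled in \S2), its support is a cone by construction, it lies in $\R^{n+1}$, and it isometrically splits off the vertical line $\R\mathbf{E}_{n+1}$, as already recorded above.

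To analyse $C_{y_*}$ I would realize it as a varifold limit of dilated vertical translates of the $M_k$. Since $C$ is a cone with $\mathbf{E}_{n+1}\in\mathrm{spt}\,C$, one has $C-t\mathbf{E}_{n+1}=t(C-\mathbf{E}_{n+1})$, so $C_{y_*}=\lim_{t\to\infty}(C-t\mathbf{E}_{n+1})$ is precisely the tangent cone of the varifold $C$ at the point $\mathbf{E}_{n+1}$, that is $\lim_{\la\to\infty}\la(C-\mathbf{E}_{n+1})$. Since $C=\lim_k|M_k|$, a diagonal argument yields $\la_j\to\infty$ and $k_j\to\infty$ for which the graphs $\widetilde M_j:=\la_j(M_{k_j}-\mathbf{E}_{n+1})$ — the graphs over $B_{\la_j R_{k_j}}$ of $\widetilde u_j(x):=\la_j\big(u_{k_j}(x/\la_j)-y^*\big)$ — converge to $C_{y_*}$ in the varifold sense. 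Two features survive this rescaling: each $\widetilde M_j$ is again a locally Lipschitz minimal graph with $|\La^2 d\widetilde u_j|\le\La$ a.e. (the singular values of $d\widetilde u_j$ equal those of $du_{k_j}$, so $2$-dilation is scale invariant), and each $\widetilde M_j$ obeys the Euclidean volume bound of Lemma~\ref{VGM} with the constant $c_{n,\La}$.

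The geometric input is that $\mathrm{spt}\,C_{y_*}$ contains the whole line $\R\mathbf{E}_{n+1}$, which forces $\widetilde u^1_j$ to be extremely steep near the origin: for each fixed $R$ the oscillation of $\widetilde u^1_j$ on $B_R$ tends to $\infty$, while the domain region $D_j:=\{x:(x,\widetilde u_j(x))\in\mathbf{B}_R\}$ has $|D_j|\to0$ — indeed $D_j=\pi(\widetilde M_j\cap\mathbf{B}_R)$ eventually lies in any given neighborhood of $\pi(\mathrm{spt}\,C_{y_*}\cap\mathbf{B}_R)$, which, because $C_{y_*}$ splits off the vertical line $\R\mathbf{E}_{n+1}$, is the Lipschitz image of an $(n-1)$-dimensional set and hence $\mathcal H^n$-null. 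On the part of $\widetilde M_j$ where the largest singular value $\kappa_1$ of $d\widetilde u_j$ is large, the $2$-dilation bound $\kappa_i\kappa_\ell\le\La$ forces $\kappa_2,\dots,\kappa_n\le\La/\kappa_1$, so there $d\widetilde u_j$ is almost rank one and essentially aligned with $D\widetilde u^1_j$, whence $\sum_{\alpha\ge2}|D\widetilde u^\alpha_j|$ is negligible compared with the steepness of $\widetilde u^1_j$. The crux is to promote this pointwise picture to a genuine estimate, valid outside a set of arbitrarily small measure in the rescaled picture, showing that each $\widetilde u^\alpha_j$ with $\alpha\ge2$ is essentially constant on $B_R$. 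I would obtain this by projecting $\widetilde M_j$ onto the hypersurface $M^\alpha_j=\mathrm{graph}\,\widetilde u^\alpha_j\subset\R^{n+1}$ and plugging a region $W\subset\R^{n+1}$ adapted to $\widetilde M_j\cap\mathbf{B}_R$ into the flux identity \eqref{CompS} — whose left-hand side controls the mass of $D\widetilde u^\alpha_j$ on $\partial W$, while its right-hand side only sees the non-vertical part of $\partial W$ because $\langle Y,E_{n+1}\rangle=0$ kills the vertical faces — combined with the volume bound of Lemma~\ref{VGM} and the smallness of $|D_j|$ to absorb the exceptional sets. Since $\mathrm{spt}\,C_{y_*}$ contains $\R\mathbf{E}_{n+1}$, on which $\widetilde u^\alpha\equiv0$ for $\alpha\ge2$, the limiting constant value must be $0$; passing to the varifold limit gives $\mathrm{spt}\,C_{y_*}\subset\R^{n+1}$, and with the first paragraph the lemma follows.

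I expect the main obstacle to be exactly this conversion from the heuristic to a rigorous measure-theoretic estimate: controlling the ``bad'' region where no singular value of $d\widetilde u_j$ is large — so that the $2$-dilation dichotomy does not apply directly — and circumventing the apparent circularity that $|D\widetilde u^1_j|$ dominates $\kappa_1$ only once the remaining $|D\widetilde u^\alpha_j|$ are already known to be small. Making \eqref{CompS} do this job — in effect showing that the projected hypersurface $M^1_j$ behaves like an almost area-minimizing hypersurface converging to $C_{y_*}$ while the other components die out — is where the real work lies; the conical structure of $C_{y_*}$ and its splitting are then free.
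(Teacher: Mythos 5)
Your reduction — show $\mathrm{spt}\,C_{y_*}\subset\{y_2=\cdots=y_m=0\}$ — and your realization of $C_{y_*}$ as a varifold limit of rescaled graphs $\widetilde M_j$ (the paper's $\Si_k$) carrying the same $2$-dilation bound are both correct. But the route you take from there is not the paper's, and the gap you flag yourself is genuine. The paper argues at a \emph{single regular point} $\mathbf{x}\in\mathrm{spt}\,C_{y_*}$ and uses the full isometric splitting, not merely the inclusion of the line $\R\mathbf{E}_{n+1}$ in the support: since $C_{y_*}$ splits off $\R\mathbf{E}_{n+1}$, the tangent plane $T_{\mathbf{x}}C_{y_*}$, written as a simple $n$-vector, satisfies $\tau_1\wedge\cdots\wedge\tau_n\wedge\mathbf{E}_{n+1}=0$. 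By varifold convergence there are regular points $\mathbf{x}_k\in\Si_k$ whose orientations $\xi_k(\mathbf{x}_k)=e_{1,k}\wedge\cdots\wedge e_{n,k}$, with $e_{j,k}=(1+\la_{j,k}^2)^{-1/2}(\mathbf{E}_j+\la_{j,k}\mathbf{E}_{n+j})$ after diagonalizing $du_k$ at $\mathbf{x}_k$, converge to $\tau_1\wedge\cdots\wedge\tau_n$. Since the limit plane contains a vertical direction, one singular value, say $\la_{1,k}$, must diverge; the $2$-dilation bound then forces the remaining $\la_{j,k}\to0$. Thus $T_{\mathbf{x}}C_{y_*}$ is spanned by exactly one vertical unit vector — necessarily $\pm\mathbf{E}_{n+1}$ — and $n-1$ horizontal vectors, so it lies in $\{y_2=\cdots=y_m=0\}$. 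This holds at every regular point and the lemma follows. No flux identity, no exceptional-set bookkeeping.

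Your plan replaces that pointwise analysis with a macroscopic flux estimate via \eqref{CompS}, and the two obstructions you name are real, not cosmetic. The left side of \eqref{CompS} carries the weight $v^{-1}$, so it is weak exactly where $v$ is large, i.e., in the steep region you need to exploit. More seriously, the $2$-dilation dichotomy is phrased in the singular-value frame, which rotates from point to point; turning it into a per-coordinate bound on $D\widetilde u^\alpha_j$ for a fixed $\alpha\ge2$ over all of $B_R$ presupposes that the steep singular direction is already aligned with $\mathbf{E}_{n+1}$ throughout — which is essentially the conclusion you are after. You defer both issues to "where the real work lies"; the paper's tangent-plane argument at a regular point is precisely how it bypasses that work. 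The flux-identity and exceptional-set machinery you reach for is what the paper deploys for the substantially harder multiplicity-one statement (Lemma \ref{multi1}), not for the present lemma.
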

\begin{proof}
For any regular point $\mathbf{x}\in\mathrm{spt}C_{y_*}$, the tangent cone of $C_{y_*}$ at $x$ is an $n$-plane with constant integer multiplicity by constancy theorem (see Theorem 41.1 in \cite{s}).
Let $T_{\mathbf{x}}C_{y_*}$ denote the tangent space of $C_{y_*}$ at $\mathbf{x}$, which is an $n$-plane with constant multiplicity.
We represent the support of $T_{\mathbf{x}}C_{y_*}$ by an $n$-vector $\tau_1\w\cdots\w \tau_n$ with orthonormal unit vectors
$\tau_i$. Since $\mathrm{spt}C_{y_*}$ splits off a line $ty_*$ isometrically, then
\begin{equation}\aligned\label{tau1nEn+1}
\tau_1\w\cdots\w \tau_n\wedge \mathbf{E}_{n+1}=0.
\endaligned
\end{equation}
From the construction of $C_{y_*}$, there is a sequence of minimal graphs $\Si_k$ in $\R^n\times\R^m$ such that $|\Si_k|$ converges to $C_{y_*}$ in the varifold sense.
Here, $\Si_k$ is a rigid motion of $M_k$.
Let $\xi_k$ be an orientation of $\Si_k$ for each $k$ (see \eqref{orientation}).
Since $|\Si_k|$ converges in the varifold sense to $C_{y_*}$, then there is a sequence $r_k\rightarrow\infty$ such that
$\left|\f1{r_k}\Si_k\cap B_{r_k^2}(y_*)\right|$ converges to $T_{\mathbf{x}}C_{y_*}$ in the varifold sense.
Hence, up to a choice of the subsequence, there is a sequence of regular points $\mathbf{x}_k=(x^k_1,\cdots,x^k_n,u_k(x^k_1,\cdots,x^k_n))\in M_k$ with $\mathbf{x}_k\rightarrow \mathbf{x}$ such that $\xi_k(\mathbf{x}_k)\rightarrow\tau_1\w\cdots\w \tau_n$.

Let $\la_{1,k},\cdots,\la_{n,k}$ be the singular values of the matrix $Du_k$ at $(x^k_1,\cdots,x^k_n)$ with $\la_{j,k}\ge0$ for all $j=1,\cdots,n$, and
$$e_{j,k}=\f1{\sqrt{1+\la^2_{j,k}}}\left(\mathbf{E}_{j}+\la_{j,k}\mathbf{E}_{n+j}\right)$$
for each integers $1\le j\le n$ and $k\ge1$.
Then $\{e_{j,k}\}_{j=1}^n$ forms an orthonormal basis of $T_{\mathbf{x}_k}M_k$ (up to a permutation of $\la_{1,k},\cdots,\la_{n,k}$ and a rotation of $\R^n$), and we can choose
$\xi_k(\mathbf{x}_k)=e_{1,k}\wedge\cdots\wedge e_{n,k}$.
From \eqref{tau1nEn+1}, the assumption $|\La^2du_k|\le\La$ a.e. and $\xi_k(\mathbf{x}_k)\rightarrow\tau_1\w\cdots\w \tau_n$, we get
$\la_{1,k}\rightarrow\infty$, $\sum_{j=2}^n\la_{j,k}\rightarrow0$ as $k\rightarrow\infty$, and then $\tau_1\w\cdots\w \tau_n=\mathbf{E}_{2}\wedge\cdots\wedge \mathbf{E}_{n+1}$ represents the orientation of $T_{\mathbf{x}}C_{y_*}$. Note that spt$C_{y_*}$ splits off the line $\{t\mathbf{E}_{n+1}|\, t\in\R\}$ isometrically.
So there is an $(n-1)$-dimensional cone $C_*$ in $\R^n$ such that $\mathrm{spt}C_{y_*}$ can be written as
$$\{(x_1,\cdots,x_n,y_1,\cdots,y_m)\in\R^n\times\R^m|\ (x_1,\cdots,x_n)\in C_*,y_{2}=\cdots=y_m=0\}.$$
This completes the proof.
\end{proof}

Let
$$g_{ij}^k=\de_{ij}+\sum_{\a=1}^m \f{\p u^\a_k}{\p x_i}\f{\p u^\a_k}{\p x_j},$$
\begin{equation}\aligned\label{vi}
v_k=\sqrt{\det g_{ij}^k}=\sqrt{\det\left(\de_{ij}+\sum_{\a=1}^m \f{\p u^\a_k}{\p x_i}\f{\p u^\a_k}{\p x_j}\right)},
\endaligned
\end{equation}
and $(g^{ij}_k)_{n\times n}$ be the inverse matrix of $(g_{ij}^k)_{n\times n}$.
Let $\pi_*$ denote the projection from $\R^{n+m}$ into $\R^{n}$ by
\begin{equation}\aligned\label{pi*****}
\pi_*(x_1,\cdots,x_{n+m})=(x_1,\cdots,x_n).
\endaligned
\end{equation}
For any $\mathbf{x}=(x_1,\cdots,x_{n+m})\in\R^{n+m}$, let $\mathbf{C}_r(\mathbf{x})=B_r(\pi_*(\mathbf{x}))\times B_r(x_{n+1},\cdots,x_{n+m})$ be the cylinder in $\R^{n+m}$, and $\mathbf{C}_r=\mathbf{C}_r(0^{n+m})$.

For studying the multiplicity of the cone $C$, we only need to prove it at any regular point of spt$C$.
Hence, it suffices to prove the following lemma.
\begin{lem}\label{multi1}
If spt$C\cong \R^n$,
then $C$ has  multiplicity one on spt$C$.
\end{lem}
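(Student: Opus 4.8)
The plan is to prove Lemma~\ref{multi1} by reducing the multiplicity question for $C$ to a statement about the codimension-one pieces $M^\a$ and then invoking the monotonicity formula together with the constancy theorem. Suppose $\mathrm{spt}\,C$ is a single $n$-plane $P$; after a rotation we may take $P=\{(x,0^m)\in\R^n\times\R^m\}$, so that the slopes $\la_{j,k}$ of $u_k$ all tend to $0$ locally. I would first show that the multiplicity $\Theta$ of $C$ along $P$ is a well-defined constant integer (constancy theorem, Theorem~41.1 in \cite{s}), equal to $\lim_k v_k$ in an averaged sense over any ball, since $v_k=\sqrt{\det g^k_{ij}}\to 1$ pointwise wherever the slopes vanish. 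The point is to rule out $\Theta\ge 2$, i.e.\ to rule out ``folding'': several graphical sheets of $M_k$ collapsing onto the same $n$-plane with the graphic functions $u_k$ themselves staying bounded but having large slope on a set of small measure.

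The key device should be the projection of $M_k$ to the hypersurfaces $M^\a_k=\mathrm{graph}_{u^\a_k}$ in $\R^n\times\R$, exactly as set up at the end of Section~3, and the estimate~\eqref{CompS}. Concretely, I would argue as follows. If $\Theta\ge 2$, then for large $k$ the slice $\{x_{n+1}=\cdots=0\}$ region contains (in the varifold limit) at least two sheets, so at least one of the $M^\a_k$ must have a nontrivial ``vertical'' part: there is a set of definite measure where $|Du^\a_k|$ is not small. But bounded $2$-dilation forces, whenever one singular value $\la_{1,k}$ is large, all the others to be small ($\la_{j,k}\le \La/\la_{1,k}$ for $j\ge2$), so the graph $M_k$ looks locally like a hyperplane rotated only in the $\mathbf{E}_1,\mathbf{E}_{n+1}$ directions --- this is precisely the mechanism of Lemma~\ref{Cy*vmc}. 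One then uses the divergence identity~\eqref{CompS} applied to $W$ a thin slab around $P$ to bound $\int_{\partial W\cap M^\a_k}\frac{|Du^\a_k|^2}{\sqrt{1+|Du^\a_k|^2}\,v_k}$ by a flux term that is controlled by the excess of $M_k$ over $P$, which tends to $0$; hence the ``vertical part'' of each $M^\a_k$ has measure $\to 0$, contradicting the existence of two sheets. Combined with the upper volume bound of Lemma~\ref{VGM} (so no mass escapes and the limit is genuinely a varifold of finite multiplicity), this yields $\Theta=1$.

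More precisely, I expect the clean route is: (i) by the monotonicity formula the density of $C$ at $0$ equals $\Theta$; (ii) choose the cylinder $\mathbf{C}_r$ and show $\mu_{|M_k|}(\mathbf{C}_r)\to \Theta\,\omega_n r^n$; (iii) decompose $M_k\cap\mathbf{C}_r$ into the ``almost-horizontal'' part where all $\la_{j,k}<\epsilon$ and the ``bad'' part $\mathcal B_k$ where some $\la_{j,k}\ge\epsilon$; on the horizontal part $v_k\le 1+C\epsilon^2$ and the projection $\pi_*$ is almost an isometry, contributing mass $\le (1+C\epsilon)\,\omega_n r^n\cdot(\text{number of sheets over }\pi_*)$; (iv) show $\mathcal H^n(\mathcal B_k)\to 0$ using bounded $2$-dilation to convert the area on $\mathcal B_k$ into area of the $M^\a_k$ over their ``steep'' sets, and then~\eqref{CompS} to bound those steep sets by a vanishing flux; (v) conclude the number of horizontal sheets is $\Theta$ and that each such sheet, being a minimal graph with slope $\to 1$, forces... actually forces $\Theta=1$ because two disjoint graphs over the same domain whose graphic functions $u_k$ both converge (they are normalized, $u_k(0)=0$, and have sublinear oscillation on the relevant scale once the slopes are small) must coincide in the limit.

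The main obstacle, I expect, is step~(iv): controlling the measure of the ``steep'' set $\mathcal B_k$. Bounded $2$-dilation alone does not bound the $1$-dilation, so a priori $M_k$ could have a lot of area concentrated where a single slope $\la_{1,k}$ is huge; one must genuinely use that such regions project (via $\pi_*$, losing only a bounded factor since the other $n-1$ slopes are small) to small sets in $\R^n$, \emph{and} that the ``vertical flux'' of the corresponding $M^\a_k$ across the boundary of a thin slab is small because $M_k$ is varifold-close to the flat plane $P$. Making the bookkeeping in~\eqref{CompS} interact correctly with the multiplicity count --- in particular ensuring the flux term really is $o(1)$ and not merely $O(1)$ --- is the delicate part, and is presumably where the hypothesis that $C$ is a single plane (rather than a general cone) gets used, since flatness of the limit is what makes the excess, and hence the flux, vanish.
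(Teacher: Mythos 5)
Your opening move --- ``after a rotation we may take $P=\{(x,0^m)\in\R^n\times\R^m\}$'' --- is not permissible, and this is where the argument breaks down. The bounded $2$-dilation hypothesis is formulated relative to the fixed reference $n$-plane $P_0=\mathbf{E}_1\wedge\cdots\wedge\mathbf{E}_n$: it says the Gauss image lies in $\Bbb{T}^{2,\La}\subset\Bbb{W}_0$, a set defined in terms of Jordan angles to $P_0$. An arbitrary rotation of $\R^{n+m}$ destroys both the graph structure over $\R^n$ and the $2$-dilation bound, so you cannot normalize a general plane $P$ to be horizontal. In the actual application of Lemma~\ref{multi1} (which feeds off Lemma~\ref{Cy*vmc}), $\mathrm{spt}\,C$ is \emph{vertical}: it is of the form $\{x_1=0,\ y_2=\cdots=y_m=0\}$ and contains $\mathbf{E}_{n+1}$. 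There $v_k\to\infty$ and $D_1u^1_k\to\infty$ on essentially all of $\pi_*(M_k\cap\mathbf{C}_1)$, not merely on a bad set of vanishing measure; your thin-slab $W$ would have to sit below a graph whose graphing function $u^1_k$ is \emph{unbounded}, so the flux in~\eqref{CompS} is no longer $o(1)$ and the whole good/bad decomposition in steps~(iii)--(iv) does not get off the ground.

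The paper's proof treats precisely this vertical case and its structure is different from your sketch. It projects $M_k$ to the hypersurface $M^*_k=\pi^*(M_k)=\mathrm{graph}\,u^1_k\subset\R^{n+1}$, then establishes a \emph{topological} claim (absent from your plan): for large $k$, $C_r(z)\setminus M^*_k$ has exactly two connected components; an extra component $F_k$ would have $\p F_k\setminus M^*_k$ of vanishing measure, and~\eqref{CompS} applied to $F_k$ (together with $v_k\approx|D_1u^1_k|\to\infty$) gives a contradiction. Given the two-component structure, the paper slices $M^*_k$ at a good level $t_k$ to produce a cap $\Gamma_k=\g_k^*\times(-1,1)$ with $\mathcal H^{n-1}(\g_k^*)<(1+\ep)\omega_{n-1}$, applies~\eqref{CompS} with $W=\Om_k$ (one of the two components, whose boundary is $M^*_k\cup\G_k\cup\cdots$), and uses the algebraic bound $|\lan Y_k,E_1\ran|\le1+\ep$ from Lemma~\ref{app1} to conclude $\mathcal H^n(M_k\cap\mathbf{C}_1)\le 2\omega_{n-1}+O(\ep)$, i.e.\ multiplicity one. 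The topological two-component step is what converts the flux bound into a multiplicity count; your proposal has nothing playing that role.

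For the complementary case --- when $\mathrm{spt}\,C$ is a graph over $\R^n$ rather than vertical --- the paper observes (via the mechanism in Lemma~\ref{Cy*vmc}) that the slopes stay bounded and simply invokes the Lipschitz graph compactness result (Lemma~\ref{Multi1}, Appendix~II). Your subgraph/flux argument is in fact a workable alternative for the genuinely horizontal sub-case: taking $W=W_{k,\de}=\{(x,t):|x|<1,\ -\de<t<u^\a_k(x)\}$, the bottom of $\p W$ contributes zero flux since $\lan Y_k,E_{n+1}\ran=0$, the side contributes $O(\de)$, so~\eqref{CompS} gives $\int_{B_1}|Du^\a_k|^2/v_k\to0$; summing over $\a$ and using $\sum_j\la_{j,k}^2/v_k\gtrsim v_k$ on $\{\la_{1,k}\ \mathrm{large}\}$ rules out $\Theta\ge2$. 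So you have correctly identified $M^\a_k$ and~\eqref{CompS} as the right tools, and the ``folding'' worry is the right thing to address. But the blanket reduction to the horizontal picture is false, and the vertical case --- the one that actually arises from Lemma~\ref{Cy*vmc} and carries the real content of Lemma~\ref{multi1} --- is not touched by the argument as written.
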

\begin{proof}
From the proof of Lemma \ref{Cy*vmc}, we can assume
$$\mathrm{spt}C=\{(x_1,\cdots,x_n,y_1,\cdots,y_m)\in\R^n\times\R^m|\ x_1=0,y_2=\cdots=y_m=0\},$$
or else we have finished the proof by Lemma \ref{Multi1} in the Appendix II.
For each $k$, let $\mathbf{x}_k=(x,u_k(x))\in \R^n\times\R^m$ be a vector-valued function on $B_{R_k}$, and
$\tau_{j,k}$ be a tangent vector field of $M_k$ in $\R^{n+m}$ defined by
\begin{equation}\aligned
\tau_{j,k}=\f{\p\mathbf{x}_k}{\p x_j}=\mathbf{E}_j+\sum_\a (D_ju^\a_k) \mathbf{E}_{n+\a}\qquad a.e.,
\endaligned
\end{equation}
then $\det\left(\lan\tau_{i,k},\tau_{j,k}\ran\right)=v_k^2$ a.e. for each $k$.
Hence, the orientation of $M_k$ can be written as
$$\xi_k=\f1{v_k}\tau_{1,k}\wedge\cdots\wedge \tau_{n,k}\qquad a.e.$$
with $|\xi_k|=1$ a.e..
From Lemma 22.2 in \cite{s}, for any compact set $K$ in $\R^{n+m}$, we have
\begin{equation}\aligned\label{xiipmxi0}
\lim_{k\rightarrow\infty}\int_{K\cap M_k}\left(1-\lan\xi_k,\xi_0\ran^2\right)=0
\endaligned
\end{equation}
with $\xi_0=\mathbf{E}_{2}\wedge\cdots\wedge \mathbf{E}_{n+1}$.
Since $\xi_k$ has the expansion
\begin{equation}\aligned
\xi_k=&\f1{v_k}\mathbf{E}_{1}\wedge\cdots\wedge \mathbf{E}_{n}+\sum_{j,\a}(-1)^{n-j}\f{D_ju^\a_k}{v_k} \mathbf{E}_{1}\wedge\cdots\wedge \widehat{\mathbf{E}_j}\wedge\cdots \mathbf{E}_{n}\wedge \mathbf{E}_{n+\a}\\
&+\sum_{j_1<\cdots<j_n,j_{n-1}>n}a_{k,j_1,\cdots,j_n}\mathbf{E}_{j_1}\wedge\cdots\wedge \mathbf{E}_{j_n}\qquad a.e.
\endaligned
\end{equation}
with $|a_{k,j_1,\cdots,j_n}|\le1$,
then from \eqref{xiipmxi0} we have
\begin{equation}\aligned\label{p0}
\lim_{k\rightarrow\infty}\int_{K\cap M_k}\left(1-\f{|D_1u^1_k|^2}{v_k^2}\right)=0.
\endaligned
\end{equation}
With $|\La^2du_k|\le\La$ a.e., there are a sequence of positive numbers $\ep_k\rightarrow0$ (as $k\to\infty$) and a sequence of open sets $W_k\subset \mathbf{C}_3\cap M_k$ with $\mathcal{H}^n(W_k)<\ep_k$ such that
\begin{equation}\aligned\label{viD1u1i}
v_k\le(1+\ep_k)|D_1u^1_k|,\quad |D_1u^1_k|\ge\f1{\ep_k}\qquad \mathrm{on}\ \pi_*(\mathbf{C}_3\cap M_k\setminus W_k).
\endaligned
\end{equation}
Then for any small constant $0<\ep<1$ we have
\begin{equation}\aligned\label{AreaEst}
\mathcal{H}^n\left(M_{k}\cap\mathbf{C}_{1}\right)\le\int_{(M_{k}\setminus W_k)\cap\mathbf{C}_{1}}1+\int_{W_k}1\le(1+\ep)\int_{\pi_*((M_{k}\setminus W_k)\cap\mathbf{C}_{1})}|D_1u^1_k|+\ep
\endaligned
\end{equation}
for large $k$.

Let $\pi^*$ denote the projection from $\R^{n+m}$ into $\R^{n+1}$ by
\begin{equation}\aligned\label{pi*}
\pi^*(x_1,\cdots,x_{n+m})=(x_1,\cdots,x_{n+1}).
\endaligned
\end{equation}
For $x=(x_1,\cdots,x_{n+1})$, let $C_r(x)$ be a cylinder in $\R^{n+1}$ defined by
$$C_r(x)=B_r((x_1,\cdots,x_n))\times(x_{n+1}-r,x_{n+1}+r).$$
Let $M^*_{k}=\pi^*(M_k)$. 
For any $q\in \R^n$ with $|q|^2+|u_k(q)|^2<r^2$ and $r\in(0,3)$, we have $|q|^2+|u^1_k(q)|^2<r^2$, then it follows that
\begin{equation}\aligned\label{MiCrMi*}
\pi^*(M_k\cap\mathbf{C}_{r})\subset M_k^*\cap C_r(0^{n+1}).
\endaligned
\end{equation}
From  \eqref{ratio}, we have $\mathcal{H}^n(M_k\cap \mathbf{B}_r(\mathbf{x}))\ge\omega_n r^n$ for any $\mathbf{x}\in M_k$ and $\mathbf{B}_r(\mathbf{x})\subset B_{R_k}\times\R^m$.
Since $|M_k|$ converges to the cone $C$, then $M_k\cap K$ converges to $\mathrm{spt}C\cap K$ in the Hausdorff sense for any compact set $K$ in $\R^{n+m}$. Thus,
\begin{equation}\aligned\label{Mir+ep}
M_k^*\cap C_r(0^{n+1})\subset\pi^*(M_k\cap\mathbf{C}_{r+\ep})
\endaligned
\end{equation}
for all the sufficiently large $k$ and $r\in(0,2]$. In particular, $\p M_k^*\cap C_2(0^{n+1})=\emptyset$.
Let $W_k^*=\pi^*(W_k)\cap M^*_{k}$.
With \eqref{MiCrMi*}, we get
\begin{equation}\aligned\label{MiWi**}
\pi^*((M_k\setminus W_k)\cap\mathbf{C}_{1})\subset C_1(0^{n+1})\cap M^*_{k}\setminus W_k^*.
\endaligned
\end{equation}
From \eqref{AreaEst}\eqref{MiWi**}, it follows that
\begin{equation}\aligned\label{AreaEst1}
\mathcal{H}^n\left(M_{k}\cap\mathbf{C}_{1}\right)\le(1+\ep)\mathcal{H}^n\left(C_1(0^{n+1})\cap M^*_{k}\setminus W_k^*\right)+\ep
\endaligned
\end{equation}
for all the sufficiently large $k$.

We claim that
\begin{center}
\emph{there is a constant $k_0>0$ such that for all $k\ge k_0$ and all the $C_r(z)\subset C_2(0^{n+1})$, $C_r(z)\setminus M^*_{k}$ has only two components.}
\end{center}
Assume that $C_r(z)\setminus M^*_{i_k}$ has at least 3 components  for a sequence $i_k\rightarrow\infty$.
Without loss of generality, we assume $\p M_{i_k}^*\cap C_2(0^{n+1})=\emptyset$ for all $k\ge1$.
Put $\mathcal{I}=\{i_k\}_{k=1}^\infty$.
Since $M_k$ converges locally to $\mathrm{spt}C$ in the Hausdorff sense, then
$C_r(z)\cap M_k^*$ converges to the $n$-dimensional ball
$$B_r(z)\cap\{(x_1,\cdots,x_n,y_1)\in\R^{n+1}|\, x_1=0\}$$
in the Hausdorff sense for any $C_r(z)\subset C_2(0^{n+1})$.
Hence, for each $k\in\mathcal{I}$ there is a component $F_k$ of $C_r(z)\setminus M_k^*$ such that
\begin{equation}\aligned\label{pFiMi*}
\p F_k\setminus M_k^*\subset (B_r(z')\times\{z_{n+1}-r\})\cup (B_r(z')\times\{z_{n+1}+r\})\cup G_k
\endaligned
\end{equation}
for some closed set $G_k$ in $\p B_r(z')\times[z_{n+1}-r,z_{n+1}+r]$ with $\lim_{k\to\infty}\mathcal{H}^n(G_k)=0$,
and $z=(z',z_{n+1})$.
Let $Y_k$ be a measurable vector field defined in $\pi_*(M_k)\times\R\subset\R^{n+1}$ by
\begin{equation}\aligned\label{Yi}
Y_k=v_{k}\sum_{i,j} g^{ij}_k\f{\p u^1_k}{\p x_i}E_j.
\endaligned
\end{equation}
It's clear that $\pi'(F_k)\subset\pi_*(M_k)$, where $\pi'$ denotes the projection from $\R^{n+1}$ to $\R^n$ by
$$\pi'(x_1,\cdots,x_{n+1})=(x_1,\cdots,x_n).$$
From the argument of \eqref{nuanue*}, there is a constant $c_\La>0$ depending only on $n,m,\La$ such that
\begin{equation}\aligned\label{|Yk|cLa}
|Y_k|\le c_\La.
\endaligned
\end{equation}
From \eqref{CompS} for $Y_k$ and \eqref{pFiMi*}, we have
\begin{equation}\aligned
\int_{\p F_k\cap M^*_k}\f{|D u^1_k|^2}{\sqrt{1+|D u^1_k|^2}v_k}\le\int_{\p F_k\setminus M^*_k}\left|\lan Y_k,E_{n+1}\ran\right|\le\int_{G_k}|Y_k|\le c_\La\mathcal{H}^n(G_k).
\endaligned
\end{equation}
From \eqref{viD1u1i}, the above inequality contradicts to $\lim_{k\to\infty}\mathcal{H}^n(G_k)=0$, and we have proven the claim.

Set
$$M^*_{k,t}=M^*_{k}\cap\{y_1=t\}$$
for any $t\in\R$.
From Lemma 22.2 in \cite{s},
\begin{equation}\aligned\label{nx1ya0*}
\lim_{k\rightarrow\infty}\int_{M_k\cap \mathbf{C}_{5/2}}\left(\left|\n_{M_k}x_1\right|^2+\sum_{\a=2}^m\left|\n_{M_k}y_\a\right|^2\right)=0,
\endaligned
\end{equation}
where $\n_{M_k}$ denotes the Levi-Civita connection on $M_k$ for each $k$. From $M^*_{k}=\pi^*(M_k)$ and \eqref{Mir+ep}, it follows that
\begin{equation}\aligned\label{nx1ya0}
\lim_{k\rightarrow\infty}\int_{M^*_k\cap C_2(0^{n+1})}\left|\n_{M^*_k}x_1\right|^2=0,
\endaligned
\end{equation}
where $\n_{M^*_k}$ denotes the Levi-Civita connection on $M^*_k$ for each $k$.

Let $\g_{k,t}=C_{1}(0^{n+1})\cap M^*_k\cap\{y_1=t\}$ for any $t\in(-1,1)$.
Since $C_{1}(0^{n+1})\cap M^*_k$ is a graph, then $\pi'(\g_{k,t_1})\cap \pi'(\g_{k,t_2})=\emptyset$ for different $t_1,t_2$.
As $\g_{k,t}$ divides $C_{1}(0^{n+1})\cap M^*_k$ into two parts for each $t\in(-1,1)$, then $\pi'(\g_{k,t})$ also divides $\pi'(C_{1}(0^{n+1})\cap M^*_k)$ into two parts.
Moreover, $\pi'(C_{1}(0^{n+1})\cap M^*_k)$ is a domain that converges to the $(n-1)$-ball
$$\{(x_1,\cdots,x_n)\in\R^{n}|\, x_1=0,\, x_2^2+\cdots+x_n^2<1\}$$
in the Hausdorff sense. Note that $\g_{k,t}$ is isometric to $\pi'(\g_{k,t})$.
Let $\pi_1$ be a projection from $\R^{n+1}$ to $\R^n$ defined by $\pi_1(x_1,\cdots,x_{n+1})=(x_2,\cdots,x_{n+1})$.
Then from \eqref{nx1ya0}, it follows that
\begin{equation}\aligned\label{nx1ya00}
\lim_{k\rightarrow\infty}\mathcal{H}^{n}\left(\pi_1(C_2(0^{n+1}))\setminus\pi_1(M^*_k\cap C_2(0^{n+1}))\right)=0.
\endaligned
\end{equation}
Combining the co-area formula, $\mathcal{H}^n(W_k^*)=\mathcal{H}^n(\pi^*(W_k))<\ep_k$ and \eqref{nx1ya0}\eqref{nx1ya00},
we can choose a sequence $t_k\in(-1,1)$ and a countably $(n-1)$-rectifiable set $\g_k^*\subset \g_{k,t_k}$ with $\p\g_k^*\subset \p C_{1}(0^{n+1})\cap\{y_1=t_k\}$ such that
\begin{equation}\aligned\label{Hn-1Gi}
\mathcal{H}^{n-1}(\g_k^*)<(1+\ep)\omega_{n-1},
\endaligned
\end{equation}
and
\begin{equation}\aligned\label{Hn-1Gi*}
\mathcal{H}^{n-1}(\g_k^*\cap W^*_{k})<\ep\omega_{n-1},
\endaligned
\end{equation}
where $\ep$ is the small positive constant defined above.
Denote $\G_k=\g_k^*\times(-1,1)$.
Since $C_{1}(0^{n+1})\setminus M_k^*$ has only two components and $C_{1}(0^{n+1})\cap M_k^*$ converges in the Hausdorff sense to
$$\{(x_1,\cdots,x_n)\in\R^{n}|\, x_1=0,\, x_2^2+\cdots+x_n^2\le1\}\times[-1,1],$$
then for each $k$ there is an open set $\Om_k$ in $C_{1}(0^{n+1})$ such that $(C_{1}(0^{n+1})\cap M_k^*)\setminus \G_k\subset \p\Om_k$ and
$$\p\Om_k\subset M_k^*\cup \G_k\cup(B_{1}(0^n)\times\{-1,1\})\cup S_k,$$
where $S_k\subset\p B_1(0^n)\times[-1,1]$ is a closed set with $\lim_{k\to\infty}\mathcal{H}^n(S_k)=0$.

From \eqref{viD1u1i} and Lemma \ref{app1} in the Appendix I, we have
\begin{equation}\aligned\label{||YiEn|}
|\lan Y_k,E_1\ran|=v_k\left|\sum_{j} g^{1j}_k D_ju^1_k\right|\le1+\ep
\endaligned
\end{equation}
on $M^*_{k,t_k}\setminus W_k^*$ for the sufficiently large $k$.
From \eqref{CompS}, we have
\begin{equation}\aligned
\int_{C_{1}(0^{n+1})\cap M_k^*}\f{|D u^1_k|^2}{\sqrt{1+|D u^1_k|^2}v_k}\le\int_{\G_k}\left|\lan Y_k,E_1\ran\right|+\int_{S_k}|Y_k|.
\endaligned
\end{equation}
Combining \eqref{|Yk|cLa}\eqref{Hn-1Gi}\eqref{Hn-1Gi*}\eqref{||YiEn|}, one has
\begin{equation}\aligned\label{CMkDu1kvk}
&\int_{C_{1}(0^{n+1})\cap M_k^*}\f{|D u^1_k|^2}{\sqrt{1+|D u^1_k|^2}v_k}\\
\le&2(1+\ep)\mathcal{H}^{n-1}(\g_k^*\setminus W^*_{k})+2c_\La\mathcal{H}^{n-1}(\g_k^*\cap W^*_{k})+c_\La\mathcal{H}^n(S_k)\\
\le&2(1+\ep)^2\omega_{n-1}+2c_\La\ep\omega_{n-1}+c_\La\mathcal{H}^n(S_k).
\endaligned
\end{equation}
With \eqref{viD1u1i} and $\lim_{k\to\infty}\mathcal{H}^n(S_k)=0$, letting $k\rightarrow\infty$ in \eqref{CMkDu1kvk} infers
\begin{equation}\aligned\label{limkC1Mk*W*k}
\limsup_{k\rightarrow\infty}\mathcal{H}^n\left(C_{1}(0^{n+1})\cap M_k^*\setminus W^*_k\right)\le2(1+\ep)^2\omega_{n-1}+2c_\La\ep\omega_{n-1}.
\endaligned
\end{equation}
Recalling \eqref{AreaEst1}, we complete the proof by letting $\ep\rightarrow0$ in \eqref{limkC1Mk*W*k}.
\end{proof}

\begin{thm}\label{Stable}
If $|M_k|$ converges to a cylindrical varifold $V$ in $\R^{n+m}$ with
$$\mathrm{spt}V=\{(x_1,\cdots,x_n,y_1,\cdots,y_m)\in\R^{n+m}|\ (x_1,\cdots,x_n)\in V_*,y_{2}=\cdots=y_m=0\}$$
for a closed set $V_*$ in $\R^n$,
then $\mathrm{spt}V$ is stable.
\end{thm}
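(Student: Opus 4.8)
The plan is to reduce the assertion to a codimension‑one stability estimate and then to establish that estimate by transplanting to the limit the positive Jacobi field carried by the codimension‑one projections of the $M_k$.

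First I would set up the reduction. After relabelling coordinates, $\mathrm{spt}V$ isometrically splits off the line $\{t\mathbf{E}_{n+1}\}$ and lies in the subspace $\R^{n+1}:=\{(x_1,\dots,x_{n+1},0,\dots,0)\}$; as in Lemma~\ref{Cy*vmc} we may write $\mathrm{spt}V=V_*\times\R$, so that $\Sigma:=\mathrm{spt}V\subset\R^{n+1}$ is a stationary integral hypersurface (a varifold limit of the minimal $M_k$, of multiplicity one by Lemma~\ref{multi1}), whose singular set has zero $(n-1)$‑capacity. Because $V$ splits off $\R$ and is contained in $\R^{n+1}$, ``$V$ stable in $\R^{n+m}$'' is equivalent to ``$\Sigma$ stable in $\R^{n+1}$'', i.e.\ to
\begin{equation*}
\int_{\mathrm{reg}\,\Sigma}|A_\Sigma|^2\phi^2\,d\H^n\le\int_{\mathrm{reg}\,\Sigma}|\n_\Sigma\phi|^2\,d\H^n\qquad\text{for every }\phi\in C^\infty_c(\mathrm{reg}\,\Sigma),
\end{equation*}
and this is what I will prove.

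Then I would pass to the projected graphs $M^*_k:=\pi^*(M_k)=\mathrm{graph}_{u^1_k}\subset\R^{n+1}$. Exactly as in the proof of Lemma~\ref{multi1}, $|M^*_k|\to|\Sigma|$ with multiplicity one, and outside a set $W^*_k$ with $\H^n(W^*_k)\to0$ one has $v_k\le(1+\ep_k)|D_1u^1_k|$ with $|D_1u^1_k|\to\infty$ (see \eqref{viD1u1i}); by $|\La^2du_k|\le\La$ all singular values of $Du_k$ but one then tend to $0$, so the ``extra'' metric term $g^k_{ij}-(\de_{ij}+D_iu^1_kD_ju^1_k)=\sum_{\a\ge2}D_iu^\a_kD_ju^\a_k$ tends to $0$ locally; thus $M^*_k$ is asymptotically a minimal graph. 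By Allard's theorem and elliptic regularity applied to the \emph{minimal} $M_k$ one gets $M_k\to\mathrm{spt}V$ in $C^2_{\mathrm{loc}}$ over $\mathrm{reg}\,\Sigma$, hence $M^*_k\to\Sigma$ in $C^2_{\mathrm{loc}}$ over $\mathrm{reg}\,\Sigma\setminus W^*_k$; in particular the second fundamental forms and induced metrics converge on compact subsets of $\mathrm{reg}\,\Sigma$. On $\mathrm{reg}\,M^*_k$ the positive function $\tilde v_k^{-1}:=\langle\nu_{M^*_k},\mathbf{E}_{n+1}\rangle=(1+|Du^1_k|^2)^{-1/2}$ is the normal part of the Killing field $\mathbf{E}_{n+1}$ of $\R^{n+1}$, so (translations being isometries) $\De_{M^*_k}\tilde v_k^{-1}+|A_{M^*_k}|^2\tilde v_k^{-1}=-\langle\n_{M^*_k}H_{M^*_k},\mathbf{E}_{n+1}^\top\rangle$, and the substitution $\phi=\tilde v_k^{-1}\psi$ yields
\begin{equation*}
\int_{M^*_k}\big(|\n\phi|^2-|A_{M^*_k}|^2\phi^2\big)=\int_{M^*_k}\tilde v_k^{-2}\,|\n(\tilde v_k\phi)|^2+\int_{M^*_k}\tilde v_k\,\phi^2\,\langle\n_{M^*_k}H_{M^*_k},\mathbf{E}_{n+1}^\top\rangle .
\end{equation*}
The first right‑hand term is $\ge0$; the error term is handled \emph{without differentiating $u^1_k$ twice}, by rewriting it through the weakly divergence‑free vector field $Y_k=v_k\sum_{i,j}g^{ij}_kD_iu^1_kE_j$ of \eqref{Yi}, which satisfies $|Y_k|\le c_\La$ (see \eqref{|Yk|cLa}) and pairs to zero against gradients of test functions (the minimal surface system for $u_k$, cf.\ \eqref{CompS}); together with $\H^n(W^*_k)\to0$ and the $(n-1)$‑slice bound \eqref{Hn-1Gi*} this forces the error $\to0$. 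Finally, given $\phi\in C^\infty_c(\mathrm{reg}\,\Sigma)$ I transplant it to $M^*_k$ by nearest‑point projection (legitimate, and supported off $W^*_k$ for $k$ large, since $\mathrm{supp}\,\phi$ is a fixed compact subset of $\mathrm{reg}\,\Sigma$ and $\H^n(W^*_k)\to0$) and let $k\to\infty$: by the $C^2_{\mathrm{loc}}$ convergence the two sides pass to $\int_{\mathrm{reg}\,\Sigma}|A_\Sigma|^2\phi^2$ and $\int_{\mathrm{reg}\,\Sigma}|\n_\Sigma\phi|^2$ while the error disappears, which is the required inequality.

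The hard part will be the error term in the displayed identity: it carries the small factors $\sum_{\a\ge2}|Du^\a_k|^2$, but it is also multiplied by the second derivatives of $u^1_k$ and by the slope $\tilde v_k$, which blows up because $\Sigma$ is ``vertical'' over $\R^n$, so no pointwise estimate can work. The resolution is to express the error entirely in terms of the \emph{bounded} field $Y_k$ and the exact identity $\int\langle Y_k,\n\phi\rangle=0$, so that it is controlled purely by $\H^n(W^*_k)$ and its $(n-1)$‑dimensional slices — precisely the measure estimates already produced in the proof of Lemma~\ref{multi1}. A minor secondary point is to justify that $\mathrm{sing}\,\Sigma=\mathrm{sing}\,V_*\times\R$ is $(n-1)$‑capacity null, so that smooth test functions on $\mathrm{reg}\,\Sigma$ genuinely suffice; this follows from the singular‑set dimension bounds available in the situations where the theorem is applied.
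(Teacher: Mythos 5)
Your plan is genuinely different from the paper's. The paper never projects to $\R^{n+1}$: it works directly on the minimal graphs $M_k\subset\R^{n+m}$ with the \emph{full} slope $v_k^{-1}=\langle\xi_k,\mathbf{E}_1\wedge\cdots\wedge\mathbf{E}_n\rangle$, uses the exact identity \eqref{DeMv-1} for $\De_{M_k}v_k^{-1}$, and exploits the smooth convergence on $\mathrm{reg}\,V$ (Allard $+$ multiplicity one from Lemma \ref{multi1}) together with the bound $\la_{i,k}\la_{j,k}\le\La$ to show that the cross-terms in \eqref{DeMv-1} are $\le\ep_k|B_{M_k}|^2+\ep_k$ with $\ep_k\to0$, because the higher-codimension coefficients $h^k_{\a,ij}$ ($\a\ge2$) vanish in the limit. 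This makes $v_k^{-1}$ an almost-Jacobi supersolution \emph{on the minimal $M_k$ itself}, after which the stability integral follows by the standard substitution and passage to the limit.

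Your route — project to $M^*_k=\pi^*(M_k)\subset\R^{n+1}$ and use the hypersurface Jacobi field $\tilde v_k^{-1}=\langle\nu_{M^*_k},\mathbf{E}_{n+1}\rangle$ — has a genuine gap precisely where you flag the difficulty. The identity you display is formally correct, but $M^*_k$ is not minimal, and the error $\int_{M^*_k}\tilde v_k\phi^2\langle\n_{M^*_k}H_{M^*_k},\mathbf{E}_{n+1}^\top\rangle$ carries the unbounded factor $\tilde v_k$ against a third-order quantity in $u^1_k$ which itself grows like powers of $\tilde v_k$ (from \eqref{haijua}, $D_{11}u^1_k\sim h^k_{1,11}\tilde v_k^{\,3}$). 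The measure estimates $\H^n(W^*_k)\to0$ and the $(n-1)$-slice bound \eqref{Hn-1Gi*} from Lemma \ref{multi1} are soft ($L^1$-type) statements that cannot by themselves defeat this pointwise blow-up, and the promised "rewriting through $Y_k$" is not a computation: $\mathrm{div}_{\R^n}Y_k=0$ reflects harmonicity of $u^1_k$ on $M_k$, while $H_{M^*_k}$ is a divergence with the wrong weight $\tilde v_k^{-1}$, so their difference is a bounded vector field with no a priori control on its divergence. What actually saves the day in the paper is not the $W^*_k$-machinery at all (that enters only into multiplicity one, not into stability), but the locally smooth convergence on fixed compact subsets of $\mathrm{reg}\,V$; you would need to redo your error estimate by feeding in that smooth convergence and tracking orders in $\tilde v_k$ carefully — a more delicate calculation than the one-line identity \eqref{DeMv-1} on $M_k$ that the paper uses. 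The final "minor secondary point" about $(n-1)$-capacity nullity of $\mathrm{sing}\,\Sigma$ is unnecessary: the theorem (see the remark following it) asserts stability only for test functions supported in $\mathrm{reg}\,V$, so no capacity estimate is required.
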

\textbf{Remark.} Here, the stable $\mathrm{spt}V$ means that $\mathrm{spt}V$ is stable outside its singular set. 
\begin{proof}
Let $M=\mathrm{reg}V$ be the regular part of $V$.
Let $\nu_M$ denote the unit normal vector of $M$ in $\R^{n+1}$.
Since $M$ is open in $\mathrm{reg}V$ by Allard's regularity Theorem \cite{a}, then
for any  point $q\in M$, there is a constant $r=r_q$ such that $\mathbf{B}_{2r}(q)\cap \mathrm{spt}V\subset M$.
Let $q_k\in M_k$ with $q_k\rightarrow q$.
Let $\la_{1,k},\cdots,\la_{n,k}$ be the singular values of $Du_k$ with $\la_{1,k}\ge\cdots\ge\la_{n,k}\ge0$.
From Allard's regularity Theorem \cite{a} and Lemma \ref{multi1}, $M_k\cap \mathbf{B}_{\f74r}(q_k)$ converges to $M\cap \mathbf{B}_{\f74r}(q)$ smoothly.
Then $$\lim_{k\rightarrow\infty}\inf_{M_k\cap \mathbf{B}_{\f32r}(q_k)}\la_{1,k}=\infty.$$
For a point $z\in\mathbf{B}_r(q)\cap M$, let $z_k\in\mathbf{B}_r(q)\cap M_k$ be a sequence of points with $z_k\rightarrow z$ such that $M_k$ is smooth at $z_k$ for each $k$.
Let $\{\nu_k^\a\}_{\a=1}^m$ be a local orthonormal frame of the normal bundle $NM_k$ on $\mathbf{B}_r(q_k)\cap M_k$ such that
\begin{equation}\aligned\label{nuka*}
\nu_k^\a=\f1{\sqrt{1+\la_{\a,k}^2}}\left(-\la_{\a,k}\mathbf{E}_\a+\mathbf{E}_{n+\a}\right)\qquad \mathrm{at}\ z_k
\endaligned
\end{equation}
for each $k,\a$. Then $\nu_k^1(z_k)\rightarrow-\mathbf{E}_1$, $\nu_k^\be(z_k)\rightarrow-\mathbf{E}_{n+\be}$ for each $\be=2,\cdots,m$.

Let us now choose a local orthonormal tangent frame field $\{e_j^k\}_{j=1,...,n}$ on $\mathbf{B}_r(q_k)\cap M_k$, such that
\begin{equation}\aligned\label{ekj*}
e_j^k=\f1{\sqrt{1+\la_{j,k}^2}}\left(\mathbf{E}_j+\la_{j,k}\mathbf{E}_{n+j}\right)\qquad \mathrm{at}\ z_k
\endaligned
\end{equation}
for each $k,j$.
Let $h^{k}_{\a, ij}$ denote the components of the second fundamental form of $M_k$ defined by
$$h^{k}_{\a,ij}=\left\lan\bn_{e_i^k}e_j^k,\nu^\a_k\right\ran.$$
Since $M_k\cap \mathbf{B}_{\f74r}(q_k)$ converges to $M\cap \mathbf{B}_{\f74r}(q)$ smoothly, it follows that
\begin{equation}\aligned
\lim_{k\rightarrow\infty}\sup_{\mathbf{B}_{r}(q_k)\cap M_k}\sum_{\a=2}^m\sum_{i,j=1}^n(h^k_{\a,ij})^2=0.
\endaligned
\end{equation}
With the Cauchy inequality and the above limit, we obtain
\begin{equation}\aligned\label{naMinuai*}
\sum_{l,i\neq j}|h^k_{i,jl}h^k_{j,il}|+\sum_{l,i\neq j}|h^k_{i,il}h^k_{j,jl}|\le\ep_k|B_{M_k}|^2+\ep_k
\endaligned
\end{equation}
on $\mathbf{B}_{r}(q_k)\cap M_k$ for some sequence $\ep_k\to0$ as $k\rightarrow\infty$.
Let $\De_{M_k}$ and $B_{M_k}$ denote the Laplacian and the second fundamental form of $M_k$, respectively.
Combining  \eqref{DeMv-1}, \eqref{naMinuai*} and the bounded 2-dilation condition, we have
\begin{equation}\aligned
\De_{M_k} v_k^{-1}=&-v_k^{-1}\left(\sum_{\a,i,j}(h^k_{\a,ij})^2+\sum_{l,i\neq j}\la_{i,k}\la_{j,k}h^k_{i,jl}h^k_{j,il}-\sum_{l,i\neq j}\la_{i,k}\la_{j,k}h^k_{i,il}h^k_{j,jl}\right)\\
\le&-v_k^{-1}\left(\sum_{\a,i,j}(h^k_{\a,ij})^2+\La\sum_{l,i\neq j}|h^k_{i,jl}h^k_{j,il}|+\La\sum_{l,i\neq j}|h^k_{i,il}h^k_{j,jl}|\right)\\
\le&-v_k^{-1}\left((1-\ep_k)|B_{M_k}|^2-\ep_k\right)
\endaligned
\end{equation}
at $z_k$. Hence the inequality
\begin{equation}\aligned\label{DeMkvkepk}
\De_{M_k} v_k^{-1}\le-v_k^{-1}\left((1-\ep_k)|B_{M_k}|^2-\ep_k\right)
\endaligned
\end{equation}
holds on $\mathbf{B}_{r}(q_k)\cap M_k$.

Let $\phi$ be a smooth function in $M$ with compact support and $\mathrm{spt} \phi\cap \mathrm{spt}V\subset M$. From \eqref{DeMkvkepk} and the covering lemma, we have
\begin{equation}\aligned
\De_{M_k} v_k^{-1}\le-v_k^{-1}\left((1-\widetilde{\ep_k})|B_{M_k}|^2-\widetilde{\ep_k}\right)
\endaligned
\end{equation}
on $\mathrm{spt} \phi\cap M_k$ for some sequence of positive numbers $\widetilde{\ep_k}$ with $\lim_{k\rightarrow\infty}\widetilde{\ep_k}=0$.
Then with the Cauchy inequality, we have
\begin{equation}\aligned
&\int_{M_k}\left((1-\widetilde{\ep_k})|B_{M_k}|^2-\widetilde{\ep_k}\right)\phi^2\\
\le&-\int_{M_k}\phi^2v_k\De_{M_k} v_k^{-1}=-\int_{M_k}v_k^{-2}\lan\nabla_{M_k}(\phi^2v_k),\nabla_{M_k}v_k\ran\\
=&-\int_{M_k}\phi^2v_k^{-2}\left|\nabla_{M_k}v_k\right|^2-2\int_{M_k}\phi v_k^{-1}\lan\nabla_{M_k}\phi,\nabla_{M_k}v_k\ran
\le\int_{M_k}\left|\nabla_{M_k}\phi\right|^2.
\endaligned
\end{equation}
Let $B_M$ denote the second fundamental form of $M$ in $\R^{n+1}$.
Letting $k\rightarrow\infty$ in the above inequality implies
\begin{equation}\aligned
\int_{M}|B_{M}|^2\phi^2\le\int_{M}\left|\nabla_{M}\phi\right|^2.
\endaligned
\end{equation}
We complete the proof.
\end{proof}
Let $M$ be a locally Lipschitz minimal graph over $\R^n$ of codimension $m\ge1$ with bounded
2-dilation of its graphic function.
From Lemma \ref{VGM} and the compactness theorem for integral varifolds (Theorem 42.7 in \cite{s}), we can suppose that a minimal cone $C$ is a tangent cone of $M$ at infinity.
Namely, there is a sequence $r_k\to\infty$ such that $|\f1{r_k}M|$ converges in the varifold sense to $C$ in $\R^{n+m}$ with $0^{n+m}\in C$.
Combining Lemma \ref{multi1} and Theorem \ref{Stable}, we can get Theorem \ref{main2} immediately.

\Section{Neumann-Poincar\'e inequality on stationary indecomposable currents}{Neumann-Poincar\'e inequality on stationary indecomposable currents}

For integers $n\ge2,m\ge1$, a constant $\La>0$ and an open set $\Om\subset\R^n$,
let $\mathcal{M}_{n,m,\La,\Om}$ denote the set containing all the locally Lipschitz
minimal graphs over $\Om$ of arbitrary codimension $m\ge1$ with 2-dilation of
their graphic functions $\le\La$.
Let $\overline{\mathcal{M}}_{n,m,\La,\Om}$ be the closure of the currents associated with minimal graphs in $\mathcal{M}_{n,m,\La,\Om}$. Namely, for an integral current $T$ in $\Om\times\R^m$, we say $T\in\overline{\mathcal{M}}_{n,m,\La,\Om}$ if and only if there is a sequence of minimal graphs $M_k\in\mathcal{M}_{n,m,\La,\Om}$ such that for any open $W\subset\subset\Om\times\R^m$, $T\llcorner W$ is the (weak) limit of $[|M_k\cap W|]$ as $k\to\infty$.
\begin{lem}\label{sptTV}
For a sequence $M_k\in\mathcal{M}_{n,m,\La,\Om}$, let $T$ be a current in $\R^{n+m}$ and $V$ be a multiplicity one rectifiable stationary $n$-varifold in $\R^{n+m}$ so that for any open $W\subset\subset\Om\times\R^m$, $[|M_k\cap W|]$ converges weakly to $T\llcorner W$ and $|M_k\cap W|$ converges to $V\llcorner W$ in the varifold sense. Then $|T|=V$ in $\Om\times\R^m$.
\end{lem}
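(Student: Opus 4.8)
The plan is to combine the closure theorem, lower semicontinuity of mass, and Allard's regularity theorem. First, by Lemma~\ref{VGM} the masses $\mathbb{M}([|M_k\cap W|])=\mathcal H^n(M_k\cap W)$ are uniformly bounded for each $W\subset\subset\Om\times\R^m$; since each $[|M_k\cap W|]$ is an integer multiplicity current, the Federer--Fleming closure theorem identifies the weak limit $T\llcorner W$ as an integer multiplicity rectifiable current, and we may write $|T|=\theta_T|S_T|$ with $S_T$ countably $n$-rectifiable and $\theta_T\ge1$ integer-valued $\mathcal H^n$-a.e. Next, for any open $W$ with compact closure in $\Om\times\R^m$, lower semicontinuity of mass under weak convergence gives $\|T\|(W)\le\liminf_k\mathcal H^n(M_k\cap W)\le\liminf_k\mu_{|M_k|}(\overline W)$, while varifold convergence $\mu_{|M_k|}\to\mu_V$ gives $\limsup_k\mu_{|M_k|}(\overline W)\le\mu_V(\overline W)$. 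Exhausting an arbitrary open set by such $W$ yields $\|T\|\le\mu_V$ as Radon measures on $\Om\times\R^m$; in particular $\mathrm{spt}T\subseteq\mathrm{spt}V$.

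It remains to prove the opposite inequality $\mu_V$-almost everywhere. Since $V$ is multiplicity one, $\mu_V=\mathcal H^n\llcorner\mathrm{spt}V$ and the density $\Theta(V,\cdot)$ equals $1$ at $\mathcal H^n$-a.e. point of $\mathrm{spt}V$; at every such point Allard's regularity theorem shows that $\mathrm{spt}V$ is, near it, a connected embedded $C^{1,\a}$ submanifold of $\R^{n+m}$ carrying $V$ with multiplicity one. Hence the regular set $\mathrm{reg}V$ is open in $\mathrm{spt}V$ and has full $\mathcal H^n$-measure, i.e. $\mu_V((\Om\times\R^m)\setminus\mathrm{reg}V)=0$.

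Now fix $p\in\mathrm{reg}V$ with $\mathbf B_\rho(p)\subset\subset\Om\times\R^m$, and choose $\rho$ small and generic so that $\mu_V(\p\mathbf B_\rho(p))=0$ and, by monotonicity and $\Theta(V,p)=1$, $\rho^{-n}\mu_V(\mathbf B_\rho(p))<(1+\ep(n)/2)\om_n$, where $\ep(n)$ is Allard's constant. Pick $p_k\in\mathrm{spt}M_k$ with $p_k\to p$; since $\mathbf B_\rho(p_k)\subset\mathbf B_{\rho+\de}(p)$ for every $\de>0$ and all large $k$, we obtain $\limsup_k\rho^{-n}\mu_{|M_k|}(\mathbf B_\rho(p_k))\le\rho^{-n}\mu_V(\mathbf B_\rho(p))<(1+\ep(n))\om_n$. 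Thus Allard's theorem applies to the stationary varifold $|M_k|$ in $\mathbf B_\rho(p_k)$ for large $k$: $M_k\cap\mathbf B_{\rho/2}(p_k)$ is a single $C^{1,\a}$ graph with uniform estimates and multiplicity one. Passing to a subsequence, these graphs converge in $C^1$, and by the varifold convergence their limit must coincide with $\mathrm{spt}V$ inside $\mathbf B_{\rho/3}(p)$. Because the orientation $\xi_k$ of $M_k$ is coherent --- one has $\lan dx_1\w\cdots\w dx_n,\xi_k\ran=v_k^{-1}>0$ --- it converges uniformly to a continuous orientation on this single limiting sheet with no sign cancellation, so $[|M_k\cap\mathbf B_{\rho/3}(p)|]$ converges as a current to $\mathrm{spt}V\cap\mathbf B_{\rho/3}(p)$ counted with multiplicity one. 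Therefore $|T|\llcorner\mathbf B_{\rho/3}(p)=V\llcorner\mathbf B_{\rho/3}(p)$.

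Finally, the balls $\mathbf B_{\rho/3}(p)$ over all $p\in\mathrm{reg}V$ (with the above properties) cover an open set $U$ whose complement meets $\mathrm{spt}V$ in an $\mathcal H^n$-null set; hence $\mu_V$ is concentrated on $U$, and since $\|T\|\le\mu_V$ the measure $\|T\|$ is too. On $U$ we have shown $|T|=V$, so $\|T\|=\mu_V$ on all of $\Om\times\R^m$, whence $\theta_T\equiv1$ $\mathcal H^n$-a.e., $S_T$ coincides $\mathcal H^n$-a.e. with the carrier of $V$, and $|T|=V$ in $\Om\times\R^m$. The main obstacle is the application of Allard's theorem to the sequence $M_k$: one must transfer the small density-ratio hypothesis from $V$ to the $M_k$ without losing a dimensional constant (which is why the balls are kept centered at $p_k$ at the same radius $\rho$), verify that the $C^1$-limit of the Allard graphs is all of $\mathrm{spt}V$ near $p$ rather than a thinner sheet, and rule out orientation cancellation in the current limit, where the graphical positivity $v_k^{-1}>0$ is essential.
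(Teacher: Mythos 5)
Your proof is correct and follows essentially the same route as the paper's: establish $\|T\|\le\mu_V$ by lower semicontinuity of mass, apply Allard's regularity theorem at regular points of $V$ to get local $C^1$ graphical convergence of the $M_k$, and use the coherent orientation $\langle\mathbf{E}_1\wedge\cdots\wedge\mathbf{E}_n,\xi_k\rangle=v_k^{-1}>0$ to rule out cancellation and identify the current limit with the multiplicity-one varifold there. The paper packages the local step by testing $T$ against a smooth $n$-form approximating the limiting orientation rather than by explicitly transferring Allard's density-ratio hypothesis to the $M_k$, but the mechanism is the same; your version simply spells out several details the paper leaves implicit (the Federer--Fleming closure step, the generic choice of $\rho$ with $\mu_V(\partial\mathbf{B}_\rho(p))=0$, and the two-sided mass comparison).
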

\begin{proof}
For any open $W\subset\subset\Om\times\R^m$ and any $\omega\in\mathcal{D}^{n-1}(W)$,
\begin{equation}\aligned
\lan\p T,\omega\ran=\lan T,d\omega\ran=\lim_{k\rightarrow\infty}\lan [|M_k|],d\omega\ran=\lim_{k\rightarrow\infty}\lan \p[|M_k|],\omega\ran=0,
\endaligned
\end{equation}
which means $\p T\llcorner W=0$.
Let $W'$ be an open set in $W$ such that $\mathrm{spt}V\cap \overline{W'}$ is contained in the regular part of spt$V$.
From Allard's regularity theorem, $M_k\cap W'$ converges to spt$V\cap W'$ smoothly as $V$ has multiplicity one from Lemma \ref{multi1}.
Hence, for any $p\in\mathrm{spt}V\cap W$ and any $\ep\in(0,d(p,\p W))$, there is an orientation $\xi$ of spt$V\cap \mathbf{B}_\ep(p)$ such that
\begin{equation}\aligned
\lim_{k\rightarrow\infty}\int_{M_k\cap \mathbf{B}_\ep(p)}\lan \xi_k,\e\ran=\int_{\mathrm{spt}V\cap \mathbf{B}_\ep(p)}\lan \xi,\e\ran\qquad \mathrm{for\ any}\ \e\in\mathcal{D}^{n}(\R^{n+m}),
\endaligned
\end{equation}
where $\xi_k$ denotes the orientation of $M_k$ defined by \eqref{orientation}.
For any $\ep'>0$, there is a smooth $\xi_*\in\mathcal{D}^{n}(\mathbf{B}_\ep(p))$ such that
$$\int_{\mathrm{spt} V\cap \mathbf{B}_\ep(p)}(1-|\lan\xi,\xi_*\ran|)<\ep'\mathcal{H}^n(\mathrm{spt}V\cap \mathbf{B}_\ep(p)).$$ Hence
\begin{equation}\aligned\label{Txi*Vep'}
\lan T,\xi_*\ran=&\lim_{k\rightarrow\infty}\lan [|M_k|],\xi_*\ran=\lim_{k\rightarrow\infty}\int_{M_k}\lan \xi_k,\xi_*\ran\\
=&\int_{\mathrm{spt}V\cap \mathbf{B}_\ep(p)}\lan \xi,\xi_*\ran
>(1-\ep')\mathcal{H}^n(\mathrm{spt}V\cap \mathbf{B}_\ep(p)),
\endaligned
\end{equation}
which implies spt$V\cap W\subset\mathrm{spt}T$.
So we obtain spt$T\cap W=\mathrm{spt}V\cap W$. From \eqref{Txi*Vep'}, we get $\mathbb{M}(T\llcorner\mathbf{B}_\ep(p))=\mathcal{H}^n(\mathrm{spt}V\cap \mathbf{B}_\ep(p))$, which means that $T$ has multiplicity one on spt$T\cap W$.
\end{proof}
As a corollary, we immediately have the following corollary.
\begin{cor}\label{MultioneT}
Any current $T\in\overline{\mathcal{M}}_{n,m,\La,\Om}$ has multiplicity one on $\mathrm{spt}T\cap(\Om\times\R^m)$.
\end{cor}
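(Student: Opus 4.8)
The plan is to realize $T$, along a subsequence of a defining sequence, simultaneously as a current limit and as a varifold limit, and then to invoke Lemma~\ref{sptTV}. By the definition of $\overline{\mathcal{M}}_{n,m,\La,\Om}$ there is a sequence $M_k\in\mathcal{M}_{n,m,\La,\Om}$ with $[|M_k\cap W|]\to T\llcorner W$ weakly for every open $W\subset\subset\Om\times\R^m$. Fix an exhaustion $W_1\subset\subset W_2\subset\subset\cdots$ of $\Om\times\R^m$ by relatively compact open sets. Since each $\overline{W_j}$ is compact in $\Om\times\R^m$, it can be covered by finitely many balls $\mathbf{B}_{2r}(\mathbf{x})$ whose projections to $\R^n$ are relatively compact in $\Om$, and Lemma~\ref{VGM} applied on each such ball gives $\sup_k\mathcal{H}^n(M_k\cap W_j)<\infty$ for every $j$. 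Each $|M_k|$ is an integral varifold that is stationary in $\Om\times\R^m$, so by the compactness theorem for integral varifolds (Theorem~42.7 of \cite{s}) together with a diagonal argument over $\{W_j\}$ there are a subsequence (not relabelled) and an integral stationary $n$-varifold $V$ in $\Om\times\R^m$ with $|M_k\cap W_j|\to V\llcorner W_j$ in the varifold sense for every $j$; along this subsequence we still have $[|M_k\cap W_j|]\to T\llcorner W_j$ weakly.

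The essential point is that $V$ has multiplicity one. For $\mathcal{H}^n$-a.e.\ $\mathbf{x}\in\mathrm{spt}V$, the unique tangent varifold of the rectifiable varifold $V$ at $\mathbf{x}$ is an $n$-plane with constant multiplicity equal to the density of $V$ at $\mathbf{x}$, which is a positive integer. Rescaling the $M_k$ about $\mathbf{x}$ (the $2$-dilation bound is scale invariant, being a condition on the singular values of $Du_k$) and passing to a suitable diagonal subsequence exhibits this tangent varifold as a varifold limit of locally Lipschitz minimal graphs over balls exhausting $\R^n$, each with $2$-dilation $\le\La$ — precisely the situation of Section~4. Since the support of this limit is an $n$-plane, Lemma~\ref{multi1} forces its multiplicity, hence the density of $V$ at $\mathbf{x}$, to be $1$. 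As the density of $V$ equals $1$ at $\mathcal{H}^n$-a.e.\ point of $\mathrm{spt}V$, the multiplicity function of $V$ is identically $1$. This reduction to Lemma~\ref{multi1} is the only delicate step; the rest is routine bookkeeping with compactness theorems.

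It remains to assemble the pieces. The varifold $V$ is a multiplicity one rectifiable stationary $n$-varifold, and both $[|M_k\cap W|]\to T\llcorner W$ and $|M_k\cap W|\to V\llcorner W$ hold for every open $W\subset\subset\Om\times\R^m$, so Lemma~\ref{sptTV} applies and gives $|T|=V$ in $\Om\times\R^m$. In particular $\mathrm{spt}T\cap(\Om\times\R^m)=\mathrm{spt}V\cap(\Om\times\R^m)$, and the multiplicity function of $T$ on this set coincides with that of $V$, which is $1$. Hence $T$ has multiplicity one on $\mathrm{spt}T\cap(\Om\times\R^m)$, as asserted.
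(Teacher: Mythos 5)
Your proof is correct and matches the paper's implicit approach: the paper labels the corollary ``immediate'' from Lemma~\ref{sptTV}, and you supply the step it omits, namely that a varifold limit $V$ of the defining sequence has multiplicity one, which you obtain by blowing up at $\mathcal{H}^n$-a.e.\ point of $V$ and reducing to the tangent-plane case of Lemma~\ref{multi1}. One cosmetic point: the rescaling should be centered on points $\mathbf{x}_k\in M_k$ with $\mathbf{x}_k\to\mathbf{x}$ rather than on $\mathbf{x}$ itself, so that each rescaled surface passes through the origin and is a graph over a ball $B_{R_k}$ with graphic function vanishing at $0^n$, exactly matching the normalization at the start of Section~4.
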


Let $M$ be a locally Lipschitz minimal graph over $\Om$ in $\R^{n+m}$. From \eqref{SobM0}, there holds the isoperimetric inequality
\begin{equation}\aligned\label{ISO**}
\left(\mathcal{H}^{n}(K)\right)^{\f{n-1}n}\le c_n\mathcal{H}^{n-1}(\p K)
\endaligned
\end{equation}
for every bounded closed subset $K$ of $M$ with countably rectifiable boundary $\p K$, where $c_n>0$ is a constant depending only on $n$.

In \cite{b-g}, Bombieri-Giusti proved that any codimension one minimizing current in Euclidean space is indecomposable, and established a Neumann-Poincar$\mathrm{\acute{e}}$ inequality on such currents.
Inspired by their ideas in \cite{b-g}, we introduce a concept '\emph{stationary indecomposable}' for integral currents associated with stationary varifolds as follows. 
\begin{defi}Let $T$ be an integral current such that $|T|$ is a stationary varifold. 
We say $T$ \emph{stationary decomposable} in an open set $W$ if there are two components $T_1,T_2$ of $T\llcorner W$ such that $|T_1|,|T_2|$ are stationary varifolds in $W$. On the contrary, we say $T$ \emph{stationary indecomposable} in $W$.
Furthermore, $T_1$ is said to be a \emph{stationary indecomposable} component of $T\llcorner W$ if $T_1$ is a component of $T\llcorner W$, and $T_1$ is stationary indecomposable in $W$.
\end{defi}
By the above definition, for an integral current $S$ with stationary $|S|$, if $S$ is indecomposable, then $S$ is stationary indecomposable.
\begin{rem}
In general, for an integral decomposable current $T$ with $|T|$ stationary, the indecomposable components of $T$ may be not stationary. For instance, Let $U_1=\{(r\cos\th,r\sin\th)\in\R^2|\ r>0,|\th|<\ep\}$, $U_2=\{(r\cos\th,r\sin\th)\in\R^2|\ r>0,|\th-\pi/3|<\ep\}$, $U_3=\{(r\cos\th,r\sin\th)\in\R^2|\ r>0,|\th-2\pi/3|<\ep\}$, and $U=U_1\cup U_2\cup U_3$. Then $[|\p U|]$ is an integral current with $|\p U|$ stationary, and $[|\p U_1|]$, $[|\p U_2|]$, $[|\p U_3|]$ are 3 components of $[|\p U|]$. Clearly, for each $i=1,2,3$, $[|\p U_i|]$ is not stationary for the suitably small $\ep>0$. However,  for any codimension one area-minimizing current $S$ in Euclidean space,  $S$ is not only indecomposable but also stationary indecomposable from the proof of Theorem 1 in \cite{b-g}.
\end{rem}
\begin{lem}\label{TindecdeT}
Let $\mathbf{T}$ be an integral current in $\overline{\mathcal{M}}_{n,m,\La,B_3}$, and $T$ is a stationary indecomposable component of $\mathbf{T}\llcorner\mathbf{B}_2$.
Then there exists a constant $\de_T>0$ depending on $T$ such that
\begin{equation}\aligned\label{TSB21}
\mathcal{H}^{n-1}(\mathrm{spt}T\cap \p U\cap \mathbf{B}_2)
\ge\de_T\left(\min\{\mathcal{H}^n(\mathrm{spt}T\cap U\cap \mathbf{B}_1),\mathcal{H}^n(\mathrm{spt}T\cap \mathbf{B}_1\setminus U)\}\right)^{\f{n-1}n}
\endaligned
\end{equation}
for any open $U\subset \mathbf{B}_2$ with $(n-1)$-rectifiable $\mathrm{spt}T\cap\p U$.
\end{lem}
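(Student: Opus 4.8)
The plan is to argue by contradiction, following the scheme Bombieri--Giusti used for minimizing currents in \cite{b-g}. Write $V=|T|$; by Corollary \ref{MultioneT} this is a multiplicity one stationary varifold in $\mathbf{B}_2$ with $\mathrm{spt}V=\mathrm{spt}T$, and $\p T=0$ in $\mathbf{B}_2$, because $\p\mathbf{T}=0$ in $B_3\times\R^m\supset\ol{\mathbf{B}_2}$ and the boundary masses of the components of $\mathbf{T}\llcorner\mathbf{B}_2$ add up. If \eqref{TSB21} failed one would obtain open sets $U_j\subset\mathbf{B}_2$ with $(n-1)$-rectifiable $\mathrm{spt}T\cap\p U_j$ such that, setting $m_j:=\min\{\mathcal{H}^n(\mathrm{spt}T\cap U_j\cap\mathbf{B}_1),\mathcal{H}^n(\mathrm{spt}T\cap\mathbf{B}_1\setminus U_j)\}$ and $P_j:=\mathcal{H}^{n-1}(\mathrm{spt}T\cap\p U_j\cap\mathbf{B}_2)$, one has $m_j>0$ (otherwise \eqref{TSB21} is trivial) and $P_j<\f1j m_j^{\f{n-1}n}$. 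Since $\mathcal{H}^n(\mathrm{spt}T\cap\mathbf{B}_1)<\infty$ by Lemma \ref{VGM} and Corollary \ref{MultioneT}, the $m_j$ are bounded, hence $P_j\to0$. Regarding the $\chi_{U_j}$ as functions on $(\mathrm{spt}V,\mathcal{H}^n\llcorner\mathrm{spt}V)$, their relative perimeters in $\mathrm{spt}V\cap\mathbf{B}_\rho$ are $\le P_j$ for each $\rho<2$; I would then invoke the Sobolev inequality \eqref{SobM0} on the stationary varifold $V$ to run the standard $BV$-compactness argument and pass to a subsequence with $\chi_{U_j}\to\chi_E$ in $L^1_{loc}(\mathrm{spt}V\cap\mathbf{B}_2)$, where $E$ has vanishing relative perimeter in $\mathbf{B}_2$ by lower semicontinuity.

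\textbf{The dichotomy.} If both $\mathcal{H}^n(E\cap\mathbf{B}_2)>0$ and $\mathcal{H}^n((\mathrm{spt}V\cap\mathbf{B}_2)\setminus E)>0$, put $T_1:=T\llcorner E$, $T_2:=T\llcorner(\mathrm{spt}V\setminus E)$: these are nonzero integral currents with $T\llcorner\mathbf{B}_2=T_1+T_2$ and additive masses, and from $T\llcorner U_j\to T_1$ weakly together with $\mathbb{M}(\p(T\llcorner U_j)\llcorner W)\le\mathbb{M}(\p T\llcorner W)+P_j\to0$ for $W\subset\subset\mathbf{B}_2$ one gets $\p T_1=\p T_2=0$ in $\mathbf{B}_2$, so $T_1,T_2$ are components of $T\llcorner\mathbf{B}_2$. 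Since $V$ is stationary, cutting it along the rectifiable set $\mathrm{spt}V\cap\p U_j$ gives $|\delta(V\llcorner U_j)(Y)|\le\|Y\|_\infty\,\mathcal{H}^{n-1}(\mathrm{spt}V\cap\p U_j\cap\mathbf{B}_2)\le\|Y\|_\infty P_j\to0$ for every $Y\in C^1_c(\mathbf{B}_2,\R^{n+m})$, and since $\chi_{U_j}\to\chi_E$ in $L^1(\mu_V)$ this passes to $\delta(V\llcorner E)(Y)=0$; hence $|T_1|=V\llcorner E$ and $|T_2|$ are stationary, so $T$ is stationary decomposable in $\mathbf{B}_2$, contradicting the hypothesis. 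In the complementary case, after replacing $U_j$ by the interior of $\mathbf{B}_2\setminus\ol{U_j}$ if necessary (harmless, since $\mathrm{spt}T\cap\p U_j$ is $\mathcal{H}^n$-null), one may assume $\chi_{U_j}\to0$ in $L^1_{loc}(\mathrm{spt}V\cap\mathbf{B}_2)$ and $F_j(1)=m_j$ for $j$ large, where $F_j(\rho):=\mathcal{H}^n(\mathrm{spt}V\cap U_j\cap\mathbf{B}_\rho)$ is non-decreasing with $F_j\ge m_j$ on $[1,2]$. With $f_j(\rho):=\mathcal{H}^{n-1}(\mathrm{spt}V\cap U_j\cap\p\mathbf{B}_\rho)$, for a.e. $\rho\in(1,3/2)$ the slice $\mathrm{spt}V\cap U_j\cap\mathbf{B}_\rho$ has relative perimeter $\le P_j+f_j(\rho)$, so \eqref{SobM0} yields $F_j(\rho)^{\f{n-1}n}\le c_n(P_j+f_j(\rho))$; combined with $F_j'\ge f_j$ a.e. (coarea and monotonicity), with $F_j^{\f1n-1}F_j^{\f{n-1}n}=1$, $F_j\ge m_j$, and $P_jm_j^{\f{1-n}n}<\f1j$, this gives
\begin{equation*}
\left(F_j^{1/n}\right)'(\rho)=\f1n F_j(\rho)^{\f1n-1}F_j'(\rho)\ge\f1{nc_n}-\f{P_j}{n}m_j^{\f{1-n}n}\ge\f1{2nc_n}\qquad(j\ge2c_n),
\end{equation*}
and integrating over $[1,3/2]$ forces $\mathcal{H}^n(\mathrm{spt}V\cap U_j\cap\mathbf{B}_{3/2})=F_j(3/2)\ge(4nc_n)^{-n}>0$, contradicting $\chi_{U_j}\to0$ in $L^1_{loc}(\mathrm{spt}V\cap\mathbf{B}_2)$.

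\textbf{Expected main difficulty.} The decomposition bookkeeping and the differential-inequality step are essentially routine; the real work is measure-theoretic and comes from the possible singularities of $\mathrm{spt}V$. One point is the $BV$-compactness of $\{\chi_{U_j}\}$ on the merely rectifiable set $\mathrm{spt}V$, which I would derive from \eqref{SobM0}. The harder point, and what I expect to be the technical heart of the argument, is the cutting estimate $|\delta(V\llcorner A)(Y)|\le\|Y\|_\infty\,\mathcal{H}^{n-1}(\mathrm{spt}V\cap\p A)$ for a stationary $V$ and a set $A$ with $(n-1)$-rectifiable relative boundary: one has to show that the singular set of $V$ does not contribute to the first variation of the cut piece $V\llcorner A$, which should follow from the monotonicity formula \eqref{ratio}.
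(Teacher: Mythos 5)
Your proposal is correct in outline and follows the same Bombieri--Giusti contradiction scheme that the paper uses, but the internal organization differs in two genuine ways, and there are a couple of places where the detail you defer is the non-trivial part.

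\textbf{Where the routes diverge.} The paper does not set up a dichotomy at all: it restricts the current on both sides, forming $T_k^+=T\llcorner U_k$ and $T_k^-=T\llcorner(\mathbf{B}_2\setminus\overline{U_k})$, extracts weak limits $T_*^\pm$ via Federer--Fleming, and then runs the isoperimetric ODE argument (your ``complementary case'' computation, essentially verbatim) \emph{for both sequences simultaneously}, obtaining $\mathbb{M}(T_*^\pm\llcorner\mathbf{B}_t)\ge\bigl(\tfrac{t-1}{2nc_n}\bigr)^n$ for $t\in[1,2]$. This kills your case (b) before it arises: both limiting pieces are automatically nonzero, so one never needs to split into ``$E$ and $E^c$ both nonzero'' vs.\ ``one vanishes.'' Your dichotomy is logically sound, but it adds a branch the paper shows is unnecessary; the paper also thereby avoids the $BV$-compactness step, working instead with Federer--Fleming compactness for the integer currents $T_k^\pm$, which is more robust on a singular support. (Your ``$BV$-compactness from \eqref{SobM0}'' claim is plausible since the $\chi_{U_j}$ produce the currents $T\llcorner U_j$ with uniformly bounded boundary mass, so Federer--Fleming gives the same limit---but stated as a $BV$ compactness theorem on $\mathrm{spt}V$ it would need justification that one would rather not supply here, since the Neumann--Poincar\'e inequality \eqref{NPI} downstream of this lemma is precisely the kind of input such compactness usually requires.)

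\textbf{On the cutting estimate.} You correctly identify showing the limiting pieces are stationary as the crux. Two remarks. First, the inequality as you wrote it, $|\delta(V\llcorner A)(Y)|\le\|Y\|_\infty\,\mathcal{H}^{n-1}(\mathrm{spt}V\cap\p A)$, does not hold with constant $1$; what one actually proves (and all one needs, since the right-hand side $\to0$) is a bound with a constant $C(n,m,\La)$ coming from the Euclidean volume growth estimate \eqref{VGM1}. Second, this is not really a consequence of the monotonicity formula alone: monotonicity only gives the lower density bound, whereas the argument needs the \emph{upper} volume bound $\mu_V(\mathbf{B}_{2r_l}(\mathbf{x}_l))\lesssim r_l^n$, which is the content of Lemma \ref{VGM} and uses the graph structure of the approximating $M_k$'s. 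Concretely, the paper covers $\mathrm{spt}T\cap\p U_k\cap\mathbf{B}_2$ by balls $\mathbf{B}_{r_l}(\mathbf{x}_l)$ with $\sum_l r_l^{n-1}\lesssim\mathcal{H}^{n-1}(\mathrm{spt}T\cap\p U_k\cap\mathbf{B}_2)+\ep$, multiplies the cutoffs $\e_l$ vanishing on $\mathbf{B}_{r_l}(\mathbf{x}_l)$ with $|\bn\e_l|\lesssim r_l^{-1}$, and tests the weak-harmonicity of the coordinate functions $x_i$ on $\mathrm{spt}T_k^\pm$ against $\phi\prod_l\e_l$; the cutoff error is $\lesssim\|\phi\|_\infty\sum_l r_l^{-1}\mu_V(\mathbf{B}_{2r_l})\lesssim\|\phi\|_\infty\sum_l r_l^{n-1}$, which vanishes as $k\to\infty$. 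Testing only with coordinate functions rather than arbitrary $Y$ is a useful economy---for a rectifiable varifold, weak harmonicity of all the $x_i$ is equivalent to stationarity, and this sidesteps having to handle $\mathrm{div}_S Y$ for general $Y$ on the singular support. So the approach is the same, but if you were to write this out in full I would steer you to the covering/cutoff/volume-growth implementation rather than trying to prove the clean ``cutting estimate'' first.
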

\begin{rem}
We do not know yet whether limits of stationary indecomposable currents are still stationary indecomposable. Hence, the coefficient $\de_T$ in \eqref{TSB21} depends on the current $T$.
\end{rem}
\begin{proof}
Let us prove \eqref{TSB21} by contradiction. Suppose that there is a sequence of open $U_k\subset\mathbf{B}_2$ with $(n-1)$-rectifiable $\mathrm{spt}T\cap\p U_k$ such that
\begin{equation}\aligned\label{Tf1kUk}
\mathcal{H}^{n-1}(\mathrm{spt}T\cap \p U_k\cap \mathbf{B}_2)
<\f1k\left(\min\{\mathcal{H}^n(\mathrm{spt}T\cap U_k\cap \mathbf{B}_1),\mathcal{H}^n(\mathrm{spt}T\cap \mathbf{B}_1\setminus U_k)\}\right)^{\f{n-1}n}.
\endaligned
\end{equation}
Let $T_k^+=T\llcorner U_k$ and $T_k^-=T\llcorner\left(\mathbf{B}_2\setminus \overline{U_k}\right)$. Then all $T_k^\pm$ are integer multiplicity currents.
Without loss of generality we can assume $\mathcal{H}^{n-1}(\mathbf{T}\cap\p\mathbf{B}_2)<\infty$, or else from co-area formula we consider a sequence of balls $\mathbf{B}_{2-s_k}$ for some sequence $0<s_k\to0$ with $\mathcal{H}^{n-1}(\mathbf{T}\cap\p\mathbf{B}_{2-s_k})<\infty$.
Hence with \eqref{Tf1kUk}, $\mathbb{M}(\p T^\pm_k)$ are uniformly bounded independent of $k$.
Clearly, $\mathbb{M}(T^+_k\llcorner W)+\mathbb{M}(T^-_k\llcorner W)=\mathbb{M}(T\llcorner W)$.
By Federer-Fleming compactness theorem, there are two integer multiplicity currents $T_*^+,T_*^-$ with spt$T_*^\pm\subset\mathrm{spt}T$ such that $T_k^\pm$ converges weakly to $T^\pm_*$ as $k\to\infty$ up to a choice of a subsequence.

For any open $W\subset \mathbf{B}_2$, $|\omega|_{\mathbf{B}_2}\le1,\omega\in\mathcal{D}^n(\mathbf{B}_2),\mathrm{spt}\omega\subset W$ we have
\begin{equation}\aligned
&T(\omega)=\lim_{k\to\infty}(T^+_k+T^-_k)(\omega)=T^+_*(\omega)+T^-_*(\omega)\le\mathbb{M}(T^+_*\llcorner W)+\mathbb{M}(T^-_*\llcorner W),
\endaligned
\end{equation}
which implies
\begin{equation}\aligned\label{p*T+T-0}
\mathbb{M}(T\llcorner W)\le\mathbb{M}(T^+_*\llcorner W)+\mathbb{M}(T^-_*\llcorner W).
\endaligned
\end{equation}
Moreover, \begin{equation*}\aligned
\mathbb{M}(T^+_*\llcorner W)+\mathbb{M}(T^-_*\llcorner W)\le\liminf_{k\to\infty}\mathbb{M}(T^+_k\llcorner W)+\liminf_{k\to\infty}\mathbb{M}(T^-_k\llcorner W)
\le\mathbb{M}(T\llcorner W).
\endaligned
\end{equation*}
Hence, we deduce
\begin{equation}\aligned\label{T*T+T-}
\mathbb{M}(T\llcorner W)=\mathbb{M}(T^+_*\llcorner W)+\mathbb{M}(T^-_*\llcorner W),
\endaligned
\end{equation}
and\begin{equation}\aligned\label{T*-WTk}
\mathbb{M}(T^\pm_*\llcorner W)=\lim_{k\to\infty}\mathbb{M}(T^\pm_k\llcorner W).
\endaligned
\end{equation}
For any $|\omega'|_{\mathbf{B}_2}\le1,\omega'\in\mathcal{D}^{n-1}(\mathbf{B}_2),\mathrm{spt}\omega'\subset W$, from \eqref{Tf1kUk} we have
\begin{equation}\aligned
\p T^+_*(\omega')=&T^+_*(d\omega')=\lim_{k\to\infty}T^+_k(d\omega')=\lim_{k\to\infty}(T\llcorner \p U_k)(\omega')\\
\le&\limsup_{k\to\infty}\mathcal{H}^{n-1}(\mathrm{spt}T\cap \p U_k\cap \mathbf{B}_2)=0,
\endaligned
\end{equation}
which implies
\begin{equation}\aligned\label{pT+-***}
\mathbb{M}(\p T^+_*\llcorner W)=\mathbb{M}(\p T^-_*\llcorner W)=0.
\endaligned
\end{equation}
Since $T$ has multiplicity one on spt$T\cap W$ for any open $W\subset \mathbf{B}_2$ from Corollary \ref{MultioneT}, $T^\pm_*$ has multiplicity one on its support.

From the co-area formula, for almost all $1< t<2$, we have
\begin{equation}\aligned\label{pptT*Bt}
\f{\p}{\p t}\mathbb{M}(T_k^\pm\llcorner \mathbf{B}_t)\ge\mathbb{M}(\p(T_k^\pm\llcorner \mathbf{B}_t))-\mathbb{M}(\p T_k^\pm\llcorner \mathbf{B}_t).
\endaligned
\end{equation}
With \eqref{ISO**} and \eqref{Tf1kUk}, for almost all $1<t<2$, we get
\begin{equation}\aligned
\f{\p}{\p t}\mathbb{M}(T_k^\pm\llcorner \mathbf{B}_t)>&\f1{c_n}\left(\mathbb{M}(T_k^\pm\llcorner \mathbf{B}_t)\right)^{\f{n-1}n}-\f1k\left(\mathbb{M}(T_k^\pm\llcorner \mathbf{B}_1)\right)^{\f{n-1}n}\\
\ge&\left(\f1{c_n}-\f1k\right)\left(\mathbb{M}(T_k^\pm\llcorner \mathbf{B}_t)\right)^{\f{n-1}n},
\endaligned
\end{equation}
which implies $\mathbb{M}(T_k^\pm\llcorner \mathbf{B}_t)>0$ for any $t>1$ and any $k>c_n$. Then we solve the above differential inequality and from \eqref{T*-WTk} we get
\begin{equation}\aligned\label{T*Btlow}
\mathbb{M}(T_*^\pm\llcorner \mathbf{B}_t)=\lim_{k\to\infty}\mathbb{M}(T_*^\pm\llcorner \mathbf{B}_t)\ge\left(\f{t-1}{2nc_n}\right)^n
\endaligned
\end{equation}
for each $t\in[1,2]$.

For any small fixed $\ep>0$ and any integer $k\ge0$, there is a collection of balls $\{\mathbf{B}_{r_l}(\mathbf{x}_l)\}_{l=1}^{N_{k,\ep}}$ with $r_l<\ep$ such that $\mathrm{spt}T\cap\p U_k\cap\mathbf{B}_2\subset\cup_{l=1}^{N_{k,\ep}}\mathbf{B}_{r_l}(\mathbf{x}_l)$, and
\begin{equation}\aligned\label{rln-1TpUkep}
\omega_{n-1}\sum_{l=1}^{N_{k,\ep}}r_l^{n-1}<\mathcal{H}^{n-1}(\mathrm{spt}T\cap \p U_k\cap \mathbf{B}_2)+\ep.
\endaligned
\end{equation}
Let $\e_l$ be a Lipschitz function on $\mathbf{B}_{3}$ with $0\le\e_l\le1$ such that $\e_l=0$ on $\mathbf{B}_{r_l}(\mathbf{x}_l)$, $\e_l=1$ on $\mathbf{B}_{3}\setminus\mathbf{B}_{2r_l}(\mathbf{x}_l)$,  $|\bn \e_l|=r_l^{-1}$ on $\mathbf{B}_{2r_l}(\mathbf{x}_k)\setminus\mathbf{B}_{r_l}(\mathbf{x}_k)$, where $\bn$ denotes the Levi-Civita connection of $\R^{n+m}$.
Set $\e_{k,\ep}=\prod_{l=1}^{N_{k,\ep}}\e_l\in C^1$. Then $\e_{k,\ep}=0$ on a neighborhood of $\mathrm{spt}T\cap \p U_k\cap \mathbf{B}_2$ and
\begin{equation}\aligned\label{naekepchi}
|\bn\e_{k,\ep}|\le\sum_{l=1}^{N_{k,\ep}}|\bn \e_l|\le\sum_{l=1}^{N_{k,\ep}}r_l^{-1}\chi_{_{\mathbf{B}_{2r_l}(\mathbf{x}_l)}}.
\endaligned
\end{equation}
Let $\phi$ be a smooth function with compact support in $\mathbf{B}_2$. Let $\n_T$ denote the Levi-Civita connection of the regular part of $|T|$. Since $|T^+_k|$ is a multiplicity one stationary $n$-varifold in $U_k$, then every position function $x_i$ is weakly harmonic on spt$T^+_k$ for each $i=1,\cdots,m+n$ (see \cite{c-m1} for instance). Hence
\begin{equation*}\aligned
0=\int_{\mathrm{spt}T^+_k}\lan\n_T x_i,\n_T(\phi\e_{k,\ep})\ran=\int_{\mathrm{spt}T^+_k}\e_{k,\ep}\lan\n_T x_i,\n_T\phi\ran+\int_{\mathrm{spt}T^+_k}\phi\lan\n_T x_i,\n_T\e_{k,\ep}\ran.
\endaligned
\end{equation*}
With \eqref{naekepchi}, it follows that
\begin{equation}\aligned\label{sptT+kxipe}
&\left|\int_{\mathrm{spt}T^+_k}\e_{k,\ep}\lan\n_T x_i,\n_T\phi\ran\right|\le\int_{\mathrm{spt}T^+_k}|\phi||\n_T\e_{k,\ep}|\\
\le&\sup_{\mathbf{B}_2}|\phi|\sum_{l=1}^{N_{k,\ep}}r_l^{-1}\mathcal{H}^n\left(\mathrm{spt}T\cap\mathbf{B}_{2r_l}(\mathbf{x}_l)\right).
\endaligned
\end{equation}
From Lemma \ref{VGM}, there is a constant $c_{n,\La}\ge1$ depending only on $n,\La$ so that
\begin{equation}\aligned\label{VolTBrx}
\mathcal{H}^n(\mathrm{spt}T\cap \mathbf{B}_r(\mathbf{x}))\le c_{n,\La}\sqrt{m}\omega_nr^n
\endaligned
\end{equation}
for any $\mathbf{B}_r(\mathbf{x})\subset\mathbf{B}_{5/2}$.  Combining \eqref{rln-1TpUkep}\eqref{sptT+kxipe}\eqref{VolTBrx}, we have
\begin{equation}\aligned
&\left|\int_{\mathrm{spt}T^+_k}\e_{k,\ep}\lan\n_T x_i,\n_T\phi\ran\right|\le 2^nc_{n,\La}\sqrt{m}\omega_n\sup_{\mathbf{B}_2}|\phi|\sum_{l=1}^{N_{k,\ep}}r_l^{n-1}\\
\le& 2^nc_{n,\La}\sqrt{m}\f{\omega_n}{\omega_{n-1}}\sup_{\mathbf{B}_2}|\phi|\left(\mathcal{H}^{n-1}(\mathrm{spt}T\cap \p U_k\cap \mathbf{B}_2)+\ep\right).
\endaligned
\end{equation}
Letting $\ep\to0$ in the above inequality implies
\begin{equation}\aligned
&\left|\int_{\mathrm{spt}T^+_k}\lan\n_T x_i,\n_T\phi\ran\right|\le 2^nc_{n,\La}\sqrt{m}\f{\omega_n}{\omega_{n-1}}\sup_{\mathbf{B}_2}|\phi|\mathcal{H}^{n-1}\left(\mathrm{spt}T\cap\mathbf{B}_2\cap\p U_k\right).
\endaligned
\end{equation}
Up to a choice of a subsequence, we can assume that $|T_k^\pm|$ converges to $|T^\pm_*|$ in the varifold sense as $k\to\infty$. With \eqref{Tf1kUk}, we have
\begin{equation}\aligned
&\left|\int_{\mathrm{spt}T^+_*}\lan\n_T x_i,\n_T\phi\ran\right|=\lim_{k\to\infty}\left|\int_{\mathrm{spt}T^+_k}\lan\n_T x_i,\n_T\phi\ran\right|=0
\endaligned
\end{equation}
for each $i=1,\cdots,n+m$. In other words, $T^+_*$ is stationary, and similarly $T^-_*$ is also stationary.
Combining \eqref{T*T+T-}\eqref{pT+-***}\eqref{T*Btlow}, we conclude that $T$ is stationary decomposable in $\mathbf{B}_2$. It is a contradiction.
This completes the proof.
\end{proof}

Using Lemma \ref{TindecdeT}, we can prove a Neumann-Poincar$\mathrm{\acute{e}}$ inequality on stationary indecomposable components of limits of minimal graphs.
\begin{lem}\label{NPITHM}
Let $\mathbf{T}$ be an integral current in $\overline{\mathcal{M}}_{n,m,\La,B_{3}}$, and $T$ be a stationary indecomposable component of $\mathbf{T}\llcorner\mathbf{B}_{2}$. Denote $\mathcal{S}$ be the singular set of $T$, then
there exists a constant $\th_T>0$ depending on $T$ such that
\begin{equation}\aligned\label{NPI}
\int_{\mathrm{spt}T\cap \mathbf{B}_r}|f-\bar{f}_r|\le \th_Tr\int_{\mathrm{spt}T\cap \mathbf{B}_{2r}}|\n_T f|
\endaligned
\end{equation}
for any $r\in(0,1]$ and any bounded $C^1$-function $f$ on $\mathbf{B}_{2}\setminus\mathcal{S}$, where
$\n_T$ is the Levi-Civita connection of the regular part of $|T|$, and $\bar{f}_r=\f1{\mathcal{H}^n(\mathrm{spt}T\cap \mathbf{B}_r)}\int_{\mathrm{spt}T\cap \mathbf{B}_r}f$.
\end{lem}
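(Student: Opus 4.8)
The plan is to follow the scheme of Bombieri--Giusti \cite{b-g}: deduce the Neumann--Poincar\'e inequality from the relative isoperimetric inequality of Lemma \ref{TindecdeT} by a coarea/layer--cake estimate, after reducing to the median of $f$. Since $T$ has multiplicity one on $\mathrm{spt}T\cap\mathbf{B}_2$ by Corollary \ref{MultioneT}, write $\mu=\mathcal H^n\llcorner\mathrm{spt}T$; the regular set $\mathrm{reg}\,T=\mathrm{spt}T\setminus\mathcal S$ is a smooth $n$-manifold and $\mathcal H^n(\mathcal S)=0$ (the density equals one at $\mu$-a.e.\ point, so Allard's theorem applies there). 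One also needs $\mathcal H^{n-1}(\mathcal S)=0$ for $T\in\overline{\mathcal M}_{n,m,\La,B_3}$ — which follows from the multiplicity--one property (Corollary \ref{MultioneT}, Lemma \ref{multi1}) together with the stability of the cylindrical blow--downs of $T$ from Theorem \ref{Stable} — so that $\mathcal S$ may be discarded in all the computations below. Now fix $r\in(0,1]$ and let $\mathfrak m$ be a median of $f$ on $\mathrm{spt}T\cap\mathbf{B}_r$; since $\int_{\mathbf{B}_r\cap\mathrm{spt}T}|f-\bar f_r|\le2\int_{\mathbf{B}_r\cap\mathrm{spt}T}|f-\mathfrak m|$, it suffices to bound the latter.

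For the layer--cake step, set $\phi(t)=\mu(\{f>\mathfrak m+t\}\cap\mathbf{B}_r)$ for $t\ge0$, so $\phi$ is non-increasing with $\phi(0)\le\tfrac12\mu(\mathbf{B}_r\cap\mathrm{spt}T)$. Apply to $U_t:=\{f>\mathfrak m+t\}\cap\mathbf{B}_{2r}$ the relative isoperimetric inequality at scale $r$ established in the next paragraph. Because $\phi(t)\le\phi(0)\le\tfrac12\mu(\mathbf{B}_r\cap\mathrm{spt}T)$ one has $\min\{\mu(U_t\cap\mathbf{B}_r),\mu(\mathbf{B}_r\setminus U_t)\}=\phi(t)$, while $\mathrm{spt}T\cap\partial U_t\cap\mathbf{B}_{2r}$ is contained, up to an $\mathcal H^{n-1}$-null set, in the level set $\{f=\mathfrak m+t\}\cap\mathrm{reg}\,T$, which for a.e.\ $t$ is $(n-1)$-rectifiable of finite $\mathcal H^{n-1}$-measure (coarea formula); thus $\mathcal H^{n-1}(\{f=\mathfrak m+t\}\cap\mathrm{reg}\,T\cap\mathbf{B}_{2r})\ge\tilde\delta_T\,\phi(t)^{\f{n-1}n}$ for a.e.\ $t>0$. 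Since $\phi(t)\le\phi(0)\le\tfrac12c_{n,\La}\sqrt m\,\om_nr^n$ by Lemma \ref{VGM}, we get $\phi(t)=\phi(t)^{1/n}\phi(t)^{\f{n-1}n}\le C_{n,m,\La}\,r\,\phi(t)^{\f{n-1}n}$, and integrating in $t$ and using the coarea formula on $\mathrm{reg}\,T$,
\[
\int_0^\infty\phi(t)\,dt\le C_{n,m,\La}\,r\int_0^\infty\phi(t)^{\f{n-1}n}dt\le\frac{C_{n,m,\La}}{\tilde\delta_T}\,r\int_{\mathbf{B}_{2r}\cap\mathrm{reg}\,T}|\n_T f|.
\]
The same bound holds with $\{f>\mathfrak m+t\}$ replaced by $\{f<\mathfrak m-t\}$; summing and recalling the median reduction gives \eqref{NPI} with $\th_T$ a dimensional multiple of $C_{n,m,\La}/\tilde\delta_T$.

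What remains, and is the main obstacle, is the relative isoperimetric inequality on the concentric pair $\mathbf{B}_r\subset\mathbf{B}_{2r}$ with a constant $\tilde\delta_T>0$ that is \emph{uniform} in $r\in(0,1]$: for open $U\subset\mathbf{B}_{2r}$ with $(n-1)$-rectifiable $\mathrm{spt}T\cap\partial U$,
\[
\mathcal H^{n-1}(\mathrm{spt}T\cap\partial U\cap\mathbf{B}_{2r})\ge\tilde\delta_T\big(\min\{\mathcal H^n(\mathrm{spt}T\cap U\cap\mathbf{B}_r),\ \mathcal H^n(\mathrm{spt}T\cap\mathbf{B}_r\setminus U)\}\big)^{\f{n-1}n}.
\]
The natural route is to dilate by $x\mapsto x/r$: each approximating minimal graph $M_k$ of $\mathbf T$ is carried to a minimal graph over a domain containing $B_3$ with the same bound $\La$ on its $2$-dilation, so $(\f1r\mathbf T)\llcorner(B_3\times\R^m)\in\overline{\mathcal M}_{n,m,\La,B_3}$ and $\f1r(T\llcorner\mathbf{B}_{2r})=(\f1r T)\llcorner\mathbf{B}_2$ is a component of $(\f1r\mathbf T)\llcorner\mathbf{B}_2$; if this component is again stationary indecomposable in $\mathbf{B}_2$, Lemma \ref{TindecdeT} applies to it and rescales to the displayed inequality (the powers $r^{-(n-1)}$ on the two sides cancel). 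The two difficulties — both flagged in the Remarks — are: (i) restricting the stationary indecomposable $T$ from $\mathbf{B}_2$ to $\mathbf{B}_{2r}$ may destroy stationary--indecomposability, so one must first show $T\llcorner\mathbf{B}_\rho$ is stationary indecomposable in $\mathbf{B}_\rho$ for every $\rho\in(0,2]$, which I expect to follow from connectedness of $\mathrm{spt}T$ (a local Hausdorff limit of the connected graphs $M_k$) together with the monotonicity formula and $\mathcal H^{n-1}(\mathcal S)=0$; and (ii) even then the constant produced by Lemma \ref{TindecdeT} a priori depends on the blown--up current, so a lower bound for $\tilde\delta_T$ uniform as $r\to0$ must be extracted, by rerunning the contradiction/compactness argument of Lemma \ref{TindecdeT} along the whole blow--up family of $T$ — the uniform volume--ratio bound of Lemma \ref{VGM} supplying the compactness and the indecomposability of the blow--up limits precluding degenerate thin--neck splittings. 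Granting this uniform relative isoperimetric inequality, the layer--cake computation of the previous paragraph completes the proof.
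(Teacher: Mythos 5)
Your layer--cake argument mirrors the paper's proof in its essential mechanism. The paper reduces to the average $\bar f$ rather than a median: after assuming WLOG $\mathcal H^n(U^+_{1,0})\le\mathcal H^n(U^-_{1,0})$ it uses $\int_{M\cap\mathbf B_1}|f-\bar f|=2\int_{U^+_{1,0}}(f-\bar f)$, which costs the same factor of $2$ as your median reduction. Both proofs then feed the super-level sets $U^+_{2,t}=\{f>\bar f+t\}$ into Lemma \ref{TindecdeT} --- the paper checks, via Sard's theorem on the regular part, that the boundary slices are $(n-1)$-rectifiable for a.e.\ $t$, which is the only hypothesis Lemma \ref{TindecdeT} requires, so your preliminary assertion that $\mathcal H^{n-1}(\mathcal S)=0$ (which you in any case only sketch) is not actually needed. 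Both then integrate in $t$ with the coarea formula, using the volume bound \eqref{VGM1} to convert the $(n-1)/n$ power into a linear bound. So at scale $r=1$ your computation and the paper's are the same argument.

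Where your proposal genuinely diverges from the paper's writeup --- and where it also fails to close --- is the uniformity in $r\in(0,1]$. You correctly observe that Lemma \ref{TindecdeT} is proved only for the concentric pair $\mathbf B_1\subset\mathbf B_2$, and that rescaling by $1/r$ does not obviously propagate it to $\mathbf B_r\subset\mathbf B_{2r}$ with a constant independent of $r$: restricting $T$ to $\mathbf B_{2r}$ may no longer be stationary indecomposable in the dilated ball, and the constant $\de_T$ produced by the compactness/contradiction argument of Lemma \ref{TindecdeT} depends (as its Remark stresses) on the current itself, not merely on $n,m,\La$. You name these two obstructions but do not resolve either; ``rerunning the contradiction argument along the blow-up family'' is a reasonable thing to hope for, but it is not carried out, and neither is the claimed stationary indecomposability of $T\llcorner\mathbf B_\rho$ for all $\rho\in(0,2]$. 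The paper itself does not address this point either --- it simply states ``we only need to prove \eqref{NPI} for $r=1$'' and gives no reduction --- so your discussion is the more candid of the two, but as written your proof, like the paper's, establishes the inequality only at $r=1$, and the scale-uniform relative isoperimetric inequality remains an open gap in the proposal.
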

\begin{proof}
Without loss of generality, let $f$ be not a constant, then we only need to prove \eqref{NPI} for $r=1$. Let $M=\mathrm{spt}T$, and $\bar{f}$ be the average of $f$ on $M\cap \mathbf{B}_1$, i.e.,
$$\bar{f}=\f1{\mathcal{H}^n(M\cap \mathbf{B}_1)}\int_{M\cap \mathbf{B}_1}f.$$
Let $U^+_{s,t}=\{y\in M\cap\mathbf{B}_s\setminus\mathcal{S}|\ f(y)>\bar{f}+t\}$, $U^-_{s,t}=\{y\in M\cap\mathbf{B}_s\setminus\mathcal{S}|\ f(y)<\bar{f}+t\}$ for all $s>0$ and $t\in\R$.
From Sard's theorem, for almost all $t$, $\p U^\pm_{s,t}$ is $C^1$ in $\mathbf{B}_s$ outside $\mathcal{S}$. In particular, $\p U^\pm_{s,t}$ is $(n-1)$-rectifiable for almost all $t$.

Without loss of generality, we assume $\mathcal{H}^n(U^+_{1,0})\le\mathcal{H}^n(U^-_{1,0})$. Then $\mathcal{H}^n(U^+_{1,t})\le\mathcal{H}^n(U^-_{1,t})$ for any $t\ge0$.
From Lemma \ref{TindecdeT}, we have
$$\mathcal{H}^{n-1}(\p U^+_{2,t}\cap \mathbf{B}_{2})\ge\de_T\left(\mathcal{H}^n(U^+_{1,t})\right)^{\f{n-1}n}
\ge\de_T\left(\mathcal{H}^n(M\cap \mathbf{B}_{1})\right)^{-\f{1}n}\mathcal{H}^n(U^+_{1,t}).$$
Using now the co-area formula,
\begin{equation}\aligned
\int_{U^+_{1,0}}(f-\bar{f})=&\int_0^\infty\mathcal{H}^n(U^+_{1,t})dt
\le\f{\left(\mathcal{H}^n(M\cap \mathbf{B}_{1})\right)^{\f{1}n}}{\de_T}\int_0^\infty\mathcal{H}^{n-1}(\p U^+_{2,t}\cap \mathbf{B}_{2})dt\\
\le& \f1{\de_T}\left(\mathcal{H}^n(M\cap \mathbf{B}_{1})\right)^{\f{1}n}\int_{M\cap \mathbf{B}_{2}}|\n_T f|,
\endaligned
\end{equation}
and then
\begin{equation}\aligned
&\int_{M\cap \mathbf{B}_{1}}|f-\bar{f}|=\int_{U^+_{1,0}}(f-\bar{f})-\int_{U^-_{1,0}}(f-\bar{f})\\
=&2\int_{U^+_{1,0}}(f-\bar{f})\le \f2{\de_T}\left(\mathcal{H}^n(M\cap \mathbf{B}_{1})\right)^{\f{1}n}\int_{M\cap \mathbf{B}_{2}}|\n_T f|.
\endaligned
\end{equation}
Using  \eqref{VGM1}, we complete the proof.
\end{proof}

Let $\mathbf{T}\in\overline{\mathcal{M}}_{n,m,\La,B_{3}}$ with $0^{n+m}\in\rm{spt}\mathbf{T}$. Then $\mathcal{H}^n(\mathrm{spt}\mathbf{T}\cap \mathbf{B}_{r})\ge\omega_nr^n$ for every $r\in(0,3)$.
From \eqref{VGM1}, the exterior ball $\mathrm{spt}\mathbf{T}\cap \mathbf{B}_{r}$ admits volume doubling property. Namely, there is a constant $c_{n,m,\La}\ge1$ depending only on $n,m,\La$ such that
\begin{equation}\aligned
\mathcal{H}^n(\mathrm{spt}\mathbf{T}\cap \mathbf{B}_{2r})\le c_{n,m,\La}\omega_n2^nr^n\le c_{n,m,\La}2^n\mathcal{H}^n(\mathrm{spt}\mathbf{T}\cap \mathbf{B}_{r})
\endaligned
\end{equation}
for every $r\in(0,1]$.
From the Sobolev inequality \cite{m-s}, nonnegative subharmonic functions on stationary varifolds admit the mean value inequality on spt$\mathbf{T}$
(see \cite{g-t} for instance).
Since Neumann-Poincar$\mathrm{\acute{e}}$ inequality \eqref{NPI}  holds on a stationary indecomposable component $T$ of $\mathbf{T}$, by De Giorgi-Nash-Moser iteration (see \cite{m0}\cite{m}, or \cite{li}, or Theorem 3.2 in \cite{d3} for instance)
there holds the mean value inequality for superharmonic functions on spt$T$.  Hence, we get Harnack's inequality for weakly harmonic functions on spt$T$ as follows.
\begin{pro}\label{Tsuperhar}
Let $\mathbf{T}$ be an integral current in $\overline{\mathcal{M}}_{n,m,\La,B_{3}}$, and $T$ be a stationary indecomposable component of $\mathbf{T}\llcorner\mathbf{B}_{2}$ with $0\in\mathrm{spt}T$.
For any $f\in C^1(\mathbf{B}_{2})$, if $f$ satisfies $\De_{T} f=0$ in the distribution sense, and $f\ge0$ on $\mathbf{B}_{2}\cap \mathrm{spt}T$, then
\begin{equation}\aligned
\sup_{\mathrm{spt}T\cap \mathbf{B}_r}f\le \Th_T\inf_{\mathrm{spt}T\cap \mathbf{B}_r}f\qquad \mathrm{for\ any}\ r\in(0,1],
\endaligned
\end{equation}
where $\De_T$ is the Laplacian of the regular part of spt$T$, $\Th_{T}>0$ is a constant depending on $n,m,\La,T$.
\end{pro}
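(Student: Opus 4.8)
The plan is to deduce the Harnack inequality by running a De~Giorgi--Nash--Moser iteration on the regular part of $\mathrm{spt}\,T$, fed by the three structural estimates already available: the Michael--Simon Sobolev inequality \eqref{SobM0} (which is scale invariant), the volume doubling property of $\mathrm{spt}\,\mathbf{T}\cap\mathbf{B}_r$ coming from Lemma~\ref{VGM}, and---crucially---the Neumann--Poincar\'e inequality \eqref{NPI} of Lemma~\ref{NPITHM}, which is the only ingredient using the stationary indecomposability of $T$ and is the reason the final constant is allowed to depend on $T$. Since \eqref{NPI} holds uniformly for $r\in(0,1]$ and the other two estimates are scale invariant, the iteration can be carried out directly on concentric balls inside $\mathbf{B}_2$, with no rescaling. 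Two technical points are dispatched at the outset. First, $f$ is weakly harmonic on all of $\mathbf{B}_2\cap\mathrm{spt}\,T$, so in the identity $\int\langle\n_T f,\n_T\varphi\rangle=0$ one may use test functions $\varphi$ built from Lipschitz cutoffs that do \emph{not} vanish near the singular set $\mathcal S$; this is what lets $\mathcal S$ be ignored. Second, replacing $f$ by $f+\eps$ with $\eps>0$ (still harmonic, now bounded below away from $0$) and letting $\eps\to0$ at the end legitimizes all negative and fractional powers appearing below.

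Step~1, local boundedness of subsolutions: testing with $\varphi=\phi^2(f+\eps)^\beta$ for $\beta\ge1$, absorbing cross terms by Cauchy--Schwarz, and iterating \eqref{SobM0} over a nested sequence of balls gives, via Moser's iteration and doubling, a constant $C=C(n,m,\La,p)$ with
\[
\sup_{\mathrm{spt}\,T\cap\mathbf{B}_r}(f+\eps)\le C\left(\f1{\mathcal{H}^n(\mathrm{spt}\,T\cap\mathbf{B}_{2r})}\int_{\mathrm{spt}\,T\cap\mathbf{B}_{2r}}(f+\eps)^p\right)^{1/p}
\]
for every $p>0$ and every $0<r\le1$; passing to small $p$ is the usual interpolation step and again uses only doubling and \eqref{SobM0}. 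Step~2, the weak Harnack inequality for the positive supersolution $f$, is the heart of the matter. Moser iteration on negative powers $(f+\eps)^\beta$, $\beta<0$, yields $\inf_{\mathbf{B}_r}(f+\eps)\ge c\big(\mathcal{H}^n(\mathbf{B}_{2r})^{-1}\int_{\mathbf{B}_{2r}}(f+\eps)^{-p}\big)^{-1/p}$ for every $p>0$. To bridge negative and positive powers, test with $\varphi=\phi^2(f+\eps)^{-1}$ to obtain the energy bound $\int_{\mathrm{spt}\,T\cap\mathbf{B}_{3r/2}}|\n_T w|^2\le Cr^{n-2}$ for $w=\log(f+\eps)$; inserting this into \eqref{NPI} shows that $w$ has bounded mean oscillation on these balls, and a John--Nirenberg argument on the doubling space $\mathrm{spt}\,T$ supporting the Poincar\'e inequality (as in the references cited just above the statement) produces a small exponent $p_0=p_0(n,m,\La,T)>0$ with $\mathcal{H}^n(\mathbf{B}_{2r})^{-1}\int_{\mathbf{B}_{2r}}e^{p_0|w-\bar w|}\le C$, i.e.
\[
\left(\f1{\mathcal{H}^n(\mathbf{B}_{2r})}\int_{\mathbf{B}_{2r}}(f+\eps)^{p_0}\right)^{1/p_0}\le C\,\inf_{\mathrm{spt}\,T\cap\mathbf{B}_r}(f+\eps).
\]
I expect the real difficulty to sit here: this is the unique place where \eqref{NPI}, hence indecomposability, enters, and one must simultaneously keep careful track of the admissibility of the $\log$ and negative-power test functions across $\mathcal S$.

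Step~3, conclusion: combining Step~1 at exponent $p=p_0$ with Step~2 on concentric balls gives $\sup_{\mathrm{spt}\,T\cap\mathbf{B}_{r/2}}f\le C\inf_{\mathrm{spt}\,T\cap\mathbf{B}_{r/2}}f$; the standard chaining argument on a doubling space supporting a Poincar\'e inequality---connecting any two points of $\mathrm{spt}\,T\cap\mathbf{B}_r$ by a controlled chain of balls of radius comparable to $r$ contained in $\mathbf{B}_{2r}$, with the chain length bounded via doubling---then upgrades this to the stated inequality on $\mathbf{B}_r$ itself. Applying the local Harnack estimate along the chain and letting $\eps\to0$ produces the constant $\Th_T$ depending on $n,m,\La,T$, which completes the proof.
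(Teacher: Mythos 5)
Your proposal follows exactly the route the paper takes: it invokes the volume doubling estimate from Lemma~\ref{VGM}, the Michael--Simon Sobolev inequality \eqref{SobM0}, and---as the sole place indecomposability enters---the Neumann--Poincar\'e inequality \eqref{NPI} of Lemma~\ref{NPITHM}, and then runs the standard De~Giorgi--Nash--Moser iteration (local boundedness of subsolutions, weak Harnack for supersolutions via a logarithmic/John--Nirenberg crossover, chaining). The paper itself leaves the iteration to the cited references (\cite{m0}\cite{m}\cite{li}\cite{d3}), so your write-up simply fills in those standard details; there is no gap, and your remark that the $C^1(\mathbf{B}_2)$ hypothesis on $f$ together with the distributional formulation of $\De_T f=0$ lets the cutoffs ignore the singular set is consistent with what the hypotheses already provide.
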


\Section{A Liouville theorem for minimal graphs of bounded 2-dilation}{A Liouville theorem for minimal graphs of bounded 2-dilation}

For codimension 1, De Giorgi \cite{dg} proved that any limit of non-flat minimal graphs over $\R^n$ in $\R^{n+1}$ is a cylinder. For arbitrary codimensions, we have the following splitting.
\begin{lem}\label{codim1splitting}
Let $n\ge2$, $m\ge1$ be integers, and $\La$ be a positive constant.
For a current $T\in\overline{\mathcal{M}}_{n,m,\La,\R^n}$, if spt$T$ is a non-flat cone living in $\R^{n+1}\subset\R^{n+m}$, then spt$T$ splits off a line isometrically perpendicular to the $n$-plane $\{(x,0^m)\in\R^n\times\R^m|\, x\in\R^n\}$ .
\end{lem}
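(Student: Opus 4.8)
\medskip

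\noindent\emph{Proof proposal.} The plan is to adapt De Giorgi's cylindricity argument: the vertical component of the unit normal of the limit cone is a nonnegative Jacobi field, and such a field must either vanish identically (forcing the splitting) or be strictly positive (forcing flatness, contradicting non-flatness).

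First I would fix a sequence of minimal graphs $M_k=\mathrm{graph}_{u_k}$ over $\R^n$ of codimension $m$ with $|\La^2du_k|\le\La$ and $[|M_k\cap W|]\to T\llcorner W$ weakly for every $W\subset\subset\R^n\times\R^m$. Then $|T|$ is stationary and, by Corollary \ref{MultioneT}, has multiplicity one, so near every point of $\mathrm{reg}\,C$, where $C:=\mathrm{spt}\,T$, Allard's theorem gives that the $M_k$ converge smoothly and that $C$ is a smooth minimal submanifold. After a rotation of $\R^m$ we may assume the ambient $(n+1)$-plane is $\R^{n+1}=\mathrm{span}\{\mathbf{E}_1,\cdots,\mathbf{E}_{n+1}\}\supset\R^n\times\{0^m\}$. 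Since $\mathrm{spt}\,T\subset\R^{n+1}$, the orientations $\xi_k$ of $M_k$ converge to that of $C$, which only involves $\mathbf{E}_1,\cdots,\mathbf{E}_{n+1}$ (Lemma 22.2 in \cite{s}, as in \eqref{nx1ya0*}); consequently $h^k_{\a,ij}\to0$ locally uniformly on $\mathrm{reg}\,C$ for $\a\ge2$, and $C$ is, on its regular part, a codimension one minimal cone in $\R^{n+1}$ with unit normal $\nu$.

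On each $M_k$, identity \eqref{DeMv-1} together with the bounded $2$-dilation and the vanishing of the extra-codimension terms in the limit (exactly as in the proof of Theorem \ref{Stable}, which yields \eqref{DeMkvkepk}) gives $\De_{M_k}v_k^{-1}\le-v_k^{-1}\big((1-\ep_k)|B_{M_k}|^2-\ep_k\big)$ locally with $\ep_k\to0$. Passing to the limit on $\mathrm{reg}\,C$, the function $w:=\lim_k v_k^{-1}=\langle\nu,\mathbf{E}_{n+1}\rangle$ is nonnegative, bounded by $1$, $0$-homogeneous (since $C$ is a cone), and satisfies the Jacobi equation $\De_C w+|B_C|^2w=0$ on $\mathrm{reg}\,C$. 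If $w\equiv0$, then $\mathbf{E}_{n+1}$ is everywhere tangent to $\mathrm{reg}\,C$, so, $\mathrm{reg}\,C$ being relatively dense in $C$, the vertical translations preserve $C$ and $C=C_*\times\R\mathbf{E}_{n+1}$ for an $(n-1)$-dimensional cone $C_*\subset\R^n$; since $\R\mathbf{E}_{n+1}\perp\R^n\times\{0^m\}$, this is the asserted splitting. If $w\not\equiv0$, then by the strong maximum principle for the Jacobi equation — equivalently by the Harnack inequality of Proposition \ref{Tsuperhar} on a stationary indecomposable component, $w$ being a nonnegative superharmonic function — one gets $w>0$ on $\mathrm{reg}\,C$; passing to the link $\Sigma:=C\cap S^n$, the $0$-homogeneity turns the equation into $\De_\Sigma w=-|B_\Sigma|^2w$ on $\mathrm{reg}\,\Sigma$, and integrating over $\Sigma$ against a cutoff vanishing near $\mathrm{Sing}\,\Sigma$ (which is $\mathcal H^{n-1}$-null) and letting the cutoff tend to $1$ gives $0=-\int_\Sigma|B_\Sigma|^2w$; as $w>0$ this forces $|B_\Sigma|\equiv0$, so $\Sigma$ is a great $(n-1)$-sphere and $C$ an $n$-plane, contradicting the non-flatness of $T$. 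Hence only the first alternative occurs and the lemma follows.

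I expect the second alternative to be the main obstacle. Making it rigorous requires (i) showing that $w$ really is an honest nonnegative Jacobi field on $\mathrm{reg}\,C$, which leans on multiplicity one (Corollary \ref{MultioneT}), Allard's smooth convergence, and the precise vanishing of every curvature coefficient carrying a normal index $\ge2$; and (ii) the integration by parts over the link across $\mathrm{Sing}\,\Sigma$, to be handled by a logarithmic cutoff once one controls $\int_{\Sigma\cap K}|B_\Sigma|^2$ for compact $K$, which follows from the stability of $C$ — itself a by-product of $w>0$ (a positive Jacobi field) together with the argument of Theorem \ref{Stable}. One should also treat a decomposable $C$ by applying the argument to each stationary indecomposable component of $T$.
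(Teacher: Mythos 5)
Your strategy is genuinely different from the paper's. The paper works with the subgraph current: writing $\pi^*(T)=[|\p U|]$ for the limit $U$ of subgraphs, it vertically translates $U$ to obtain a cylindrical limit $W\supset U$ with $\p W$ stable (via Theorem \ref{Stable}), and then shows $\p U\setminus\p W=\emptyset$ by contradiction, using Wickramasekera's strong maximum principle \cite{wn1}, a measure-theoretic dimension-reduction on the contact set $\p U\cap\p W\subset\mathcal S$, and the Schoen--Simon theorem together with the classification of one-dimensional geodesics at the bottom of the reduction. You instead propose the classical Jacobi-field dichotomy on $w=\langle\nu,\mathbf{E}_{n+1}\rangle$. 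The De~Giorgi lifting in the paper is a measure-theoretic implementation of the same underlying idea (the vertical Jacobi field), designed precisely to be robust across the singular set; your version is cleaner where it works, but several steps that are automatic in the paper's formulation become genuine obstacles in yours.

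The main gap is the global dichotomy $w\equiv 0$ vs.\ $w>0$. The strong maximum principle is local and only gives the dichotomy on each \emph{connected component} of $\mathrm{reg}\,C$, which need not be connected once $\mathcal{S}\neq\emptyset$. Proposition \ref{Tsuperhar} would propagate it, but it requires $f\in C^1(\mathbf{B}_2)$, i.e.\ a $C^1$ extension of $w$ across $\mathcal S$, which you do not have. Proposition \ref{MeanVSuph} avoids this, but is only available for $\La<\sqrt 2$, whereas this lemma is stated, and used in Theorem \ref{Lioua1}, with an \emph{arbitrary} bound on the $2$-dilation. So neither tool in the paper justifies the step as you invoke it.

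Second, and independently of the Harnack issue: even if one has the dichotomy on each stationary indecomposable component, your argument does not exclude the mixed configuration in which $w\equiv 0$ on one component (which is then cylindrical) and $w>0$ on another (which, by your link argument, is a non-vertical $n$-plane). In that case $\mathrm{spt}\,T$ does \emph{not} split off $\R\mathbf{E}_{n+1}$, and the lemma's conclusion would be false. You would need an extra argument to rule this out — a Frankel-type intersection statement for the link, or a unique-continuation argument across components — and this is precisely what the paper's $U\subset W$, $\p U\subset\p W$ structure and Wickramasekera's unique continuation provide at one stroke, by treating $\p U$ and $\p W$ as global objects rather than proceeding componentwise. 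Your Case 1 ("$w\equiv0\Rightarrow$ splitting") also silently assumes the dichotomy holds on \emph{all} of $\mathrm{reg}\,C$.

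Finally, the integration by parts over the link needs both a capacity estimate for $\mathrm{Sing}\,\Sigma$ and $L^2$-control of $|A_\Sigma|$. You correctly observe that stability (from $w>0$) should give both, but applying Wickramasekera's dimension bound \cite{wn} requires checking the "no classical singularities" hypothesis; this is plausible here (multiplicity one, graph limits) but is a nontrivial step you have not supplied, and the paper avoids it by terminating the dimension reduction at the one-dimensional geodesic classification instead.
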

\begin{proof}
Let $\pi^*$ be the projection defined in \eqref{pi*}.
From the assumption, we can treat $\pi^*(\mathrm{spt}T)$ as a codimension one cone in $\R^{n+1}\times\{0^{m-1}\}=\{(x_1,\cdots,x_{n+1},0^{m-1})\in\R^{n+m}|\ (x_1,\cdots,x_{n+1})\in\R^{n+1}\}$.
From Lemma \ref{sptTV}, $T$ has multiplicity one, and $|\mathrm{spt}T|$ is a minimal cone in $\R^{n+1}$.
If $\mathrm{spt}T\cap\{(0^n,y)\in\R^n\times\R|\, y\in\R^m\}=\{0^{n+m}\}$, then spt$T$ can be written as an entire minimal graph in $\R^{n+1}$, which implies flatness of spt$T$ by the regularity result of  De Giorgi \cite{dg0} since a regular everywhere cone is flat. Hence, without loss of generality, we assume $(0^n,-1,0^{m-1})\in\mathrm{spt}T$.

Let $M_k$ be a sequence of minimal graphs over $\R^{n}$ in $\R^{n+m}$ of 2-dilation bounded by $\La$ such that the $n$-current $[|M_k|]\in\mathcal{D}_n(\R^{n+m})$ (associated with $M_k$) converges weakly to $T$. Let $\omega$ be a smooth $n$-form defined by $\sum_{i=1}^{n+1}f_idx_1\wedge\cdots\wedge \widehat{dx_i}\wedge\cdots\wedge x_{n+1}$ with compact support in $\R^{n+1}$. Let $\tilde{\omega}$ be a smooth $n$-form with compact support in $\R^{n+m}$ so that $\tilde{\omega}(x_1,\cdots,x_{n+1},0,\cdots,0)=\omega(x_1,\cdots,x_{n+1})$ for each $(x_1,\cdots,x_{n+1})\in\R^{n+1}$.
Then from \eqref{PushforwfT}, it follows that
\begin{equation}\aligned\label{TomegaMk*}
\lan \pi^*(T),\omega\ran=\lan T,\tilde{\omega}\ran=\lim_{k\to\infty}\lan [|M_k|],\tilde{\omega}\ran=\lim_{k\to\infty}\lan [|\pi^*(M_k)|],\omega\ran.
\endaligned
\end{equation}
Let $M_k^*=\pi^*(M_k)\subset\R^{n+1}$, then the above limit implies that $[|M_k^*|]$ converges weakly to $\pi^*(T)$.
Let $u_k$ denote the graphic function of $M_k^*$, and
$$U_k=\{(x,t)\in\R^{n+1}|\, t<u_k(x)\}.$$
From Lemma \ref{VGM*} and Federer-Fleming compactness theorem, $[|U_k|]$ converges weakly to a current $[|U|]$ for some open subset $U\subset\R^{n+1}$. With \eqref{TomegaMk*}, we have
\begin{equation}\aligned
\lan [|\p U|],\omega\ran=&\lan [|U|],d\omega\ran=\lim_{k\to\infty}\lan [|U_k|],d\omega\ran=\lim_{k\to\infty}\lan [|\p U_k|],\omega\ran\\
=&\lim_{k\to\infty}\lan [|M_k^*|],\omega\ran=\lan \pi^*(T),\omega\ran,
\endaligned
\end{equation}
which implies $\pi^*(T)=[|\p U|]$.
In particular, $U$ is also a cone. Since $(0^n,-1,0^{m-1})\in\mathrm{spt}T$, we consider a family of open sets
$$U_{0^n,t}=\{y+(0^n,t)\in\R^{n+1}|\, y\in U\}=\{y+(0^n,1)\in\R^{n+1}|\, y\in tU\}\supset U$$
for each $t>0$. By Federer-Fleming compactness theorem again, there is a sequence $t_k\to\infty$ such that $[|U_{0^n,t_k}|]$ converges weakly to a current $[|W|]$ for some open subset $W=\{(x,t)\in\R^{n+1}|\, x\in \mathscr{W}\}$ with some open $\mathscr{W}\subset\R^n$.
It's clear that $U\subset W$.
Since $\p W=\p\mathscr{W}\times\R$ is stable minimal from Theorem \ref{Stable}, then $|\p\mathscr{W}|$ is a stable minimal cone in $\R^n$.

Let $\Si=\p U\setminus\p W$. If $\Si=\emptyset$, then $\p U=\mathrm{spt}T$ splits off a line $\{(0^n,t)\in\R^{n+1}|\, t\in\R\}$ isometrically. Now let us assume $\Si\neq\emptyset$, or else we complete the proof.
Let us deduce a contradiction. From Theorem 3.2 in \cite{wn1} by Wickramasekera, it follows that
\begin{equation}\aligned\label{pU*U*inftyBry}
\mathcal{H}^{n-1}(\p U\cap\p W\cap B_r(y))>0\qquad \mathrm{for\ any}\ y\in\p U\cap\p W,\ r>0.
\endaligned
\end{equation}
Let $\mathcal{S}$ denote the singular set of $\p W$.
By the strong maximum principle, $\p U\cap\p W$ is a closed subset in $\mathcal{S}$.

For any $\be\ge0$, let $\mathcal{H}^\be_\infty$ be a measure defined by
\begin{equation}\aligned\label{HbeinftyW}
\mathcal{H}^\be_\infty(W)=\omega_\be 2^{-\be}\inf\left\{\sum_{k=1}^\infty(\mathrm{diam} U_k)^\be\bigg|\, W\subset\bigcup_{k=1}^\infty U_k\subset\R^{n+1}\right\}
\endaligned
\end{equation}
for any set $W$ in $\R^{n+1}$, where $\omega_\be=\f{\pi^{\be/2}}{\G(\f \be2+1)}$, and $\G(r)=\int_0^\infty e^{-t}t^{r-1}dt$ is the gamma function for $0<r<\infty$.
From Lemma 11.2 in \cite{gi}, if $\mathcal{H}^\be(W)>0$, then $\mathcal{H}^\be_\infty(W)>0$.
From the argument of Proposition 11.3 in \cite{gi} and \eqref{pU*U*inftyBry}, there is a point $q\in \p U\cap\p W\setminus\{0^{n+1}\}$ and a sequence $r_k\rightarrow0$ such that
\begin{equation}\aligned\label{n-1HpUpW}
\mathcal{H}^{n-1}_\infty\left(\p U\cap\p W\cap B_{r_k}^{n+1}(q)\right)>2^{-n-2}\omega_{n-1} r_k^{n-1}.
\endaligned
\end{equation}
Let $U_{q,k}=\f1{r_k}\{x+q|\, x\in U\}$, $W_{q,k}=\f1{r_k}\{x+q|\, x\in W\}$, $\G_{q,k}=\p U_{q,k}\cap\p W_{q,k}$, $\mathcal{S}_{q,k}=\f1{r_k}\{x+q|\, x\in \mathcal{S}\}$.
Then \eqref{n-1HpUpW} implies
\begin{equation}\aligned
\mathcal{H}^{n-1}_\infty\left(\G_{q,k}\cap B^{n+1}_{1}(0^{n+1})\right)>2^{-n-2}\omega_{n-1}.
\endaligned
\end{equation}
From Federer-Fleming compactness theorem, without loss of generality, there are two open sets $U_*,W_*$ in $\R^{n+1}$ so that $[|U_{q,k}|],[|W_{q,k}|]$ converge to $[|U_*|],[|W_*|]$ in the current sense as $k\rightarrow\infty$, respectively. From the constructions of $U_{q,k},W_{q,k}$, $|\p U_*|$, $|\p W_*|$ are minimal cones both splitting off a line $l\neq\{tE_{n+1}|\ t\in\R\}$ isometrically.
Up to choosing the subsequence, we may assume that $\G_{q,k}$ converges to a closed set $\G_*$ in the Hausdorff sense.
Let $\mathcal{S}_*$ be the singular set of $\p W_*$.
If $y_k\in \mathcal{S}_{q,k}$ and $y_k\rightarrow y_*\in\p W_*$, then
it's clear that $y_*$ is a singular point of $\p W_*$ by Allard's regularity theorem and multiplicity one of $\p W_*$, which implies $\limsup_{k\rightarrow\infty}\mathcal{S}_{q,k}\subset\mathcal{S}_*$.
With $\p U\cap\p W\subset\mathcal{S}$, it follows that $\G_*\subset\mathcal{S}_*$.
Analog to the proof of Lemma 11.5 in \cite{gi}, we have
\begin{equation}\aligned
\mathcal{H}^{n-1}_\infty\left(\G_{*}\cap B^{n+1}_{1}(0^{n+1})\right)\ge 2^{-n-2}\omega_{n-1}.
\endaligned
\end{equation}

Let us continue the above procedure. By dimension reduction argument,
there are a 2-dimensional open cone $V_0\subset\R^2$ with $|\p V_0|$ minimal, an open cone $V\subset V_0\times\R\subset\R^3$ with $|\p V|$ minimal, a sequence of open sets $V_i,W_i$ (obtained from scalings and translations of $U,W$, respectively) such that $\p V_0$ has an isolated singularity at the origin, and $[|W_i|]$ converges to $[|V_0\times\R^{n-1}|]$, $[|V_i|]$ converges to $[|V\times\R^{n-2}|]$ in the current sense.
Since $\Si$ is a minimal graph over $\mathscr{W}$, then $\Si$ is smooth stable.
From Theorem 2 of \cite{s-s} by Schoen-Simon (see also Lemma \ref{cross}), we get $\p V\neq\p V_0\times\R$.
It's well-known that a smooth 1-dimensional minimal surface(geodesic) in $\mathbb{S}^2$ is a collection of circles of radius one, which implies that $\p V$ is a collection of planes through $0^3\in\R^3$. Hence $\p V=\p V_0\times\R$. It's a contradiction. We complete the proof.
\end{proof}
\begin{rem}
Cheeger-Naber \cite{c-n} showed the Minkowski content estimation on the quantitative singular sets of stationary varifolds.
Using it we can simplify the proof of Lemma \ref{codim1splitting}.
Namely, without dimension reduction argument, we immediately have the following conclusion in Lemma \ref{codim1splitting}:
there are a point $x^*\in\p U\cap\p W$, a 2-dimensional open cone $V_0\subset\R^2$ with $|\p V_0|$ minimal, and an open cone $V\subset V_0\times\R\subset\R^3$ with $|\p V|$ minimal such that $\p V_0$ has an isolated singularity at the origin, and $\f1{r_i}([|W|],x^*)$ converges to $([|V_0\times\R^{n-1}|],0^{n+1})$, $\f1{r_i}([|U|],x^*)$ converges to $([|V\times\R^{n-2}|],0^{n+1})$ for some sequence $r_i\to0$.
\end{rem}

From Proposition \ref{Tsuperhar} and Lemma \ref{codim1splitting}, we can obtain a Liouville type theorem for minimal graphs as follows.
\begin{thm}\label{Lioua1}
Let $M=\mathrm{graph}_u$ be a locally Lipschitz minimal graph over $\R^n$ of codimension $m\ge2$ with bounded 2-dilation of $u=(u^1,\cdots,u^m)$. Suppose
\begin{equation}\aligned\label{uaxline}
\limsup_{r\rightarrow\infty}\left(r^{-1}\sup_{\mathbf{B}_r\cap M}u^\a\right)\le0
\endaligned
\end{equation}
for each $\a\in\{2,\cdots,m\}$. If
\begin{equation}\aligned\label{u1xline}
\liminf_{r\rightarrow\infty}\left(r^{-1}\sup_{B_r}u^1\right)<\infty,
\endaligned
\end{equation}
then $M$ is flat.
\end{thm}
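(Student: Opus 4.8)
The plan is to pass to a tangent cone of $M$ at infinity, use the two growth hypotheses to pin it down to a multiplicity-one hyperplane, and then conclude by the monotonicity formula. After translating so that $u(0^n)=0^m$, Lemma \ref{VGM*} gives that $M$ has Euclidean volume growth and lies in an affine subspace of dimension $\le C_{n,\La}$, so we may assume $m\le C_{n,\La}$. By \eqref{u1xline} there is a sequence $r_k\to\infty$ with $r_k^{-1}\sup_{B_{r_k}}u^1\to L$ for some finite $L\ge 0$. The rescaled graphs $M_k:=\tfrac1{r_k}M$ are locally Lipschitz minimal graphs over $\R^n$ of $2$-dilation $\le\La$ (minimality and the $2$-dilation bound are scale invariant) with $\mathcal{H}^n(M_k\cap\mathbf{B}_R)\le C_{n,\La}\om_nR^n$ for all $k,R$. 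Passing to a subsequence, $|M_k|\to C$ as varifolds and $[|M_k|]\to T$ weakly as currents, where $C$ is a stationary integral minimal cone with $0^{n+m}\in\mathrm{spt}C$ (coneness comes from the monotonicity formula). By Theorem \ref{main2}, $C$ has multiplicity one; hence by Lemma \ref{sptTV}, $|T|=C$ and $T\in\overline{\mathcal{M}}_{n,m,\La,\R^n}$.

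The key step is to show $\mathrm{spt}C\subset\R^{n+1}:=\{(x_1,\dots,x_{n+1},0^{m-1})\in\R^{n+m}\}$. Fix $\a\in\{2,\dots,m\}$. Rescaling \eqref{uaxline}, the quantity $\sup_{\mathbf{B}_1\cap M_k}y_\a=r_k^{-1}\sup_{\mathbf{B}_{r_k}\cap M}u^\a$ has limsup $\le 0$; since the uniform lower density bound forces $\mathrm{spt}M_k\to\mathrm{spt}C$ locally in the Hausdorff sense, it follows that $y_\a\le 0$ on $\mathrm{spt}C\cap\mathbf{B}_1$, hence on all of $\mathrm{spt}C$ by coneness. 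As $C$ is stationary, $y_\a$ is weakly harmonic on $\mathrm{spt}C$, and also on the support of any stationary indecomposable component of $C\llcorner\mathbf{B}_2$. Applying the Harnack inequality of Proposition \ref{Tsuperhar} to the nonnegative harmonic function $-y_\a$ on such a component, and using that $-y_\a$ vanishes at the vertex $0^{n+m}$, forces $-y_\a\equiv 0$, so $y_\a\equiv 0$ on $\mathrm{spt}C$. Running this for each $\a\ge 2$ gives $\mathrm{spt}C\subset\R^{n+1}$.

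Now $T\in\overline{\mathcal{M}}_{n,m,\La,\R^n}$ with $\mathrm{spt}T\subset\R^{n+1}$, so the De Giorgi type splitting, Lemma \ref{codim1splitting}, applies: if $\mathrm{spt}C$ were not flat it would split off a line perpendicular to the base $\R^n$, which, lying in $\R^{n+1}$, must be the $\mathbf{E}_{n+1}$-direction; combined with $0^{n+m}\in\mathrm{spt}C$ this gives $\{s\,\mathbf{E}_{n+1}:s\in\R\}\subset\mathrm{spt}C$. But then, for each $s>0$, the Hausdorff convergence $\tfrac1{r_k}\mathrm{spt}M\to\mathrm{spt}C$ produces points $q_k=(x_k,u(x_k))\in\mathrm{spt}M$ with $\tfrac1{r_k}q_k\to s\,\mathbf{E}_{n+1}$, so that $x_k\in B_{r_k}$ and $u^1(x_k)\ge (s-1)r_k$ for all large $k$, forcing $r_k^{-1}\sup_{B_{r_k}}u^1\ge s-1$ eventually; letting $s\to\infty$ contradicts $r_k^{-1}\sup_{B_{r_k}}u^1\to L<\infty$. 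Hence $\mathrm{spt}C$ is flat, i.e. $C$ is a multiplicity-one $n$-plane. Finally, $\Theta_C(0^{n+m})=1$ equals $\lim_{\rho\to\infty}\om_n^{-1}\rho^{-n}\mathcal{H}^n(M\cap\mathbf{B}_\rho)$, which is monotone nondecreasing in $\rho$ and already $\ge 1$ as $\rho\to 0^+$ (lower density bound at $0^{n+m}\in M$); so this quantity is identically $1$, and the equality case of the monotonicity formula together with Allard's regularity theorem forces $M$ to be a multiplicity-one affine $n$-plane through $0^{n+m}$, i.e. $M$ is flat.

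I expect the genuine difficulty to lie in the second step, the reduction $\mathrm{spt}C\subset\R^{n+1}$ from the one-sided control \eqref{uaxline}. This is exactly where the analysis of \S4--\S5 is needed: the weak harmonicity of coordinate functions on the limit cone together with the Neumann--Poincar\'e/Harnack inequality (Proposition \ref{Tsuperhar}); and one must handle carefully the decomposition of $C\llcorner\mathbf{B}_2$ into stationary indecomposable components and the verification that the vertex lies in the support of the component to which Harnack is applied (here the cone structure of $\mathrm{spt}C$ is used). The remaining steps are, by comparison, soft once Lemma \ref{codim1splitting} and Theorem \ref{main2} are available.
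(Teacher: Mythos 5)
Your proof is correct and follows essentially the same route as the paper's: rescale by a subsequence $r_k\to\infty$ (chosen so that $r_k^{-1}\sup_{B_{r_k}}u^1$ stays bounded) to obtain a multiplicity-one tangent cone $C$ at infinity, force $\mathrm{spt}C\subset\R^{n+1}$ by applying Proposition~\ref{Tsuperhar} to $-y_\a$ on each stationary indecomposable component of $T\llcorner\mathbf{B}_1$ (each of which is a truncated cone through the vertex because the radial direction is tangential to $\mathrm{spt}T$, giving equality in the monotonicity formula), and then use Lemma~\ref{codim1splitting} together with \eqref{u1xline} to rule out the non-flat case. The only divergence is cosmetic: at the end the paper passes via $\sup|x_{n+1}|<\infty$ and De~Giorgi--Moser regularity of the $1$-homogeneous Lipschitz graphic function of $\mathrm{spt}C$, whereas you derive a direct contradiction from the fact that the split-off line $\{s\mathbf{E}_{n+1}:s\in\R\}\subset\mathrm{spt}C$ would force $r_k^{-1}\sup_{B_{r_k}}u^1\to\infty$; both are fine, and your version of that step is arguably cleaner.
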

\begin{proof}
Assume that $u$ has bounded $2$-dilation by a constant $\La>0$.
From \eqref{u1xline}, there are a constant $\Th>0$ and a sequence of numbers $r_k\rightarrow\infty$ such that
\begin{equation}\aligned\label{u1xiline}
\sup_{B_{r_k}}u^1\le \Th r_k.
\endaligned
\end{equation}
Recalling that $[|M|]\in\mathcal{D}_n(\R^{n+m})$ is the $n$-current associated with $M$ and its orientation \eqref{orientation}.
From Lemma \ref{sptTV}, we can assume that $[|\f1{r_k}M|]$ converges weakly as $k\to\infty$ to a multiplicity one current $T\neq0$ with $0\in\mathrm{spt}T$ and $\p T=0$.
Moreover, the varifold associated with spt$T$ is a minimal cone in $\R^{n+m}$ with the vertex at the origin.
Let $\mathbf{x}=(x_1,\cdots,x_{n+m})$ be the position vector in $\R^{n+m}$. From \eqref{uaxline}, we get
\begin{equation}\aligned\label{sptxnaR}
\sup_{\mathrm{spt}T\cap\mathbf{B}_1}x_{n+\a}\le0
\endaligned
\end{equation}
for each $\a\in\{2,\cdots,m\}$.

Suppose that $T$ is stationary decomposable in $\mathbf{B}_1$. Let $T'$ be a stationary component of $T\llcorner\mathbf{B}_1$.
Then from $\p T'=0$ in $\mathbf{B}_1$, we conclude that spt$T'$ is a truncated cone. In particular, $0\in\mathrm{spt}T'$. Hence we have
\begin{equation}\aligned\label{T'lowomega}
\mathbb{M}(\mathrm{spt}T'\cap \mathbf{B}_r)\ge\omega_nr^n\qquad \mathrm{for\ any}\  r\in(0,1].
\endaligned
\end{equation}
From Lemma \ref{VGM*}, we have
\begin{equation}\aligned\label{TupLaomega}
\mathbb{M}(\mathrm{spt}T\cap \mathbf{B}_r)\le C_{n,\La}\omega_nr^n.
\endaligned
\end{equation}
If $T'$ is stationary decomposable in $\mathbf{B}_1$, then we consider a stationary component $T''$ of $T'\llcorner\mathbf{B}_1$. Clearly, spt$T''$ is a truncated cone and $0\in\mathrm{spt} T''$.
Combining \eqref{T'lowomega}\eqref{TupLaomega}, the procedure of decomposition will cease after finite times.
Hence, there is a collection of indecomposable stationary components $T_1,\cdots,T_l$ of $T\llcorner\mathbf{B}_1$, where $l$ is a positive integer $\le C_{n,\La}$. In particular, all $\mathrm{spt}T_1,\cdots,\mathrm{spt}T_l$ are truncated cones.

From Proposition \ref{Tsuperhar} and \eqref{sptxnaR}, we get $x_{n+\a}\equiv0$ on the truncated cone spt$T_k$ for each $\a\in\{2,\cdots,m\}$ and $k\in\{1,\cdots,l\}$ as $x_{n+\a}$ is weakly harmonic in spt$T_k\cap \mathbf{B}_1$.
In particular, the varifold associated with spt$T$ is a minimal cone living in an $(n+1)$-dimensional Euclidean space $\R^{n+1}$.
From \eqref{u1xiline}, we conclude that
\begin{equation}\aligned
\sup_{\mathrm{spt}T\cap(B_1\times\R^m)}x_{n+1}<\infty.
\endaligned
\end{equation}
From Lemma \ref{codim1splitting}, it follows that
\begin{equation}\aligned
\sup_{\mathrm{spt}T\cap(B_{1}\times\R^m)}|x_{n+1}|<\infty.
\endaligned
\end{equation}
Note that $[|\f1{r_k}M|]\rightharpoonup T$ and spt$T$ is a cone. Then spt$T$ can be written as an entire graph over $\R^n$ with the graphic function $(\phi,0,\cdots,0)$, where $\phi$ is 1-homogenous on $\R^n$.
Then
\begin{equation}\aligned
\mathrm{graph}_\phi=\{(x,\phi(x))\in\R^n\times\R|\, x\in\R^n\}
\endaligned
\end{equation}
is a (Lipschitz) minimal graph in $\R^{n+1}$. Therefore, the regularity theorem of  De Giorgi \cite{m} implies that $\phi$ is linear, and then spt$T$ is flat.
Since $T$ has multiplicity one on spt$T$, then Allard's regularity theorem yields the proof.
\end{proof}

\Section{Bernstein theorem for minimal graphs of bounded slope}{Bernstein theorem for minimal graphs of bounded slope}

For a domain $\Om\subset\R^n$, let $M=\mathrm{graph}_u$ be a smooth minimal graph over $\Om$ of codimension $m\ge2$.
Let $g_{ij}=\de_{ij}+\sum_{\a=1}^m \p_iu^\a\p_ju^\a$,
and $(g^{ij})$ be the inverse matrix of $(g_{ij})$.
Let $v$ be the {\it slope} function of $M$ defined by
\begin{equation}\aligned\label{v}
v=\sqrt{\det g_{ij}}=\sqrt{\det\left(\de_{ij}+\sum_{\a=1}^m \f{\p u^\a}{\p x_i}\f{\p u^\a}{\p x_j}\right)}.
\endaligned
\end{equation}
\begin{lem}\label{sqrt2logv}
Let $\la_1,\cdots,\la_n$ be the singular eigenvalues of $Du$ at any point of $\Om$.
If $\la_1\ge\cdots\ge\la_n\ge0$ and $\la_1^2\la_i^2\le2+\la_i^2$ for all $i\ge2$, then
\begin{equation}\aligned\label{logvsqrt2}
\De_M\log v\ge\sum_{\a>n,i,j}h^2_{\a,ij}+\sum_i(1+\la_i^2)h_{i,ii}^2.
\endaligned
\end{equation}
Moreover, when the above inequality becomes an  equality, $(2+\la_i^2)h_{i,ij}^2+h_{j,ii}^2+2\la_i\la_jh_{i,ji}h_{j,ii}=0$ for all $i\neq j$.
\end{lem}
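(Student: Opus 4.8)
The plan is to turn \eqref{logvsqrt2} into an elementary algebraic inequality for the second fundamental form at a single differentiable point $p$ of $M$, and then to prove that inequality by splitting the relevant sum into mutually disjoint $2\times2$ and $3\times3$ quadratic forms, each of which is positive semidefinite exactly under the stated hypothesis on the $\lambda_i$. First I would fix $p$, rotate $\R^n$ so that $Du$ is diagonalized at $p$ with singular values $\lambda_1\ge\cdots\ge\lambda_n\ge0$, and pass to the adapted orthonormal frame of \S2 in which \eqref{Delogv} holds. Writing $|B_M|^2=\sum_{\alpha>n,i,j}h_{\alpha,ij}^2+\sum_{i,j,l}h_{i,jl}^2$ and cancelling $\sum_i\lambda_i^2h_{i,ii}^2$ against part of $\sum_i(1+\lambda_i^2)h_{i,ii}^2$, the asserted inequality \eqref{logvsqrt2} becomes equivalent to
\[
\sum_i\sum_{(j,l)\neq(i,i)}h_{i,jl}^2+\sum_i\sum_{j\neq i}\lambda_i^2h_{i,ij}^2+\sum_{l,i\neq j}\lambda_i\lambda_jh_{i,jl}h_{j,il}\ge 0 .
\]

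Next I would sort the terms of the left-hand side by the index pattern of $h_{i,jl}$, using $h_{i,jl}=h_{i,lj}$ throughout. A routine bookkeeping of multiplicities shows that this expression is precisely the sum, over all ordered pairs $a\neq b$, of the two-variable form $Q_{a,b}:=(2+\lambda_a^2)h_{a,ab}^2+h_{b,aa}^2+2\lambda_a\lambda_bh_{a,ab}h_{b,aa}$, plus the sum, over all unordered triples $\{a,b,c\}$ of pairwise distinct indices, of $2\bigl(X^2+Y^2+Z^2\bigr)+2\bigl(\lambda_a\lambda_bXY+\lambda_a\lambda_cXZ+\lambda_b\lambda_cYZ\bigr)$ with $X=h_{a,bc}$, $Y=h_{b,ac}$, $Z=h_{c,ab}$. (Here the multiplicity of $h_{a,ab}^2$ is $2$ plus the contribution $\lambda_a^2h_{a,ab}^2$ from the second sum; the $l\in\{i,j\}$ cross terms of the third sum feed the $Q_{a,b}$, and the $l\notin\{i,j\}$ cross terms feed the triple forms.) Since these families involve pairwise disjoint sets of components $h_{\alpha,ij}$ (each $Q_{a,b}$ uses only components with a repeated index, the triple forms only components with three distinct indices, and different pairs/triples use different components), it suffices to check that each $Q_{a,b}\ge0$ and each triple form is $\ge0$.

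For $Q_{a,b}$: its matrix is $\bigl(\begin{smallmatrix}2+\lambda_a^2&\lambda_a\lambda_b\\\lambda_a\lambda_b&1\end{smallmatrix}\bigr)$, so $Q_{a,b}\ge0$ iff $\lambda_a^2\lambda_b^2\le2+\lambda_a^2$; because $\lambda_a,\lambda_b\le\lambda_1$, a short case check from the hypothesis $\lambda_1^2\lambda_i^2\le2+\lambda_i^2$ (and $n\ge2$) gives exactly this. Since $Q_{a,b}$ is the quadratic form appearing in the "Moreover" assertion with $(i,j)=(a,b)$, equality in \eqref{logvsqrt2} forces every $Q_{a,b}$ to vanish, which is the final sentence of the lemma. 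For a triple, relabel so that $a<b<c$ (so $\lambda_a\ge\lambda_b\ge\lambda_c$ and $b,c\ge2$, whence $\lambda_b^2,\lambda_c^2\le2$) and complete the square in $X$; the triple form becomes
\[
2\Bigl(X+\tfrac{\lambda_a\lambda_b}{2}Y+\tfrac{\lambda_a\lambda_c}{2}Z\Bigr)^2+2\Bigl[\bigl(1-\tfrac{\lambda_a^2\lambda_b^2}{4}\bigr)Y^2+\bigl(1-\tfrac{\lambda_a^2\lambda_c^2}{4}\bigr)Z^2+\lambda_b\lambda_c\bigl(1-\tfrac{\lambda_a^2}{2}\bigr)YZ\Bigr].
\]
The diagonal coefficients are nonnegative because $\lambda_a^2\lambda_b^2\le2+\lambda_b^2\le4$ (and similarly for $c$), and the residual $2\times2$ form has determinant $1-\tfrac14\bigl(\lambda_a^2\lambda_b^2+\lambda_a^2\lambda_c^2+\lambda_b^2\lambda_c^2-\lambda_a^2\lambda_b^2\lambda_c^2\bigr)$, which is $\ge0$ by the elementary inequality $\lambda_a^2\lambda_b^2+\lambda_a^2\lambda_c^2+\lambda_b^2\lambda_c^2-\lambda_a^2\lambda_b^2\lambda_c^2\le4$; this last I would prove by setting $\lambda_a^2\lambda_b^2=2+\lambda_b^2-\varepsilon_1$, $\lambda_a^2\lambda_c^2=2+\lambda_c^2-\varepsilon_2$ with $\varepsilon_1,\varepsilon_2\ge0$ (legitimate since $\lambda_a\le\lambda_1$ and $b,c\ge2$), whereupon the left side collapses to $4-\varepsilon_1(1-\tfrac{\lambda_c^2}{2})-\varepsilon_2(1-\tfrac{\lambda_b^2}{2})\le4$. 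This finishes the proof.

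The step I expect to be the real obstacle is twofold. First, getting the combinatorial identity of the second paragraph exactly right: one must track carefully how the symmetry $h_{i,jl}=h_{i,lj}$ and the exclusion $(j,l)\neq(i,i)$ distribute the squares so that the $Q_{a,b}$ and the triple forms consume every term precisely once. Second, and more essentially, the positivity of the $3\times3$ triple form: this is exactly the point where the constant $2$ — equivalently, the bound $\La<\sqrt2$ used elsewhere in the paper — is forced, since the elementary inequality $\lambda_a^2\lambda_b^2+\lambda_a^2\lambda_c^2+\lambda_b^2\lambda_c^2-\lambda_a^2\lambda_b^2\lambda_c^2\le4$ would fail with any larger constant in place of $2$.
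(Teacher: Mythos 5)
Your proposal is correct and reproduces the paper's overall structure, but at one essential point it takes a genuinely shorter route. Both you and the paper start from \eqref{Delogv}, regroup into the identity \eqref{DeMlogdecom}, isolate the two-index forms $Q_{a,b}=(2+\la_a^2)h_{a,ab}^2+h_{b,aa}^2+2\la_a\la_bh_{a,ab}h_{b,aa}$ and the three-index forms, and verify $Q_{a,b}\ge0$ by the same $2\times2$ determinant check. The divergence is in the treatment of the three-index form: the paper packages it as the $3\times3$ Hessian $\mathbf{Hess}_f$ of \eqref{Hessf}, computes $\det\mathbf{Hess}_f$, and then defers the inequality $4+\mu_1\mu_2\mu_3-\mu_1\mu_2-\mu_1\mu_3-\mu_2\mu_3\ge0$ to the appendix Lemma~\ref{mu123}, which is a roughly page-long argument involving critical-point analysis under a volume constraint (and also needs Remark~\ref{muimujleq4} to ensure the leading minors are nonnegative). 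You instead complete the square in one variable, so the triple form collapses to a perfect square plus an explicit $2\times2$ residual whose determinant is $1-\tfrac14(\la_a^2\la_b^2+\la_a^2\la_c^2+\la_b^2\la_c^2-\la_a^2\la_b^2\la_c^2)$, i.e.\ a quarter of $\det\mathbf{Hess}_f/2$; and you prove the resulting bound $\la_a^2\la_b^2+\la_a^2\la_c^2+\la_b^2\la_c^2-\la_a^2\la_b^2\la_c^2\le4$ directly by the substitution $\la_a^2\la_b^2=2+\la_b^2-\eps_1$, $\la_a^2\la_c^2=2+\la_c^2-\eps_2$, which collapses the left side cleanly to $4-\eps_1(1-\la_c^2/2)-\eps_2(1-\la_b^2/2)$. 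Your substitution argument is both shorter and more transparent than Lemma~\ref{mu123} for the purposes of this lemma (the appendix lemma also identifies exactly where equality occurs, which is not needed here), and it makes visible where the hypothesis $\la_1^2\la_i^2\le2+\la_i^2$ is used. Your equality-case discussion matches the paper's. The only point worth flagging is the one you flagged yourself: the combinatorial verification that $|B_M|^2+\sum\la_i^2h_{i,ij}^2+\sum\la_i\la_jh_{i,jl}h_{j,il}$ decomposes exactly into $\sum_{a\ne b}Q_{a,b}$ plus the triple forms is stated but not checked term by term; the paper is no more explicit there, referring the reader to the decomposition in \cite{d-x-y}, so this is a shared, and acceptable, ellipsis.
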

\begin{proof}
From \eqref{Delogv}, we have (see also the decomposition (4.16) in \cite{d-x-y})
\begin{equation}\aligned\label{DeMlogdecom}
&\De_M\log v=\sum_{\a,i,j}h^2_{\a,ij}+\sum_{i,j}\la_i^2h^2_{i,ij}+\sum_{k,i\neq j}\la_i\la_jh_{i,jk}h_{j,ik}\\
=&\sum_{\a>n,i,j}h^2_{\a,ij}+\sum_i(1+\la_i^2)h_{i,ii}^2+\sum_{i\neq j}\left((2+\la_i^2)h_{i,ij}^2+h_{j,ii}^2+2\la_i\la_jh_{i,ji}h_{j,ii}\right)\\
&+\sum_{i,j,k\ \mathrm{mutually\ distinct}}\left(h_{k,ij}^2+\la_i\la_jh_{i,jk}h_{j,ik}\right),
\endaligned
\end{equation}
where
\begin{equation}\aligned\label{ijkmutdistinct}
&\sum_{i,j,k\ \mathrm{mutually\ distinct}}\left(h_{k,ij}^2+\la_i\la_jh_{i,jk}h_{j,ik}\right)\\
=&2\sum_{i<j<k}\left(h_{i,jk}^2+h_{k,ij}^2+h_{j,ki}^2+\la_i\la_jh_{i,jk}h_{j,ki}+\la_i\la_kh_{i,kj}h_{k,ij}+\la_j\la_kh_{j,ki}h_{k,ji}\right).
\endaligned
\end{equation}
Without loss of generality, we assume $\la_1\ge\la_2\ge\cdots\ge\la_n\ge0$.
Let $$f(x,y,z)=x^2+y^2+z^2+\la_i\la_jxy+\la_j\la_kyz+\la_i\la_kxz$$
for every $x,y,z\in\R$ with mutually distinct $i,j,k$. Then
\begin{equation}\label{Hessf}
\mathbf{Hess}_f=
\left( \begin{array}{ccc}
2 & \la_i\la_j & \la_i\la_k \\
\la_i\la_j & 2 & \la_j\la_k \\
\la_i\la_k & \la_j\la_k & 2
\end{array} \right).
\end{equation}
From a direct computation, we have
\begin{equation}\aligned
\det\mathbf{Hess}_f=8+2\la_i^2\la_j^2\la_k^2-2\la_i^2\la_j^2-2\la_i^2\la_k^2-2\la_j^2\la_k^2.
\endaligned
\end{equation}
Then we consider a function
\begin{equation}\aligned
\phi(\mu_1,\mu_2,\mu_3)=4+\mu_1\mu_2\mu_3-\mu_1\mu_2-\mu_1\mu_3-\mu_2\mu_3
\endaligned
\end{equation}
on $V=\{(\mu_1,\mu_2,\mu_3)\in\R^3|\ 0\le\mu_3\le\mu_2\le\mu_1,\ \ \mu_1\mu_2\le2+\f{2}{\mu_1-1}\}$. From Lemma \ref{mu123}  in the Appendix I, $\phi\ge0$ on $V$,
which implies
\begin{equation}\aligned\label{detHessf}
\det\mathbf{Hess}_f\ge0
\endaligned
\end{equation}
combining $\la_1^2\la_i^2\le2+\la_i^2$ for all $i\ge2$.
We further conclude that all the eigenvalues of $\mathbf{Hess}_f$ are non-negative,
then it follows that
$$f\ge0\qquad \mathrm{on}\ \ \R^3.$$
From \eqref{ijkmutdistinct}, we have
\begin{equation}\aligned\label{ijkdiff}
\sum_{i,j,k\ \mathrm{mutually\ distinct}}\left(h_{k,ij}^2+\la_i\la_jh_{i,jk}h_{j,ik}\right)\ge0.
\endaligned
\end{equation}

Moreover, by the assumption for $i\ge2$ and $\la_1\ge1$ we have
\begin{equation}\aligned
2+\la_i^2-\la_i^2\la_1^2=2+\la_i^2\la_1^2(\la_1^{-2}-1)\ge2+\left(2+\f2{\la_1^2-1}\right)(\la_1^{-2}-1)=0.
\endaligned
\end{equation}
From the Cauchy inequality, we have
\begin{equation}\aligned\label{ineqjest}
(2+\la_i^2)h_{i,ij}^2+h_{j,ii}^2+2\la_i\la_jh_{i,ji}h_{j,ii}\ge0.
\endaligned
\end{equation}
Substituting \eqref{ijkdiff}\eqref{ineqjest} into \eqref{DeMlogdecom}, we have
\begin{equation}\aligned
\De_M\log v\ge&\sum_{\a>n,i,j}h^2_{\a,ij}+\sum_i(1+\la_i^2)h_{i,ii}^2.
\endaligned
\end{equation}
When the above inequality becomes an equality, we clearly have
$$\sum_{i\neq j}\left((2+\la_i^2)h_{i,ij}^2+h_{j,ii}^2+2\la_i\la_jh_{i,ji}h_{j,ii}\right)=0,$$ which completes the proof.
\end{proof}
As a corollary, we have the following result.
\begin{cor}\label{Lalogv}
Let $\la_1,\cdots,\la_n$ be the singular eigenvalues of $Du$ at any point of $\Om$.
If $\la_1\ge\cdots\ge\la_n\ge0$ and $\sup_{i\ge2}\la_1\la_i\le\La$ for some $0<\La\le\sqrt{2}$, then
\begin{equation}\aligned\label{logvLa}
\De_M\log v\ge\left(1-\f{\La}{\sqrt{2}}\right)|B_M|^2+\f1n|\n\log v|^2,
\endaligned
\end{equation}
where $B_M$ denotes the second fundamental form of $M$ in $\R^{n+m}$.
\end{cor}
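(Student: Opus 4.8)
The plan is to run the Bochner-type identity \eqref{Delogv} directly and estimate its terms, rather than to quote the conclusion of Lemma \ref{sqrt2logv}, which discards too much. Two preliminary remarks. First, since $\la_1\ge\cdots\ge\la_n\ge0$ and $\sup_{i\ge2}\la_1\la_i\le\La$, one has $\la_i\la_j\le\La$ for \emph{all} $i\neq j$; in particular $\la_i^2\le\la_1\la_i\le\La\le\sqrt2$ for $i\ge2$, so $\la_1^2\la_i^2\le2\le2+\la_i^2$ and the hypothesis of Lemma \ref{sqrt2logv} holds. Second, combining \eqref{dw3} and \eqref{hij}, the gradient of $\log v$ on $M$ has components $\sum_j\la_jh_{j,jk}$ in a frame adapted to the Gauss map, so $|\n\log v|^2=\sum_k\big(\sum_j\la_jh_{j,jk}\big)^2$, and Cauchy--Schwarz gives
\[
|\n\log v|^2\le n\sum_{j,k}\la_j^2h_{j,jk}^2=n\sum_{i,j}\la_i^2h_{i,ij}^2 .
\]
Inserting this into \eqref{Delogv} shows that \eqref{logvLa} follows once we prove the single estimate
\[
\sum_{k,\,i\neq j}\la_i\la_jh_{i,jk}h_{j,ik}\ \ge\ -\tfrac{\La}{\sqrt2}\,|B_M|^2 .
\]

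To prove the displayed estimate I would split the sum according to whether $k\in\{i,j\}$. The terms with $k\in\{i,j\}$ sum, after relabelling, to $2\sum_{i\neq j}\la_i\la_jh_{i,ij}h_{j,ii}$; the weighted inequality $2|h_{i,ij}h_{j,ii}|\le\sqrt2\,h_{i,ij}^2+\tfrac1{\sqrt2}h_{j,ii}^2$ together with $\la_i\la_j\le\La$ bounds this below by $-\tfrac{\La}{\sqrt2}\big(2\sum_{i\neq j}h_{i,ij}^2+\sum_{i\neq j}h_{j,ii}^2\big)$, i.e.\ by $-\tfrac{\La}{\sqrt2}$ times the part of $|B_M|^2$ in which the normal index and the two tangent indices take exactly two distinct values. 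The terms with $k\notin\{i,j\}$, i.e.\ $i,j,k$ mutually distinct, I would regroup over unordered triples $\{p,q,r\}$: writing $(x,y,z)=(h_{p,qr},h_{q,pr},h_{r,pq})$, the contribution of such a triple is $2\big(\la_p\la_q\,xy+\la_p\la_r\,xz+\la_q\la_r\,yz\big)$ while the corresponding part of $|B_M|^2$ is $2(x^2+y^2+z^2)$, so it suffices to show
\[
\tfrac{\La}{\sqrt2}(x^2+y^2+z^2)+\la_p\la_q\,xy+\la_p\la_r\,xz+\la_q\la_r\,yz\ \ge\ 0 .
\]
This is positive semidefiniteness of the symmetric $3\times3$ matrix with diagonal entries $\tfrac{\La}{\sqrt2}$ and off-diagonal entries $\tfrac12\la_a\la_b$: its $2\times2$ principal minors are nonnegative because $\la_a\la_b\le\La\le\sqrt2\,\La$, and its determinant reduces, after clearing constants, to
\[
2\La^2+\tfrac{\sqrt2}{\La}\,\la_p^2\la_q^2\la_r^2\ \ge\ \la_p^2\la_q^2+\la_p^2\la_r^2+\la_q^2\la_r^2 ,
\]
which I would verify by a case split on the largest of $\la_p^2,\la_q^2,\la_r^2$: if it does not exceed $\La/\sqrt2$, the three pairwise products sum to at most $3\La^2/2\le2\La^2$; otherwise bound two of them by $\La^2$ (from $\la_a\la_b\le\La$) and absorb the remaining one into the cubic term. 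Adding the two contributions bounds $\sum_{k,\,i\neq j}\la_i\la_jh_{i,jk}h_{j,ik}$ below by $-\tfrac{\La}{\sqrt2}$ times the sum of the ``two distinct values'' and ``all distinct'' parts of $|B_M|^2$, which is at most $|B_M|^2$; this is the required estimate, and hence \eqref{logvLa}.

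I expect the last determinant inequality to be the main obstacle: here the diagonal of the quadratic form is $\tfrac{\La}{\sqrt2}<1$ rather than the $1$ occurring in the proof of Lemma \ref{sqrt2logv}, so that lemma cannot be reused and this piece must be reproved from scratch; everything else is Cauchy--Schwarz and weighted arithmetic--geometric mean bookkeeping. Finally, $\La\le\sqrt2$ enters only to keep the coefficient $1-\tfrac{\La}{\sqrt2}$ nonnegative, so that \eqref{logvLa} yields subharmonicity of $\log v$, hence of $v$; the inequality itself is valid for every $\La$.
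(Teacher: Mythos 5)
Your proof is correct, and it takes a genuinely different route from the paper's. Both arguments start from the Bochner formula \eqref{Delogv}, split the cross term $\sum_{k,\,i\neq j}\la_i\la_jh_{i,jk}h_{j,ik}$ according to whether $\{i,j,k\}$ has two or three distinct values, and close with $|\n\log v|^2=\sum_k\bigl(\sum_j\la_jh_{j,jk}\bigr)^2$ plus Cauchy--Schwarz; but the middle step differs. The paper carves off a $\bigl(1-\tfrac{\La}{\sqrt2}\bigr)$ coefficient inside the decomposition \eqref{DeMlogdecom} and then invokes \eqref{ijkestLa}, a claimed quantitative lower bound with constant $(2-\sqrt2)(2-\La^2)$ for the ``all-distinct'' quadratic form, which it deduces from the determinant estimate of Corollary \ref{mu123La}. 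You instead prove the single inequality $\sum_{k,\,i\neq j}\la_i\la_jh_{i,jk}h_{j,ik}\ge-\tfrac{\La}{\sqrt2}|B_M|^2$ directly and feed it back into \eqref{Delogv}; this needs only positive semidefiniteness --- with no quantified gap --- of a $3\times3$ matrix with diagonal $\tfrac{\La}{\sqrt2}$, which you verify cleanly via principal minors and a case split on $\max_a\mu_a$ versus $\La/\sqrt2$. The accompanying bookkeeping is also correct: the ``$k\in\{i,j\}$'' terms reorganize to $2\sum_{i\neq j}\la_i\la_jh_{i,ij}h_{j,ii}$, the weighted AM--GM with weights $(\sqrt2,\tfrac1{\sqrt2})$ bounds this below by $-\tfrac{\La}{\sqrt2}\sum_{i\neq j}(2h_{i,ij}^2+h_{j,ii}^2)$, and the two pieces of $|B_M|^2$ you control (two-distinct and all-distinct tangent-index parts) are indeed disjoint and sum to at most $|B_M|^2$. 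Your route is more elementary and self-contained --- it does not need Lemma \ref{mu123} or Corollary \ref{mu123La} at all --- and it is arguably more robust: passing from a lower bound on $\det\mathbf{Hess}_f$ to a lower bound on the associated quadratic form, as \eqref{ijkestLa} implicitly requires, needs a minimum-eigenvalue argument that the paper does not supply, and in fact \eqref{ijkestLa} cannot hold for small $\La$ (take $\la\equiv 0$: then $(2-\sqrt2)(2-\La^2)>1$ once $\La^2<1-\tfrac1{\sqrt2}$), whereas your argument delivers \eqref{logvLa} for every $\La\in(0,\sqrt2]$ without that obstruction.
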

\begin{proof}
From \eqref{Hessf} and Corollary \ref{mu123La} in the Appendix I, we have
\begin{equation}\aligned\label{ijkestLa}
\sum_{i,j,k\ \mathrm{mutually\ distinct}}\left(h_{k,ij}^2+\la_i\la_jh_{i,jk}h_{j,ik}\right)
\ge(2-\sqrt{2})(2-\La^2)\sum_{i,j,k\ \mathrm{mutually\ distinct}}h_{k,ij}^2.
\endaligned
\end{equation}
From the Cauchy inequality, we have
\begin{equation}\aligned\label{ineqjestLa}
\f{\La}{\sqrt{2}}\left(2h_{i,ij}^2+h_{j,ii}^2\right)+2\la_i\la_jh_{i,ji}h_{j,ii}\ge\f{\La}{\sqrt{2}}\left(2h_{i,ij}^2+h_{j,ii}^2\right)-2\La|h_{i,ji}h_{j,ii}|\ge0.
\endaligned
\end{equation}
Substituting \eqref{ijkestLa}\eqref{ineqjestLa} into \eqref{DeMlogdecom} gets
\begin{equation}\aligned
\De_M\log v=&\sum_{\a>n,i,j}h^2_{\a,ij}+\sum_i(1+\la_i^2)h_{i,ii}^2+\sum_{i\neq j}\left(1-\f{\La}{\sqrt{2}}\right)\left(2h_{i,ij}^2+h_{j,ii}^2\right)\\
&+\sum_{i\neq j}\la_i^2h_{i,ij}^2+(2-\sqrt{2})(2-\La^2)\sum_{i,j,k\ \mathrm{mutually\ distinct}}h_{k,ij}^2\\
\ge&\left(1-\f{\La}{\sqrt{2}}\right)\sum_{\a,i,j}h^2_{\a,ij}+\sum_{i,j}\la_i^2h_{i,ij}^2.
\endaligned
\end{equation}
Combining $|\n\log v|^2=\sum_j\left(\sum_{i}\la_ih_{i,ij}\right)^2$ and the Cauchy inequality, we complete the proof.
\end{proof}

For any considered point $p\in\Om$, up to rotations of $\R^n$, $\R^m$, we assume $D_ku^\a(p)=\de_{k,\a}\la_k$.
Let $e_i=\left(1+\sum_{\a=1}^m(D_iu^\a)^2\right)^{-1/2}\left(\mathbf{E}_i+\sum_{\a=1}^mD_iu^\a \mathbf{E}_{n+\a}\right)$ for $i=1,\cdots,n$, and
$\nu_\a=\left(1+|Du^\a|^2\right)^{-1/2}\left(\sum_{j=1}^nD_ju^\a \mathbf{E}_{j}+\mathbf{E}_{n+\a}\right)$ for $\a=1,\cdots,m$.
Then $\{e_i\}_{i=1}^n\cup\{\nu_\a\}_{\a=1}^m$ forms a local frame field in $\R^{n+m}$ (see also \eqref{nuka*}\eqref{ekj*}), which is orthonormal at $p$. Moreover, at $p$
\begin{equation}\aligned\label{haijua}
&h_{\a,ij}=\lan\bn_{e_j}e_i,\nu_\a\ran\\
=&\left(1+\sum_{\a=1}^m(D_iu^\a)^2\right)^{-1/2}\left(1+\sum_{\a=1}^m(D_ju^\a)^2\right)^{-1/2}\left(1+|Du^\a|^2\right)^{-1/2}\f{\p^2u^\a}{\p x_ix_j}\\
=&\f1{(1+\la_i^2)(1+\la_j^2)(1+\la_\a^2)}D_{ij}u^\a.
\endaligned
\end{equation}
\begin{lem}\label{Harm}
Suppose that the 2-dilation $|\La^2du|$ of $u$ satisfies $|\La^2du|^2\le2+\left|\f{2}{(\mathrm{Lip}\,u)^2-1}\right|$ and $v$ is a constant on $\Om$. Then $\De_{\R^n}u^\a=0$ on $\Om$ for each $\a=1,\cdots,m$.
\end{lem}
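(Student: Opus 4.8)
The plan is to push the hypothesis $v\equiv\text{const}$ — which gives both $\De_M\log v\equiv0$ and $\n_M\log v\equiv0$ — through the sharp Bochner inequality of Lemma \ref{sqrt2logv}, run backwards, to conclude that $D_{ii}u^\a=0$ at every point, and hence $\De_{\R^n}u^\a=\sum_iD_{ii}u^\a=0$.

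First I would check that Lemma \ref{sqrt2logv} applies at each $p\in\Om$: writing the singular values of $Du(p)$ as $\la_1\ge\cdots\ge\la_n\ge0$, the bound $|\La^2du|^2=\la_1^2\la_2^2\le2+\left|\f{2}{(\mathrm{Lip}\,u)^2-1}\right|$ forces $\la_1^2\la_i^2\le2+\la_i^2$ for all $i\ge2$: trivially if $\la_1\le1$, and if $\la_1>1$ then $\la_i^2\le\la_2^2\le\la_1^{-2}\big(2+\f{2}{\la_1^2-1}\big)=\f{2}{\la_1^2-1}$, which is exactly the required bound (and this holds whether $\mathrm{Lip}\,u$ is read pointwise or as the global Lipschitz constant, since $\la_1$ never exceeds it). Since $\De_M\log v=0$, the inequality of Lemma \ref{sqrt2logv} forces $\sum_{\a>n,i,j}h_{\a,ij}^2=0$ and $\sum_i(1+\la_i^2)h_{i,ii}^2=0$, so $h_{\a,ij}=0$ for $\a>n$ and $h_{i,ii}=0$ for all $i$. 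Its equality-case statement gives, for all $i\ne j$, $(2+\la_i^2)h_{i,ij}^2+h_{j,ii}^2+2\la_i\la_jh_{i,ji}h_{j,ii}=0$; rewriting this as $(2+\la_i^2-\la_i^2\la_j^2)h_{i,ij}^2+(h_{j,ii}+\la_i\la_jh_{i,ij})^2=0$ and using $2+\la_i^2-\la_i^2\la_j^2\ge0$ (the same inequality that makes Lemma \ref{sqrt2logv} work), I obtain $h_{j,ii}=-\la_i\la_jh_{i,ij}$ in all cases, and $h_{i,ij}=h_{j,ii}=0$ at every pair $i\ne j$ for which $2+\la_i^2-\la_i^2\la_j^2>0$.

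It then remains to show $\sum_i(1+\la_i^2)h_{\a,ii}=0$ for each $\a$ at every point, since by \eqref{haijua} this is precisely $\De_{\R^n}u^\a=0$. For $\a>n$ all $h_{\a,ii}$ vanish, and $h_{\a,\a\a}=h_{i,ii}|_{i=\a}=0$, so only the indices $i\ne\a$ with $\a\le n$ for which the pair $(i,\a)$ is \emph{degenerate} — i.e. $\la_i^2(\la_\a^2-1)=2$, whence $\la_\a^2>1$ and $\la_i^2=\f{2}{\la_\a^2-1}=:\mu^2$ is the same positive value for all such $i$ — contribute, and there $h_{\a,ii}=-\la_i\la_\a h_{i,i\a}$. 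The step that closes the gap is to feed in $\n_M\log v\equiv0$: via \eqref{dw3} and \eqref{hij} this is the pointwise identity $\sum_k\la_kh_{k,k\a}=0$, in which the $k=\a$ term is $\la_\a h_{\a,\a\a}=0$, the terms with $(k,\a)$ non-degenerate vanish since $h_{k,k\a}=0$ by the previous paragraph, and the remaining terms all carry the common factor $\la_k=\mu\ne0$; hence $\sum_{k\,\mathrm{degen}}h_{k,k\a}=0$, and since $(1+\la_k^2)\la_k=(1+\mu^2)\mu$ is constant over these $k$ we get $\sum_i(1+\la_i^2)h_{\a,ii}=-(1+\mu^2)\mu\la_\a\sum_{k\,\mathrm{degen}}h_{k,k\a}=0$. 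Thus $\De_{\R^n}u^\a(p)=0$ for every $p$ and every $\a$, which is the assertion.

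The main obstacle is exactly this degenerate locus $2+\la_i^2=\la_i^2\la_j^2$, which the Lipschitz-dependent hypothesis genuinely allows at the extreme of its range: there the equality case of Lemma \ref{sqrt2logv} alone does not force the relevant components of $B_M$ to vanish, and one is obliged to bring in the first-order information $\n_M\log v=0$. I would emphasize in the write-up that one never needs the full vanishing of $B_M$ (nor of the mixed components $h_{\a,ij}$ with $i,j,\a$ pairwise distinct) — only the weaker combination $\sum_i(1+\la_i^2)h_{\a,ii}=0$, which is what $\De_{\R^n}u^\a=0$ amounts to under \eqref{haijua}.
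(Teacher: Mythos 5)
Your proof is correct, and the algebra checks out: the rewriting $(2+\la_i^2)h_{i,ij}^2+h_{j,ii}^2+2\la_i\la_jh_{i,ij}h_{j,ii}=(2+\la_i^2-\la_i^2\la_j^2)h_{i,ij}^2+(h_{j,ii}+\la_i\la_jh_{i,ij})^2$ isolates exactly the degenerate locus, the identity $\n_M\log v=0\Leftrightarrow\sum_k\la_kh_{k,k\a}=0$ follows from \eqref{dw3} and \eqref{hij}, and the observation that all degenerate $i$ for a fixed $\a$ share $\la_i^2=2/(\la_\a^2-1)$ closes the argument cleanly.

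The route, however, differs from the paper's in the final step. The paper splits into cases according to whether $\la_1>\la_2$ or $\la_1=\la_2=\cdots=\la_k>\la_{k+1}$, kills $D_{ii}u^\a$ for all non-degenerate indices via \eqref{pxilai20}, and then invokes the minimal surface system \eqref{Nms} — which at the diagonalization point is $\sum_i(1+\la_i^2)^{-1}D_{ii}u^\a=0$ — to resolve the surviving block with equal $\la$'s. You instead avoid the case split entirely by characterizing the degenerate pairs $(i,\a)$ once and for all, and use $\n_M\log v=0$ in place of \eqref{Nms}. These two auxiliary linear relations are in fact interchangeable here: combining the equality-case identity $h_{\a,ii}=-\la_i\la_\a h_{i,i\a}$ with the trace gives $\sum_ih_{\a,ii}=-\la_\a\sum_k\la_kh_{k,k\a}$, so for the degenerate $\a$ (which have $\la_\a>1$) the minimality $H_\a=0$ from \eqref{Nms} and your gradient relation encode the same information. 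What your organization buys is a uniform, case-free argument, and it makes visible — as you note — that neither $B_M$ nor the triply-distinct components $h_{\a,ij}$ need to vanish; only the weighted trace $\sum_i(1+\la_i^2)h_{\a,ii}$ must, and that is precisely what a second linear relation on the degenerate block (of equal $\la_i=\mu$) supplies.

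One small expository remark: \eqref{haijua} in the paper has a typo (the $-1/2$ exponents from the middle line are dropped in the last line), so $D_{ii}u^\a=(1+\la_i^2)(1+\la_\a^2)^{1/2}h_{\a,ii}$; you have implicitly used the corrected version, which is consistent with the identity $\sum_i(1+\la_i^2)h_{\a,ii}=0\Leftrightarrow\De_{\R^n}u^\a=0$, so no issue arises.
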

\begin{proof}
At any  point $p$ in $\Om$, from Lemma \ref{sqrt2logv} we get
\begin{equation}\aligned\label{pxilai20}
\sum_{\a>n,i,j}h^2_{\a,ij}+\sum_ih_{i,ii}^2=\sum_{i\neq j}\left((2+\la_i^2)h_{i,ij}^2+h_{j,ii}^2+2\la_i\la_jh_{i,ji}h_{j,ii}\right)=0,
\endaligned
\end{equation}
where $\la_1\ge\cdots\ge\la_n\ge0$ are the singular eigenvalues of $Du$ at $p$. From the assumption, we have
\begin{equation}\aligned\label{la1lai2ge0**}
2+\la_i^2-\la_i^2\la_1^2\ge0\qquad \mathrm{for\ each}\ i=2,\cdots,n.
\endaligned
\end{equation}
\begin{itemize}
  \item Case 1: $\la_1>\la_2$ at $p$.  For $j\ge2$ and $i\neq j$, we have $2+\la_i^2-\la_i^2\la_j^2>0$ from \eqref{la1lai2ge0**}.
Then from \eqref{haijua}\eqref{pxilai20}, we deduce that $u^\a_{ii}=0$ for all $\a=2,\cdots,n$ and $i=1,\cdots,n$. In particular, $\De_{\R^n}u^\a=0$ for all $\a\ge2$ from \eqref{logvsqrt2} and the constant $v$.
Suppose $\la_2=\cdots=\la_k>\la_{k+1}\ge\cdots\ge\la_n$ for some integer $k\ge2$.
From \eqref{pxilai20} again, there hold $u^1_{11}=0$ and $u^1_{ii}=0$ for $i\ge k+1$.
From \eqref{Nms}, we get $\sum_{i=2}^k\f1{1+\la_2^2}u^1_{ii}=0$, which implies $\De_{\R^n}u^1=0$.

  \item Case 2: $\la_1=\la_2=\cdots=\la_k>\la_{k+1}\ge\cdots\ge\la_n$ at $p$ for some integer $k\ge2$.
From \eqref{la1lai2ge0**}, $\la_1\le\sqrt{2}$, and $2+\la_i^2-\la_i^2\la_j^2>0$ for all $i\neq j$ with $\max\{i,j\}\ge k+1$.
Then from \eqref{pxilai20}, $u^\a_{ii}=0$ for $\max\{\a,i\}\ge k+1$ (similar to case 1).
In particular, $\De_{\R^n}u^\a=0$ for all $\a\ge k+1$.
Using \eqref{Nms}, we get $\sum_{i=1}^k\f1{1+\la_1^2}u^j_{ii}=0$ for $j=1,\cdots,k$. Therefore, $\De_{\R^n}u^j=0$ for $j=1,\cdots,k$.
\end{itemize}
This completes the proof.
\end{proof}

Now we consider minimal graphs with bounded slope.
\begin{lem}\label{Reggraphcone}
Let $M_k=\mathrm{graph}_{u_k}$ be a family of Lipschitz minimal graphs over $\R^n$ of codimension $m\ge1$ with $\sup_k\mathbf{Lip}\,u_k<\infty$ and $|\La^2du_k|^2\le\f{2(\mathrm{Lip}\,u_k)^2}{|(\mathrm{Lip}\,u_k)^2-1|}$ a.e. on $\R^n$.
Let $V$ be the limit of $|M_k|$ in the varifold sense.
If spt$V$ is a regular cone or there is a regular $l$-dimensional cone $C$ with spt$V=C\times\R^{n-l}$, then spt$V$ is flat.
\end{lem}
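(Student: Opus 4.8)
The plan is to realise the limit varifold $V$ as a single, multiplicity-one minimal graph of a positively $1$-homogeneous Lipschitz function, to prove that its slope $v$ is constant by a maximum principle on the (compact) cross-section of the cone, and then to invoke Lemma \ref{Harm} to force the graphic function to be harmonic, hence linear, hence $\mathrm{spt}\,V$ flat. First I would normalise so that the vertex of the cone sits at $0^{n+m}$. Since $0^{n+m}\in\mathrm{spt}\,V$ and $\sup_k\mathbf{Lip}\,u_k<\infty$, the $u_k$ satisfy $u_k(0^n)\to0^m$, so after passing to a subsequence $u_k\to u_\infty$ locally uniformly with $u_\infty$ Lipschitz, and $u_\infty$ is positively $1$-homogeneous because $\mathrm{spt}\,V$ is a cone. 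As each $M_k$ is a single-valued graph over $\R^n$ with uniformly bounded slope, the limit varifold is a multiplicity-one graph with $\mathrm{spt}\,V=\mathrm{graph}_{u_\infty}$ (this is where the multiplicity-one conclusion of \S4, Lemma \ref{multi1}, enters); by Allard's regularity theorem the convergence $M_k\to M:=\mathrm{reg}\,V$ is smooth, and $\R^n\setminus\pi_*(M)=\pi_*(\mathrm{sing}\,V)$ has Hausdorff dimension $\le n-2$ (it is $\{0^n\}$ in the regular cone case and $\{0^l\}\times\R^{n-l}$ in the case $\mathrm{spt}\,V=C\times\R^{n-l}$; if $l=1$ then $C$, being a $1$-homogeneous single-valued graph over $\R$, is a line and $\mathrm{spt}\,V$ is already a plane). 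Finally, the hypothesis $|\Lambda^2du_k|^2\le 2(\mathrm{Lip}\,u_k)^2/|(\mathrm{Lip}\,u_k)^2-1|$ is equivalent to the pointwise bound $\la_1^2\la_i^2\le 2+\la_i^2$ ($i\ge2$) on the singular values of $Du_k$, which is stable under smooth limits and hence passes to $u_\infty$ on $\pi_*(M)$.

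With this in hand, Lemma \ref{sqrt2logv} gives on $M$
\begin{equation*}
\De_M\log v\ \ge\ \sum_{\a>n,i,j}h^2_{\a,ij}+\sum_i(1+\la_i^2)h_{i,ii}^2\ \ge\ 0 ,
\end{equation*}
so $\log v$ is subharmonic on $M$. Because $\mathrm{spt}\,V$ is a cone (respectively $C\times\R^{n-l}$ with $C$ a cone), the Gauss map—and therefore $v=\tilde v(\cdot,P_0)\circ\g$—is constant along the rays through the vertex and invariant under the $\R^{n-l}$-translations, so $v$ descends to a continuous function $\bar v$ on the link $L=\mathrm{spt}\,V\cap\mathbb S^{n+m-1}$ (respectively on the link $L_C$ of $C$), which is a smooth \emph{compact} manifold since the cone is regular. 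The induced metric on $M$ is the metric cone $dr^2+r^2g_L$ (respectively the product of such a metric cone with flat $\R^{n-l}$), hence $\De_M\log v=r^{-2}\De_L\log\bar v$; thus $\log\bar v$ is subharmonic on a closed manifold and must be constant, so $v$ is constant on each connected component of $M$.

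On a component of $\pi_*(M)$ where $v\equiv\mathrm{const}$, the bound $\la_1^2\la_i^2\le 2+\la_i^2$ yields $\la_2^2\big((\mathrm{Lip}\,u_\infty)^2-1\big)\le2$, hence $|\Lambda^2du_\infty|^2\le 2+\left|\f{2}{(\mathrm{Lip}\,u_\infty)^2-1}\right|$, so Lemma \ref{Harm} applies there and gives $\De_{\R^n}u_\infty^\a=0$ for every $\a$. Since $u_\infty^\a$ is Lipschitz and harmonic off a closed set of dimension $\le n-2$, it is harmonic on all of $\R^n$; being positively $1$-homogeneous, it must be linear. Therefore $\mathrm{spt}\,V=\mathrm{graph}_{u_\infty}$ is an affine $n$-plane, and this conclusion survives even when $L$ is disconnected, since a single-valued graph cannot be a nontrivial union of planes through the vertex. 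Hence $\mathrm{spt}\,V$ is flat.

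I expect the main difficulty to lie in the first step together with the limiting procedure: ensuring that the varifold limit really is a \emph{single-valued, multiplicity-one} graph (so that the cross-section of the cone is a compact manifold and Lemmas \ref{VGM}, \ref{sqrt2logv} and \ref{Harm} are applicable), and controlling the scaling factor $(\mathrm{Lip}\,u_k)^2/|(\mathrm{Lip}\,u_k)^2-1|$ as $\mathrm{Lip}\,u_k\to1$. Identifying the induced metric on $\mathrm{reg}\,V$ as (a product of) a metric cone is the device that lets subharmonicity descend to the compact link; once $v$ is known to be constant, Lemma \ref{Harm}—which already absorbs, via the minimal surface system, the degenerate singular-value configurations where some $\la_i=\sqrt2$—closes the argument.
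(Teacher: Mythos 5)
Your proposal is correct and follows essentially the same route as the paper: reduce via Lemma \ref{Multi1} to a multiplicity-one Lipschitz graph $\mathrm{graph}_{u_\infty}$, use Lemma \ref{sqrt2logv} to get $\De_M\log v\ge0$, exploit homogeneity plus the strong maximum principle to force $v$ constant, and then invoke Lemma \ref{Harm} together with $1$-homogeneity to conclude that each $u_\infty^\a$ is an entire harmonic degree-one function, hence linear. The one cosmetic difference is in the maximum-principle step: the paper writes $u^\a=\phi^\a(x_1,\dots,x_l)+c_\a x_{l+\a}$ explicitly, views $\log v$ as a $0$-homogeneous function of $(x_1,\dots,x_l)$ alone, and applies the strong maximum principle on an annulus in $\R^l$; you instead descend $\log v$ to the compact link $L$ of the cone using the cone-metric decomposition $\De_M=r^{-2}\De_L$ (on $0$-homogeneous data) and kill it by subharmonicity on a closed manifold. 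The two devices are equivalent here; your version is slightly more intrinsic, while the paper's coordinate form is what makes the subsequent application of Lemma \ref{Harm} (and the passage from $\phi^\a$ harmonic on $\R^l\setminus\{0\}$ to affine) entirely transparent.
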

\begin{proof}
Denote $M=\mathrm{spt}V$, which is a Lipschitz minimal graph over $\R^n$ for some graphic function $u=(u^1,\cdots,u^m)$.
From Lemma \ref{Multi1} in the appendix II and Allard's regularity theorem, $u_k$ converges to $u$ in $C^3$-sense at any regular point of $u$. Hence, we have $|\La^2du|^2\le\f{2(\mathrm{Lip}\,u)^2}{|(\mathrm{Lip}\,u)^2-1|}$ a.e. on $\R^n$ from the assumption of $u_k$.

Now we assume $l\ge2$.
Since $M=C\times\R^{n-l}$, then $C$ lives in an $(m+l)$-dimensional Euclidean space.
Up to a rotation, $C$ can be represented as a graph of a 1-homogenous vector-valued function $\phi=(\phi^1,\cdots,\phi^m)$ on $\R^l$.
Then up to two rotations of $\R^n$ and $\R^m$, there are a constant matrix $(c^\a_{j})$ for $j=l+1,\cdots,n$ and $\a=1,\cdots,m$ such that
$$u^\a(x_1,\cdots,x_n)=\phi^\a(x_1,\cdots,x_l)+\sum_{j=l+1}^nc^\a_jx_j$$
for each $\a$.
After a rotation of $\R^{n-l}$, we can further assume
\begin{equation}\aligned\label{uaxka}
u^\a(x_1,\cdots,x_n)=\phi^\a(x_1,\cdots,x_l)+c_\a x_{l+\a}
\endaligned
\end{equation}
for each $\a=1,\cdots,m$, where we let $c_{n+j}=0$ for any positive integer $j$.

Let $g_{ij}=\de_{ij}+\sum_\a \p_iu^\a\p_ju^\a$.
From \eqref{uaxka}, $g_{ij}$ is a function of $x_1,\cdots,x_l$, and $v=\sqrt{\det g_{ij}}$ can be seen as a function of $x_1,\cdots,x_l$. From Lemma \ref{sqrt2logv}, we have
\begin{equation}\aligned
\De_M\log v\ge0
\endaligned
\end{equation}
on the regular part of $M$.
Note that $\log v$ is smooth on $\R^l\setminus\{0\}$. Since  $\log v$ is
0-homogenous, it achieves its maximum on $B^l_2\setminus B^l_{1/2}$ at a point in $\p B^l_1$.
From the strong maximum principle, $v$ is a constant.
With Lemma \ref{Harm}, $u^\a$ is harmonic on $\R^n\setminus(\{0^l\}\times\R^{n-l})$ for each $\a=1,\cdots,m$. Namely, $\phi^\a=\phi^\a(x_1,\cdots,x_l)$ is harmonic on $\R^l\setminus\{0\}$.
Note that $\phi^\a$ is 1-homogenous. Then $\phi^\a$ is harmonic on $\R^l$ for each $\a$, and it must be affine, i.e., $\phi^\a-\phi^\a(0)$ is linear.
From \eqref{uaxka}, it follows that each $u^\a$ is affine and then $M$ is flat.

For $l<2$, $M$ is regular, then $M$ is flat from the above argument.
This completes the proof.
\end{proof}
Let us prove a Bernstein theorem for minimal graphs with bounded slope. 
\begin{thm}\label{BDMGAFFINE}
Let $M=\mathrm{graph}_u$ be a Lipschitz minimal graph over $\R^n$ of codimension $m\ge2$.
If $|\La^2du|^2\le\f{2(\mathrm{Lip}\,u)^2}{|(\mathrm{Lip}\,u)^2-1|}$ a.e. on $\R^n$, then $M$ is flat.
\end{thm}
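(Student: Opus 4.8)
The plan is to blow $M$ down to a tangent cone at infinity, show this cone is a hyperplane by running Federer's dimension reduction down to the case already handled in Lemma~\ref{Reggraphcone}, and then upgrade flatness of the tangent cone to flatness of $M$. Write $L_0=\mathbf{Lip}\,u<\infty$. Since every singular value of $Du$ is $\le L_0$ a.e., the $2$-dilation satisfies $|\Lambda^2du|=\lambda_1\lambda_2\le L_0^2$ a.e., so $M\in\mathcal{M}_{n,L_0^2}$; by Lemma~\ref{VGM*}, $M$ has Euclidean volume growth and lies in an affine subspace of dimension $\le C_{n,L_0^2}$, so we may assume $m$ is bounded in terms of $n,L_0$. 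Moreover the pointwise hypothesis $|\Lambda^2du|^2\le 2\lambda_1^2/|\lambda_1^2-1|$ is equivalent to $\lambda_2^2|\lambda_1^2-1|\le 2$, hence implies $\lambda_1^2\lambda_i^2\le 2+\lambda_i^2$ for every $i\ge2$ (trivially when $\lambda_1\le1$); this is exactly the hypothesis of Lemmas~\ref{sqrt2logv} and~\ref{Harm}. Finally, translating and dilating $M$ inside $\R^{n+m}$ preserves the property of being an entire graph over $\R^n$, preserves $\mathbf{Lip}\,u$ and preserves the singular values pointwise, hence preserves all of these conditions.

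Let $\mathcal{F}$ be the family of integral $n$-varifolds in $\R^{n+m}$ obtained as varifold limits of sequences $\mathrm{graph}_{u_k}$ of Lipschitz minimal graphs over $\R^n$ with $\sup_k\mathbf{Lip}\,u_k\le L_0$ and $|\Lambda^2du_k|^2\le 2(\mathrm{Lip}\,u_k)^2/|(\mathrm{Lip}\,u_k)^2-1|$ a.e. By the invariances above and a diagonal argument, $\mathcal{F}$ is closed under translations, dilations and the extraction of (iterated) tangent cones; each $V\in\mathcal{F}$ has multiplicity one (Theorem~\ref{main2}, i.e.\ Lemma~\ref{multi1} together with Lemma~\ref{sptTV}), so by Allard's theorem the approximating graphs converge smoothly on the regular part of $\mathrm{spt}\,V$. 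Take a tangent cone $C$ of $M$ at infinity, which exists by the Euclidean volume growth: then $C\in\mathcal{F}$, and since $u$ is $L_0$-Lipschitz, $\mathrm{spt}\,C$ is again an entire $L_0$-Lipschitz graph-cone over $\R^n$; the same holds for every iterated tangent cone, and the space of translation-invariance directions of any split cone in $\mathcal{F}$ is necessarily non-vertical, hence (after rotations of $\R^n$ and $\R^m$) a coordinate subspace.

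Now run the dimension reduction inside $\mathcal{F}$: let $d=\sup\{\dim\mathrm{sing}\,V:V\in\mathcal{F}\}$, with the convention $d<0$ if every $V\in\mathcal{F}$ is regular. If $d\ge0$, Almgren's dimension-reduction lemma produces a cone $V=V_0\times\R^{d}\in\mathcal{F}$ with $\mathrm{sing}\,V_0=\{0\}$, where $V_0$ is an $(n-d)$-dimensional minimal cone; since a $1$-dimensional minimal cone is a line (hence regular), necessarily $l:=n-d\ge2$ and $\mathrm{spt}\,V_0$ is a regular $l$-dimensional cone. Applying Lemma~\ref{Reggraphcone} to $V$ forces $\mathrm{spt}\,V=\mathrm{spt}\,V_0\times\R^{d}$, hence $\mathrm{spt}\,V_0$, to be flat, contradicting $\mathrm{sing}\,V_0\ne\emptyset$. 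Therefore $d<0$, every $V\in\mathcal{F}$ is regular, and in particular the tangent cone $C$ at infinity is a regular cone; Lemma~\ref{Reggraphcone} (with $l=n$) then shows $\mathrm{spt}\,C$ is an $n$-plane with multiplicity one.

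It remains to pass from ``flat tangent cone at infinity'' to ``$M$ flat''. Since $\mathrm{spt}\,C$ is a multiplicity-one $n$-plane, $\lim_{r\to\infty}\omega_n^{-1}r^{-n}\mathcal{H}^n(M\cap\mathbf{B}_r(p))=1$ for every $p$, while the density of $M$ at $p$ is at least $1$; by the monotonicity of \eqref{ratio} the ratio $\omega_n^{-1}r^{-n}\mathcal{H}^n(M\cap\mathbf{B}_r(p))$ is non-decreasing and squeezed between $1$ and $1$, hence $\equiv1$, so by the equality case of the monotonicity formula $M$ is a cone with vertex $p$, for every $p\in M$. Equivalently, the tangent cone of $M$ at each of its points is a multiplicity-one plane, so $M$ is regular by Allard's theorem and, being a smooth minimal cone about every one of its points, is an affine $n$-plane. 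The step I expect to be the main obstacle is the dimension-reduction bookkeeping: checking that $\mathcal{F}$ is genuinely closed under iterated tangent cones (the diagonal argument commuting the blow-up at infinity with later blow-ups at interior singular points) and that the split factor produced at the bottom of the reduction indeed satisfies the hypotheses of Lemma~\ref{Reggraphcone} — in particular that its splitting directions are non-vertical, so that it is again a bounded-slope graph-cone. The analytic heart (subharmonicity of $\log v$ from the $2$-dilation bound) has already been packaged inside Lemma~\ref{Reggraphcone}, so what remains is geometric-measure-theoretic rather than computational.
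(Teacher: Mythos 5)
Your proposal is correct and follows the same overall strategy as the paper's proof: blow down to a multiplicity-one tangent cone at infinity (via Lemma~\ref{Multi1} / Theorem~\ref{main2}), run the Almgren--Federer dimension reduction until Lemma~\ref{Reggraphcone} forces flatness of the terminal split cone $C_*\times\R^{l}$, and then conclude flatness of $M$ from the flatness of its tangent cone at infinity. Your write-up spells out the dimension-reduction bookkeeping (closure of the family $\mathcal{F}$ under iterated tangent cones) and the final density-squeeze/monotonicity step more explicitly than the paper, which compresses the last step into a citation of Allard's theorem, but the underlying argument is the same.
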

\begin{proof}
Assume that $M$ is not flat. From Lemma \ref{Multi1} in the appendix II, there is a sequence $r_k\rightarrow\infty$ such that $|\f1{r_k}M|$ converges to a minimal cone $C$ of multiplicity one in the varifold sense, where spt$C$ can be rewritten as a graph over $\R^n$ with a Lipschitz homogenous graphic function $u_\infty=(u^1_\infty,\cdots,u^m_\infty)$ satisfying
$$\mathbf{Lip}\,u_\infty\le \mathbf{Lip}\,u\le L$$
for some constant $L>0$. Then from Allard's regularity theorem, when $W$ is a bounded open set with $\overline{W}\cap \mathrm{spt}C$ belonging to the regular part of $C$, $W\cap\f1{r_k}M$ converges to $W\cap \mathrm{spt}C$ smoothly. Hence we have
$$|\La^2du_\infty|^2\le\f{2(\mathrm{Lip}\,u_\infty)^2}{|(\mathrm{Lip}\,u_\infty)^2-1|}\qquad a.e. \ \mathrm{on}\ \R^n.$$

If spt$C$ is a regular cone, then spt$C$ is flat from Lemma \ref{Reggraphcone}. Now we assume that there is a singular point $q$ in spt$C\setminus\{0^{n+m}\}$.
We blow  $C$ up at the point $q$, and get a nonflat minimal cone $C'$ whose support
splits off $\R$ isometrically.
If $u'_\infty$ denotes the graphic function of spt$C'$, then we have
$$\mathbf{Lip}\,u'_\infty\le \mathbf{Lip}\,u_\infty\le \mathbf{Lip}\,u\le L,$$
and $|\La^2du'_\infty|^2\le\f{2(\mathrm{Lip}\,u'_\infty)^2}{|(\mathrm{Lip}\,u'_\infty)^2-1|}$ a.e. on $\R^n$.
By dimensional reduction argument, we get a sequence of minimal graphs $M_k$ (which are scalings and rigid motions of $M$) such that $|M_k|$ converges to a nonflat minimal cone $C^*$, where spt$C^*=C_*\times\R^l$ for a regular $(n-l)$-cone $C_*$.
Moreover, the graphic function $u^*_\infty$ of spt$C^*$ satisfies
$$\mathbf{Lip}\,u^*_\infty\le \mathbf{Lip}\,u_\infty\le \mathbf{Lip}\,u\le L,$$
and $|\La^2du^*_\infty|^2\le\f{2(\mathrm{Lip}\,u^*_\infty)^2}{|(\mathrm{Lip}\,u^*_\infty)^2-1|}$ a.e. on $\R^n$.
From Lemma \ref{Reggraphcone}, we get the flatness of spt$C^*$, which is a contradiction. Hence,
spt$C$ is flat. Since $C$ has multiplicity one, then Allard's regularity theorem implies that $M$ is flat.
\end{proof}
\begin{rem}
Lawson-Osserman \cite{l-o} constructed the minimal Hopf cones in $\R^{2m}\times\R^{m+1}$ for $m=2,4,8$.
For $m=2$, the Hopf cone has the Lipschitz graphic function $w=(w^1,w^2,w^3)$ over $\R^4$ given by
$$w=\f{\sqrt{5}}2|x|\e\left(\f{x}{|x|}\right),$$
where $\e=(|z_1|^2-|z_2|^2,2z_1\bar{z_2})$ is the Hopf map from $S^3$ to
$S^2$. It is clear that $\mathrm{Lip}\,w\equiv\sqrt{5}$, and
$|\La^2dw|\equiv5$
on $\R^n$ (see also the appendix in \cite{j-x-y}).
\end{rem}

By a contradiction argument, we have the following curvature estimate.
\begin{thm}\label{CUVest}
For every constant $L$, there exists a constant $\Th_{n,m,L}>0$ depending on $n,m,L$ such that if $M=\mathrm{graph}_u$ is a Lipschitz minimal graph over $B_2$ of codimension $m\ge2$ with $\mathbf{Lip}\,u\le L$ and $|\La^2du|^2\le\f{2(\mathrm{Lip}\,u)^2}{|(\mathrm{Lip}\,u)^2-1|}$ a.e. on $B_2$, then
\begin{equation}\aligned\label{D2uThnmL}
\sup_{B_1}|D^2u|\le\Th_{n,m,L}.
\endaligned
\end{equation}
In particular, $M$ is smooth.
\end{thm}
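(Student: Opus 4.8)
The plan is a standard blow-up/compactness argument against the Bernstein-type rigidity just established in Theorem \ref{BDMGAFFINE} and against the smooth convergence supplied by Allard's regularity theorem. First I would argue by contradiction: suppose there is a sequence of Lipschitz minimal graphs $M_k=\mathrm{graph}_{u_k}$ over $B_2$ with $\mathbf{Lip}\,u_k\le L$ and $|\La^2du_k|^2\le\f{2(\mathrm{Lip}\,u_k)^2}{|(\mathrm{Lip}\,u_k)^2-1|}$ a.e., together with points $p_k\in B_1$ where the (essential sup of the) second fundamental form, or equivalently a suitable quantity controlling $|D^2u_k|$ on the regular part, blows up. Since $\mathbf{Lip}\,u_k\le L$, the graphs have uniformly bounded slope $v_k\le (1+L^2)^{n/2}$, hence uniformly bounded volume ratios; after passing to a subsequence, $|M_k|$ converges in the varifold sense to a stationary integral varifold $V$ in $B_2\times\R^m$, and (translating so $p_k\to p_\infty\in\overline{B_1}$) we may center at $p_\infty$.

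The second step is the point-selection/rescaling. Using the standard "worst-point" rescaling of Schoen-Simon type (choose points and radii realizing, up to a factor, the maximum of the curvature times distance-to-boundary on $M_k$), I would produce rescaled minimal graphs $\widetilde M_k$ with curvature bounded by $1$ on larger and larger balls and curvature exactly $1$ (in the appropriate normalized sense) at a base point, and with the same pointwise $2$-dilation bound, which is scale-invariant. Here the uniform Lipschitz bound is crucial: after rescaling, the blow-up limit is still a graph (the tilt cannot degenerate to a vertical plane because slope stays $\le (1+L^2)^{n/2}$). By Lemma \ref{Multi1}, Lemma \ref{VGM*} and Allard's regularity theorem, along a subsequence $\widetilde M_k$ converges, away from the singular set of the limit $V_\infty$, smoothly to a complete minimal graph $M_\infty=\mathrm{graph}_{u_\infty}$ over $\R^n$ with $\mathbf{Lip}\,u_\infty\le L$ and $|\La^2du_\infty|^2\le\f{2(\mathrm{Lip}\,u_\infty)^2}{|(\mathrm{Lip}\,u_\infty)^2-1|}$ a.e. By Theorem \ref{BDMGAFFINE}, $M_\infty$ is flat, so $|B_{M_\infty}|\equiv 0$. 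On the other hand, $C^3$-convergence on compact subsets of the regular part forces the normalized curvature at the base point to pass to the limit and equal $1$, contradicting flatness. This yields the bound $\sup_{B_1}|D^2u|\le\Th_{n,m,L}$ on the regular part; combined with the interior regularity of Morrey (smoothness at differentiable points, as recalled before \eqref{Nms*}) and the now-uniform $C^2$ bound, $u$ extends to a $C^2$ function on $B_1$, hence $M$ is smooth there.

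The main obstacle is the step where smooth convergence is used: one must be sure the limit varifold $V_\infty$ really is, on its regular part, a Bernstein graph to which Theorem \ref{BDMGAFFINE} applies, and that its (a priori possibly nonempty) singular set does not swallow the blow-up base point. The singular set is handled because the base point of the rescaling carries curvature $1$, hence by Allard's theorem lies in the regular part of $V_\infty$; and $V_\infty$ is an entire graph because the Lipschitz bound $L$ is preserved under rescaling and passage to the limit, so no vertical blow-up can occur. A secondary subtlety is verifying that the scale-invariant $2$-dilation inequality $|\La^2du|^2\le\f{2(\mathrm{Lip}\,u)^2}{|(\mathrm{Lip}\,u)^2-1|}$ really is preserved in the limit, which follows from the $C^1$ (indeed $C^3$) convergence on the regular part together with lower semicontinuity of $\mathbf{Lip}$; and that the rescaled functions, centered appropriately, have uniformly controlled gradient so that the limit graph is defined over all of $\R^n$ rather than a proper cone of directions. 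Once these points are in place, the contradiction closes the argument.
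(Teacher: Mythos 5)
Your proposal is correct and follows essentially the same route as the paper's proof: a Schoen--Simon style worst-point selection and rescaling, with the uniform Lipschitz and scale-invariant $2$-dilation bounds passing to the blow-up limit, and the contradiction supplied by Theorem~\ref{BDMGAFFINE}. The only cosmetic difference is that the paper does not route through varifold compactness and Allard's theorem at the rescaled stage: once the point selection gives $|D^2w_k|\le 1$ uniformly (and $|D^2w_k|(0)=\tfrac12$), interior elliptic estimates for the minimal surface system already yield smooth convergence to an entire graph, so no singular set needs to be excluded.
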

\begin{proof}
Let us prove \eqref{D2uThnmL} by contradiction.
Suppose that there is a sequence of Lipschitz minimal graphs $\mathrm{graph}_{u_k}$ over $B_2$ of codimension $m\ge2$ with $\mathbf{Lip}\,u_k\le L$ and $|\La^2du_k|^2\le\f{2(\mathrm{Lip}\,u_k)^2}{|(\mathrm{Lip}\,u_k)^2-1|}$ such that
\begin{equation}\aligned\label{D2ukinfty}
\lim_{k\rightarrow\infty}\sup_{B_{3/2}}\left(\f32-|x|\right)|D^2u_k|(x)=\infty.
\endaligned
\end{equation}
If $u$ is not smooth at a point $q\in B_2$, then we blow $M$ up at $(q,u(q))$, and get a contradiction from Theorem \ref{BDMGAFFINE}.
Hence $M$ is smooth.

From \eqref{D2ukinfty}, there exists a sequence of points $p_k\in B_{\f32}$ such that
$$r_k\triangleq\left(\f32-|p_k|\right)|D^2u_{k}(p_k)|=\sup_{B_{3/2}}\left(\f32-|x|\right)|D^2u_{k}(x)| \rightarrow\infty$$
as $k\rightarrow\infty$.
Put $\tau_k=\f32-|p_k|$, and $R_k=2r_k/\tau_k$. Let
$$w_k(x)=R_ku_{k}\left(\f{x}{R_k}+p_k\right)\qquad \mathrm{on}\ \ B_{r_k}.$$
Then
\begin{equation}\aligned
D^2w_k(x)=\f1{R_k}D^2u_{k}\left(\f{x}{R_k}+p_k\right).
\endaligned
\end{equation}
Note that $\f{\tau_k}2\le\f32-|x|$ for all $x\in B_{\f{\tau_k}2}(p_k)$.
We have
\begin{equation}\aligned\label{Bsqrtriwi}
\sup_{B_{r_k}}\left|D^2w_k\right|=\f1{R_k}\sup_{B_{\f{\tau_k}2}(p_k)}\left|D^2u_{k}\right|
\le\f2{R_k\tau_k}\sup_{B_{\f{\tau_k}2}(p_k)}\left(\f32-|x|\right)\left|D^2u_{k}\right|(x)\le1,
\endaligned
\end{equation}
and
\begin{equation}\aligned\label{D2wi0}
\left|D^2w_k\right|(0)=\f1{R_k}\left|D^2u_{k}\right|(p_k)
=\f1{R_k}\f{1}{\tau_k}\sup_{B_{3/2}}\left(\f32-|x|\right)\left|D^2u_{k}\right|(x)=\f12.
\endaligned
\end{equation}
Moreover, from $\mathbf{Lip}\,u_{k}\le L$, $|\La^2du_{k}|^2\le\f{2(\mathrm{Lip}\,u_{k})^2}{|(\mathrm{Lip}\,u_{k})^2-1|}$ on $B_{2}$,
we have $\mathbf{Lip}\,w_k\le L$ and $|\La^2dw_k|^2\le\f{2(\mathrm{Lip}\,w_{k})^2}{|(\mathrm{Lip}\,w_{k})^2-1|}$ on $B_{r_k}$.
With \eqref{Bsqrtriwi}, after choosing a subsequence, graph$_{w_k}$ converges smoothly to a minimal graph with the graphic function $w_*$ such that
$\mathrm{Lip}\,w_*\le L$, $|\La^2dw_*|^2\le\f{2(\mathrm{Lip}\,w_{*})^2}{|(\mathrm{Lip}\,w_{*})^2-1|}$ a.e. and $\left|D^2w_*\right|\le1$ on $\R^n$.
Moreover, \eqref{D2wi0} implies $\left|D^2w_*\right|(0)=\f12$. However, this contradicts to Theorem \ref{BDMGAFFINE}. We complete the proof.
\end{proof}

\Section{Quasi-cylindrical minimal cones from minimal graphs}{Quasi-cylindrical minimal cones from minimal graphs}

Let $\{\mathbf{E}_i\}_{i=1}^{n+m}$ denote the standard basis of $\R^{n+m}$ such that $\mathbf{E}_i$ corresponds to the axis $x_i$.
\begin{lem}\label{codim3}
Let $M_k\in\mathcal{M}_{n,m,\La,B_2}$ for some constant $0<\La<\sqrt{2}$.
Assume that $[|M_k|]\llcorner\mathbf{B}_2$ converges to a stationary varifold $V$ in the varifold sense.
If $\mathcal{S}$ denotes the singular set of $\mathrm{spt}V\cap \mathbf{B}_1$, then $\mathcal{S}$ has Hausdorff dimension $\le n-3$.
\end{lem}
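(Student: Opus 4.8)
The plan is to argue by Federer's dimension reduction. First I would reduce to the case that $V$ is itself a cone: $\dim_{\mathcal{H}}\mathcal{S}$ is bounded by the supremum, over $p\in\mathcal{S}$, of the dimension of the singular set of a tangent cone of $V$ at $p$, and every such tangent cone is again a varifold limit of minimal graphs over $\R^n$ with $2$-dilation $\le\La$. Indeed, $V$ is the varifold limit of the $M_k$, so a tangent cone of $V$ at $p$ is, via a diagonal sequence, a limit of dilations of the $M_k$ centred at points $p_k\in M_k$ with $p_k\to p$; each such dilate is still a minimal graph over $\R^n$ because the $2$-dilation $|\La^2du|$ in \eqref{DEF2-dil} is invariant under translations and dilations of $\R^{n+m}$. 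Hence it suffices to bound $\dim\mathcal{S}$ for $V$ a cone, and by Lemma \ref{sptTV} (with Lemma \ref{multi1}) every such $V$ is a multiplicity one stationary integral $n$-varifold, so Allard's theorem applies and $\mathcal{S}$ is closed. Let $\mathcal{C}$ denote the class of all iterated tangent cones of such limits; $\mathcal{C}$ is closed under rigid motions, dilations and varifold limits, and every member is a multiplicity one stationary integral cone in $\overline{\mathcal{M}}_{n,m,\La,\R^n}$.

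Next I would set up the dimension reduction in the usual form (cf. Appendix A of \cite{s}): $\dim_{\mathcal{H}}\mathcal{S}\le n-3$ follows once no $C\in\mathcal{C}$ with $\mathrm{sing}(C)\neq\emptyset$ is invariant under translations by an $(n-2)$-dimensional linear subspace. Writing such a $C$ as $\mathrm{spt}C=C_*\times\R^{n-2}$, with $C_*$ a two-dimensional minimal cone in a linear complement $\R^{m+2}$, this amounts to showing that every such $C_*$ is a $2$-plane. (The case of an $(n-1)$-dimensional invariance subspace, $\mathrm{spt}C=C_*'\times\R^{n-1}$ with $C_*'$ one-dimensional, is subsumed, since then $C_*'\times\R$ is a non-flat two-dimensional minimal cone of the above type.)

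To rule out a non-flat $C_*$ I would distinguish two situations. If some iterated tangent cone of $C$ contains a line perpendicular to the $n$-plane $\{(x,0^m)\}$, then by the second part of Theorem \ref{main2} together with the splitting Lemma \ref{codim1splitting} that cone is a cylindrical stable minimal hypercone in an $(n+1)$-dimensional Euclidean subspace; moreover, exactly as in the proof of Lemma \ref{codim1splitting}, the approximating graphs project onto the boundaries $\partial U_k$ of their subgraphs, so the limit carries no triple-junction (classical) singularities, and the Schoen--Simon and Wickramasekera regularity theory for stable minimal hypersurfaces (the form of Simons' theorem used for the main results, cf. \cite{ss}\cite{s-s}\cite{wn}\cite{wn1}) forces its singular set to have codimension at least $7$ --- incompatible with a singular locus of codimension $\le2$. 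In the remaining situation no perpendicular line ever appears, so $C$ splits off only directions lying in $\R^n\times\{0^m\}$ and $C_*$ is a two-dimensional minimal cone whose link is a smooth closed one-dimensional minimal submanifold of a sphere, i.e. a union of great circles. Here the hypothesis $\La<\sqrt2$ enters through Corollary \ref{Lalogv}: $\log v_k$ is subharmonic on each $M_k$ with $\De_{M_k}\log v_k\ge(1-\La/\sqrt2)|B_{M_k}|^2$, so on the regular part of $\mathrm{spt}C$ the slope $v$ is a bounded log-subharmonic function with $v\ge1$, and together with the fact that the approximants are genuine graphs --- embedded and single-sheeted over $\R^n$ off a set of codimension $\ge2$, and (after the projection $\pi^*$ of \eqref{pi*}) boundaries of open subgraphs --- this excludes more than one great circle and any crossing configuration, so $C_*$ is a single $2$-plane.

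The hard part is precisely this last step: excluding branched or crossed minimal cones among high-codimension limits of minimal graphs without a minimizing hypothesis. My strategy is to transfer the problem to a hypersurface in a suitable $\R^{n+1}$ via $\pi^*$, as in Section 4 and Lemma \ref{codim1splitting}: away from a set of small measure in the scaling sense the graphs project diffeomorphically onto their steepest $\R^{n+1}$-slice, which is the boundary of an open subgraph, and boundaries cannot develop the density jump that a crossing or a multiplicity-one union of distinct planes would require; the slope estimate \eqref{CompS}, the stability of cylindrical limits (Theorem \ref{Stable}) and the subharmonicity from Corollary \ref{Lalogv} make this quantitative in the limit. Once this is established, a non-flat $C_*$ of dimension $\le2$ is impossible, and the dimension reduction yields $\dim_{\mathcal{H}}\mathcal{S}\le n-3$.
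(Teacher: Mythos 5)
Your overall scaffolding — Federer dimension reduction to a cone invariant under an $(n-2)$-dimensional subspace, then ruling out a non-flat two-dimensional cone factor by distinguishing whether a direction perpendicular to $\R^n\times\{0^m\}$ appears — is the same as the paper's. But there are genuine gaps.

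First and most seriously, you explicitly leave the crux unproved: you write that "the hard part is precisely this last step" and supply only a strategy sketch (``transfer the problem to a hypersurface via $\pi^*$ \ldots make this quantitative''). A proof of Lemma~\ref{codim3} must actually close this. Notice also that in the paper's treatment the two branches carry very different weight: the dimension reduction yields a \emph{regular} cone $C$ (smooth off the origin), whose link is therefore a finite disjoint union of great circles, so $\mathrm{spt}C_*$ is a union of $n$-planes $P_1\cup\cdots\cup P_{j_0}$. In the ``no perpendicular direction'' branch each $P_j$ is a graph over $\R^n$, and since the $\Sigma_k$ are single-valued graphs with multiplicity-one varifold limit, the limit must be a single-valued graph, forcing $j_0=1$; this is the \emph{easy} branch for the paper, not the hard one. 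What is delicate is precisely the structure of the singular cone as a union of planes with possibly perpendicular directions, which the paper resolves with Theorem~\ref{Stable}. You have misidentified where the difficulty sits and then left it unresolved.

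Second, your claim that the $(n-1)$-invariant case (a one-dimensional cone factor $C_*'$) is ``subsumed'' by the two-dimensional case is not sound. Writing $C_*'\times\R$ as a two-dimensional cone produces a cone that is \emph{not} regular off the origin when $C_*'$ is non-flat (it is singular along the whole line $\{0\}\times\R$), so it does not belong to the class of regular cones that dimension reduction produces, and the great-circle argument does not apply to it. The paper handles $l=1$ separately via Lemma~\ref{cross} in Appendix~III, which is also precisely where the hypothesis $\La<\sqrt{2}$ enters (through the curvature estimate \eqref{Ai}, which rests on Corollary~\ref{Lalogv} and Theorem~\ref{CUVest}). Your proposal omits this entirely.

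Third, your treatment of the ``perpendicular line'' case invokes the full Schoen--Simon/Wickramasekera theory to get a codimension-$\ge7$ bound on the singular set. This is a much heavier tool than the paper uses (the paper just invokes Theorem~\ref{Stable} and the splitting structure), and using it honestly carries verification burdens you gloss over: Wickramasekera's theorem requires in particular that the stable hypersurface have no classical (junction-type) singularities, and your justification for this (``the approximating graphs project onto the boundaries $\partial U_k$'') is an unproved assertion, not an argument. The elementary route — once the cylinder $C_*\times\R^{n-2}$ sits in $\R^{n+1}$, the factor $C_*$ sits in $\R^3$, and two distinct $2$-planes in $\R^3$ necessarily intersect in a line, violating the regularity of $C_*$ — is what makes this branch quick; you did not identify it.

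In short: the skeleton is right, but the central step is missing, the $l=1$ branch is mishandled, and where you do argue you reach for machinery that is both heavier and incompletely justified compared to the paper's more direct use of Theorem~\ref{Stable}, Lemma~\ref{cross}, and the multiplicity-one graph structure.
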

\begin{proof}
We assume that $\mathcal{S}$ has Hausdorff dimension $>n-3$.
Then there is a constant $\be>n-3$ so that $\be$-dimensional Hausdorff measure of $\mathcal{S}$ satisfies $\mathcal{H}^\be(\mathcal{S})>0$.
From Lemma 11.2 in \cite{gi}, $\mathcal{H}^\be(\mathcal{S})>0$ implies $\mathcal{H}^\be_\infty(\mathcal{S})>0$ (see \eqref{HbeinftyW} for its definition with $\R^{n+1}$ there replaced by $\R^{n+m}$).
From the argument of Proposition 11.3 in \cite{gi}, there are a point $q\in \mathcal{S}$ and a sequence $r_k\rightarrow0$ (as $k\to\infty$) such that
\begin{equation}\aligned
\mathcal{H}^\be_\infty\left(\mathcal{S}\cap \mathbf{B}_{r_k}(q)\right)>2^{-\be-1}\omega_\be r_k^\be.
\endaligned
\end{equation}
Let $\mathcal{S}_k=\f1{r_k}\left(\mathcal{S}\cap \mathbf{B}_{r_k}(q)\right)$. Then
\begin{equation}\aligned
\mathcal{H}^\be_\infty\left(\mathcal{S}_k\cap \mathbf{B}_{1}(0)\right)>2^{-\be-1}\omega_\be.
\endaligned
\end{equation}
Without loss of generality, we assume that $\f1{r_k}(V,q)$ converges to a tangent cone $(V_*,0^{n+m})$ in $\R^{n+m}$ in the varifold sense as $k\rightarrow\infty$. By the definition of $V$, there is a sequence of minimal graphs $M'_k$ in $\R^{n+m}$ (rigid motions of $M_k$) such that $|M'_k|$ converges to the minimal cone $V_*$ in the varifold sense as $k\rightarrow\infty$.

Let $\mathcal{S}_*$ be the singular set of $V_*$.
If $y_k\in \mathcal{S}_k$ and $y_k\rightarrow y_*\in V_*$, then
it's clear that $y_*$ is a singular point of $V_*$ by Allard's regularity theorem and multiplicity one of $V_*$, which implies $\limsup_{k\rightarrow\infty}\mathcal{S}_k\subset\mathcal{S}_*$. Analog to the proof of Lemma 11.5 in \cite{gi}, we have
\begin{equation}\aligned
\mathcal{H}^\be_\infty\left(\mathcal{S}_{*}\cap \mathbf{B}_{1}(0)\right)\ge2^{-\be-1}\omega_\be.
\endaligned
\end{equation}
Let us continue the above procedure.
By dimension reduction argument, there is a $l(l\le2)$-dimensional non-flat regular minimal cone $C\subset\R^{m+k}$ such that
there is a sequence of minimal graphs $\Si_k\in\mathcal{M}_{n,m,\La,B_{R_k}}$ (which are scalings and translations gotten from $M_k$) with $R_k\rightarrow\infty$ so that $\Si_k$ converges to a minimal cone $C_*$ in the varifold sense, which is a trivial product of $C$ and $\R^{n-l}$.

From Lemma \ref{cross} in the Appendix III, $l=1$ is impossible.
For $l=2$, $\mathrm{spt}C\cap\p B_1(0^{m+2})$ is smooth minimal in $\p B_1(0^{m+2})$, hence it is a disjoint union of geodesic circles in a sphere. So $\mathrm{spt}C_*\cap\R^{n+m}$ is a union of $n$-planes $P_1,\cdots,P_{j_0}$ with $j_0\ge2$ as $\mathrm{spt}C$ is non-flat. If there is a unit vector $\xi\in \mathrm{spt}C_*$ with $\lan\xi,\mathbf{E}_i\ran=0$ for each $i=1,\cdots,n$, then Theorem \ref{Stable} contradicts to that $\mathrm{spt}C$ splits off $\R^{n-2}$ isometrically. Therefore, $\mathrm{spt}C_*$ can be written as a graph over $\R^n$, and then $\mathrm{spt}C_*$ is an $n$-plane. It is a contradiction.
This completes the proof.
\end{proof}
\textbf{Remark}. The above dimension estimate is not sharp.
We will establish a sharp one through the  Bernstein theorem for minimal graphs in Lemma \ref{codim7}.

With Lemma \ref{codim3}, we can use De Giorgi-Nash-Moser iteration for nonnegative superharmonic functions on the regular part of stationary indecomposable currents as follows.
\begin{pro}\label{MeanVSuph}
Let $\mathbf{T}$ be an integral current in $\overline{\mathcal{M}}_{n,m,\La,B_{3}}$ for some constant $0<\La<\sqrt{2}$, and
$T$ be a stationary indecomposable component of $\mathbf{T}\llcorner\mathbf{B}_{2}$ with $0\in\mathrm{spt}T\triangleq M$.
Then there exist a constant $\de_*>0$ depending only on $n$, a constant $\Th_{T}>0$ depending on $n,m,\La,T$ such that
if $f$ is a nonnegative smooth bounded function on the regular part of $T$ satisfying $\De_{M} f\le0$ on the regular part of $T$,
then
\begin{equation}\aligned\label{MVf}
r^{-n}\int_{M\cap \mathbf{B}_r}f^{\de_*}\le \Th_T f^{\de_*}(0)\qquad \mathrm{for\ any}\ 0<r<1,
\endaligned
\end{equation}
where $\De_M$ is the Laplacian of $M$.
\end{pro}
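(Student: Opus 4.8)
The plan is to run a De Giorgi--Nash--Moser iteration for nonnegative superharmonic functions on the regular part $M=\mathrm{reg}\,T$, using the two analytic ingredients that are available on a stationary indecomposable component $T$: the Neumann--Poincar\'e inequality \eqref{NPI} of Lemma \ref{NPITHM}, and the volume doubling together with the Michael--Simon Sobolev inequality \eqref{SobM0}. The extra difficulty compared to the classical Euclidean situation is that $T$ has a singular set $\mathcal{S}$, so one must first know that $\mathcal{S}$ is negligible for the Sobolev/Poincar\'e machinery; this is exactly what Lemma \ref{codim3} provides, namely $\dim\mathcal{S}\le n-3$, so that $\mathcal{S}$ has vanishing $\mathcal{H}^{n-2}$-measure and, in particular, zero Newtonian capacity on the $n$-dimensional space $M$. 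Hence cutoff functions supported away from $\mathcal{S}$ can be inserted into all integral identities with errors that tend to $0$, and $f$ (a priori defined and smooth only on $\mathrm{reg}\,T$) may be treated as a Sobolev function on $M\cap\mathbf{B}_2$.

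First I would record the Caccioppoli inequality: for $\phi$ a Lipschitz cutoff with compact support in $\mathbf{B}_2\setminus\mathcal{S}$ and any exponent $p<0$ (for superharmonic $f>0$, powers $f^{p}$ with $p<0$ are subharmonic, and likewise small positive powers $f^{p}$ with $0<p<1$), testing $\De_M f\le 0$ against $\phi^2 f^{2p-1}$ and using Cauchy--Schwarz gives
\begin{equation*}
\int_M \phi^2 |\nabla_M f^{p}|^2 \le C(p)\int_M |\nabla_M\phi|^2 f^{2p}.
\end{equation*}
Because $\mathrm{cap}(\mathcal{S})=0$, one may let the cutoff increase to $\chi_{\mathbf{B}_r}$ (with a Lipschitz transition layer in $\mathbf{B}_R\setminus\mathbf{B}_r$) and absorb the error near $\mathcal{S}$. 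Then I combine this with the Sobolev inequality \eqref{SobM0} applied to $\phi f^{p}$ to obtain a reverse-H\"older/$L^q$--$L^2$ gain: writing $w=f^{\de_*}$ for a small $\de_*>0$ to be fixed,
\begin{equation*}
\left(\fint_{M\cap\mathbf{B}_r} w^{2\chi}\right)^{1/\chi}\le \frac{C}{(R-r)^2}\fint_{M\cap\mathbf{B}_R} w^{2},\qquad \chi=\tfrac{n}{n-2},
\end{equation*}
where the averages are taken against $\mathcal{H}^n\llcorner M$ and the volume doubling of $M\cap\mathbf{B}_\rho$ (a consequence of \eqref{VGM1} together with the density lower bound from monotonicity \eqref{ratio}) is used to pass from integrals to averages. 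Iterating over a geometric sequence of radii $r_j\downarrow 0$ yields the $L^2\to L^\infty$ bound $\sup_{M\cap\mathbf{B}_{r/2}}w^2\le C r^{-n}\int_{M\cap\mathbf{B}_r}w^2$, hence $\sup_{M\cap\mathbf{B}_{r/2}}f^{\de_*}\le C r^{-n}\int_{M\cap\mathbf{B}_r}f^{\de_*}$ after adjusting constants. This is the Moser half of the argument and is essentially insensitive to which power is used.

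The harder half is to convert this sup bound into the pointwise statement $r^{-n}\int_{M\cap\mathbf{B}_r}f^{\de_*}\le \Theta_T f^{\de_*}(0)$; this is precisely the mean value inequality for superharmonic functions, and here the Neumann--Poincar\'e inequality \eqref{NPI} on $T$ is indispensable, because it is the ingredient that makes the ``$\log f$'' estimate work. Concretely, testing superharmonicity against $\phi^2 f^{-1}$ bounds $\int_M\phi^2|\nabla_M\log f|^2$ by $\int_M|\nabla_M\phi|^2$; then \eqref{NPI} applied to $\log f$ controls the oscillation of $\log f$ on $\mathbf{B}_r$ in $L^1$ by $r\int_{\mathbf{B}_{2r}}|\nabla_M\log f|\le C r^{1+n/2}(\int|\nabla_M\log f|^2)^{1/2}$, which is bounded; this is the hypothesis needed to run the John--Nirenberg / Bombieri--Giusti style argument showing that $\fint_{\mathbf{B}_r} f^{\de_*}$ and $\fint_{\mathbf{B}_r} f^{-\de_*}$ are comparable for a sufficiently small universal $\de_*=\de_*(n)>0$. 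Chaining this with the Moser sup bound and with the subsolution estimate applied to $f^{-\de_*}$ (which is bounded above by $(\inf f)^{-\de_*}$ only if $f$ is bounded below away from $0$ — but one may add $\varepsilon$ to $f$, run the argument, and let $\varepsilon\to 0$) gives $r^{-n}\int_{M\cap\mathbf{B}_r}f^{\de_*}\le \Theta_T\inf_{M\cap\mathbf{B}_{r/2}}f^{\de_*}\le\Theta_T f^{\de_*}(0)$, which is \eqref{MVf} up to renaming $r$. The constant $\de_*$ depends only on $n$ (it comes from the John--Nirenberg exponent tied to the dimension in \eqref{NPI} and \eqref{SobM0}), while $\Theta_T$ inherits the dependence of the Poincar\'e constant $\th_T$ in Lemma \ref{NPITHM} on $T$, hence on $n,m,\La,T$. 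I expect the main technical obstacle to be the careful justification that the singular set $\mathcal{S}$ can be excised from every integration by parts without introducing boundary terms — i.e. that $f$, a priori only smooth on $\mathrm{reg}\,T$, genuinely lies in the relevant Sobolev class on $M\cap\mathbf{B}_2$; this is handled by the capacity argument built on the dimension bound $\dim\mathcal{S}\le n-3$ from Lemma \ref{codim3}, which makes $\mathcal{S}$ removable for $W^{1,2}$ functions on the $n$-dimensional manifold $M$.
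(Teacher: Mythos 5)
Your proposal follows essentially the same strategy as the paper: use the dimension bound $\dim\mathcal{S}\le n-3$ from Lemma \ref{codim3} to show the singular set is negligible (so that $f$ is a genuine distributional supersolution across $\mathcal{S}$), then run a De Giorgi--Nash--Moser iteration built on the Neumann--Poincar\'e inequality \eqref{NPI} and the Michael--Simon Sobolev inequality \eqref{SobM0}; the paper then delegates the actual iteration to a cited reference. One small remark: to justify passing the Laplacian through the excision cutoff, the paper takes the cutoffs $\e_\ep$ to be $C^2$ with $r_k|\bn\e_k|+r_k^2|\la(\bn^2\e_k)|\le c$ and uses the Besicovitch covering lemma to control $\int_M|\De_M\e_\ep|$ (not just $\int_M|\n_M\e_\ep|^2$), a detail your "zero capacity" shorthand glosses over but which is covered by the same $\mathcal{H}^{n-2}(\mathcal{S})=0$ fact; also, for $0<p<1$ and $f>0$ superharmonic, $f^p$ is superharmonic, not subharmonic, though this slip is immaterial to the iteration you describe.
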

\begin{proof}
For any small $\ep>0$, let $\mathcal{S}$ denote the singular set of $\mathbf{T}\llcorner\mathbf{B}_{2-\ep}$. From Lemma \ref{codim3}, there is a collection of balls $\{\mathbf{B}_{r_k}(\mathbf{x}_k)\}_{k=1}^{N_\ep}$ with $r_k<\ep/2$ such that $\mathcal{S}\subset\cup_{k=1}^{N_\ep}\mathbf{B}_{r_k}(\mathbf{x}_k)$, and
\begin{equation}\aligned\label{rkn-2ep}
\sum_{k=1}^{N_\ep}r_k^{n-2}<\ep.
\endaligned
\end{equation}
By Besicovitch covering lemma, we can further require that there is a constant $c_{n+m}>0$ depending only on $n+m$ so that
\begin{equation}\aligned\label{chijneqk}
\int_E\chi_{_{\mathbf{B}_{2r_j}(\mathbf{x}_j)}}\sum_{k\neq j}\chi_{_{\mathbf{B}_{2r_k}(\mathbf{x}_k)}}\le c_{n+m}\mathcal{H}^n(E\cap\mathbf{B}_{2r_j}(\mathbf{x}_j))
\endaligned
\end{equation}
for each $j=1,\cdots,N_\ep$ and each $n$-rectifiable set $E\subset\R^{n+m}$.
Let $\e_k$ be a $C^2$ function on $\mathbf{B}_{2}$ with $0\le\e_k\le1$ such that $\e_k=0$ on $\mathbf{B}_{r_k}(\mathbf{x}_k)$, $\e_k=1$ on $\mathbf{B}_{2}\setminus\mathbf{B}_{2r_k}(\mathbf{x}_k)$ and  
$$r_k|\bn \e_k|+r_k^2|\la(\bn^2\e_k)|\le c\qquad \mathrm{on}\ \mathbf{B}_{2r_k}(\mathbf{x}_k)\setminus\mathbf{B}_{r_k}(\mathbf{x}_k),$$
where $\bn$ denotes the Levi-Civita connection of $\R^{n+m}$, $|\la(\bn^2\e_k)|$ denotes the maximum of the absolution of eigenvalues of the Hessian of $\e_k$ on $\R^{n+m}$, $c$ is an absolute positive constant.
Let $e_1,\cdots,e_n$ be a local orthonormal tangent frame field of $M$. Since $M$ is minimal, we have (see (5.4) in \cite{d-j-x0} for instance)
\begin{equation}\aligned\label{DeMek}
|\De_M\e_k|=\left|\sum_{i=1}^{n}\mathbf{Hess}_{\e_k}(e_i,e_i)\right|\le ncr_k^{-2}\chi_{_{\mathbf{B}_{2r_k}(\mathbf{x}_k)}},
\endaligned
\end{equation}
where $\mathbf{Hess}_{\e_k}$ denotes the Hessian of $\e_k$ on $\R^{n+m}$.
Set $\e_\ep=\prod_{k=1}^{N_\ep}\e_k\in C^2$. Then $\e_\ep=0$ on a neighborhood of $\mathcal{S}$. Let $\n_M$ be the Levi-Civita connection of $M$, and $\mathrm{div}_M$ be the divergence of $M$.
From \eqref{DeMek} and the Cauchy inequality, one has
\begin{equation}\aligned\label{DeMeep}
&|\De_M\e_\ep|\le\sum_{k=1}^{N_\ep}|\De_M\e_k|+\sum_{j\neq k}|\n_M \e_j||\n_M \e_k|\\
\le& nc\sum_{k=1}^{N_\ep}r_k^{-2}\chi_{_{\mathbf{B}_{2r_k}(\mathbf{x}_k)}}+c^2\sum_{j\neq k}r_j^{-1}r_k^{-1}\chi_{_{\mathbf{B}_{2r_j}(\mathbf{x}_j)}}\chi_{_{\mathbf{B}_{2r_k}(\mathbf{x}_k)}}\\
\le& nc\sum_{k=1}^{N_\ep}r_k^{-2}\chi_{_{\mathbf{B}_{2r_k}(\mathbf{x}_k)}}+c^2\sum_{j\neq k}r_j^{-2}\chi_{_{\mathbf{B}_{2r_j}(\mathbf{x}_j)}}\chi_{_{\mathbf{B}_{2r_k}(\mathbf{x}_k)}}.
\endaligned
\end{equation}
Then with \eqref{chijneqk} we deduce
\begin{equation}\aligned
\int_{\mathrm{spt}T}|\De_M\e_\ep|\le(nc+c^2c_{n+m})\sum_{k=1}^{N_\ep}r_k^{-2}\mathcal{H}^n\left(M\cap\mathbf{B}_{2r_k}(\mathbf{x}_k)\right).
\endaligned
\end{equation}
Similarly,
\begin{equation}\aligned
\int_{M}|\n_M \e_\ep|^2\le&\int_{M}\left(\sum_{k=1}^{N_\ep}|\n_M \e_k|^2+\sum_{j\neq k}|\n_M \e_j||\n_M \e_k|\right)\\
\le&(1+c_{n+m})c^2\sum_{k=1}^{N_\ep}r_k^{-2}\mathcal{H}^n\left(M\cap\mathbf{B}_{2r_k}(\mathbf{x}_k)\right).
\endaligned
\end{equation}
Combining Lemma \ref{VGM1} and \eqref{rkn-2ep}, we deduce
\begin{equation}\aligned\label{MB3neep}
\lim_{\ep\to0}\int_{M}\left(|\De_M\e_\ep|+|\n_M \e_\ep|^2\right)=0.
\endaligned
\end{equation}
Let $\varphi$ be a nonnegative Lipschitz function with compact support in $\mathbf{B}_2$. Then the support of $\varphi$ is in $\mathbf{B}_{2-\ep}$ for small $\ep>0$. With integrating by parts, we have
\begin{equation}\aligned
0\le&-\int_{M}\varphi\e_\ep\De_M f=\int_{M}\lan\n_M(\varphi\e_\ep),\n_M f\ran\\
=&\int_{M}\e_\ep\lan\n_M\varphi,\n_M f\ran-\int_{M}f\mathrm{div}_M\left(\varphi\n_M\e_\ep\right) \\
=&\int_{M}\e_\ep\lan\n_M\varphi,\n_M f\ran-\int_{M}f\lan\n_M\varphi,\n_M\e_\ep\ran-\int_{M}f\varphi\De_M\e_\ep.
\endaligned
\end{equation}
Combining \eqref{MB3neep} and Cauchy inequality, we conclude that
\begin{equation}\aligned
0\le&\int_{M}\lan\n_M\varphi,\n_M f\ran,
\endaligned
\end{equation}
which means that $f$ is superharmonic on $M$ in the distribution sense.
Now we can follow the argument of De Giorgi-Nash-Moser iteration (see Theorem 3.2 in \cite{d3} for instance) and finish the proof.
Note that the constant $\de_*$ in \eqref{MVf} is obtained from the dimension
$n$ and the exponent of the Sobolev inequality, which implies that $\de_*$ only depends on $n$.
\end{proof}

Let $M_k=\mathrm{graph}_{u_k}\in\mathcal{M}_{n,m,\La,B_{R_k}}$ with $R_k\rightarrow\infty$ for some constant $0<\La<\sqrt{2}$.
Let
$g_{ij}^k=\de_{ij}+\sum_{\a=1}^m \p_i u^\a_k\p_j u^\a_k,$
and $v_k=\sqrt{\det g_{ij}^k}$.
We suppose that $M_k$ converges in the varifold sense to a minimal cone $C$ in $\R^{n+m}$ with the vertex at the origin.
Since the multiplicity function of spt$C$ is one from Theorem \ref{multi1}, we denote spt$C$ by $C$ for simplicity.
Without loss of generality, we assume that $[|M_k|]$ converges to an integral current $T$ in $\R^{n+m}$.
From Lemma \ref{sptTV}, $\p T=0$ and $T$ has multiplicity one on spt$T=\mathrm{spt}C$.
As before (in \S 4), $\pi_*$ denotes the projection from $\R^{n+m}$ into $\R^{n}$ by
$$\pi_*(x_1,\cdots,x_{n+m})=(x_1,\cdots,x_n),$$
$\mathbf{C}_r(\mathbf{x})=B_r(\pi_*(\mathbf{x}))\times B_r(x_{n+1},\cdots,x_{n+m})$ denotes the cylinder in $\R^{n+m}$ for any $\mathbf{x}=(x_1,\cdots,x_{n+m})\in\R^{n+m}$, and $\mathbf{C}_r=\mathbf{C}_r(0^{n+m})$.
\begin{lem}\label{almostver}
Suppose that for any regular point $q\in C$ there is a sequence of points $M_k\ni q_k\to q$ with $\lim_{k\rightarrow\infty}v_k(q_k)=\infty$.
Then $C$ is a quasi-cylinder in $\R^{n+m}$ with countably $(n-1)$-rectifiable $\pi_*(C)$, and $C\cap (B_r(x)\times\{0^m\})$ is isometric to
$\pi_*(C\cap (B_r(x)\times\p B_{s}(0^m)))$ for any $r,s>0$, $x\in\R^n$.
\end{lem}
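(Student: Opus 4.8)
The plan is to combine the $2$-dilation bound $|\La^2du_k|\le\La<\sqrt2$ with the hypothesis $v_k(q_k)\to\infty$ to show that at every regular point $q\in C$ the tangent plane $T_qC$ contains a line lying in $\{0^n\}\times\R^m$ and projects under $\pi_*$ onto an $(n-1)$-plane; the assertion is then a foliation and rectifiability argument. First I would fix a regular point $q\in C$ and a sequence $M_k\ni q_k\to q$ with $v_k(q_k)\to\infty$. Since $|M_k|\to C$ with multiplicity one (Theorem \ref{multi1}), Allard's regularity theorem \cite{a} gives smooth convergence of $M_k$ to $C$ near $q$, so $T_{q_k}M_k\to T_qC$. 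Let $\lambda_{1,k}\ge\cdots\ge\lambda_{n,k}\ge0$ be the singular values of $Du_k$ at $\pi_*(q_k)$; then $v_k(q_k)=\prod_{i=1}^n\sqrt{1+\lambda_{i,k}^2}$ and, by the $2$-dilation bound, $\lambda_{1,k}\lambda_{j,k}\le\La$ for every $j\ge2$. Since $v_k(q_k)\to\infty$ forces $\lambda_{1,k}\to\infty$, we get $\lambda_{j,k}\le\La/\lambda_{1,k}\to0$ for all $j\ge2$. Writing an orthonormal basis of $T_{q_k}M_k$ via the singular value decomposition of $du_k$ as $f_{j,k}=(1+\lambda_{j,k}^2)^{-1/2}(a_{j,k},\lambda_{j,k}b_{j,k})$, with $\{a_{j,k}\}_{j=1}^n$ orthonormal in $\R^n$ and the $b_{j,k}$ unit vectors in $\R^m$, a subsequence yields $f_{1,k}\to(0^n,w(q))$ for a unit vector $w(q)\in\R^m$, and $f_{j,k}\to(a_j(q),0^m)$ for $j\ge2$ with $\{a_j(q)\}_{j\ge2}$ orthonormal in $\R^n$. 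Hence $T_qC=\R(0^n,w(q))\oplus\mathrm{span}\{(a_2(q),0^m),\dots,(a_n(q),0^m)\}$, so $T_qC\cap(\{0^n\}\times\R^m)$ is one-dimensional and $d\pi_*|_{T_qC}$ has constant rank $n-1$ on the regular part of $C$.

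Second, by the constant rank theorem the map $\pi_*$ restricted to the regular part of $C$ is, locally, projection onto an $(n-1)$-plane: the fibers $\pi_*^{-1}(x)$ are one-dimensional, each contained in a slice $\{x\}\times\R^m$ (its tangent at every point is the vertical kernel line $\R(0^n,w)$, which has zero $\R^n$-component), and the image is locally an $(n-1)$-submanifold of $\R^n$. Parametrizing each fiber by arc length produces a normalized curve $\gamma_x\colon\R\to\R^m$. Thus $\pi_*(\mathrm{reg}\,C)$ is countably $(n-1)$-rectifiable, and the area formula applied to $\pi_*|_{\mathrm{reg}\,C}$ (whose $n$-Jacobian vanishes identically) gives $\mathcal{H}^n(\pi_*(\mathrm{reg}\,C))=0$. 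By Lemma \ref{codim3} the singular set of $C$ has Hausdorff dimension $\le n-3$, so $\pi_*(\mathrm{sing}\,C)$ is $\mathcal{H}^{n-1}$-null and, by the Eilenberg inequality \cite{s}, $\mathcal{H}^1(\pi_*^{-1}(x)\cap\mathrm{sing}\,C)=0$ for $\mathcal{H}^{n-1}$-a.e.\ $x$. Setting $E=\pi_*(C)$, it follows that $E$ is countably $(n-1)$-rectifiable and that $E_\gamma=\{(x,y)\colon x\in E,\ y\in\gamma_x\}$ agrees with $C$ up to an $\mathcal{H}^n$-null set; that is, $C$ is quasi-cylindrical with countably $(n-1)$-rectifiable $\pi_*(C)$.

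It remains to establish the last identity, where I would use that $C$ is a cone with vertex at the origin, so $\gamma_{tx}=t\gamma_x$ for $t>0$. The point is that each fiber $\gamma_x$, $x\in E$, is a half-line (or line) through $0^m$: blowing $C$ up at an interior point $(x,y)$ of a fiber produces the tangent cone $T_{(x,y)}C$, which contains the vertical line $\R(0^n,w)$, so the fiber is locally straight, and combining this with the homogeneity $\gamma_{tx}=t\gamma_x$ and the closedness of $\mathrm{spt}\,C$ forces $\gamma_x$ to be a ray from $0^m$ (or a full line through $0^m$). Consequently every $\gamma_x$ passes through $0^m$ and meets every sphere $\partial B_s(0^m)$, so both $C\cap(B_r(x)\times\{0^m\})$ and $\pi_*(C\cap(B_r(x)\times\partial B_s(0^m)))$ coincide, under the identification $\R^n\times\{0^m\}\cong\R^n$, with $E\cap B_r(x)$, which gives the asserted isometry. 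I expect this last step to be the main obstacle: excluding fibers that fail to be straight rays through $0^m$ should require using minimality of $C$ in conjunction with the cone structure, rather than the soft arguments above, whereas the vertical splitting of $T_qC$ and the rectifiability of $\pi_*(C)$ are routine given the $2$-dilation bound and Lemma \ref{codim3}.
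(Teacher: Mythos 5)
Your first two paragraphs essentially reproduce the paper's argument: the singular-value decomposition of $Du_k$ together with the $2$-dilation bound forces $\lambda_{1,k}\to\infty$ and $\lambda_{j,k}\to 0$ for $j\ge 2$, hence $T_qC$ contains exactly one vertical line and $d\pi_*|_{T_qC}$ has rank $n-1$; the local foliation by the vertical unit vector field and the use of Lemma~\ref{codim3} to control the singular set are also in the paper. (The paper obtains countable $(n-1)$-rectifiability of $\pi_*(C)$ from the slicing lemma for the current $T$ rather than the area formula and Eilenberg's inequality, but that is a cosmetic difference.)

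The gap is in your third paragraph. The step ``blowing $C$ up at an interior point $(x,y)$ of a fiber produces the tangent cone $T_{(x,y)}C$, which contains the vertical line $\R(0^n,w)$, so the fiber is locally straight'' does not follow. Knowing that $T_{(x,y)}C$ contains $\R(0^n,w(x,y))$ only prescribes the \emph{direction} $\dot\gamma_x$ of the fiber at that point; the vertical direction $w(\cdot)$ may rotate as one moves along the fiber, so the integral curve of the vertical field need not be a geodesic of $\R^{n+m}$. Invoking homogeneity $\gamma_{tx}=t\gamma_x$ does not close the gap: dilation relates fibers over different base points, it does not constrain a single fiber $\gamma_x$ to be a ray. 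Indeed the lemma (and the paper's definition of quasi-cylindrical) only asserts the fibers are countably $1$-rectifiable normalized curves, not straight rays, and the paper never proves or needs straightness. The paper's route to the isometry statement avoids this entirely: from the integral-curve computation $\pi_*(\gamma_z(t))=\pi_*(z)$, the $\pi_*$-projection of a small piece of $C$ near a regular point agrees with the $\pi_*$-projection of its intersection with each level $\{y=z'\}$; propagating this local invariance through the codimension-$\ge 3$ singular set (Lemma~\ref{codim3}) gives $\pi_*\bigl(C\cap(B_r(x)\times\p B_{r_1}(0^m))\bigr)=\pi_*\bigl(C\cap(B_r(x)\times\p B_{r_2}(0^m))\bigr)$ for all $r_1,r_2>0$, and letting $r_1\to 0$ yields exactly the claimed isometry. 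You correctly flagged this step as the main obstacle; the fix is not to invoke further structure of minimal cones to straighten the fibers, but to replace the straightness claim by this projection-invariance argument, which uses only what you already established plus the dimension estimate.
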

\begin{proof}
For any $x\in\pi_*(C)$, there is a point $\mathbf{x}\in C$ such that $\pi_*(\mathbf{x})=x$. Then $t\mathbf{x}\in C$ implies $tx=\pi_*(t\mathbf{x})\in\pi_*(C)$. This means that $\pi_*(C)$ is a cone.
It is easy to check that $\pi_*(C)$ is closed in $\R^n$.
For any point $q\in \mathrm{reg}C$, from the assumption there is a unit vector $\e_q\in T_qC$ such that $\lan\mathbf{E}_i,\e_q\ran=0$ for each integer $i=1,\cdots,n$.
There are a small constant $r_q>0$ and a local orthonormal tangent frame $\{e_i\}_{i=1}^n$ on $\mathbf{C}_{r_q}(q)\cap C$ such that $\lan e_1(z),\mathbf{E}_i\ran=0$ for any $z\in\mathbf{C}_{r_q}(q)\cap C$ and $i=1,\cdots,n$.
In other words, $e_1$ is a $C^1$ tangent vector field on $\mathbf{C}_{r_q}(q)\cap C$ with $\pi_*(e_1(z))=0$ for any $z\in\mathbf{C}_{r_q}(q)\cap C$.
After choosing the  constant $r_q>0$ suitably small, for each $y\in \mathbf{C}_{r_q}(q)\cap \G_q$ there is an integral curve $\g_y$ in $\mathbf{C}_{r_q}(q)\cap C$ with $\dot{\g}_y=e_1\circ\g_y$.

We write $q=(\pi_*(q),q')\in\R^n\times\R^m$, and denote $\G_{z'}=C\cap(\R^n\times\{z'\})$ for each $z'\in\R^m$.
For any $z\in \mathbf{C}_{r_q}(q)\cap C$, let $\g_z$ be the integral curve with $\g_z(0)=z$. For any vector $\xi$ spanned by $\mathbf{E}_1,\cdots,\mathbf{E}_n$, we have
\begin{equation}\aligned
\lan \g_{z}(t)-\g_{z}(0),\xi\ran=\int_0^{t}\lan \dot{\g}_{z}(s),\xi\ran ds=\int_0^{t}\lan e_1(\g_z(s)),\xi\ran ds=0,
\endaligned
\end{equation}
which implies $\pi_*(\g_z(t))=\pi_*(z)$. Hence, we conclude that
$\pi_*(\mathbf{C}_{r_q}(q)\cap C)=\pi_*(\mathbf{C}_{r_q}(q)\cap \G_{z'})$ for any $|z'-q'|<r_q$.
With the dimensional estimates from Lemma \ref{codim3}, we deduce
\begin{equation}\aligned
\pi_*(C\cap (B_r(x)\times\p B_{r_1}(0^m)))=\pi_*(C\cap (B_r(x)\times\p B_{r_2}(0^m)))
\endaligned
\end{equation}
for any $r_1,r_2>0$ and $B_r(x)\subset B_1$. Letting $r_1\to0$ implies
\begin{equation}\aligned\label{pi*CB1}
\pi_*(C\cap (B_r(x)\times\{0^m\}))=\pi_*(C\cap (B_r(x)\times\p B_{r_2}(0^m))),
\endaligned
\end{equation}
which is isometric to $C\cap (B_r(x)\times\{0^m\})$.

From the slicing lemma for $T$ in \cite{s}, $\pi_*(C)=\pi_*(\mathrm{spt}T)$ is a countably $(n-1)$-rectifiable cone in $\R^n$.
With Lemma \ref{codim3}, for almost all $x\in\pi_*(C)$, there is a countably 1-rectifiable normalized curve $\g_x:\,\R\rightarrow\R^m$ such that
$\{(x,y)\in\R^n\times\R^m|\, x\in\pi_*(C),y\in\g_x\}$ is the support of $C$ up to a zero $\mathcal{H}^n$-set.
\end{proof}

With Proposition \ref{Tsuperhar}, we can weaken the condition in Lemma \ref{almostver} and get the same conclusion.
\begin{thm}\label{mainCONE}
If there is a sequence of points $q_k\in M_k$ with $\limsup|q_k|<\infty$ such that $\lim_{k\rightarrow\infty}v_k(q_k)=\infty$,
then $C$ is a multiplicity one quasi-cylindrical minimal cone.
\end{thm}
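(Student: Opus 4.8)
The plan is to verify the hypothesis of Lemma \ref{almostver}: that for \emph{every} regular point $q$ of $C$ one has $v_k(q_k)\to\infty$ along any sequence $M_k\ni q_k\to q$; Lemma \ref{almostver} then gives the conclusion at once. Passing to a subsequence, we may assume $q_k\to q_*\in\mathrm{spt}C$, and (Theorem \ref{multi1}, Lemma \ref{sptTV}) that $[|M_k|]\rightharpoonup T$ with $\p T=0$ and $\mathrm{spt}T=C$ of multiplicity one. Since $0<\La<\sqrt2$, Corollary \ref{Lalogv} yields $\De_{M_k}\log v_k\ge\f1n|\n_{M_k}\log v_k|^2$ on $\mathrm{reg}M_k$, so the functions $\psi_k:=v_k^{-1/n}\in(0,1]$ satisfy $\De_{M_k}\psi_k=\f1n\psi_k\big(\f1n|\n_{M_k}\log v_k|^2-\De_{M_k}\log v_k\big)\le0$, i.e. each $\psi_k$ is superharmonic on $\mathrm{reg}M_k$, bounded, and $\psi_k(q_k)\to0$ because $v_k(q_k)\to\infty$.

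Next I would pass to the limit. The singular set of $C$ has dimension $\le n-3$ (Lemma \ref{codim3}), and away from it $M_k\to C$ smoothly with multiplicity one; hence the tangent planes converge and $\psi_k$ converges to $\psi_\infty:=v_\infty^{-1/n}$ on $\mathrm{reg}C$, where $v_\infty(p):=\tilde v(T_pC,P_0)$ and $P_0$ is the horizontal $n$-plane. By continuity of the Pl\"ucker inner product $w$ on $\grs{n}{m}$, the function $\psi_\infty$ is continuous into $[0,1]$ — smooth and positive where $T_pC$ admits no vertical vector, and vanishing exactly where it does — and it is superharmonic on $\mathrm{reg}C$. Because $\psi_k$ is bounded and superharmonic, the monotonicity formula gives $\psi_k(q_k)\ge\big(\H^n(\mathbf{B}_r(q_k)\cap M_k)\big)^{-1}\int_{\mathbf{B}_r(q_k)\cap M_k}\psi_k$ for small fixed $r$; letting $k\to\infty$ forces $\int_{\mathbf{B}_r(q_*)\cap C}\psi_\infty=0$, hence, by continuity on the full-measure regular part, $\psi_\infty\equiv0$ on the open nonempty set $\mathbf{B}_r(q_*)\cap\mathrm{reg}C$. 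Since $C$ is a cone and $v_\infty$ is constant along rays, $q_*$ may be slid toward the vertex, so $\psi_\infty$ in fact vanishes on a whole ``vertical'' piece of $\mathrm{reg}C$ accumulating at the origin.

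Finally, to propagate the vanishing to all of $\mathrm{reg}C$, decompose $T\llcorner\mathbf{B}_2$ into stationary indecomposable components $T_1,\dots,T_l$ — finitely many by the volume upper bound of Lemma \ref{VGM*} against the lower bound $\H^n(\mathrm{spt}T_j\cap\mathbf{B}_\rho)\ge\omega_n\rho^n$, each $T_j$ being a truncated cone through $0$. On $\mathrm{reg}T_j$, $\psi_\infty\ge0$ is superharmonic and $0$-homogeneous, and by indecomposability (connectedness of $\mathrm{reg}T_j$), Proposition \ref{MeanVSuph}, and the strong minimum principle it is either $\equiv0$ or everywhere positive; in the latter case $0$-homogeneity makes it a positive constant, so $v_k$ tends to a finite constant there and Lemma \ref{Harm} (valid since $\La<\sqrt2$, via Lemma \ref{sqrt2logv}) forces $T_j$ to be a flat non-vertical $n$-plane. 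The previous paragraph already produces a component with $\psi_\infty\equiv0$; using Proposition \ref{Tsuperhar} (Harnack for the harmonic coordinate functions) to chain across the origin, which lies in every $T_j$, and the fact that the $T_j$ all arise as limits of the single connected graph $M_k$ — whose ``transition'' regions between vertical and non-vertical parts have asymptotically vanishing area, exactly as in the proof of Lemma \ref{multi1} — one excludes the flat non-vertical alternative. Thus $\psi_\infty\equiv0$ on $\mathrm{reg}C$, i.e. $v_k\to\infty$ at every regular point of $C$, and Lemma \ref{almostver} completes the proof. The main obstacle is precisely this last step: the limit may a priori decompose and $\psi_k\to\psi_\infty$ is controlled only away from singularities, so passing the verticality through the singular set and ruling out a persistent flat non-vertical component is the delicate point, to be handled via the cone structure, the strong maximum principle, the rigidity ``constant slope $\Rightarrow$ flat'', and Harnack.
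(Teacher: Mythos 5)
Your high-level strategy — establish superharmonicity of $v_k^{-1/n}$, show $v_T^{-1}=0$ at the limit point, propagate to all of $\mathrm{spt}T$ via a decomposition into stationary indecomposable cones, and then invoke Lemma \ref{almostver} — matches the paper's. But there is a genuine gap in the step that is supposed to get you off the ground. You write the sub-mean-value inequality
\begin{equation*}
\psi_k(q_k)\ge\big(\H^n(\mathbf{B}_r(q_k)\cap M_k)\big)^{-1}\int_{\mathbf{B}_r(q_k)\cap M_k}\psi_k
\end{equation*}
and attribute it to ``the monotonicity formula.'' This is false in general: the weighted monotonicity formula on a minimal submanifold reads
\begin{equation*}
\f{d}{d\rho}\left(\rho^{-n}\int_{\mathbf{B}_\rho(p)\cap M}f\right)=\rho^{-n-1}\int_{\p\mathbf{B}_\rho(p)\cap M}f\,\f{|(x-p)^\perp|^2}{|(x-p)^T|}+\f1{2\rho^{n+1}}\int_{\mathbf{B}_\rho(p)\cap M}(\rho^2-|x-p|^2)\De_M f,
\end{equation*}
and for $\De_M f\le0$ the two right-hand terms have opposite signs, so there is no one-sided monotonicity and no pointwise mean value inequality for superharmonic functions. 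Obtaining such an inequality in high codimension is in fact the main analytic content of \S5 of the paper: it requires the Neumann--Poincar\'e inequality of Lemma \ref{NPITHM}, which the paper explicitly notes can \emph{fail} for merely stationary minimal submanifolds (the catenoid), and which is only established on \emph{stationary indecomposable components} of the limit current, not on the approximating graphs $M_k$ themselves, and even then only in the weaker form of Proposition \ref{MeanVSuph} with exponent $\de_*<1$. The paper avoids your problematic step entirely: it gets $v_T^{-1}(q)=0$ at the single limit point $q$ by elementary means (smooth convergence $v_k(q_k)\to v_\infty(q)$ if $q$ is regular; a blow-up at $q$ plus Theorem \ref{BDMGAFFINE} and the inf-extension \eqref{DEFvT-1} if $q$ is singular), and \emph{only then} invokes the mean value inequality \eqref{meanvT} on the indecomposable component containing $q$ to propagate. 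You should replace your monotonicity step with this two-case argument.

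Your final ``chain across the origin'' step is also not a proof. Ruling out a flat non-vertical component $T_{j'}$ by appealing to ``Harnack for harmonic coordinate functions'' and ``asymptotically vanishing transition areas as in Lemma \ref{multi1}'' is hand-waving: $\psi_\infty$ is superharmonic (not harmonic), and Lemma \ref{multi1} controls excess area in a fixed cylinder, not the cross-component behaviour of $v_T^{-1}$ near a singular vertex. The paper's resolution is sharper and simpler: once $v_T^{-1}>0$ at one point of $T_{j'}$, Proposition \ref{MeanVSuph} forces $v_T^{-1}>0$ on all of $\mathrm{spt}T_{j'}$; blow-up and Theorem \ref{BDMGAFFINE} then make $\mathrm{spt}T_{j'}$ an $n$-plane, which by multiplicity one (Corollary \ref{MultioneT}) forces $\mathrm{spt}T_{j'}=\mathrm{spt}T$, contradicting the existence of the vertical component you already produced. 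Finally, a smaller point: ``$0$-homogeneity makes it a positive constant'' is not immediate from positivity and superharmonicity, and even if granted, Lemma \ref{Harm} requires the slope $v$ to be \emph{constant} on a domain, whereas you have only convergence of $v_k$ to a value, and this would have to be combined with $1$-homogeneity of the graphing function — your use of Lemma \ref{Harm} is in the wrong register compared with the paper's cleaner Bernstein argument via Theorem \ref{BDMGAFFINE}.
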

\begin{proof}
Let $\De_{M_k}$ and $\n_{M_k}$ denote the Laplacian and the Levi-Civita connection of $M_k$, respectively.
From Corollary \ref{Lalogv},
\begin{equation}\aligned\label{Mkvk-1f1n}
\De_{M_k} v_k^{-1/n}=\De_{M_k}e^{-\f1n\log v_k}=-\f1nv_k^{-1/n}\De_{M_k}\log v_k+\f1{n^2}v_k^{-1/n}|\n_{M_k}\log v_k|^2\le0
\endaligned
\end{equation}
on $M_k$.
Let $\xi_k$ denote an orientation of $M_k$, namely, $T_{x}M_k$ can be represented by the unit $n$-vector $\xi_k(x)$, such that $v_k^{-1}=\lan\xi_k,\mathbf{E}_1\wedge\cdots\wedge \mathbf{E}_n\ran$.
For any regular point $z\in C$, there is a constant $r_z>0$ such that $\mathbf{B}_{2r_z}(z)\cap C$ is smooth.
From Allard's regularity theorem, $M_k\cap \textbf{B}_{3r_z/2}(z)$ converges to $C\cap \textbf{B}_{3r_z/2}(z)$ smoothly.
Recalling $[|M_k|]\rightharpoonup T$. Let $\xi$ denote the orientation of $T$, and $v_T^{-1}=\lan\xi,\mathbf{E}_1\wedge\cdots\wedge \mathbf{E}_n\ran$ on reg$T$ (the regular part of $T$). We extend $v_T$ to sing$T$ (the singular part of $T$) by letting
\begin{equation}\aligned\label{DEFvT-1}
v_T^{-1}(x)=\inf_{\mathrm{reg}T\ni y\to x}\lan\xi(y),\mathbf{E}_1\wedge\cdots\wedge \mathbf{E}_n\ran\qquad \mathrm{for\ any}\ x\in\mathrm{sing}T.
\endaligned
\end{equation}
Then from \eqref{Mkvk-1f1n} we have
\begin{equation}\aligned\label{TvT-1f1n}
\De_{T} v_T^{-1/n}\le0\qquad \mathrm{on}\ \mathrm{reg}T,
\endaligned
\end{equation}
where $\De_{T}$ denotes the Laplacian of $T$.

From monotonicity of the density and Lemma \ref{VGM*}, there is a constant $c_n\ge\omega_n$ depending only on $n$
such that
\begin{equation}
\omega_nr^n\le\mathbb{M}(T\llcorner\mathbf{B}_r(\mathbf{x}))\le c_nr^n
\end{equation}
for any $r>0$ and $\mathbf{x}\in\mathrm{spt}T$.
From the proof of Theorem \ref{Lioua1},
we assume that there exist indecomposable multiplicity one currents $T_1,\cdots,T_l\in\mathcal{D}_n(\R^{n+m})$ for $l\ge1$, $T_1,\cdots,T_l\neq0$ such that
\begin{equation}\aligned\label{MTWWpm}
\mathbb{M}(T\llcorner W)=\sum_{j=1}^l\mathbb{M}(T_j\llcorner W),\qquad \sum_{j=1}^l\mathbb{M}(\p T_j\llcorner W)=0
\endaligned
\end{equation}
for any open $W\subset\subset\R^{n+m}$. From Proposition \ref{MeanVSuph} and \eqref{TvT-1f1n},
there are a constant $\de_*$ depending only on $n$, and a constant $\Th_T$ depending only on $n,m,\La,T$ such that for any $j$, $q\in\mathrm{spt}T_j$, $r>0$
\begin{equation}\aligned\label{meanvT}
r^{-n}\int_{\mathrm{spt}T_j\cap \textbf{B}_{r}(q)}v_T^{-\de_*/n}\le \Th_Tv_T^{-\de_*/n}(q).
\endaligned
\end{equation}

There is a point $q\in\mathrm{spt}T_j$ for some integer $j\in\{1,\cdots,l\}$ so that $q_k\to q$ up to a choice of a subsequence.
From $\lim_{k\rightarrow\infty}v_k(q_k)=\infty$, it follows that $v_T^{-1}(q)=0$ if $q$ is a regular point of $C$.
Now we assume that $q$ is a singular point of $C$ and $v_T^{-1}(q)>0$. Then we blow $C$ up at $q$, and get a contradiction from \eqref{DEFvT-1} and Theorem \ref{BDMGAFFINE}.
Hence we always have $v_T^{-1}(q)=0$.
From \eqref{meanvT}, we conclude that $v_T^{-1}=0$ on spt$T_j$.
Similarly, if there is an integer $j'\in\{1,\cdots,l\}$ such that $\sup_{\mathrm{spt}T_{j'}}v_T^{-1}>0$, then \eqref{meanvT} implies $v_T^{-1}>0$ on $\mathrm{spt}T_{j'}$.
By a blowing up argument and Theorem \ref{BDMGAFFINE}, we conclude that $\mathrm{spt}T_{j'}$ is an $n$-plane, which implies $\mathrm{spt}T_{j'}=\mathrm{spt}T$.
Therefore, such $j'$ does not exist, and we get
\begin{equation}\aligned
v_T^{-1}=\lan\xi,\mathbf{E}_1\wedge\cdots\wedge \mathbf{E}_n\ran=0\qquad \mathrm{on}\ \mathrm{spt}T.
\endaligned
\end{equation}
From Allard's regularity theorem, any sequence of points $y_k\in M_k$ converging to a regular point $y$ of $\mathrm{spt}T$ satisfies $\lim_{k\rightarrow\infty}v_k(y_k)=\infty$.
Recalling Lemma \ref{almostver}, we complete the proof.
\end{proof}

In Lemma 6.1.1 (p. 42) in \cite{ss}, J. Simons proved the following well-known result.
\begin{lem}\label{JSimons}
Let $\Si$ be a closed co-dimension 1 minimal variety in $\mathbb{S}^n$. Suppose $\Si$ is not the totally geodesic $\mathbb{S}^{n-1}$. Then if $n\le 6$, the cone $C\Si$ is not stable.
\end{lem}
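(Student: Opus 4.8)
The plan is to reduce the stability of the cone $C\Si\subset\R^{n+1}$ to a spectral inequality on $\Si$, and then to violate that inequality when $n\le6$ via the constant test function together with a Simons-type pinching estimate. First I would note that $C\Si$ is a smooth minimal hypersurface away from the vertex, and the vertex, being a single point, is negligible for the Dirichlet energy (here $n\ge3$, since a non-totally-geodesic closed minimal hypersurface of $\mathbb{S}^n$ forces $n\ge3$), so stability of $C\Si$ may be tested against $\varphi\in C^\infty_c(C\Si\setminus\{0\})$. In polar coordinates $(r,\theta)\in(0,\infty)\times\Si$ the induced metric on $C\Si$ is $dr^2+r^2g_\Si$ and $|A_{C\Si}|^2(r,\theta)=r^{-2}|A_\Si|^2(\theta)$. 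Expanding a general test function as $\varphi=\sum_k f_k(r)\psi_k(\theta)$ in the $L^2(\Si)$-eigenfunctions $\psi_k$ of the operator $-\De_\Si-|A_\Si|^2$ (eigenvalues $\lambda_1\le\lambda_2\le\cdots$), the second-variation form splits into one-dimensional forms $\int_0^\infty\big((f_k')^2+\lambda_k r^{-2}f_k^2\big)r^{n-1}\,dr$; by the sharp weighted Hardy inequality $\int_0^\infty(f')^2r^{n-1}\,dr\ge\big(\tfrac{n-2}{2}\big)^2\int_0^\infty f^2 r^{n-3}\,dr$ the worst mode is $k=1$, and one concludes that $C\Si$ is stable if and only if
\begin{equation*}
\lambda_1\!\left(-\De_\Si-|A_\Si|^2\right)\ \ge\ -\left(\frac{n-2}{2}\right)^2 .
\end{equation*}

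Next I would rewrite this in terms of the Jacobi operator of $\Si$ as a minimal hypersurface of $\mathbb{S}^n$, namely $\mathcal J_\Si=-\De_\Si-|A_\Si|^2-(n-1)$ (the extra $-(n-1)$ coming from $\mathrm{Ric}_{\mathbb{S}^n}(\nu,\nu)$). Since $\lambda_1\!\left(-\De_\Si-|A_\Si|^2\right)=\lambda_1(\mathcal J_\Si)+(n-1)$ and $(n-1)+\big(\tfrac{n-2}{2}\big)^2=\tfrac{n^2}{4}$, the cone $C\Si$ is stable iff $\lambda_1(\mathcal J_\Si)\ge-\tfrac{n^2}{4}$. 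So it suffices to show $\lambda_1(\mathcal J_\Si)<-\tfrac{n^2}{4}$ whenever $\Si$ is not totally geodesic and $n\le6$. Testing $\mathcal J_\Si$ with the constant function yields
\begin{equation*}
\lambda_1(\mathcal J_\Si)\ \le\ -(n-1)-\frac{1}{|\Si|}\int_\Si|A_\Si|^2 ,
\end{equation*}
so the problem reduces to the pinching inequality $\frac{1}{|\Si|}\int_\Si|A_\Si|^2>\big(\tfrac{n-2}{2}\big)^2$ for $\Si$ not totally geodesic.

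For this last step I would invoke the classical Simons pinching (the heart of \cite{ss}): via the Simons identity for $|A_\Si|^2$ on a minimal hypersurface of $\mathbb{S}^n$ together with a maximum-principle/integration argument, a closed non-totally-geodesic $\Si^{n-1}\subset\mathbb{S}^n$ satisfies the sharp bound $\frac{1}{|\Si|}\int_\Si|A_\Si|^2\ge n-1$ (equality only for the Clifford hypersurfaces, where $|A_\Si|^2\equiv n-1$). Combined with the elementary arithmetic
\begin{equation*}
n-1>\left(\frac{n-2}{2}\right)^2\ \Longleftrightarrow\ n^2-8n+8<0\ \Longleftrightarrow\ 2\le n\le6 ,
\end{equation*}
this gives $\lambda_1(\mathcal J_\Si)\le-2(n-1)<-\tfrac{n^2}{4}$ for $n\le6$, hence $C\Si$ is not stable.

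The main obstacle is precisely the pinching estimate $\frac{1}{|\Si|}\int_\Si|A_\Si|^2\ge n-1$ (equivalently $\lambda_1(\mathcal J_\Si)<-\tfrac{n^2}{4}$): this is the genuinely nontrivial analytic input, and it is where the dimension hypothesis is sharp. Sharpness is already visible in the Clifford example $\Si=S^3(1/\sqrt2)\times S^3(1/\sqrt2)\subset\mathbb{S}^7$, for which $|A_\Si|^2\equiv6=n-1$ but $6<\tfrac{49}{4}$, so the associated Simons cone in $\R^8$ is stable — which is exactly why the conclusion must fail for $n\ge7$. A minor point to verify along the way is that deleting the zero-capacity vertex affects neither the stability of $C\Si$ nor the admissible test functions, and that for the version of the statement allowing $C$ to have singularities (as in \cite{wn}) the same reduction is carried out on the regular part of $C$, the capacity and Hardy arguments being unchanged.
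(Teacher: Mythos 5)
Your structural reduction is exactly right, and it coincides with the standard approach to this lemma of Simons (which the paper simply cites): separate variables on the cone metric $dr^2+r^2g_\Si$, apply the sharp weighted Hardy inequality $\int_0^\infty (f')^2r^{n-1}\,dr\ge\left(\frac{n-2}{2}\right)^2\int_0^\infty f^2r^{n-3}\,dr$, and reduce stability of $C\Si$ to the spectral condition $\lambda_1(-\De_\Si-|A_\Si|^2)\ge-\left(\frac{n-2}{2}\right)^2$. The remark that the vertex has zero capacity, and the sharpness observation via $S^3(1/\sqrt2)\times S^3(1/\sqrt2)$, are correct.

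However, there is a genuine gap in the final step. You test the spectral condition with the constant function, which reduces the problem to the ``average pinching'' $\frac{1}{|\Si|}\int_\Si|A_\Si|^2\ge n-1$, and you attribute this bound to ``the classical Simons pinching.'' This is not what Simons' pinching theorem says. The Simons pinching is a \emph{pointwise supremum} rigidity statement: if $\sup_\Si|A_\Si|^2\le n-1$, then either $\Si$ is totally geodesic or $|A_\Si|^2\equiv n-1$ (Clifford). In particular it yields $\sup_\Si|A_\Si|^2\ge n-1$, not a lower bound for the \emph{average} of $|A_\Si|^2$. The integrated Simons identity gives $\int_\Si|A|^4=\int_\Si|\n A|^2+(n-1)\int_\Si|A|^2$; combined with Cauchy--Schwarz this controls $\frac{1}{|\Si|}\int|A|^2$ from \emph{above} (when $\n A\equiv0$), not from below, so the averaged pinching does not follow from the Simons identity by any integration argument, and I know of no proof of it for general $n$. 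The correct replacement is to test the form $\int_\Si(|\n\phi|^2-|A_\Si|^2\phi^2)$ with $\phi=|A_\Si|$ (a Lipschitz function, zero where $A_\Si=0$, nontrivial since $\Si$ is not totally geodesic). Then the integrated Simons identity together with Kato's inequality $|\n|A||^2\le|\n A|^2$ gives
\begin{equation*}
\int_\Si\left(|\n|A||^2-|A|^4\right)\le-(n-1)\int_\Si|A|^2,
\end{equation*}
hence $\lambda_1(-\De_\Si-|A_\Si|^2)\le-(n-1)$, equivalently $\lambda_1(\mathcal J_\Si)\le-2(n-1)$, and the same arithmetic $n-1>\left(\frac{n-2}{2}\right)^2$ for $2\le n\le 6$ closes the argument. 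In short: keep everything up to the spectral criterion, but replace the constant test function (and the unjustified averaged pinching it requires) with the test function $|A_\Si|$.
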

Using Lemma \ref{JSimons}, Simons proved the celebrated Bernstein theorem in Theorem 6.2.2 in \cite{ss} with the help of Fleming's and De Giorgi's arguments. In high codimensions, we have the following Bernstein theorem based on Simons' work.
\begin{lem}\label{Cflat}
If $n\le7$, then spt$C$ is an $n$-plane.
\end{lem}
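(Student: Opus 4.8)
The plan is to argue by contradiction and to reduce, through the quasi-cylindrical structure, to a non-flat stable minimal hypercone in $\R^n$, which Simons' theorem (Lemma \ref{JSimons}) rules out for $n\le7$. We may assume $m\ge2$, the codimension-one case being the classical Bernstein theorem of Fleming, De Giorgi, Almgren and Simons. So suppose $C=\mathrm{spt}C$ is not an $n$-plane. First I would dispose of the case in which the slopes $v_k$ of the (rescaled) graphs $M_k$ stay locally bounded, i.e. $\sup_{M_k\cap\mathbf{B}_R}v_k$ is bounded in $k$ for every $R$: then the limit $C$ is a Lipschitz minimal graph over $\R^n$ whose graphic function $u$ still satisfies $|\La^2du|\le\La<\sqrt2$ almost everywhere, and, since both sides are functions of the singular values of $Du$, an elementary inequality gives $|\La^2du|^2\le\frac{2(\mathrm{Lip}\,u)^2}{|(\mathrm{Lip}\,u)^2-1|}$; hence Theorem \ref{BDMGAFFINE} forces $C$ flat, a contradiction. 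Therefore the slope must blow up somewhere near $C$: there is a sequence $q_k\in M_k$ with $\limsup_k|q_k|<\infty$ and $v_k(q_k)\to\infty$. By Theorem \ref{mainCONE}, $C$ is then a multiplicity one quasi-cylindrical minimal cone, and, as in the proof of that theorem, the orientation $\xi$ of $C$ satisfies $\langle\xi,\mathbf{E}_1\wedge\cdots\wedge\mathbf{E}_n\rangle\equiv0$ on $\mathrm{reg}\,C$; equivalently, at every regular point the tangent plane of $C$ contains a nonzero vector of the vertical subspace $\{0^n\}\times\R^m$.

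Next I would extract from $C$ a non-flat minimal cone that contains a line perpendicular to the $n$-plane $\{(x,0^m)\,|\,x\in\R^n\}$. Since $C$ is a non-flat cone its singular set is a nonempty cone, so one may run Federer's dimension reduction, taking at each stage a tangent cone at a singular point off the vertex; each such blow-up (and, when needed, the translation limit of Lemma \ref{Cy*vmc}) is again a limit of rescaled rigid motions of $M$, hence stays multiplicity one, quasi-cylindrical, and, once it becomes cylindrical, stable by Theorem \ref{Stable}. Using the quasi-cylindrical fibration $C=\bigcup_x\{x\}\times\gamma_x$ with the fibers $\gamma_x$ unbounded in $\R^m$, together with the cone scaling, one sees that $C$ contains a vertical ray $\{t(0^n,\omega)\,|\,t>0\}$, and one arranges the reduction so that a vertical line becomes one of the split factors of the resulting non-flat cone $C^\sharp$. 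By Theorem \ref{main2}, such a $C^\sharp$ is a cylindrical stable minimal cone living in an $(n+1)$-dimensional subspace $\R^n\times\R\omega$; thus $C^\sharp=C_*\times\R\omega$, where $C_*$ is an $(n-1)$-dimensional stable minimal cone in $\R^n$, non-flat. After one further reduction making the singular set of $C_*$ a single point, its link $\Sigma=C_*\cap\mathbb{S}^{n-1}$ is a closed smooth minimal hypersurface of $\mathbb{S}^{n-1}$ which is not the totally geodesic $\mathbb{S}^{n-2}$; since $n-1\le6$, Lemma \ref{JSimons} makes $C_*=C\Sigma$ unstable, contradicting its stability. Hence $C$ is an $n$-plane.

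The main obstacle is the bookkeeping in the dimension-reduction and translation step. One must verify that the three structural facts invoked at the end, namely multiplicity one, the quasi-cylindrical fibration, and (in the cylindrical case) stability, all persist under tangent cones at non-vertex singular points and under the translation limits of Lemma \ref{Cy*vmc}; and, most delicately, that a genuinely perpendicular split line can be produced rather than some slanted one. This last point is exactly where the identity $\langle\xi,\mathbf{E}_1\wedge\cdots\wedge\mathbf{E}_n\rangle\equiv0$ and the unboundedness of the fibers $\gamma_x$ are used, while the finiteness of the reduction follows from the two-sided volume bounds (monotonicity of the density together with Lemma \ref{VGM*}). Note that $\La<\sqrt2$ enters only through Corollary \ref{Lalogv}, i.e. through Theorem \ref{mainCONE}; once one reaches the cylindrical stable cone, the argument is the classical codimension-one one, and $n\le7$ is exactly the range in which $\R^n$ admits no non-flat stable minimal hypercone, the Simons cone in $\R^8$ being the first exception.
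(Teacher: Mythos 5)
Your first case (slopes locally bounded) is handled correctly: you rightly observe that $|\La^2 du|\le\La<\sqrt2$ forces $|\La^2du|^2\le\frac{2(\mathrm{Lip}\,u)^2}{|(\mathrm{Lip}\,u)^2-1|}$ pointwise (indeed $\la_1^2\la_2^2<2\le 2+\la_2^2$), so Theorem~\ref{BDMGAFFINE} applies; this parallels the paper's ``if $C$ is an entire graph'' reduction. You also correctly extract the identity $\lan\xi,\mathbf{E}_1\wedge\cdots\wedge\mathbf{E}_n\ran\equiv0$ on $\mathrm{reg}\,C$ in the remaining case.

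The gap is in the contradiction mechanism. You propose to produce, by dimension reduction and the translation of Lemma~\ref{Cy*vmc}, a \emph{non-flat} cylindrical stable cone $C^\sharp=C_*\times\R\omega$ and then invoke Lemma~\ref{JSimons}. But for $n\le7$ the translation limit $C-ty_*$ at the vertical point $y_*=(0^n,y^*)$ yields $C_{y_*}\times\R\omega$ with $C_{y_*}$ an $(n-1)$-dimensional ($\le 6$) stable minimal cone in $\R^n$, and Simons' theorem forces $C_{y_*}$ to be \emph{flat}. So this construction cannot give a non-flat $C^\sharp$; rather, the flatness of $C_{y_*}$ implies, via Allard's theorem, that $y_*$ is a \emph{regular} point of $C$. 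That is the conclusion the paper draws, and by itself it does not contradict $C$ being non-flat. Relatedly, your opening of the reduction --- ``its singular set is a nonempty cone, so one may ... take a tangent cone at a singular point off the vertex'' --- fails whenever $\mathrm{sing}\,C=\{0^{n+m}\}$, which is precisely the situation you end up in.

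What your proposal is missing is the step that turns regularity at $y_*$ into regularity at the origin. The paper does this with Lemma~\ref{almostver}: the quasi-cylinder structure gives an isometry between $C\cap(B_r(0^n)\times\{0^m\})$ and $\pi_*(C\cap(B_r(0^n)\times\p B_s(0^m)))$; once $\mathbf{B}_{\de_y}(y_*)\cap C$ is smooth, that isometry shows $\pi_*(C)$ is smooth near the origin, hence the origin is a regular point of the cone $C$, hence $C$ is flat. Without this transfer of regularity along the vertical fibers your argument ends at ``$y_*$ is regular'' and does not close.
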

\begin{proof}
If $C$ is an entire graph over $\R^n$, then $C$ is an $n$-plane from Theorem \ref{BDMGAFFINE}.
Hence we can assume that there is a point $y_*=(0^n,y^*)\in C$ with $0\neq y^*\in\R^{m}$.
Without loss of generality, we assume $y^*=(1,0,\cdots,0)$.
From Lemma \ref{Cy*vmc} and Lemma \ref{multi1}, $C_t=C-ty_*$ converges as $t\rightarrow\infty$ to a minimal cone
$$\{(x_1,\cdots,x_n,y_1,\cdots,y_m)\in\R^n\times\R^m|\ (x_1,\cdots,x_n)\in C_{y_*},\ y_2=\cdots=y_m=0\},$$
where $C_{y_*}$ is a minimal cone in $\R^n$
with multiplicity one. From Theorem \ref{Stable}, $C_{y_*}$ is stable with the dimension $n-1\le 6$. With Lemma \ref{codim3} and dimension reduction argument, we get the flatness of $C_{y_*}$ by Lemma \ref{JSimons}.
In other words, $y_*$ is a regular point of $C$. Denote $C=\mathrm{spt}C$.
Hence, there is a constant $r>0$ such that $M_k\cap B_{2r}(y_*)$ converges smoothly to $C\cap B_{2r}(y_*)$ as $k\rightarrow\infty$.
Let $\xi$ denote the orientation of reg$C$.
From the argument in the proof of Theorem \ref{mainCONE}, we get
\begin{equation}\aligned\label{vwinfty}
\lan\xi,\mathbf{E}_1\wedge\cdots\wedge \mathbf{E}_n\ran=0\qquad \mathrm{on}\ \mathrm{reg}C.
\endaligned
\end{equation}
There is a ball $\mathbf{B}_{\de_y}(y_*)$ with the radius $\de_y>0$ such that $\mathbf{B}_{\de_y}(y_*)\cap C$ is regular everywhere.
From Lemma \ref{almostver}, $\pi_*(\mathbf{B}_{\de}(y_*)\cap C)$ contains a neighborhood of the origin in $\pi_*(C)$ for any $\de\in(0,\de_y]$, and the origin is the regular point of $C$ in particular. Hence $C$ is flat. We complete the proof.
\end{proof}
Analogously to the argument in Lemma \ref{Cflat}, we have the following sharp dimension estimate.
\begin{lem}\label{codim7}
Let $M_k\in\mathcal{M}_{n,m,\La,B_2}$ for some constant $0<\La<\sqrt{2}$.
Assume that $|M_k|\llcorner\mathbf{B}_2$ converges to a stationary varifold $V$ in the varifold sense.
If $\mathcal{S}$ denotes the singular set of $\mathrm{spt}V\cap \mathbf{B}_1$, then $\mathcal{S}$ has Hausdorff dimension $\le n-7$.
\end{lem}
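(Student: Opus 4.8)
The plan is to argue by contradiction, following the scheme of the proof of Lemma \ref{Cflat} but now running a full Federer-type dimension reduction so as to reach the sharp threshold of Simons' Lemma \ref{JSimons}. Suppose $\mathcal{S}$ has Hausdorff dimension $\beta>n-7$, so $\mathcal{H}^\beta(\mathcal{S})>0$ and hence $\mathcal{H}^\beta_\infty(\mathcal{S})>0$ by Lemma 11.2 in \cite{gi}. As in the proof of Lemma \ref{codim3} (the density/blow-up argument of Proposition 11.3 and Lemma 11.5 in \cite{gi}) one finds a point $q\in\mathcal{S}$ and scales $r_k\to0$ along which the rescaled varifolds converge to a minimal cone that is again a varifold limit of rigid motions of minimal graphs in $\mathcal{M}_{n,m,\La,B_{R_k}}$ with $R_k\to\infty$, and whose singular set still has dimension $\ge\beta$. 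Iterating this at singular points away from the vertex, each step splitting off one Euclidean factor (which, after a rotation of $\R^n$, may be taken among the $\mathbf{E}_i$-directions, since otherwise Theorem \ref{Stable} would be violated exactly as at the end of the proof of Lemma \ref{codim3}), and using the uniform Euclidean volume bound of Lemma \ref{VGM*} together with the varifold compactness theorem so that this ambient class is closed under dilations, rotations of $\R^n$ and of $\R^m$, translations and tangent cones, one reaches a non-flat minimal cone $C:=C_0\times\R^{n-l}$ in the class, where $C_0$ is an $l$-dimensional minimal cone whose only singularity is its vertex; since the reduction preserves $\beta$ and $\mathrm{sing}\,C=\{0\}\times\R^{n-l}$, one gets $n-l\ge\beta>n-7$, hence $l\le 6$.

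It remains to rule out such a non-flat $C$, which succeeds precisely because, after the reduction, the stable minimal hypercones that arise live in Euclidean spaces of dimension $\le l\le 6$, within the range of Lemma \ref{JSimons}. The $2$-dilation bound $\La<\sqrt2$ gives $\la_1^2\la_i^2<2\le 2+\la_i^2$ for the singular values, so the hypotheses of Lemma \ref{sqrt2logv}, Corollary \ref{Lalogv} and Theorem \ref{BDMGAFFINE} hold along the whole family; in particular if $C$ is an entire graph over $\R^n$ it is flat by Theorem \ref{BDMGAFFINE}, a contradiction. Otherwise there is a point $y_*=(0^n,y^*)$ on $C$ with $0\ne y^*\in\R^m$, and since the split factor is horizontal, $C_0$ itself has a point off its reference $l$-plane. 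Translating $C$ along $y_*$ and letting the parameter tend to infinity, Lemmas \ref{Cy*vmc} and \ref{multi1} produce a cylinder $\widehat C_0\times\R^{n-l+1}$ with $\widehat C_0\subset\R^l$ an $(l-1)$-dimensional minimal cone, stable by Theorem \ref{Stable}. Since $\dim \widehat C_0=l-1\le 5$, a further dimension reduction inside $\R^l$ — bounding $\dim \mathrm{sing}\, \widehat C_0$ by Lemma \ref{codim3}, the one-dimensional case handled by Lemma \ref{cross} — together with Lemma \ref{JSimons} shows $\widehat C_0$, hence this cylinder, is flat. By Lemma \ref{sptTV} and Allard's regularity theorem $C$ is then smooth along the ray through $y_*$, so $y_*$ is a regular point whose tangent plane contains the line $\R y_*$; hence $v^{-1}(y_*)=|\langle\xi,\mathbf{E}_1\wedge\cdots\wedge\mathbf{E}_n\rangle|(y_*)=0$. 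From here the argument is exactly that of the proofs of Theorem \ref{mainCONE} and Lemma \ref{Cflat}: Corollary \ref{Lalogv} makes $v^{-1/n}$ superharmonic on the regular part; decomposing $C$ into its stationary indecomposable components (using the bounded density of Lemma \ref{VGM*} as in the proof of Theorem \ref{Lioua1}) and applying Proposition \ref{MeanVSuph} forces $v^{-1}\equiv0$ on the component through $y_*$; any component on which $v^{-1}$ is somewhere positive is, after a blow-up and Theorem \ref{BDMGAFFINE}, an $n$-plane equal to all of $\mathrm{spt}\,C$, contradicting non-flatness; so $v^{-1}\equiv0$ on $C$. Lemma \ref{almostver} then shows $C$ is a quasi-cylinder with $C\cap(B_r(x)\times\{0^m\})$ isometric to $\pi_*(C\cap(B_r(x)\times\partial B_s(0^m)))$, and combined with regularity at $y_*$ this forces the vertex of $C$ to be regular, so $C$ is an $n$-plane, contradicting non-flatness.

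The hard part is the dimension-reduction bookkeeping. One must verify that every intermediate blow-up stays inside the uniformly volume-bounded (hence varifold-compact) class of limits of minimal graphs of $2$-dilation $<\sqrt2$, that the Hausdorff dimension of the singular set is never lost along the reduction, and — since limits of stationary indecomposable currents need not be stationary indecomposable (the remark after Lemma \ref{TindecdeT}) — that the decomposition into stationary indecomposable components is performed anew at each scale before Proposition \ref{MeanVSuph} is invoked. Controlling these points, so that one may descend all the way to stable minimal hypercones in $\R^k$ with $k\le 7$ rather than only $k\le 3$, is exactly what upgrades the non-sharp estimate $n-3$ of Lemma \ref{codim3} to the sharp bound $n-7$.
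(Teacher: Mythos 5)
Your proposal is correct and follows essentially the same approach as the paper: a Federer-type dimension reduction (as in Lemma~\ref{codim3}, but iterated to the sharp Simons threshold) to a cone $C_0\times\R^{n-l}$ with $l\le 6$, then Theorem~\ref{BDMGAFFINE} to produce a vertical point $y_*$, then Lemma~\ref{Cy*vmc}/Theorem~\ref{Stable}/Lemma~\ref{JSimons} to handle the tangent cone at $y_*$, and finally Lemma~\ref{almostver} to force the vertex to be regular, a contradiction. The only difference is that you merge the paper's two cases (the paper separately treats $y_*$ singular versus $y_*$ regular, blowing up in the first case and invoking Lemma~\ref{almostver} in the second) into a single argument that always translates along $y_*$ and then pushes through the machinery of Theorem~\ref{mainCONE}/Lemma~\ref{Cflat}/Proposition~\ref{MeanVSuph}, which you spell out in more detail than the paper's rather terse proof does.
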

\begin{proof}
Let us prove it by following the steps in the proof of Lemma \ref{codim3}. We assume that $\mathcal{S}$ has Hausdorff dimension $>n-7$.
By the dimension reduction argument, there is a $k(k\le6)$-dimensional non-flat regular minimal cone $C\subset\R^{m+k}$ such that
there is a sequence of minimal graphs $\Si_k=\mathcal{M}_{n,m,\La,B_{r_k}}$ (which are scalings and translations gotten from $M_k$) with $r_k\rightarrow\infty$ so that $\Si_k$ converges to a minimal cone $C_*$ in the varifold sense, which is a trivial product of $C$ and $\R^{n-k}$.

From Theorem \ref{BDMGAFFINE}, there is a point $y_*=(0^n,y^*)\in C_*$ with $0\neq y^*\in\R^{m}$.
If $y_*$ is a singular point of $C_*$, then we blow $C_*$ up at $y_*$, and get the contradiction by Theorem \ref{Stable}, Lemma \ref{codim3} and Lemma \ref{JSimons}.
If $y_*$ is a regular point of $C_*$, then there is a constant $\de_y>0$ such that $\mathbf{B}_{\de_y}(y_*)\cap C_*$ is regular. From Lemma \ref{almostver}, we conclude that the origin is a regular point of $C_*$. It's a contradiction. We complete the proof.
\end{proof}

We summarize Theorem \ref{mainCONE}, Lemma \ref{Cflat} and Lemma \ref{codim7}, and complete the proof of Theorem \ref{main4}.

\Section{Appendix I}{Appendix I}

In this appendix, we will derive several auxiliary algebraic results.
Let $a$ be a real matrix $(a_{\a i})_{m\times n}$, and $b$ be a matrix $(b_{ij})_{n\times n}$ defined by
$$b_{ij}=\de_{ij}+\sum_{\a=1}^m a_{\a i}a_{\a j}.$$
Let $(b^{ij})$ be the inverse matrix of $(b_{ij})$, and $\xi$ be a matrix $(\xi_{\a j})_{m\times n}$ defined by
\begin{equation}\aligned
\xi_{\a j}=\sqrt{\mathrm{det}\,b}\sum_{i=1}^n b^{ij}a_{\a i}.
\endaligned
\end{equation}
\begin{lem}\label{app1}
Let $\la_1,\cdots,\la_n$ be the singular values of $a$ satisfying $\la_1\ge\la_2\ge\cdots\ge\la_n\ge0$ and $\la_1\la_2\le \La$ for a constant $\La>0$. If
\begin{equation}\aligned\label{aaijep*}
a_{11}\ge(1-\ep)\sqrt{\mathrm{det}\,b}
\endaligned
\end{equation}
for a positive constant $\ep<<1$,
then $|\xi_{11}|<1+\psi(\ep|\La)$, where $\psi(\ep|\La)$ is a positive function of $\ep,\La$ with $\lim_{\ep\rightarrow0}\psi(\ep|\La)=0$.
\end{lem}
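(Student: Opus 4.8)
The plan is to reduce everything to a diagonal picture via the singular value decomposition of $a$, and then show that the hypothesis \eqref{aaijep*} forces the first rows of the orthogonal factors to be almost concentrated in their first entry. Concretely, write $a=U\Si V^T$ with $U\in O(m)$, $V\in O(n)$ and $\Si$ the $m\times n$ matrix carrying $\la_1,\la_2,\cdots$ on its diagonal. Then $b=I+a^Ta=V(I+\Si^T\Si)V^T$, so $b^{-1}=V(I+\Si^T\Si)^{-1}V^T$, $\det b=\prod_i(1+\la_i^2)=:v^2$, and
\[ \xi_{11}=v\,(ab^{-1})_{11}=v\sum_k p_k q_k\,\f{\la_k}{1+\la_k^2},\qquad a_{11}=\sum_k p_k q_k\,\la_k, \]
where $p_k:=U_{1k}$, $q_k:=V_{1k}$ and $\sum_k p_k^2=\sum_k q_k^2=1$. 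By Cauchy--Schwarz $a_{11}\le\la_1\le\sqrt{1+\la_1^2}$, while $v\ge\sqrt{1+\la_1^2}$; plugging this into \eqref{aaijep*} gives $(1-\ep)\prod_{i\ge2}\sqrt{1+\la_i^2}\le1$, hence $\prod_{i\ge2}(1+\la_i^2)\le(1-\ep)^{-2}$ and therefore $\sum_{i\ge2}\la_i^2\le\ep':=(1-\ep)^{-2}-1$ with $\ep'\to0$ as $\ep\to0$; in particular $\la_2\le\sqrt{\ep'}$.

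Next I would extract that $|p_1|$ and $|q_1|$ are both close to $1$. Separating the $k=1$ term and using Cauchy--Schwarz together with $\la_k\le\la_2$ on the tail,
$a_{11}\le|p_1q_1|\,\la_1+\la_2\sqrt{(1-p_1^2)(1-q_1^2)}$. Dividing by $\sqrt{1+\la_1^2}$ and using $\la_2/\sqrt{1+\la_1^2}\le\la_2\le\sqrt{\ep'}$, $\la_1/\sqrt{1+\la_1^2}\le1$, and $a_{11}/\sqrt{1+\la_1^2}\ge(1-\ep)\prod_{i\ge2}\sqrt{1+\la_i^2}\ge1-\ep$, one obtains $|p_1||q_1|\ge1-\ep-\sqrt{\ep'}=:1-\eta$; since $|p_1|,|q_1|\le1$ this yields $|p_1|,|q_1|\ge1-\eta$ and hence $\sum_{k\ge2}p_k^2=1-p_1^2\le2\eta$ and likewise $\sum_{k\ge2}q_k^2\le2\eta$.

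Finally I would estimate $\xi_{11}$ term by term. The $k=1$ contribution equals $p_1q_1\,\f{\la_1}{\sqrt{1+\la_1^2}}\prod_{i\ge2}\sqrt{1+\la_i^2}$, which is at most $(1-\ep)^{-1}$. For the tail, $\bigl|v\sum_{k\ge2}p_kq_k\,\f{\la_k}{1+\la_k^2}\bigr|\le v\la_2\sum_{k\ge2}|p_k||q_k|\le 2\eta\, v\la_2$, and here the $2$-dilation bound enters decisively: $v\la_2\le\la_2\sqrt{1+\la_1^2}\,(1-\ep)^{-1}\le(\la_2+\la_1\la_2)(1-\ep)^{-1}\le(\sqrt{\ep'}+\La)(1-\ep)^{-1}$. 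Combining the two, $|\xi_{11}|\le\bigl(1+2\eta(\sqrt{\ep'}+\La)\bigr)(1-\ep)^{-1}$, which is $1+\psi(\ep|\La)$ for a function $\psi$ with $\psi(\ep|\La)\to0$ as $\ep\to0$. The only delicate point is exactly this tail estimate: a priori $\la_1$ is unbounded, so a crude bound would leave a residual term of size $\La$ rather than $o(1)$; it is killed only by feeding in the smallness $\sum_{k\ge2}p_k^2,\sum_{k\ge2}q_k^2=O(\eta)$ and the dilation bound $\la_1\la_2\le\La$ at the same time.
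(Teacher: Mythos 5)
Your proof is correct, and its logical skeleton (singular value decomposition of $a$, then isolating the contribution of $\lambda_1$) is the same as the paper's; but the execution is genuinely cleaner. You derive the exact closed formula
\[
\xi_{11}=v\sum_k p_kq_k\,\f{\la_k}{1+\la_k^2},\qquad
a_{11}=\sum_k p_kq_k\,\la_k,
\]
from $b^{-1}=V(I+\Si^T\Si)^{-1}V^T$, and then estimate the $k=1$ term and the tail separately, using (i) the smallness $\sum_{k\ge2}\la_k^2\le(1-\ep)^{-2}-1$ extracted from \eqref{aaijep*}, (ii) the resulting lower bound $|p_1|,|q_1|\ge 1-\eta$, and (iii) the $2$-dilation hypothesis $\la_1\la_2\le\La$ to control $v\la_2$. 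The paper instead avoids the closed formula: it splits $a$ into its rank-one $\la_1$-part plus a small remainder $a^*$, writes $b=g+c^*$ where $g$ is a rank-one perturbation of $I$ with explicit inverse, and feeds $b^{-1}=g^{-1}+fg^{-1}$ into $\sum_j b^{1j}a_{1j}$. Both routes rely on exactly the same three inputs you identify; your version just short-circuits the matrix-perturbation calculus, while the paper's formulation is closer to the entrywise notation that \eqref{||YiEn|} uses when the lemma is applied. One small stylistic remark: in your estimate of the tail it is worth flagging (as you implicitly use) that the sum $\sum_k p_kq_k\la_k/(1+\la_k^2)$ runs only over $k\le\min(m,n)$, so Cauchy--Schwarz against $\sum_{k\ge2}p_k^2\le 1-p_1^2$ and $\sum_{k\ge2}q_k^2\le 1-q_1^2$ is legitimate even when $m\neq n$.
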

\begin{proof}
For the fixed $\La>0$, we put $\psi_\ep=\psi(\ep|\La)$ for convenience.
Since $a_{11}\le\la_1$ and $\det\, b=\prod_{k=1}^n(1+\la_k^2)$, then from \eqref{aaijep*} we have
\begin{equation}\aligned\label{la2aai**}
\sum_{k\ge2}\la_k<\psi_\ep,\qquad \sum_{\a+i\ge3}|a_{\a i}|\le\psi_\ep a_{11}\le\psi_\ep\la_1.
\endaligned
\end{equation}
There are two real orthonormal matrices $p=(p_{ij})_{n\times n}$ and $q=(q_{\a\be})_{m\times m}$ such that
$$a_{\a i}=\sum_{j=1}^nq_{\a j}\la_jp_{ji}.$$
From \eqref{aaijep*}, $p_{11}q_{11}\ge1-\psi_\ep$, which implies
\begin{equation}\aligned\label{p11ij}
|1-p_{11}|+\sum_{i+j\ge3}|p_{ij}|\le\psi_\ep.
\endaligned
\end{equation}
We define an $m\times n$ matrix $a^*$ with the element
$$a_{\a i}^*=a_{\a i}-q_{\a 1}\la_1p_{1i}=\sum_{j\ge2}q_{\a j}\la_jp_{ji}.$$
Let $c_i=\sum_\a q_{\a1}a_{\a i}^*$ and $c^*=(c_{ij}^*)$ be an $(n\times n)$-matrix with the elements
$c_{ij}^*=\sum_\a a_{\a i}^*a_{\a j}^*-c_ic_j$.
From \eqref{p11ij}, we have
\begin{equation}\aligned\label{a1iaa1}
\sum_{\a,i}|a_{\a i}^*|\le\psi_\ep\la_2,\quad \sum_i|c_i|\le\psi_\ep\la_2, \quad\sum_{i,j}|c_{ij}^*|\le \psi_\ep\la_2^2.
\endaligned
\end{equation}
By a direct computation,
\begin{equation}\aligned\label{bij1}
b_{ij}=&\de_{ij}+\sum_{\a=1}^m(a_{\a i}^*+q_{\a 1}\la_1p_{1i})(a_{\a j}^*+q_{\a 1}\la_1p_{1j})\\
=&\de_{ij}+\la_1^2p_{1i}p_{1j}+\la_1p_{1i}\sum_\a q_{\a1}a_{\a j}^*+\la_1p_{1j}\sum_\a q_{\a1}a_{\a i}^*+\sum_\a a_{\a i}^*a_{\a j}^*\\
=&\de_{ij}+\la_1^2p_{1i}p_{1j}+\la_1p_{1i}c_j+\la_1p_{1j}c_i+c_ic_j+c_{ij}^*\\
=&\de_{ij}+(\la_1p_{1i}+c_i)(\la_1p_{1j}+c_j)+c_{ij}^*.
\endaligned
\end{equation}
Let $g=(g_{ij})$ be the matrix with the elements
$$g_{ij}=b_{ij}-c_{ij}^*=\de_{ij}+(\la_1p_{1i}+c_i)(\la_1p_{1j}+c_j),$$
then its inverse matrix $(g^{ij})$ satisfies
\begin{equation}\aligned
g^{ij}=\de_{ij}-\f{(\la_1p_{1i}+c_i)(\la_1p_{1j}+c_j)}{1+\sum_k(\la_1p_{1k}+c_k)^2}.
\endaligned
\end{equation}
Then
\begin{equation}\aligned
\sum_jg^{ij}(\la_1p_{1j}+c_j)=&\sum_j\left(\de_{ij}-\f{(\la_1p_{1i}+c_i)(\la_1p_{1j}+c_j)}{1+\sum_k(\la_1p_{1k}+c_k)^2}\right)(\la_1p_{1j}+c_j)\\
=&\f{\la_1p_{1i}+c_i}{1+\sum_k(\la_1p_{1k}+c_k)^2}
\endaligned
\end{equation}
and
\begin{equation}\aligned\label{gija1j}
\sum_jg^{ij}a_{1j}=&\sum_jg^{ij}\left(q_{11}\la_1p_{1j}+a^*_{1j}\right)=q_{11}\sum_jg^{ij}\left(\la_1p_{1j}+c_j\right)+\sum_jg^{ij}\left(a^*_{1j}-q_{11}c_j\right)\\
=&\f{(\la_1p_{1i}+c_i)q_{11}}{1+\sum_k(\la_1p_{1k}+c_k)^2}+\sum_jg^{ij}\left(a^*_{1j}-q_{11}c_j\right).
\endaligned
\end{equation}
From \eqref{p11ij}\eqref{a1iaa1}, it's clear that $\sum_{j\ge1}|g^{1j}|\le \psi_\ep$, then from \eqref{gija1j} it follows that
\begin{equation}\aligned\label{bij+}
\la_1\left|\sum_{j\ge1}g^{1j}a_{1j}\right|\le1+\psi_\ep.
\endaligned
\end{equation}
From \eqref{a1iaa1}, the elements of $g^{-1}c^*$ satisfy $\sum_{i,j}|(g^{-1}c^*)_{ij}|\le\psi_\ep\la_2^2$.
Let $f$ be an $n\times n$ matrix  $f=(I+g^{-1}c^*)^{-1}-I\triangleq(f_{ij})$ with the unit matrix $I$, then $\sum_{i,j}|f_{ij}|\le\psi_\ep\la_2^2$.
Hence
\begin{equation}\aligned\label{bij-}
\la_1\left|\sum_j(fg^{-1})_{1j}a_{1j}\right|\le\psi_\ep.
\endaligned
\end{equation}
Since the inverse matrix of $b=g+c^*$ satisfies
\begin{equation}\aligned
b^{-1}=(g+c^*)^{-1}=(I+g^{-1}c^*)^{-1}g^{-1}=(f+I)g^{-1}=g^{-1}+fg^{-1},
\endaligned
\end{equation}
combining this with  \eqref{bij+}\eqref{bij-}, we have
\begin{equation}\aligned
&|\xi_{11}|=\sqrt{\prod_{k=1}^n(1+\la_k^2)}\left|\sum_j b^{1j}a_{1j}\right|\\
\le&\sqrt{\prod_{k=1}^n(1+\la_k^2)}\left(\left|\sum_jg^{1j}a_{1j}\right|+\left|\sum_j(fg^{-1})_{1j}a_{1j}\right|\right)\le1+\psi_\ep.
\endaligned
\end{equation}
This completes the proof.
\end{proof}

\begin{lem}\label{mu123}
Let $\phi$ be a function defined by
\begin{equation}\aligned
\phi(\mu_1,\mu_2,\mu_3)=4+\mu_1\mu_2\mu_3-\mu_1\mu_2-\mu_1\mu_3-\mu_2\mu_3
\endaligned
\end{equation}
on $(\R^+)^3$ with $\prod_{i=1}^3(1+\mu_i)=v_0^2$ for some constant $v_0>1$. If $\mu_i\mu_j\le2+\f{2}{\max_k\mu_k-1}$ for all $i\neq j$, then
$\phi\ge0$, where the equality is attained at $(2,2,\f19v_0^2-1)$ for $3\le v_0\le3\sqrt{3}$, or $(\mu_*,\mu_*,1+2/\mu_*)$ with $\mu_*\le2$ being the unique positive solution to $2(\mu_*+1)^3=\mu_* v_0^2$.
\end{lem}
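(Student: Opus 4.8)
The plan is to minimize $\phi$ over the constraint set by a slicing argument that reduces everything to three one-dimensional boundary computations. First I would dispose of the degenerate range: if $\max_k\mu_k\le 1$, then $\mu_i\mu_j\le(\max_k\mu_k)^2\le 1$ for every pair, hence $\phi\ge 4+0-3=1>0$. So, after a permutation, we may assume $\mu_1\ge\mu_2\ge\mu_3\ge 0$ and $\mu_1>1$, so that $\max_k\mu_k=\mu_1$ and the hypothesis is equivalent to the single inequality $\mu_1\mu_2\le 2+\frac{2}{\mu_1-1}=\frac{2\mu_1}{\mu_1-1}$ (the other two pairs follow since $\mu_1\mu_3\le\mu_1\mu_2$ and $\mu_2\mu_3\le\mu_2^2\le\mu_1\mu_2$). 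Since $1+\mu_1\le(1+\mu_1)(1+\mu_2)(1+\mu_3)=v_0^2$, the variable $\mu_1$ ranges over a bounded interval, the constraint set is compact, and $\phi$ attains its minimum.

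The key observation is monotonicity along each slice $\{\mu_1=\text{const}\}$ of the surface $(1+\mu_1)(1+\mu_2)(1+\mu_3)=v_0^2$. Writing $p=\mu_2+\mu_3$ and $q=\mu_2\mu_3$, the product relation forces $p+q=s$ with $s:=\frac{v_0^2}{1+\mu_1}-1$, hence $\phi=4+(\mu_1-1)(s-p)-\mu_1 p=4+(\mu_1-1)s-(2\mu_1-1)p$, which is affine and strictly decreasing in $p$ because $2\mu_1-1>1$. As $\mu_2$ (the larger of $\mu_2,\mu_3$) increases from the balanced value $\sqrt{1+s}-1$, the quantity $p$ increases monotonically, so the minimum of $\phi$ on the slice is attained at the largest admissible $p$, i.e.\ at the first obstruction among: $\mu_3=0$; $\mu_2=\mu_1$ (possible only when $\mu_1\le 2$, since $\mu_1^2\le\frac{2\mu_1}{\mu_1-1}$ is equivalent to $\mu_1\le 2$); and $\mu_2=\frac{2}{\mu_1-1}$ (possible only when $\mu_1>2$, when this value is $<\mu_1$). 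It therefore suffices to prove $\phi\ge 0$ on these three loci, and this yields the global bound because every candidate minimizer lies on one of them.

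Each of the three checks is a short explicit computation. On $\{\mu_3=0\}$ one has $\phi=4-\mu_1\mu_2$ with $\mu_1\mu_2\le\min\{\mu_1^2,\tfrac{2\mu_1}{\mu_1-1}\}\le 4$ (the first term is $\le 4$ when $\mu_1\le 2$, the second when $\mu_1>2$), so $\phi\ge 0$ with equality only at $(2,2,0)$. On $\{\mu_1=\mu_2\le 2\}$ one computes $\phi=(2-\mu_1)\bigl(2+\mu_1-\mu_1\mu_3\bigr)\ge(2-\mu_1)\bigl(2+\mu_1-\mu_1^2\bigr)=(2-\mu_1)^2(1+\mu_1)\ge 0$, using $\mu_3\le\mu_1$, with equality only for $\mu_1=2$. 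Finally, on $\{\mu_2=\frac{2}{\mu_1-1},\ \mu_1>2\}$, using $\mu_1\mu_2-\mu_1-\mu_2=2-\mu_1$ and $4-\mu_1\mu_2=\frac{2(\mu_1-2)}{\mu_1-1}$ one gets $\phi=\frac{2(\mu_1-2)}{\mu_1-1}+(2-\mu_1)\mu_3=(\mu_1-2)\bigl(\tfrac{2}{\mu_1-1}-\mu_3\bigr)=(\mu_1-2)(\mu_2-\mu_3)\ge 0$, with equality exactly when $\mu_2=\mu_3$. Collecting the equality information, the vanishing locus consists of the configurations with $\mu_1=\mu_2=2$ (on which $\phi\equiv 0$; these meet the constraint surface precisely when $\mu_3=\tfrac19 v_0^2-1\in[0,2]$, i.e.\ for $3\le v_0\le 3\sqrt3$), together with the constraint-saturated configurations $\mu_2=\mu_3=:\mu_*$, $\mu_1=1+\tfrac{2}{\mu_*}$, for which the product relation reads $2(\mu_*+1)^3=\mu_* v_0^2$.

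I expect the only real work to be bookkeeping rather than new ideas. One must verify carefully that, on each slice, every admissible pair $(\mu_2,\mu_3)$ is swept out monotonically in $p$ between the balanced configuration and one of the three listed obstructions, so that no minimizer is overlooked; and one must match the two equality families with the ranges of $v_0$, in particular checking that for $v_0>3\sqrt3$ the configuration $(2,2,\cdot)$ leaves the domain (there $\mu_3>2$ would force $\mu_1\mu_3>\frac{2\mu_1}{\mu_1-1}$), so that the family $(\mu_*,\mu_*,1+2/\mu_*)$ takes over, and that in that regime the cubic $2(\mu_*+1)^3=\mu_* v_0^2$ has a unique positive root with $\mu_*\le 2$. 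Once the slicing reduction is in place, the inequality itself drops straight out of the three displayed identities.
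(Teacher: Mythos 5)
Your argument is correct and takes a genuinely different route from the paper's. The paper eliminates $\mu_3$ to get a two-variable function $\psi(\mu_1,\mu_2)$, checks $\psi=4-\mu_1\mu_2\ge0$ on the boundary $\mu_3=0$, locates interior critical points on (a relabeling of) the diagonal $\mu_1=\mu_2$ via $D\psi$, and then controls the closed form $\psi(\mu,\mu)=\frac{\mu-2}{(1+\mu)^2}\left(\mu v_0^2-2(1+\mu)^3\right)$ by bounding $v_0^2$ from the hypothesis in the sub-cases $\mu_3\le\mu_1$ and $\mu_3\ge\mu_1$. You instead fix $\mu_1$, pass to the symmetric functions $p=\mu_2+\mu_3$ and $q=\mu_2\mu_3$ with $p+q=s:=v_0^2/(1+\mu_1)-1$, and observe that $\phi=4+(\mu_1-1)s-(2\mu_1-1)p$ is affine and strictly decreasing in $p$; the slice-minimum therefore sits at the largest admissible $\mu_2$, one of three loci, each with a clean factorization. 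The payoff is twofold: the monotonicity in $p$ replaces the paper's critical-point reduction (which implicitly invokes the permutation symmetry of $\phi$ to collapse the critical set onto the diagonal), and your third case $\mu_2=2/(\mu_1-1)$, $\mu_1>2$, with $\phi=(\mu_1-2)(\mu_2-\mu_3)$, addresses head-on the locus where $\mu_i\mu_j\le 2+2/(\max_k\mu_k-1)$ is saturated at a non-diagonal point, a boundary piece the paper's diagonal reduction does not visibly treat. The bookkeeping you flag checks out: $p$ is increasing in the larger of $\mu_2,\mu_3$ on the relevant range, and the regime change occurs exactly at $v_0=3\sqrt{3}$. One small caveat, which applies equally to the paper's statement: $2(\mu_*+1)^3=\mu_*v_0^2$ can have two roots in $(0,2]$ when $27/2<v_0^2<27$, so ``unique'' in the equality description is slightly loose, though the inequality $\phi\ge0$ is unaffected.
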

\begin{rem}\label{muimujleq4}
Suppose $\mu_1\ge\mu_2\ge\mu_3\ge0$. If $\mu_i\mu_j\le2+\f{2}{\max_k\mu_k-1}$ for all $i\neq j$,  we have
$\mu_1\mu_2\le2+\f{2}{\mu_1-1}=\f{2\mu_1}{\mu_1-1}$, and then $\mu_2\le\f2{\mu_1-1}$. Hence
\begin{equation}\aligned
\mu_1\mu_2\le2+\mu_2\le2+\sqrt{\mu_1\mu_2},
\endaligned
\end{equation}
which implies $\mu_1\mu_2\le4$. In other words, $\mu_i\mu_j\le2+\f{2}{\max_k\mu_k-1}$ for all $i\neq j$ implies $\mu_i\mu_j\le4$ for all $i\neq j$.
\end{rem}
\begin{proof}
By the definition of $v_0$, we have
\begin{equation}\aligned
\phi(\mu_1,\mu_2,\mu_3)
=&4-\mu_1\mu_2+\left(\mu_1\mu_2-\mu_1-\mu_2\right)\left(\f{v_0^2}{(1+\mu_1)(1+\mu_2)}-1\right)\\
=&4+\f{\mu_1\mu_2-\mu_1-\mu_2}{(1+\mu_1)(1+\mu_2)}v_0^2-2\mu_1\mu_2+\mu_1+\mu_2\triangleq\psi(\mu_1,\mu_2)
\endaligned
\end{equation}
with $(1+\mu_1)(1+\mu_2)\le v_0^2$.
From $\mu_1\mu_2\le4$ in Remark \ref{muimujleq4}, $\psi(\mu_1,\mu_2)=4-\mu_1\mu_2\ge0$ clearly on the set
$\{\mu_1\ge0,\,u_2\ge0|\,(1+\mu_1)(1+\mu_2)= v_0^2\}$, where the equality is attained at $(\mu_1,\mu_2,\mu_3)=(2,2,0)$ with $v_0=3$.
Since
\begin{equation*}\aligned
\p_{\mu_1}\psi=&\left(\f{\mu_2-1}{1+\mu_1}-\f{\mu_1(\mu_2-1)-\mu_2}{(1+\mu_1)^2}\right)\f{v_0^2}{1+\mu_2}-2\mu_2+1\\
=&(2\mu_2-1)\f{v_0^2}{1+\mu_2}-2\mu_2+1=(2\mu_2-1)\left(\f{v_0^2}{1+\mu_2}-1\right),
\endaligned
\end{equation*}
and $\psi(\mu_1,\mu_2)=\psi(\mu_2,\mu_1)$,
it follows that 
$$\{\mu_1\ge0,\,u_2\ge0|\,D\psi(\mu_1,\mu_2)=0\}\subset\{\mu_1\ge0,\,u_2\ge0|\,\mu_1=\mu_2\}.$$ 
Note that
\begin{equation}\aligned\label{psimumu}
\psi(\mu,\mu)=4+\f{\mu^2-2\mu}{(1+\mu)^2}v_0^2-2\mu^2+2\mu=\f{\mu-2}{(1+\mu)^2}\left(\mu v_0^2-2(1+\mu)^3\right).
\endaligned
\end{equation}
We suppose $\mu_i\mu_j\le2+\f{2}{\max_k\mu_k-1}$ for all $i\neq j$, and $\mu_1=\mu_2$.
\begin{itemize}
  \item If $\mu_3\le\mu_1$, then $\mu_1\mu_2\le2+\f{2}{\mu_1-1}=\f{2\mu_1}{\mu_1-1}$, which implies $\mu_1\le2$.
Note that $v_0^2\le(1+\mu_1)^3$, then from \eqref{psimumu}
\begin{equation}\aligned
\psi(\mu_1,\mu_1)\ge\f{\mu_1-2}{(1+\mu_1)^2}\left(\mu_1 (1+\mu_1)^3-2(1+\mu_1)^3\right)\ge0,
\endaligned
\end{equation}
where the equality is attained at $(\mu_1,\mu_2,\mu_3)=(2,2,\f19v_0^2-1)$ for $3\le v_0\le3\sqrt{3}$.
  \item If $\mu_3\ge\mu_1$, then
$$\mu_1\mu_3\le2+\f{2}{\mu_3-1}=\f{2\mu_3}{\mu_3-1},$$
which implies $\mu_3\le1+2/\mu_1$. Note that $\mu_1\le\mu_3\le1+2/\mu_1$ implies $\mu_1\le2$.
Hence from \eqref{psimumu},
\begin{equation}\aligned
\psi(\mu_1,\mu_1)\ge\f{\mu_1-2}{(1+\mu_1)^2}\left(\mu_1 (1+\mu_2)^2(1+1+2/\mu_1)-2(1+\mu_1)^3\right)=0,
\endaligned
\end{equation}
where the equality is attained at $(\mu_1,\mu_2,\mu_3)=(2,2,2)$ with $v_0=3\sqrt{3}$, or $(\mu_*,\mu_*,1+2/\mu_*)$ with $\mu_*\le2$ being the unique positive solution to $2(\mu_*+1)^3=\mu_* v_0^2$.
\end{itemize}
This completes the proof.
\end{proof}

\begin{cor}\label{mu123La}
Let $\phi$ be a function defined by
\begin{equation}\aligned
\phi(\mu_1,\mu_2,\mu_3)=4+\mu_1\mu_2\mu_3-\mu_1\mu_2-\mu_1\mu_3-\mu_2\mu_3
\endaligned
\end{equation}
on $(\R^+)^3$. If $\sup_{1\le i<j\le3}\mu_i\mu_j\le\La^2$ for some constant $0<\La\le\sqrt{2}$, then
$\phi\ge(2-\sqrt{2})(2-\La^2)$.
\end{cor}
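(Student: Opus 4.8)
The plan is to convert this three--variable inequality into a minimization over a cube and then exploit separate concavity. I first set $a=\mu_1\mu_2$, $b=\mu_1\mu_3$, $c=\mu_2\mu_3$; the hypothesis says precisely that $a,b,c\in[0,\La^2]$, and since $\mu_1\mu_2\mu_3=\sqrt{abc}$ we may rewrite $\phi(\mu_1,\mu_2,\mu_3)=F(a,b,c)$ with $F(a,b,c):=4+\sqrt{abc}-a-b-c$. Every admissible triple $(\mu_1,\mu_2,\mu_3)$ with $\mu_i\ge0$ (the degenerate ones with some $\mu_i=0$ included) produces a point of the closed cube $Q:=[0,\La^2]^3$, so it suffices to bound $F$ from below on $Q$.

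Next I would note that $F$ is concave in each variable separately: for fixed $b,c$ one computes $\partial^2F/\partial a^2=-\f{1}{4}\sqrt{bc}\,a^{-3/2}\le0$ on $(0,\La^2]$, and $F(\cdot,b,c)$ is continuous at $a=0$, hence concave on all of $[0,\La^2]$; the same holds in $b$ and in $c$. A function concave in each variable separately attains its minimum over a box at a vertex (fix all but one variable and push the free one to an endpoint, then iterate). Hence $\min_Q F$ equals the least of the eight vertex values, which up to the obvious symmetry are $4$, $4-\La^2$, $4-2\La^2$ and $4-3\La^2+\La^3$. For $0<\La\le\sqrt{2}$ the first two are each at least as large as the last two, so $\phi\ge\min\{4-2\La^2,\ 4-3\La^2+\La^3\}$.

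Finally I would verify that both of these candidates exceed $(2-\sqrt{2})(2-\La^2)$ on the range $0<\La\le\sqrt2$. Expanding, $4-2\La^2-(2-\sqrt2)(2-\La^2)=\sqrt2\,(2-\La^2)\ge0$, and $4-3\La^2+\La^3-(2-\sqrt2)(2-\La^2)=\La^3-(1+\sqrt2)\La^2+2\sqrt2=:h(\La)$. Since $h'(\La)=\La\bigl(3\La-2-2\sqrt2\bigr)$ and $3\La\le3\sqrt2<2+2\sqrt2$ on the range, $h'<0$ there, so $h$ is decreasing and $h(\La)\ge h(\sqrt2)=2\sqrt2-2>0$. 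Putting the two estimates together gives $\phi\ge(2-\sqrt2)(2-\La^2)$.

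All the steps are elementary, so there is no serious obstacle; the only point calling for a bit of care is the reduction --- checking that the substitution $(a,b,c)=(\mu_1\mu_2,\mu_1\mu_3,\mu_2\mu_3)$ genuinely confines every admissible $\mu$--triple to $Q$, and that the principle ``concave in each variable $\Rightarrow$ minimum at a vertex'' is applied so that no extremal configuration is overlooked.
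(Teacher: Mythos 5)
Your proof is correct, and it takes a genuinely different route than the paper. The paper derives the corollary from the more delicate Lemma \ref{mu123}: it rescales $\tilde{\mu}_i=\sqrt{2}\mu_i/\La$, decomposes $\phi(\mu)$ into $\frac{\La^2}{2}\phi(\tilde\mu)$ plus explicit correction terms, and invokes the lemma (whose own proof requires analyzing $\phi$ restricted to the level set $\prod(1+\mu_i)=v_0^2$ under the nonconvex constraint $\mu_i\mu_j\le 2+\tfrac{2}{\max_k\mu_k-1}$) to conclude $\phi(\tilde\mu)\ge 0$. Your argument bypasses Lemma \ref{mu123} entirely: the change of variables $(a,b,c)=(\mu_1\mu_2,\mu_1\mu_3,\mu_2\mu_3)$ turns the hypothesis into membership in the cube $Q=[0,\La^2]^3$, the function $F(a,b,c)=4+\sqrt{abc}-a-b-c$ is concave in each variable separately, and the minimum over a box of a separately concave continuous function is attained at a vertex. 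The eight vertex values reduce by symmetry to $4,\,4-\La^2,\,4-2\La^2,\,4-3\La^2+\La^3$, and a short check confirms the last two are the relevant ones and both exceed $(2-\sqrt2)(2-\La^2)$ for $0<\La\le\sqrt2$. Two things each approach buys: the paper's route reuses Lemma \ref{mu123}, which is needed elsewhere (Lemma \ref{sqrt2logv}, Lemma \ref{Harm}) and encodes the equality cases; your route is self-contained, shorter, and in fact yields the slightly sharper intermediate bound $\phi\ge\min\{4-2\La^2,\,4-3\La^2+\La^3\}$, whereas the paper's chain passes through $(2-\La)(2-\La^2)$, which is weaker on the whole range. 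The only point worth flagging explicitly in a write-up (and you do flag it) is that the image of $\mu\mapsto(a,b,c)$ is only a proper subset of $Q$, but since one is bounding $F$ from below on the larger set, this is harmless.
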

\begin{proof}
Let $\tilde{\mu}_i=\sqrt{2}\mu_i/\La$, then
\begin{equation}\aligned
&\phi=4+\f{\La^3}{2\sqrt{2}}\tilde{\mu}_1\tilde{\mu}_2\tilde{\mu}_3-\f{\La^2}2\left(\tilde{\mu}_1\tilde{\mu}_2+\tilde{\mu}_1\tilde{\mu}_3+\tilde{\mu}_2\tilde{\mu}_3\right)\\
=&4-2\La^2+\tilde{\mu}_1\tilde{\mu}_2\tilde{\mu}_3\left(\f{\La^3}{2\sqrt{2}}-\f{\La^2}2\right)
+\f{\La^2}2\left(4+\tilde{\mu}_1\tilde{\mu}_2\tilde{\mu}_3-\tilde{\mu}_1\tilde{\mu}_2-\tilde{\mu}_1\tilde{\mu}_3-\tilde{\mu}_2\tilde{\mu}_3\right).
\endaligned
\end{equation}
From the assumption, $\sup_{1\le i<j\le3}\tilde{\mu}_i\tilde{\mu}_j\le2$, and $\tilde{\mu}_1\tilde{\mu}_2\tilde{\mu}_3\le2\sqrt{2}$.
Combining Lemma \ref{mu123}, we get
\begin{equation}\aligned
\phi\ge&4-2\La^2+2\sqrt{2}\left(\f{\La^3}{2\sqrt{2}}-\f{\La^2}2\right)\ge4-2\La^2+2\sqrt{2}\left(\f{\La^3}{2\sqrt{2}}-\f{\La}{\sqrt{2}}\right)\\
=&(2-\La)(2-\La^2)\ge(2-\sqrt{2})(2-\La^2).
\endaligned
\end{equation}
\end{proof}

\Section{Appendix II}{Appendix II}

Analog to Lemma 4.3 in \cite{d3}, we have the following multiplicity one convergence for Lipschitz minimal graphs.
Let $\Om$ be a domain in $\R^{n}$ with countably $(n-1)$-rectifiable boundary $\p\Om$.
\begin{lem}\label{Multi1}
Let $M_k=\mathrm{graph}_{u_k}$ be a sequence of Lipschitz minimal graphs over $\Om$ of codimension $m\ge1$ with $\sup_k\mathbf{Lip}\,u_k<\infty$.
Then there are a Lipschitz function $u_\infty:\overline{\Om}\rightarrow\R^m$ with $\mathbf{Lip}\,u_\infty\le\sup_k\mathbf{Lip}\,u_k$, and a multiplicity one $n$-varifold $V$ in $\overline{\Om}\times\R^m$ with $\mathrm{spt}V=\{(x,u_\infty(x))\in\R^n\times\R^m|\, x\in \overline{\Om}\}$ such that
up to a choice of the subsequence $|M_k|$ converges as $k\to\infty$ to $V$ in $\Om\times\R^m$ in the varifold sense.
\end{lem}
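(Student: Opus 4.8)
The plan is to combine an Arzel\`a--Ascoli extraction for the graphic functions with the compactness theorem for stationary integral varifolds, and then to pin down the multiplicity by exploiting the one property that distinguishes our varifolds from arbitrary stationary ones, namely that each $M_k$ is a \emph{graph} over $\Om$.

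First I would extract the limit. Set $L:=\sup_k\mathbf{Lip}\,u_k<\infty$. Since (as in all applications, e.g.\ when $u_k(0)=0$) the $u_k$ are locally uniformly bounded, Arzel\`a--Ascoli gives a subsequence with $u_k\to u_\infty$ locally uniformly on $\overline\Om$, and $u_\infty$ is Lipschitz with $\mathbf{Lip}\,u_\infty\le L$. The slopes satisfy $v_k=\sqrt{\det(\de_{ij}+\sum_\a\p_iu_k^\a\p_ju_k^\a)}\le(1+L^2)^{n/2}$, so $\mathcal{H}^n(M_k\cap\mathbf{B}_r(\mathbf{x}))\le c(n,L)r^n$ locally uniformly; each $|M_k|$ is a multiplicity one (hence integral) stationary varifold, so by the compactness theorem for integral varifolds (Theorem 42.7 in \cite{s}) a further subsequence of $|M_k|$ converges in the varifold sense in $\Om\times\R^m$ to a stationary integral $n$-varifold $V$. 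From the local uniform convergence $u_k\to u_\infty$ the sets $M_k=\mathrm{graph}_{u_k}$ converge locally in Hausdorff distance to $\mathrm{graph}_{u_\infty}$, so $\mathrm{spt}V\subset\{(x,u_\infty(x))\,|\,x\in\overline\Om\}$; conversely, for $x_0\in\Om$ the points $(x_0,u_k(x_0))\in M_k$ have density $1$ and converge to $(x_0,u_\infty(x_0))$, so upper semicontinuity of density under varifold convergence gives $\Theta_V(x_0,u_\infty(x_0))\ge1$ and hence $(x_0,u_\infty(x_0))\in\mathrm{spt}V$. Thus $\mathrm{spt}V=\mathrm{graph}_{u_\infty}$ over $\overline\Om$, and $V=\th\,|\mathrm{graph}_{u_\infty}|$ for some $\mathcal{H}^n$-a.e.\ integer $\th\ge1$.

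The main step is to show $\th\equiv1$, and for this it suffices to treat the Allard-regular part of $V$, which is open, dense and of full measure in $\mathrm{spt}V$. Let $p\in\mathrm{spt}V$ be a regular point, so near $p$ the varifold $V$ is a $C^{1,\a}$ submanifold with constant multiplicity $Q:=\th(p)$, and its tangent plane $P$ at $p$ is tangent to the Lipschitz graph $\mathrm{graph}_{u_\infty}$, hence of the form $\{(\dot x,A\dot x)\,|\,\dot x\in\R^n\}$ with $\|A\|\le L$; in particular the vertical projection $\pi_*$ of \eqref{pi*****} restricts to a linear isomorphism $P\to\R^n$ whose Jacobian is bounded below by $(1+L^2)^{-n/2}$. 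By Allard's regularity theorem (sheeting), for all large $k$ the varifold $|M_k|$ coincides in a fixed ball $\mathbf{B}_\r(p)$ with a union of $Q$ pairwise disjoint $C^{1,\a}$ sheets, each a small $C^1$-perturbation of $P$; by the lower Jacobian bound each such sheet is mapped by $\pi_*$ onto an open subset of $\R^n$ containing a fixed ball $B_\de(\pi_*(p))$. If $Q\ge2$, then any point of $B_\de(\pi_*(p))$ has at least two distinct preimages in $M_k$ under $\pi_*$, contradicting the fact that $M_k=\mathrm{graph}_{u_k}$, for which $\pi_*|_{M_k}$ is injective. Hence $Q=1$, so $\th\equiv1$ and $V=|\mathrm{graph}_{u_\infty}|$ is the asserted multiplicity one varifold (and, as a by-product, $v_k\,dx\to v_\infty\,dx$ weakly as measures).

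The hard part is exactly this last step. Soft arguments give only $\int_A v_\infty\le\liminf_k\int_A v_k$ (lower semicontinuity of the graph area), and the reverse inequality cannot hold by general principles in higher codimension, since minimal graphs of bounded slope admit no interior $C^1$ estimates (the Lawson--Osserman cones of \cite{l-o}), so one genuinely must use that each $M_k$ is an entire graph. The leverage is that this forces $\pi_*|_{M_k}$ to be injective, which is incompatible with two or more sheets once the limiting tangent planes project nondegenerately onto $\R^n$ --- and the uniform Lipschitz bound guarantees precisely that nondegeneracy. One should be careful that ``regular point of $V$'' is taken in Allard's sense and that the approximate tangent plane of $\mathrm{spt}V$ there is indeed a graph over $\R^n$, which is automatic because $\mathrm{spt}V$ lies inside the Lipschitz graph $\mathrm{graph}_{u_\infty}$.
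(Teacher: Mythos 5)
Your extraction step, the identification of $\mathrm{spt}V$ with $\mathrm{graph}_{u_\infty}$, and your correct emphasis that the whole work lies in showing the multiplicity $\th\equiv1$ all match the paper and are fine. The gap is in the sentence where you invoke ``Allard's regularity theorem (sheeting)'' to conclude that $|M_k|$ coincides in $\mathbf{B}_\r(p)$ with $Q$ disjoint $C^{1,\a}$ sheets. Allard's theorem has the hypothesis that the mass ratio is close to $1$; but if the multiplicity of $V$ at the regular point $p$ were $Q\ge2$, then by varifold convergence the mass ratio $\r^{-n}\mathcal H^n(M_k\cap\mathbf B_\r(p))$ of $|M_k|$ would be close to $Q$, which is exactly the regime where Allard gives nothing. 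There is no general sheeting theorem for stationary integral varifolds near a higher-multiplicity plane --- this is precisely the ``multiplicity problem,'' which requires extra structure (stability and codimension one \`a la Wickramasekera, or area-minimizing \`a la De Lellis--Spadaro) that is not available here. So the step that would produce the contradiction with injectivity of $\pi_*|_{M_k}$ is circular: the sheeting you need presupposes the density bound you are trying to prove.

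The paper's proof avoids this by never attempting a sheeting decomposition. It instead invokes Lemma 22.2 of Simon's GMT lectures (an $L^2$ tilt-excess estimate that holds automatically under integral varifold convergence to a plane), which yields
\begin{equation*}
\lim_{r\to0}\Bigl(r^{-n}\lim_{k\to\infty}\int_{M_k\cap\mathbf B_r(\mathbf y)}|e_{k,1}\wedge\cdots\wedge e_{k,n}-\xi_1\wedge\cdots\wedge\xi_n|^2\Bigr)=0,
\end{equation*}
and then, using the uniform Lipschitz bound to push forward integrals to $\R^n$ and Cauchy--Schwarz, deduces $r^{-n}\int_{B_r(y)}|1-v_kv_\infty^{-1}|\to0$. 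Since each $M_k$ is a single graph over $B_r(y)$ one has the exact identity $\mathcal H^n(M_k\cap(B_r(y)\times\R^m))=\int_{B_r(y)}v_k$, so the cylinder density of $V$ at $\mathbf y$ comes out to be $\omega_n v_\infty$ rather than $Q\omega_n v_\infty$, forcing $Q=1$. The graph injectivity you highlight is used, but only through this identity for the cylinder mass; the nondegenerate projection onto $\R^n$ (coming from $\mathbf{Lip}\,u_k\le L$) is used only to control $v_k$ from above, not to sheet. Incidentally, your remark that the reverse inequality ``cannot hold by general principles'' is also slightly off: it does hold, via the tilt-excess estimate, and this is exactly where the argument gets its teeth; the absence of pointwise $C^1$ estimates (Lawson--Osserman) only rules out an $L^\infty$ version, not the $L^2$ one.
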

\textbf{Remark}. In Proposition 11.53 of \cite{g-m}, Giaquinta and Martinazzi have already proved the multiplicity one of $V$ in the above lemma. Here, we give an alternative proof for completeness.
\begin{proof}
By Arzela-Ascoli theorem, up to a choice of the subsequence, we assume that there is a Lipschitz function $u_\infty$ on $\overline{\Om}$ with $\mathbf{Lip}\,u_\infty\le\sup_k\mathbf{Lip}\,u_k$.
By compactness of varifolds (see \cite{s}), there is an $n$-varifold $V$ in $\overline{\Om}\times\R^m$ such that up to a choice of the subsequence, $|M_k|$ converges to an integer multiplicity stationary varifold $V$ in $\Om\times\R^m$ in the varifold sense.
Let $\mu_V$ denote the Radon measure associated to $V$. By monotonicity of the density of $V$, for any $\mathbf{x}_*\in\mathrm{spt}V\cap(\Om\times\R^m)$ we have
$\mu_V(\mathbf{B}_r(\mathbf{x}_*))\ge\omega_nr^n$ for sufficient small $r>0$. By the convergence of $|M_k|$, there is a sequence $\mathbf{x}_k\in M_k$ with $\mathbf{x}_k\rightarrow\mathbf{x}_*$. Denote $\mathbf{x}_k=(x_k,u_k(x_k))$. Then $x_k$ converges to a point $x_*$ with $\pi_*(\mathbf{x}_*)=x_*$, where $\pi_*$ is defined in \eqref{pi*****}. Therefore, $\mathbf{x}_*=\lim_{k\rightarrow\infty}\mathbf{x}_k=\lim_{k\rightarrow\infty}(x_k,u_k(x_k))=(x_*,u_\infty(x_*))$, which implies the support of $V$
\begin{equation}\aligned\label{sptVVVVV}
\mathrm{spt}V\subset\{(x,u_\infty(x))\in\overline{\Om}\times\R^m|\, x\in \R^n\}.
\endaligned
\end{equation}
Note that for any $z\in\Om$, $\mathcal{H}^n(\mathbf{B}_r(\mathbf{z}_k)\cap M_k)\ge\omega_nr^n$ for all suitably small $r>0$ with $\mathbf{z}_k=(z,u_k(z))$.
Since $(z,u_k(z))\rightarrow(z,u_\infty(z))$ as $k\rightarrow\infty$, from the convergence of $|M_k|$ we get $\mu_V(\mathbf{B}_r(\mathbf{z}))\ge\omega_nr^n$ for $\mathbf{z}=(z,u_\infty(z))$.
In particular, $\mathbf{z}\in\mathrm{spt}V$, which implies
\begin{equation}\aligned\label{sptVVVVV*}
\{(x,u_\infty(x))\in\R^n\times\R^m|\, x\in \overline{\Om}\}\subset\mathrm{spt}V.
\endaligned
\end{equation}

Now it only remains to prove that $V$ has multiplicity one. Let reg$V$ denote the regular part of $V$ in $\Om\times\R^m$.
For any $\mathbf{y}\in\mathrm{reg}V$, let $T_{\mathbf{y}}V$ denote the tangent plane of spt$V$ at $\mathbf{y}$.
Let $\xi_1,\cdots,\xi_n$ be an orthonormal basis of $T_{\mathbf{y}}V$. 
From Lemma 22.2 in \cite{s},
\begin{equation}\aligned\label{Mkeix}
\lim_{r\rightarrow0}\left(r^{-n}\lim_{k\rightarrow\infty}\int_{M_k\cap \mathbf{B}_{r}(\mathbf{y})}\left|e_{k,1}\wedge\cdots\wedge e_{k,n}-\xi_{1}\wedge\cdots\wedge \xi_{n}\right|^2\right)=0,
\endaligned
\end{equation}
where $e_{k,1},\cdots,e_{k,n}$ is a local orthonormal tangent frame of $M_k$ for each $k$.
We also treat $e_{k,i}$ as a vector on $\pi_*(M_k)$ by letting $e_{k,i}(x)=e_{k,i}(x,u_k(x))$ for each $i=1,\cdots,n$ and $k\ge1$.
Let $\{\mathbf{E}_i\}_{i=1}^{n+m}$ denote the standard orthonormal basis of $\R^{n+m}$ such that $\mathbf{E}_i$ corresponds to the axis $x_i$ for $i=1,\cdots,n+m$, $v_k$ be a function on $\R^n$ defined by $v_k^{-1}=\left|\lan e_{k,1}\wedge\cdots\wedge e_{k,n},\mathbf{E}_1\wedge\cdots\wedge \mathbf{E}_n\ran\right|$. From $\sup_k\mathbf{Lip}\,u_k<\infty$ and \eqref{Mkeix}, we get
\begin{equation}\aligned\label{Mkeix*}
\lim_{r\rightarrow0}\left(r^{-n}\lim_{k\rightarrow\infty}\int_{B_r(y)}\left|e_{k,1}\wedge\cdots\wedge e_{k,n}-\xi_{1}\wedge\cdots\wedge \xi_{n}\right|^2v_k\right)=0
\endaligned
\end{equation}
with $\mathbf{y}=(y,u_\infty(y))$.
Let $v_\infty=\left|\lan \xi_1\wedge\cdots\wedge \xi_n,\mathbf{E}_1\wedge\cdots\wedge \mathbf{E}_n\ran\right|^{-1}$.
From \eqref{Mkeix*} and
\begin{equation}\aligned
&\int_{B_{r}(y)}\left|1-v_kv_\infty^{-1}\right|=\int_{B_{r}(y)}\left|v_k^{-1}-v_\infty^{-1}\right|v_k\\
\le&\int_{B_{r}(y)}\left|\lan e_{k,1}\wedge\cdots\wedge e_{k,n}-\xi_1\wedge\cdots\wedge \xi_n,\mathbf{E}_1\wedge\cdots\wedge \mathbf{E}_n\ran\right|v_k\\
\le&\int_{B_{r}(y)}\left|e_{k,1}\wedge\cdots\wedge e_{k,n}-\xi_1\wedge\cdots\wedge \xi_n\right|v_k,
\endaligned
\end{equation}
with the Cauchy inequality we get
\begin{equation}\aligned
&\lim_{r\rightarrow0}\left(r^{-n}\lim_{k\rightarrow\infty}\int_{B_{r}(y)}\left|1-v_kv_\infty^{-1}\right|\right)\\\le&\lim_{r\rightarrow0}\left(r^{-n}\lim_{k\rightarrow\infty}\left(\int_{B_{r}(y)}v_k\int_{B_{r}(y)}\left|e_{k,1}\wedge\cdots\wedge e_{k,n}-\xi_{1}\wedge\cdots\wedge \xi_{n}\right|^2v_k\right)\right)=0,
\endaligned
\end{equation}
which implies
\begin{equation}\aligned
&\lim_{r\rightarrow0}\left(r^{-n}\mu_V(B_{r}(y)\times\R^m)\right)
=\lim_{r\rightarrow0}\left(r^{-n}\lim_{k\rightarrow\infty}\mathcal{H}^n\left(M_k\cap(B_{r}(y)\times\R^m)\right)\right)\\
=&\lim_{r\rightarrow0}\left(r^{-n}\lim_{k\rightarrow\infty}\int_{B_{r}(y)}v_k\right)=\omega_n v_\infty=\omega_n\left|\lan \xi_1\wedge\cdots\wedge \xi_n,\mathbf{E}_1\wedge\cdots\wedge \mathbf{E}_n\ran\right|^{-1}.
\endaligned
\end{equation}
With \eqref{sptVVVVV}, we conclude that $V$ has multiplicity one everywhere on spt$V$. This completes the proof.
\end{proof}

\Section{Appendix III}{Appendix III}

Let $\La$ be a positive constant $<\sqrt{2}$, and $M_k\in \mathcal{M}_{n,\La}$ for each integer $k\ge1$.
From Theorem \ref{CUVest}, $M_k$ is smooth for each $k$.
From \eqref{logvLa}, integrating by parts infers that there is a constant $c_{n,\La}>0$ depending only on $n$ and $\La$
such that
\begin{equation}\aligned\label{Ai}
\int_{M_k\cap\mathbf{B}_{\r}(p)}|B_{M_k}|^2\le c_{n,\La}\r^{n-2}
\endaligned
\end{equation}
for any $p\in M_k$ and any $\r>0$.
Here, $B_{M_k}$ is the second fundamental form of $M_k$ in $\R^{n+m}$.

\begin{lem}\label{cross}
Suppose that $|M_k|$ converges to a nontrivial stationary varifold $V$ in the varifold sense. If $V$ splits off $\R^{n-1}$ isometrically,
then $\mathrm{spt}V$ is an $n$-plane.
\end{lem}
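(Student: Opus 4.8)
The plan is to first push the $L^2$ curvature estimate \eqref{Ai} to the limit, then to ``divide out'' the Euclidean factor so as to force the cross-section of $V$ to be flat, and finally to rule out junctions and extra sheets using the graph structure of the $M_k$.

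\emph{Step 1: curvature estimate in the limit.} Since $M_k\in\mathcal{M}_{n,\La}$ and $|M_k|\to V$, limits of such graph currents have multiplicity one on their support (Lemma \ref{multi1} and the results of \S 4), so $V$ is a multiplicity one stationary integral varifold; by Allard's regularity theorem \cite{a}, $M_k\to\mathrm{reg}V$ smoothly on the regular part, hence $|B_{M_k}|^2\to|B_V|^2$ there. Together with \eqref{Ai} and lower semicontinuity of the associated measures this gives
\begin{equation*}
\int_{V\cap\mathbf{B}_\r(p)}|B_V|^2\le c_{n,\La}\r^{n-2}\qquad\text{for all }p\in\mathrm{spt}V,\ \r>0.
\end{equation*}

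\emph{Step 2: the cross-section is flat.} Because $V$ splits off $\R^{n-1}$ isometrically, after an isometry of $\R^{n+m}$ write $\mathrm{spt}V=C\times\R^{n-1}$ with $C$ a $1$-dimensional multiplicity one stationary integral varifold in the orthogonal $\R^{m+1}$. The $\R^{n-1}$ factor being flat and totally geodesic, $|B_V|^2$ is the pull-back of $|B_C|^2$, so, writing $p=(p',p'')$ and integrating over $(C\cap B_{\r/2}(p'))\times B_{\r/2}(p'')\subset V\cap\mathbf{B}_\r(p)$,
\begin{equation*}
\om_{n-1}\left(\tfrac{\r}{2}\right)^{n-1}\int_{C\cap B_{\r/2}(p')}|B_C|^2\le\int_{V\cap\mathbf{B}_\r(p)}|B_V|^2\le c_{n,\La}\r^{n-2}.
\end{equation*}
Letting $\r\to\infty$ forces $\int_C|B_C|^2=0$, so $C$ is totally geodesic on its regular part: $\mathrm{reg}C$ is a union of open segments and $C$ is locally a finite union of segments. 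The first variation of such a configuration at a point where $k$ segments meet is $\sum_{i=1}^k v_i$ (outward unit directions), so stationarity gives $\sum_{i=1}^k v_i=0$; a $2$-fold junction is then a regular point, and every singular point of $C$ is a $k$-fold junction with $k\ge3$.

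\emph{Step 3: no junctions, no extra sheets.} Suppose $\mathrm{spt}V$ is not an $n$-plane. Then $C$ has a $k$-fold junction with $k\ge3$ or has at least two components, and in either case $\mathrm{spt}V=C\times\R^{n-1}$ contains --- locally, or after passing to a tangent cone, which is again a limit of rescaled $M_k$'s --- two distinct $n$-dimensional half-planes through a common $(n-1)$-plane, or two distinct parallel $n$-planes. Let $T=\lim[|M_k|]$, so $|T|=V$, $\partial T=0$, $T$ has multiplicity one, and its orientation $\xi$ satisfies $\lan\xi,\mathbf{E}_1\w\cdots\w\mathbf{E}_n\ran\ge0$ a.e. (it is the limit of $v_k^{-1}>0$, cf. \eqref{orientation}). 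Since $\pi_{*\#}[|M_k|]=[|\R^n|]$ for the projection $\pi_*\colon\R^{n+m}\to\R^n$, one gets $\pi_{*\#}T=[|\R^n|]$; combined with multiplicity one of $T$ and the sign condition on $\xi$, this means that for a.e. $x\in\R^n$ the fibre $\pi_*^{-1}(x)$ meets $\mathrm{spt}V$ in exactly one point at which the tangent plane is not vertical. On the other hand, at most one Jordan angle of any tangent plane of $V$ can equal $\pi/2$ (as $\tan\th_i\tan\th_j\le\La<\infty$ for $i\ne j$, the planes lying in the closure of the $2$-bounded subset of the Grassmannian), and a vertical sheet forces the common $(n-1)$-plane to lie inside $\R^n$, which makes its companion sheets honest graphs over a half-space of $\R^n$; with this bookkeeping, at least two of the above sheets project non-vertically over a common open set $\Om\subset\R^n$, producing two positively oriented points over each $x\in\Om$ --- a contradiction. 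Hence $C$ has no singular point and is connected, so $C$ is a single line and $\mathrm{spt}V=C\times\R^{n-1}$ is an $n$-plane.

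The main obstacle is Step 3: making the slicing/pushforward argument precise and controlling vertical versus non-vertical sheets is exactly where the graph structure and the bound $\La<\sqrt{2}$ --- which, through Corollary \ref{Lalogv}, is what makes \eqref{Ai} available uniformly --- are genuinely used. In the two-sheet case, Theorem \ref{Stable} provides an alternative finish via the stability of $\mathrm{spt}V$.
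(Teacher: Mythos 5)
Your Steps 1 and 2 are essentially sound, and they set up a genuinely different route from the paper's: you pass the $L^2$ curvature estimate \eqref{Ai} to the limit, conclude that the cross-section $C$ is totally geodesic on its regular part (hence a union of rays with balanced directions at junctions), and then try to rule out junctions by current-theoretic projection. The paper, by contrast, never extracts information about $C$'s curvature; it works directly at the $M_k$ level, following Schoen--Simon's Theorem~2 \cite{s-s}: it shows that the Gauss map $\xi_k$ must vary by a definite amount $\beta>0$ along arcs in the slices $\{x\}\times\{|y|\le 2\r\}$ (because these arcs must interpolate between distinct ray directions), integrates $|\n\xi_k|\le c_n|B_{M_k}|$ over such arcs and over $|x|<1$, and then compares against \eqref{Ai} and the volume bound to get $\beta^2\lesssim\r$, a contradiction as $\r\to 0$.

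The genuine gap in your proposal is in Step 3, and it is precisely the issue of vertical sheets that you flag as the ``main obstacle.'' The pushforward $\pi_{*\#}T=[|\R^n|]$ with nonnegative orientation does \emph{not} force two non-vertical sheets to project over a common open set. Concretely, write each ray direction as $v_i=a_i\mathbf{e}_n+w_i$ with $\mathbf{e}_n\in\R^n$ the direction orthogonal to the $\R^{n-1}$ axis and $w_i\perp\R^n$. The balance condition only forces $\sum_i a_i=0$ and $\sum_i w_i=0$. A configuration with exactly one $a_i>0$, exactly one $a_j<0$ with $a_i+a_j=0$, and all remaining sheets vertical ($a_l=0$) projects to $[|\{x_n>0\}|]+[|\{x_n<0\}|]=[|\R^n|]$ with everywhere nonnegative orientation and no overlap: for instance, two transversal lines $\R\mathbf{e}_n\cup\R w$ with $w\in\R^m$. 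Such a $C$ has a $4$-fold junction, is not a single line, and is entirely invisible to your pushforward argument. The $2$-dilation constraint (``at most one Jordan angle equals $\pi/2$'') is compatible with a tangent plane $\R^{n-1}\wedge w$ for $w\perp\R^n$, so it does not help either. Nor does your suggested fallback via Theorem~\ref{Stable}: that result requires the limit to be cylindrical inside an $(n+1)$-dimensional subspace, which fails for a genuinely higher-codimensional junction when $m\ge 2$. The paper's Schoen--Simon argument does not see this difficulty because it measures the turning of the full Gauss map $\xi_k$ along the slice arcs, which is forced to be large regardless of whether the ray directions are vertical; your projection argument only measures a scalar component of $\xi_k$ and loses exactly the information needed. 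To repair your Step~3 you would essentially have to re-derive the Schoen--Simon quantitative estimate (or an equivalent replacement for it) in order to rule out vertical junctions.

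Two smaller points worth noting. First, in Step~1 the inequality for $V$ should be understood as an estimate on $\mathrm{reg}\,V$; the passage from smooth convergence on the regular part to the limit inequality is fine, but one should not write $|B_V|$ on the singular set. Second, the assertion in Step~2 that $C$ is \emph{locally a finite} union of segments (rather than merely totally geodesic on its regular part) uses the structure theory of $1$-dimensional stationary integral varifolds; this is implicit in the paper as well (in the line ``we write $T=\sum n_j|R_j|$'') but should be acknowledged.
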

The proof is similar to the argument in the proof of Theorem 2 of \cite{s-s} by Schoen-Simon. For self-containment, we give the proof here.
\begin{proof}
Let us prove it by contradiction. Suppose that there is a varifold $T$ in $\R^{m+1}$ such that $\mathrm{spt}T$ is not a line in $\R^{m+1}$, and
$$\mathrm{spt}V=\{(x,y)\in\R^{n-1}\times\R^{m+1}|\, y\in\mathrm{spt}T\}=\R^{n-1}\times\mathrm{spt}T.$$
We write
\begin{equation}\aligned
T=\sum_{j=1}^ln_j|R_j|,\qquad R_j=\{\la p_j|\ \la>0\},
\endaligned
\end{equation}
with $l\ge2,n_j$ positive integers, $|p_j|=1$ and $p_1,\cdots,p_l$ spanning a space of dimension $\ge2$.
Let $\xi_k$ be the orientation of $M_k$ defined in \eqref{orientation}. Note that $T$ has multiplicity one. For any $0<\r\le1/2$, from Allard's regularity theorem there is a constant $k_\r>0$ such that
\begin{equation}\aligned
M_k\cap\left(\{x\}\times\{y\in\R^{m+1}|\ \r<|y|<2\r\}\right)=\bigcup_{j=1}^{l'}\g_{j}^k(x)
\endaligned
\end{equation}
for each $k\ge k_\r$ and each $x\in\R^{n-1}$ with $|x|\le1$, where $l'=\sum_{j=1}^ln_{l}$, $\g_{j}^k(x)$ are smooth properly embedded Jordan arcs having their endpoints in
$\{x\}\times\{y\in\R^{m+1}|\ |y|=\r\ \mathrm{or}\ |y|=2\r\}$, and satisfying
\begin{equation}\aligned
&\lim_{k\rightarrow\infty}\max_{j'=1,\cdots,l'}\sup_{x\in\R^{n-1},\ |x|\le1}\mathrm{dist}\left(\g_{j'}^k(x),\R^{n-1}\times T\right)=0,\\
&\lim_{k\rightarrow\infty}\min_{j=1,\cdots,l}\sup_{\mathbf{x}\in\g_{j'}^i(x),\ j'=1,\cdots,l'}\left|\xi_k(\mathbf{x})\wedge(0^{n-1},p_j)\right|=0.
\endaligned
\end{equation}
Here, $0^{n-1}$ denotes the origin of $\R^{n-1}$.

On the other hand, for each $\mathbf{x}\in M_k$, there is a constant $r_{k,\mathbf{x}}$ such that for every $0<r<r_{k,\mathbf{x}}$, each component of $M_k\cap \textbf{B}_{r}(\mathbf{x})$ is embedded, and can be written as a graph over the tangent plane of $M_k\cap B_{r}(\mathbf{x})$ at $\mathbf{x}$ with the graphic function $w_{k,\mathbf{x}}$ so that $|Dw_{k,\mathbf{x}}|<\f1{10}r$.
Covering lemma and Sard's theorem imply that for almost all $x\in\R^{n-1}$ with $|x|\le1$, and for each $k\ge1$ we have
\begin{equation}\aligned
M_k\cap\left(\{x\}\times\{y\in\R^{m+1}|\ |y|\le\r\}\right)=\left(\bigcup_{j=1}^{l_1}\G_j^k(x)\right)\cup\left(\bigcup_{j=1}^{l_2}\Upsilon_j^k(x)\right),
\endaligned
\end{equation}
where $l_1$ is a positive integer (depending on
$i, x$), $\G_j^k(x)$ are smooth properly embedded arcs with endpoints contained
in $\{x\}\times\{y\in\R^{m+1}|\ |y|=\r\}$, $l_2$ is a non-negative integer, and $\Upsilon_j^k(x)$ are
smooth properly embedded curves (with no endpoints).
Hence for almost all $x\in\R^{n-1}$ with $|x|\le1$, $M_k\cap\left(\{x\}\times\{y\in\R^{m+1}|\ |y|\le2\r\}\right)$ is a union of several embedded smooth arcs or curves with their endpoints in
$\{x\}\times\{y\in\R^{m+1}|\ |y|=2\r\}$.

Clearly, there are a constant $\be>0$ independent of $\r$ and a large constant $k^*_\r$
such that for all $k\ge k^*_\r$ and for almost all $x\in\R^{n-1}$ with $|x|\le1$, there is an embedded smooth arcs $\g^k_{*,\r}$ in $M_k\cap\left(\{x\}\times\{y\in\R^{m+1}|\ |y|\le2\r\}\right)$ with their endpoints in
$\{x\}\times\{y\in\R^{m+1}|\ |y|=2\r\}$ (depending on $\r$) so that
\begin{equation}\aligned
\sup_{\mathbf{x}_1,\mathbf{x}_2\in\g^k_{*,\r}}\left|\xi_k(\mathbf{x}_1)-\xi_k(\mathbf{x}_2)\right|\ge\be.
\endaligned
\end{equation}
Let $\n_{\dot{\g}^k_{*,\r}}\xi_k$ denote the directional derivative of $\xi_k$ in the direction of the tangent to $\g^k_{*,\r}$. Then from the above inequality one has
\begin{equation}\aligned
\be\le\sup_{\mathbf{x}_1,\mathbf{x}_2\in\g^k_{*,\r}}\left|\xi_k(\mathbf{x}_1)-\xi_k(\mathbf{x}_2)\right|\le\int_{\g^k_{*,\r}}\left|\n_{\dot{\g}^k_{*,\r}}\xi_k\right|.
\endaligned
\end{equation}
Since
$\left|\n_{\dot{\g}^k_{*,\r}}\xi_k\right|\le c_n|B_{M_k}|$ for some constant $c_n>0$, then
\begin{equation}\aligned\label{beAi}
\f{\be}{c_n}\le\int_{\g^k_{*,\r}}\left|B_{M_k}\right|\le\int_{M_k\cap\left(\{x\}\times\{y\in\R^{m+1}|\ |y|\le2\r\}\right)}\left|B_{M_k}\right|
\endaligned
\end{equation}
for almost all $x\in\R^{n-1}$ with $|x|\le1$, and $k\ge k^*_\r$. Denote $M_{k,\r}=M_k\cap\{(x,y)\in\R^{n-1}\times\R^{m+1}|\ |x|<1,\ |y|\le2\r\}$ for every $0<\r\le1/2$. Integrating \eqref{beAi} over $|x|<1$ and using the co-area formula yield
\begin{equation}\aligned\label{beAi2}
\f{\omega_{n-1}}{c_n}\be\le\int_{M_{k,\r}}\left|B_{M_k}\right|\le&\left(\mathcal{H}^n(M_{k,\r})\right)^{1/2}\left(\int_{M_{k,\r}}\left|B_{M_k}\right|^2\right)^{1/2}\\
\le&\left(\mathcal{H}^n(M_{k,\r})\right)^{1/2}\left(\int_{M_k\cap\mathbf{B}_{\sqrt{2}}}\left|B_{M_k}\right|^2\right)^{1/2}.
\endaligned
\end{equation}
There are an integer $l_\r>1$ and a finite sequence of $\{x'_j\}_{j=1}^{l_\r}\subset\R^{n-1}$ with $|x'_j|<1$ such that $\bigcup_{j=1}^{l_\r}B_{\r}(x'_j)\supset B_1(0^{n-1})$ and $l_\r\r^{n-1}<c_n'$ for some constant $c_n'$ depending only on $n$. Here, $B_{\r}(x_j')$ denotes the ball in $\R^{n-1}$ centered at $x_j'$ with the radius $\r$, $B_1(0^{n-1})$ denotes the unit ball in $\R^{n-1}$ centered at the origin. Denote $z_j=(x'_j,0^{m+1})$. Then
$$B_{\r}(x'_j)\times\{y\in\R^{m+1}|\ |y|\le2\r\}\subset\mathbf{B}_{3\r}(z_j),$$
which implies
$$B_{1}(0^{n-1})\times\{y\in\R^{m+1}|\ |y|\le2\r\}\subset\bigcup_{j=1}^{l_\r}\mathbf{B}_{3\r}(z_j).$$
Combining Lemma \ref{VGM*}, \eqref{Ai}\eqref{beAi2} and $l_\r\r^{n-1}<c_n'$, we have
\begin{equation}\aligned\label{beAi3}
\omega_{n-1}^2\be^2c_n^{-2}\le& c_{n,\La}2^{\f n2-1}\mathcal{H}^n(M_{k,\r})\le c_{n,\La}2^{\f n2-1}\sum_{j=1}^{l_\r}\mathcal{H}^n\left(M_{k}\cap \mathbf{B}_{3\r}(z_j)\right)\\
\le& c_{n,\La}2^{\f n2-1}l_\r C_{n,\La}\omega_n(3\r)^n\le 2^{\f n2-1}3^n\omega_nc_{n,\La}C_{n,\La} c_n'\r.
\endaligned
\end{equation}
However, the above inequality fails for the sufficiently small $\r>0$.
Thus, we get that $\mathrm{spt}T$ is a line in $\R^{m+1}$, and complete the proof.
\end{proof}

\bibliographystyle{amsplain}

\begin{thebibliography}{10}

\bibitem{al} F. J. Almgren Jr., Some interior regularity theorems for minimal surfaces and an extension of Bernstein's theorem, Ann. of Math. {\bf(2) 84} (1966), 277-292.

\bibitem{a} W. Allard, On the first variation of a varifold, Ann. Math. {\bf 95} (1972), 417-491.

\bibitem{af} Frederick J. Almgren, Jr. Almgren's big regularity paper, volume 1 of World Scientific Monograph Series in Mathematics. World Scientific Publishing Co. Inc., River Edge, NJ, 2000.

\bibitem{a-f-p} L. Ambrosio, N. Fusco, D. Pallara, Functions of Bounded Variation and Free Discontinuity Problems, Clarendon Press, Oxford (2000).

\bibitem{a-j} R. Assimos, J. Jost, The Geometry of  Maximum Principles and a Bernstein Theorem in Codimension 2, Preprint.

\bibitem{b} J.L.M. Barbosa, An extrinsic rigidity theorem for minimal immersion from $S^2$ into $S^n$, J. Diff. Geom. {\bf 14(3)} (1980), 355-368.

\bibitem{b-d-g} E. Bombieri, E. De Giorgi, E. Giusti, Minimal cones and the Bernstein problem, Inventiones math. {\bf 7} (1969), 243-268.

\bibitem{b-g-m} E. Bombieri, E. De Giorgi and M. Miranda. Una maggiorazione a priori relativa alle ipersuperfici minimali non parametriche, Arch. Rational Mech. Anal. {\bf 32} (1969), 255-267.

\bibitem{b-g} E. Bombieri, E. Giusti, Harnack's inequality for elliptic differential equations on minimal surfaces, Invent. Math. {\bf 15} (1972), 24-46.

\bibitem{br} S. Brendle, The isoperimetric inequality for a minimal submanifold in Euclidean space, J. Amer. Math. Soc. 34 (2021), 595-603.

\bibitem{c-n} Jeff Cheeger, A. Naber, Quantitative stratification and the regularity of harmonic maps and minimal currents, Comm. Pure Appl. Math. {\bf66} (2013), 965-990.

\bibitem{c-l-y} S. Y. Cheng, P. Li, and S. T. Yau, Heat equations on minimal submanifolds and their applications, Amer. J. Math. {\bf106} (1984), 1033-1065.

\bibitem{c-o} S. S. Chern and R. Osserman, Complete minimal surfaces in Euclidean $n-$space, J. d'Anal. Math. {\bf 19}(1967), 15-34.

\bibitem{c-m} T. H. Colding, and W. P. Minicozzi II, Liouville theorems for harmonic sections and applications, Comm. Pure Appl. Math. {\bf51}, no. 2 (1998) 113-138.

\bibitem{c-m1} T. H. Colding, and W. P. Minicozzi II, A course in minimal surfaces, Graduate Studies in Mathematics, 121. American Mathematical Society, Providence, RI, 2011.

\bibitem{dg0} E. De Giorgi, E., Sulla differentiabilit\'a e l'analiticit\'a delle estremali degli integrali multipli regolari, Mem. Accad. Sci. Torino, s. III, parte I, (1957), 25-43.

\bibitem{dg} E. De Giorgi, Frontiere orientate di misura minima, Sem. Mat. Scuola Norm. Sup. Pisa, (1961), 1-56.

\bibitem{ds1} Camillo De Lellis, Emanuele Spadaro, Regularity of area minimizing currents I: gradient $L^p$ estimates, Geom. Funct. Anal. {\bf 24} (2014), no. 6, 1831-1884.

\bibitem{ds2} Camillo De Lellis, Emanuele Spadaro, Regularity of area minimizing currents II: center manifold, Ann. of Math. {\bf(2)183} (2016), no. 2, 499-575.

\bibitem{ds3} Camillo De Lellis, Emanuele Spadaro, Regularity of area minimizing currents III: blow-up, Ann. of Math. {\bf(2)183} (2016), no. 2, 577-617.

\bibitem{d3} Qi Ding, A Liouville type theorem and Hessian estimates for special Lagrangian equations, arXiv:1912.00604.

\bibitem{d-j-x0} Qi Ding, J.Jost and Y.L.Xin, Existence and non-existence of area-minimizing hypersurfaces in manifolds of non-negative Ricci curvature, Amer. J. Math., {\bf 138} (2016), no.2., 287-327.

\bibitem{d-j-x} Qi Ding, J.Jost and Y.L.Xin, Existence and non-existence of minimal graphs, arXiv:1701.01674.

\bibitem{d-x-y} Qi Ding, Y.L.Xin and Ling Yang, The rigidity theorems of self shrinkers via Gauss maps, Advances in Mathematics, {\bf 303}(5), 2016, 151-174.

\bibitem{d-y} Weiyue Ding, Yuan Yu, Resolving the singularities of the minimal Hopf cones, (English summary) J. Partial Differential Equations {\bf19} (2006), no. 3, 218-231.

\bibitem{ff} H. Federer, W. Fleming, Normal and integral currents, Annals of Math. {\bf 72} (1960), 458-520.

\bibitem{fc} D.~Fischer-Colbrie, Some rigidity theorems for minimal submanifolds of the sphere, Acta Math. {\bf 145}(1980), 29-46.

\bibitem{f} W. H. Fleming, On the oriented Plateau problem, Rend. Circ. Mat. Palermo {\bf(2)11} (1962), 69-90.

\bibitem{F} T. Frankel, On the fundamental group of a compact minimal submanifold, Ann. of Math. {\bf(2)83} (1966), 68-73.

\bibitem{gi} E. Giusti, Minimal surfaces and functions of bounded variation, Birkh$\mathrm{\ddot{a}}$user Boston, Inc., 1984.

\bibitem{g-m} M. Giaquinta and L. Martinazzi, An introduction to the regularity theory for elliptic systems, harmonic maps and minimal graphs, Scu. Norm. Sup. Pisa, 2012.

\bibitem{g-t} D. Gilbarg and N. Trudinger, Elliptic Partial Differential Equations of Second Order, Springer-Verlag, Berlin-New York, (1983).

\bibitem{h-s-v} Th. Hasanis, A. Savas-Halilaj and Th. Vlachos, Minimal graphs in $\R^4$ with bounded Jacobians, Proc. of Amer. Math. Soc. {\bf 137} (2009), 3463-3471.

\bibitem{h-j-w} S. Hildebrandt, J. Jost, K. Widman, Harmonic mappings and minimal submanifolds, Invent. math. {\bf 62}(1980), 269-298.

\bibitem{j-x}J. Jost, Y. L. Xin, Bernstein type theorems for higher codimension, Calc.Var. {\bf 9}(1999), 277-296.

\bibitem{j-x-y1} J. Jost, Y. L. Xin and Ling Yang, Curvature estimates for minimal submanifolds of higher codimension and small G-rank, Trans. AMS {\bf 367(12)} (2015), 8301-8323.

\bibitem{j-x-y} J. Jost, Y. L. Xin and Ling Yang, The geometry of Grassmannian manifolds and Bernstein-type theorems for higher codimension, Ann della Scuola Normale Superiore di Pisa Serie V {\bf XVI} (2016),1-39.

\bibitem{l-o} H. B. Lawson and R. Osserman, Non-existence, non-uniqueness and irregularity of solutions to the minimal surface system. Acta math. {\bf 139}(1977), 1-17.

\bibitem{li} Peter Li, Lecture notes on geometric analysis, 1996.

\bibitem{l-ya} F.H. Lin, X.P. Yang, Geometric measure theory: an introduction, Science Press, Beijing/ New York; International Press, Boston, 2002.

\bibitem{l-w-w} L. Liu, G. Wang, L. Weng, The relative isoperimetric inequality for minimal submanifolds in the Euclidean space, arXiv:2002.00914, 2020.

\bibitem{m-s} J. Michael and L.M. Simon, Sobolev and mean-value inequalities on generalized submanifolds of $\R^n$, Comm. Pure Appl. Math., {\bf 26} (1973), 361-379.

\bibitem{m0} J. Moser, A new proof of de Giorgi's theorem concerning the regularity problem for elliptic differential equations, Comm. Pure Appl. Math. {\bf13} (1960), 457-468.

\bibitem{m} J. Moser, On Harnack's theorem for elliptic differential equations, Comm. Pure Appl. Math. {\bf 14}(1961), 577-591.

\bibitem{o} R. Osserman, Minimal varieties, Bull. Amer. Math. Soc. {\bf 75} (1969), 1092-1120.

\bibitem{r-v} E. A. Ruh and J. Vilms, The tension field of Gauss maps, Trans. AMS {\bf 149}(1970), 569-573.

\bibitem{s-s} R. Schoen, L. Simon, Regularity of stable minimal hypersurfaces, Comm. Pure and Appl. Math. {\bf 34}(1981), 742-797.

\bibitem{s} Leon Simon, Lectures on Geometric Measure Theory, Proceedings of the center for mathematical analysis Australian national university, Vol. 3, 1983.

\bibitem{ss} J. Simons, Minimal varieties in riemannian manifolds, Ann. of Math. {\bf(2)88} (1968), 62-105.

\bibitem{wang} Mu-Tao Wang, On graphic Bernstein type results in higher codimension, Trans. AMS {\bf 355(1)} (2003), 265-271.

\bibitem{w2} Mu-Tao Wang, Interior gradient bounds for solutions to the minimal surface system, Amer. J. Math. {\bf 126} (2004),  no. 4, 921-934.

\bibitem{wn} N. Wickramasekera, A general regularity theory for stable codimension 1 integral varifolds, Ann. Math. {\bf 179(3)} (2014), 843-1007.

\bibitem{wn1} N. Wickramasekera, A sharp strong maximum principle and a sharp unique continuation theorem for singular minimal hypersurfaces, Calc. Var. Partial Differential Equations {\bf 51} (2014), no. 3-4, 799-812.

\bibitem{w} Yung-Chow Wong, Differential geometry of Grassmann manifolds,  Proc. N.A.S. {\bf 57} (1967), 589-594.

\bibitem{x} Yuanlong Xin, Minimal submanifolds and related topics, World Scientic Publ. 2003,\, $2$-Edition, 2018.

\bibitem{x-y1} Y. L. Xin and Ling Yang: Convex functions on Grassmannian manifolds and Lawson-Osserman Problem, Adv. Math. {\bf 219(4)}, (2008), 1298-1326.

\bibitem{x-y-z} Xiaowei Xu, Ling Yang, Yongsheng Zhang, Dirichlet boundary values on Euclidean balls with infinitely many solutions for the minimal surface system, J. Math. Pures Appl. {\bf129} (2019), 266-300.





\end{thebibliography}

\end{document}